\def\@seccntformat#1{%
  \protect\textup{%
    \protect\@secnumfont
    \expandafter\protect\csname format#1\endcsname 
    \csname the#1\endcsname
    \protect\@secnumpunct
  }%
}
\newcommand{\xrightcong}{\xrightarrow{\raisebox{-0.4ex}[0ex][0ex]{$\,
     \scriptstyle \cong\, $}}}
\newcommand{\xrightcongdbl}{\xrightarrow{\raisebox{-0.4ex}[0ex][0ex]{$\,
      \scriptstyle \cong\, $}}\mathrel{\mkern-14mu}\rightarrow}
\newcommand{\xrightsimeq}{\xrightarrow{\raisebox{-0.2ex}[0ex][0ex]{$\,
      \scriptstyle\simeq\, $}}}
\newcommand{\xleftcong}{\xleftarrow{\raisebox{-0.4ex}[0ex][0ex]{$\,
      \scriptstyle \cong\, $}}}
\newcommand{\xleftsimeq}{\xleftarrow{\raisebox{-0.2ex}[0ex][0ex]{$\,
      \scriptstyle\simeq\, $}}}
\newcommand{\xrightncong}{\xrightarrow{\raisebox{-0.4ex}[0ex][0ex]{$\,
      \scriptstyle {\not\cong}\, $}}}
\newcounter{thecounter}
\numberwithin{thecounter}{section}
\newtheorem{lemma}[thecounter]{Lemma}
\newtheorem{prop}[thecounter]{Proposition}
\newtheorem{thrm}[thecounter]{Theorem}
\newtheorem{cor}[thecounter]{Corollary}
\newtheorem{Th}{Theorem}
\newtheorem{Co}[Th]{Corollary}
\theoremstyle{definition}
\newtheorem{example}[thecounter]{Example}
\newtheorem{rem}[thecounter]{Remark}
\theoremstyle{remark}
\numberwithin{equation}{section}
\newcommand{\sd}{\operatorname{sd}}
\newcommand{\sta}{\operatorname{\bf star}}
\newcommand{\link}{ \operatorname{\bf link}}
\newcommand{\colim}{\operatorname{colim}}
\newcommand{\Aut}{\mathrm{Aut}}
\newcommand{\Rep}{\operatorname{Rep}}
\newcommand{\Inn}{\operatorname{Inn}}
\newcommand{\Out}{\operatorname{Out}}
\newcommand{\End}{\operatorname{End}}
\newcommand{\rk}{\operatorname{rk}}
\newcommand{\Hom}{\operatorname{Hom}}
\newcommand{\sHom}{{\operatorname{\underbar{Hom}}}}
\newcommand{\im}{\operatorname{im}}
\newcommand{\mor}{\operatorname{Mor}}
\newcommand{\id}{\operatorname{id}}
\newcommand{\res}{\operatorname{res}}
\newcommand{\diag}{\operatorname{diag}}
\newcommand{\Rmod}{R\mbox{--}\mathrm{mod}}
\newcommand{\hocolim}{\operatorname{hocolim}}
\newcommand{\fF}{{\mathfrak F}}
\newcommand{\calF}{{\mathcal F}}
\newcommand{\scrF}{{\mathscr F}}
\newcommand{\calC}{{\mathcal{C}}}
\newcommand{\calX}{{\mathcal{X}}}
\newcommand{\calY}{{\mathcal{Y}}}
\newcommand{\calB}{{\mathcal{B}}}
\newcommand{\calA}{{\mathcal{A}}}
\newcommand{\bO}{{\mathscr O}}
\newcommand{\coker}{\operatorname{coker}}
\newcommand{\op}{\operatorname{op}}
\newcommand{\GL}{\operatorname{GL}}
\newcommand{\Sp}{\operatorname{Sp}}
\newcommand{\SL}{\operatorname{SL}}
\newcommand{\PSL}{\operatorname{PSL}}
\newcommand{\Tr}{\operatorname{Tr}}
\newcommand{\luft}{\medskip\par\noindent}
\newcommand{\QED}{\hfill {\bf $\square$}\luft}
\newcommand{\Q}{{\mathbb {Q}}}
\newcommand{\F}{{\mathbb {F}}}
\newcommand{\Z}{{\mathbb {Z}}}
\newcommand{\D}{{\mathbf D}}
\newcommand{\A}{{\mathbf A}}
\newcommand{\beq}{\begin{eqnarray*}}
\newcommand{\eeq}{\end{eqnarray*}}
\newcommand{\tuborg}{\left\{\begin{array}{ll}}
\newcommand{\sluttuborg}{\end{array}\right.}
\newfont{\bm}{msbm10}
\newcommand{\semi}{{\mbox{{\bm \symbol{111}}}}}
\newcommand{\Ob}{\operatorname{Ob}}
\newcommand{\Mor}{\operatorname{Mor}}
\newcommand{\cA}{{\mathcal A} }
\renewcommand{\phi}{\varphi}
\newcommand{\calS}{{\mathcal{S}}}
\newcommand{\calT}{{\mathscr{T}}}
\newcommand{\calG}{{\mathcal{E}}}
\newcommand{\stmod}{\operatorname{stmod}}
\newcommand{\Stmod}{\operatorname{StMod}}
\newcommand{\Oep}{{{\mathscr O}_p^*}}
\newcommand{\Op}{{\mathscr O}_p}
\newcommand{\Tep}{{{\mathscr T}_p^*}}
\newcommand{\G}{{\mathbf G}}
\newcommand{\sym}{{\mathfrak S}}
\newcommand{\alt}{{\mathfrak A}}
\newcommand{\TS}{T_k(G,S)}
\newcommand{\Pic}{\operatorname{Pic}}
\newcommand{\Tors}{\operatorname{Tors}}
\newcommand{\atlas}{\mathbb A \mathbb T  \mathbb L  \mathbb A  \mathbb
  S}
\newcommand{\bcalF}{{\bar{\mathcal F}}}
\newcommand{\Dperf}{D^{{\mathrm{perf}}}}
\def\co{\colon\thinspace}
\newcommand{\thmunderline}[1]{%
  \sc{{#1}}%
}
\begin{document}
\title{Endotrivial modules for finite groups via homotopy theory}


\author{Jesper Grodal}

\thanks{Supported by the Danish National Research Foundation through
  the Centre for Symmetry and Deformation (DNRF92). 
}

\subjclass[2000]{Primary: 20C20; Secondary: 20J05, 55P91}

\address{Department of Mathematical Sciences, University of
Copenhagen, Denmark}
\email{jg@math.ku.dk}

\begin{abstract} Classifying endotrivial
  $kG$--modules, i.e., elements of the Picard group of the stable module
  category for an
  arbitrary finite group $G$, has been a long-running quest. By
  deep work of Dade, Alperin, Carlson, Th\'evenaz, and others, it has
been reduced to understanding the subgroup consisting of modular
representations that split as the trivial module $k$ direct sum a
projective module when restricted
to a Sylow $p$--subgroup.
In this paper we
identify this subgroup as the first cohomology group of the orbit
category on non-trivial $p$--subgroups with values in the units
$k^\times$, viewed as a constant
coefficient system. 
We then use homotopical techniques to give a
number of formulas for this group in terms of 
the abelianization of normalizers and centralizers in $G$, in particular
verifying the Carlson--Th\'evenaz conjecture---this reduces the calculation
of this group to algorithmic calculations in local group theory rather than
representation theory.
We also provide strong
restrictions on when such representations of dimension greater than one can occur,
in terms of the $p$--subgroup complex and $p$--fusion systems. 
We immediately recover and extend a large
number of computational results in the literature,
and further illustrate the computational potential by calculating the group in other sample new
cases, e.g., for the Monster at all primes. 
\end{abstract}

\maketitle

\setcounter{tocdepth}{1}
\tableofcontents

\section{Introduction}
In modular representation theory of finite groups, the
indecomposable $kG$--modules $M$ whose restriction
to a Sylow $p$--subgroup $S$ split 
as the trivial module $k$ plus a
free $kS$--module are basic yet somewhat mysterious objects. Such modules
form a group $\TS$  under tensor product, discarding projective
$kG$--summands, with neutral element $k$ and $M^*$ the inverse of $M$
(see \S\ref{conventions} for details). It is also denoted $K(G)$ in the literature.
It contains the one-dimensional
characters $\Hom(G,k^\times)$ as a subgroup, but has been observed to
sometimes also contain {\em exotic} elements, i.e., modules of dimension greater
than one. The group $\TS$ is an important subgroup 
of the larger group of all so-called endotrivial modules $T_k(G)$,
i.e., $kG$--modules $M$ where $M^* \otimes M \cong k \oplus P$,
for $P$ a projective $kG$--module. Namely $\TS =
\ker\left(T_k(G) \to T_k(S)\right)$, i.e., the kernel of the restriction to
$S$. The group of endotrivial modules has a categorical interpretation as
$T_k(G) \cong \Pic(\Stmod_{kG})$, the Picard group of the stable
module category. Such modules
occur in many parts of representation theory, e.g., as source modules
(see the surveys
\cite{thevenaz07, carlson12, carlson17} and the papers quoted below). 

Classifying endotrivial modules has
been a long-running quest, which has been reduced to calculating
$\TS$, through a series of fundamental papers: The group
$T_k(S)$ was determined in celebrated works of Dade
\cite{dade78i,dade78ii}, Alperin \cite{alperin01}, and
Carlson--Th\'evenaz \cite{CT04,CT05}. From this, Carlson--Mazza--Nakano--Th\'evenaz
\cite{CMN06,MT07,CMT13} worked out the image of the restriction
$T_k(G)\mkern-1mu \to T_k(S)$, at least as
an abstract abelian group, and showed that the restriction is split onto its
image, except in well-understood cases with cyclic Sylow
$p$--subgroup (see \eqref{Lsequence} later in the introduction).
Subsequently there has been an intense
interest in calculating $\TS$,
with contributors Balmer, Carlson, Lassueur, Malle, Mazza,
Nakano, Navarro, Robinson, Th\'evenaz, and others.

\smallskip

In this paper we give an elementary and computable homological description
of the group $\TS$, 
as the first cohomology group of the orbit
category on non-trivial $p$--subgroups of $G$, with constant coefficients in
$k^\times$, for any finite group $G$ and any field $k$ (not assumed algebraically closed) of characteristic
$p$ dividing the order of $G$. 
Using homological methods, adapted from mod $p$ homology decompositions
(but now with
$k^\times$--coefficients, hence ``prime to $p$''), we deduce a range of structural and computational results on
$\TS$, with answers expressed in terms of standard $p$--local group theory: We write $\TS$ as an inverse limit of homomorphisms from normalizers of
chains of $p$--subgroups  to $k^\times$, answering the main
conjecture of
Carlson--Th\'evenaz \cite[Ques.~5.5]{CT15} in the positive (see also
\cite[Ques.~1]{carlson17}).
A related ``centralizer decomposition'' expresses it in
terms of the $p$--fusion system of $G$ and centralizers of elementary
abelian $p$--subgroups  
(see \S\S\ref{homologydecomp-subsection}--\ref{ctconj-sec}). 
We also get a formula for $\TS$ in terms of $\pi_0$ and $\pi_1$ of the
$p$--subgroup complex $|\calS_p(G)|$ of $G$. It implies that $\TS \cong \Hom(G,k^\times)$ when the $p$--subgroup complex
of $G$ is simply connected. The formula can be seen as a topological correction to the
old hope (too na\"ive, see \cite[p.~106]{carlson12}), that exotic
modules could only occur in the presence of a strongly
$p$--embedded subgroup, meaning $|\calS_p(G)|$ disconnected (see
\S\ref{subgroupcpx-subsection}).
We get bounds on $\TS$ in terms of the fundamental group of the $p$--fusion
system of $G$, and see the contribution of specific $p$--subgroups in $G$
 (see \S\ref{fusion-subsec}). 
Lastly we provide consequences of these results for specific
classes of groups, e.g., finite groups of Lie type and sporadic
groups, obtaining new computations as well as recovering and simplifying
many old ones in the vast literature. As an example we try out
one of our formulas on the Monster sporadic simple group, and easily
calculate $\TS$ for $p$ any of the harder primes $3,5,7,11,$ and $13$, which
had been left open in the literature \cite{LM15sporadic} (see
\S\ref{computations-subsection}).

Our proof of the identification of $\TS$ is direct and self-contained, and provides  ``geometric'' models for the module
generator in $\TS$ corresponding to a $1$--cocycle: it is the class in the stable module
category represented by the unreduced Steinberg complex
of $G$ twisted by the $1$-cocycle. It has the further conceptual
interpretation as the homotopy left Kan extension of the
$1$--cocycle from the orbit category on non-trivial $p$--subgroups, to
all $p$--subgroups. 
Our identification was originally inspired by a
characterization due to Balmer of $\TS$ in terms of what he dubs  ``weak
homomorphisms'' \cite{balmer13}, and we also indicate another
argument of how to deduce the identification using these.

\smallskip

Let us now describe our work in detail. Call a $kG$--module {\em Sylow-trivial}
if it upon restriction to $kS$ splits as the
trivial module $k$ plus a projective $kS$--module. $\TS$
identifies with the group of equivalence classes of
Sylow-trivial modules, identifying two if they become isomorphic after discarding projective
$kG$--summands. Each equivalence class
contains a unique indecomposable representative, up to isomorphism
(see Proposition~\ref{onedim}).
Let $\Oep(G)$ denote the orbit category of $G$ with objects $G/P$, for $P$
a non-trivial $p$--subgroup, and morphisms $G$--maps.
The following is our main identification of $\TS$.

\begin{Th}
\label{main} Fix a finite
  group $G$ and $k$ a field of characteristic $p$ dividing the order
  of $G$.
The group $\TS$ 
is described via the following
  isomorphism of abelian groups
$$\Phi\co \TS \xrightcong H^1(\Oep(G);k^\times)
$$
The map $\Phi$ sends  $[M] \in \TS$ to the functor $\phi\co \Oep(G) \to
k^\times$ 
defined as follows: First consider
$${\textstyle k_\phi\co G/P \mapsto \hat H^0(P;M) =
M^P/(\sum_{g \in P} g)M,}$$
given by zeroth Tate cohomology, a functor from $\Oep(G)$ to the
connected groupoid of one-dimensional
$k$--modules and isomorphisms.
Then identify the target with the group $k^\times$, regarded as a
category with one object, via an equivalence of categories, to obtain a functor $\phi\co \Oep(G) \to
k^\times$, well-defined up to natural isomorphism of functors. 

The inverse $\Phi^{-1}(\phi)$ is the
class in $\TS$ of the unaugmented ``twisted Steinberg
complex''
$$C_*(|\calS_p(G)|;k_\phi) \in D^b(kG)/\Dperf(kG) \xleftsimeq \stmod_{kG} $$ 
where $k_\phi$ is the $G$--twisted coefficient system
induced by  $\phi\co \Oep(G) \to k^\times$, i.e., assigning
$k_\phi(G/P_0)$ ($\cong k$) to the $n$--simplex $(P_0 \leq \cdots
\leq P_n)$ and endowing the chain complex with the canonical
$G$--action (see \S\ref{coeff-subsub}).
\end{Th}

In fact we establish in Theorem~\ref{sylowss} a more general correspondence between $kG$--modules
that split as a sum of trivial and projective modules upon
restriction to $S$, and representations of the fundamental group
$\pi_1(\Oep(G))$, refining parts of Green correspondence.

The one-dimensionality of $\hat H^0(P;M)$ is by definition of
Sylow-trivial, as $P$ is subconjugate to $S$.
Also, we used the equivalence of homotopy categories $\stmod_{kG} \xrightsimeq D^b(kG)/\Dperf(kG)$
between $\stmod_{kG}$, the full subcategory of the stable module
category $\Stmod_{kG}$ with objects finitely generated $kG$--modules, and the bounded derived category of
finitely generated $kG$--modules, modulo perfect
complexes, recalled in \S\ref{rickard-subsec}. We recall
$G$--twisted coefficient systems on the $p$--subgroup complex
$|\calS_p(G)|$, the nerve of the poset of non-trivial $p$--subgroups
of $G$, in \S\ref{coeff-subsub}. 

The chain complex
$C_*(|\calS_p(G)|;k_\phi)$ can be interpreted as the value on $G/e$ of the homotopy
left Kan extension of
$\phi$ along $\Oep(G)^{\op} \to \Op(G)^{\op}$, i.e., to the opposite orbit
category on {\em all} $p$--subgroups, which has model the homotopy
colimit in chain complexes $\hocolim_{P \in
  \calS_p(G)^{\op}} k_\phi$ (see
Proposition~\ref{kanext-prop}). When $\phi=1$, the complex is the Steinberg complex without the $k$--augmentation in degree $-1$, and the fact that
$\Phi^{-1}(k) \cong k$ is equivalent to projectivity of the augmented Steinberg
complex, proved by Quillen \cite[4.5]{quillen78} and Webb \cite{webb91}. 

\smallskip

Let us briefly describe how to obtain the whole group of endotrivial
modules $T_k(G)$ from
that of $T_k(G,S)$ together with results in the
literature. Using the definitions (see \S\ref{subsec:sylowtriv}) have an
exact sequence
\begin{equation}\label{Lsequence}
0 \to \TS \to T_k(G) \xrightarrow{\res_S} L \to 0
\end{equation}
with $L = \im(T_k(G) \to
T_k(S))$.
The
torsion-free rank of $L$ has been determined in
\cite[\S3]{CMN06}, extending the work of Alperin \cite{alperin01}. By
the classification of endotrivial modules for finite $p$--groups
\cite{CT04,CT05}, $L$ is torsion-free except when $S$ is cyclic or a
semi-dihedral or quaternionic $2$--group, and in particular the above sequence
is split outside those cases. The exceptions can be described explicitly by a
case-by-case analysis carried out in \cite{MT07,CMT13} and it turns
out that in all those cases the torsion part of $L$  equals that of $T_k(S)$. When $S$ is a semi-dihedral or
quaternionic $2$--group the restriction is furthermore split by \cite[Thms.~6.4 and 4.5]{CMT13}. When
$S$ is cyclic, $T_k(\Z/2) =0$, and for $|S|>2$, $T_k(S) \cong
\Z/2$. The $4$--fold periodic resolution of the trivial
$\F_3\sym_3$--module shows that the restriction is not always split,
but the structure of the extension above can in all cases be described
explicitly (see \cite[Thm.~3.2 and
Lem.~3.5]{MT07}).  We remark that while $L$ hence is known
as an abstract abelian group for any finite group
$G$,  explicit generators for the torsion-free part have in fact hitherto
been elusive
(see \cite{CMT14}). Combining the methods of this
paper with methods from higher algebra, we have recently, together
with Tobias Barthel and Joshua Hunt \cite{BGHprep}, also been
able to describe these, giving the precise image $L \subseteq
\lim_{G/P \in \Oep(G)}T_k(P) \subseteq T_k(S)$, and obtaining answers to conjectures in
\cite{CMT14}.

\smallskip

We now embark in putting the model for $\TS$ of Theorem~\ref{main} to use, expressing
$H^1(\Oep(G);k^\times)$ in terms of $p$--local information about the group.
We start with some
elementary observations:
By standard algebraic topology (recalled in \S\ref{pi1-subsec})
\begin{equation}\label{algtop}
 H^1(\Oep(G);k^\times)\mkern-1mu \cong \mkern-1mu  \Rep(\Oep(G),
 k^\times) \mkern-1mu \cong \mkern-1mu\Hom(\pi_1(\Oep(G)),k^\times)
 \mkern-1mu \cong \mkern-1mu
\Hom(H_1(\Oep(G)),k^\times)
\end{equation}
where $\Rep$ means isomorphism classes of
functors, viewing $k^\times$ as a category with one object.
Let
\begin{equation}\label{pembeddeddef}
G_0 = \langle N_G(Q) | 1< Q \leq S\rangle
\end{equation}
also called the $1$--generated core $\Gamma_{S,1}(G)$ by group
theorists, which, if proper
in $G$, is
the {\em smallest strongly
$p$--embedded subgroup containing $S$} (see Remark~\ref{stronglyembeddedclassification}). Recall that a Frattini
argument implies that we have surjections $N_G(S)/S \twoheadrightarrow
(G_0)_{p'}  \twoheadrightarrow G_{p'}$,
where we throughout the paper adopt the {\em convention} that
\begin{equation}G_{p'} = G/\langle\, g \in G\, |\, g \mbox { \em is of finite $p$--power order}\,\rangle.\end{equation}
(Hence, $G_{p'} = G/O^{p'}(G)$, when $G$ is
finite, and $M_{p'} = M/\Tors_{p}(M)$ when $M$ is abelian.)
 An application of Alperin's
fusion theorem \cite[\S3]{alperin67}
shows that $\Oep(G)$ and $\Oep(G_0)$ are equivalent categories and
the Frattini surjections above refine to
\begin{equation}\label{naivebounds2}
 N_G(S)/S \twoheadrightarrow \pi_1(\Oep(G)) \twoheadrightarrow
(G_0)_{p'} \twoheadrightarrow G_{p'}
\end{equation}
displaying $\pi_1(\Oep(G))$ as a finite $p'$--group (see Proposition~\ref{boundslemma}).
Via Theorem~\ref{main}, this encodes the classical bounds on $\TS$
(cf.~\cite[Prop.~2.6]{CMN06}, \cite[Lem.~2.7]{MT07}, and Proposition~\ref{injectivity}):
\begin{equation}\label{classicalbounds}
\Hom(G,k^\times) \leq  \Hom(G_0,k^\times) \leq \TS \leq
\Hom(N_G(S)/S,k^\times) 
\end{equation}
using that $k^\times$ does not contain $p$--torsion.

We describe the precise
kernel of $N_G(S)/S \twoheadrightarrow \pi_1(\Oep(G))$ in terms of $p$--local group
theory in Theorem~\ref{grouptheory-pi1}---it already directly implies a list of structural properties of
$\TS$ via Theorem~\ref{main} and \eqref{algtop} (see e.g.,
Corollary~\ref{standardprop}).

In the rest of the introduction we present our further
descriptions of
$\pi_1(\Oep(G))$ and its abelianization
$H_1(\Oep(G))$, each highlighting different structural properties.
We divide this into $5$ subsections: \S\ref{subgroupcpx-subsection}  Subgroup
categories, \S\ref{homologydecomp-subsection} Decompositions,
 \S\ref{ctconj-sec} The Carlson--Th\'evenaz conjecture, \S\ref{fusion-subsec} Fusion systems, and \S\ref{computations-subsection} Computations.

\subsection{Descriptions in terms of subgroup complexes} \label{subgroupcpx-subsection}
Let $\Tep(G)$ denote the transport category of $G$ with objects
the non-trivial $p$--subgroups of $G$, and $$\Hom_{\Tep(G)}(P,Q) = \{ g
  \in G| {}^gP \leq Q\}.$$
We have a functor $\Tep(G) \to \Oep(G)$ sending $g$ to the $G$--map
$G/P \to G/Q$ specified by $eP \mapsto g^{-1}Q$.
Since $\Oep(G)$ is a quotient of $\Tep(G)$
by morphisms in $p$--groups $Q \leq \Aut_{\Tep(G)}(Q)$,
\begin{equation} \label{pprimequotient}
\pi_1(\Tep(G))_{p'} \xrightcong
\pi_1(\Oep(G))
\end{equation}
(see Proposition~\ref{pprime}). As $k^\times$ contains no elements of
finite $p$--power order, \eqref{pprimequotient} and Theorem~\ref{main}
imply that $1$--dimensional
characters of $\pi_1(\Tep(G))$ also
parametrize Sylow-trivial modules for $G$, i.e.,
\begin{equation} \label{transportfundgrp}
\TS \cong \Hom(\pi_1(\Tep(G)),k^\times)
\end{equation}
By definition $\Tep(G)$ equals the transport category (or
Grothendieck construction) of the $G$--action on the poset of nontrivial
$p$--subgroups $\calS_p(G)$, under inclusion. Hence
\begin{equation}\label{grothendieck}
|\Tep(G)| \simeq |\calS_p(G)|_{hG}
\end{equation}
by Thomason's theorem \cite[Thm.~1.2]{thomason79}, where 
$X_{hG} = EG \times_G X$ denotes the Borel
construction (see also Lemma~\ref{thomasonlemma}). Hence we can also
conclude the following.
\begin{Co} \label{brown} For any finite group $G$ and $k$ any field of
  characteristic $p$,
$$\TS \cong H^1(|\calS_p(G)|_{hG};k^\times)$$

In particular if $H_1(|\calS_p(G)|_{hG})_{p'}
\xrightcong H_1(G)_{p'}$ then $\TS \cong \Hom(G,k^\times)$.
\end{Co}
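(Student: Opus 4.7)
The plan is to assemble the corollary from three ingredients already recorded in the introduction: Theorem~\ref{main} identifying $\TS$ with $H^1(\Oep(G);k^\times)$, the $p'$-quotient identification \eqref{pprimequotient} comparing $\pi_1(\Oep(G))$ with $\pi_1(\calT_p(G))$, and Thomason's theorem \eqref{grothendieck} identifying $|\calT_p(G)|$ with the Borel construction $|\calS_p(G)|_{hG}$. The only auxiliary input is the elementary observation that $k^\times$ has no $p$-torsion when $\operatorname{char} k = p$: if $x^{p^n}=1$ in $k$ then $(x-1)^{p^n}=0$, forcing $x=1$.

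First I would invoke Theorem~\ref{main} to write $\TS \cong H^1(\Oep(G);k^\times)$. Since $k^\times$ is abelian, $H^1$ with these coefficients depends only on the abelianization of $\pi_1$, giving $H^1(\Oep(G);k^\times) \cong \Hom(\pi_1(\Oep(G)),k^\times)$. Next, by \eqref{pprimequotient}, $\pi_1(\Oep(G)) \cong \pi_1(\calT_p(G))_{p'}$, and because $k^\times$ has no elements of $p$-power order, the projection $\pi_1(\calT_p(G)) \twoheadrightarrow \pi_1(\calT_p(G))_{p'}$ induces an isomorphism $\Hom(\pi_1(\calT_p(G))_{p'},k^\times) \cong \Hom(\pi_1(\calT_p(G)),k^\times)$, which is precisely \eqref{transportfundgrp}. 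Finally, Thomason's theorem \eqref{grothendieck} identifies $|\calT_p(G)| \simeq |\calS_p(G)|_{hG}$, so $\pi_1(\calT_p(G)) \cong \pi_1(|\calS_p(G)|_{hG})$, and reinterpreting $\Hom(\pi_1,k^\times)$ as $H^1$ on the Borel construction yields the first claim $\TS \cong H^1(|\calS_p(G)|_{hG};k^\times)$.

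For the ``In particular'' statement, I would apply the abelian-coefficient form of universal coefficients to write $H^1(|\calS_p(G)|_{hG};k^\times) \cong \Hom(H_1(|\calS_p(G)|_{hG}),k^\times)$, which once more, by the absence of $p$-torsion in $k^\times$, equals $\Hom(H_1(|\calS_p(G)|_{hG})_{p'},k^\times)$. The hypothesis $H_1(|\calS_p(G)|_{hG})_{p'} \xrightarrow{\sim} H_1(G)_{p'}$ then produces $\Hom(H_1(G)_{p'},k^\times) = \Hom(H_1(G),k^\times) = \Hom(G,k^\times)$, as desired.

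There is no genuine obstacle: this is formal bookkeeping, chaining Theorem~\ref{main} through the two identifications \eqref{pprimequotient} and \eqref{grothendieck}. The only step to write out with care is recording why passing between $\pi_1$ (resp.\ $H_1$) and its $p'$-quotient is harmless when mapping into $k^\times$ in characteristic $p$.
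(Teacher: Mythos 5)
Your proposal is correct and follows essentially the same route the paper takes: the paper's proof of Corollary~\ref{brown} simply notes that the statement was already established in Section~\ref{subgroupcpx-subsection} by chaining Theorem~\ref{main}, the $p'$-quotient comparison \eqref{pprimequotient} (i.e.\ Proposition~\ref{pprime}), and Thomason's identification \eqref{grothendieck}, exactly as you do. Your explicit observation that $k^\times$ has no $p$-torsion in characteristic $p$ is the right justification for why passing to $p'$-quotients is harmless when mapping into $k^\times$, and it is the same fact the paper relies on implicitly.
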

From this perspective, exotic
Sylow-trivial modules parametrize the failure of the
  collection of non-trivial $p$--subgroups to be
  `$H_1(-;\Z)$--ample' generalizing Dwyer's definition
  \cite[1.2]{dwyer97} for mod $p$ cohomology (see Remark~\ref{amplerem} and Theorem~\ref{homotopy-finiteness}).
It also allows us to deduce a very recent result of Balmer
\cite{balmer15}, as we explain in Remark~\ref{reltobalmerremark}.

To further describe the group $\pi_1(\Tep(G))$, recall that
\begin{equation}\label{redtoG_0}|\calS_p(G)| \cong G \times_{G_0}
|\calS_p(G_0)|
\end{equation}
 with $|\calS_p(G_0)|$ connected, as observed by
 Quillen \cite[\S5]{quillen78} (see
 Proposition~\ref{conn-comp-C}).
 Hence
 $|\calS_p(G)|_{hG} \cong EG \times_G(G \times_{G_0} |\calS_p(G_0)|) \cong
 EG \times_{G_0}  |\calS_p(G_0)| \simeq |\calS_p(G_0)|_{hG_0}$, and we have a fibration sequence
\begin{equation}\label{fibseq}
|\calS_p(G_0)| \to |\calS_p(G)|_{hG} \to BG_0
\end{equation}
On fundamental groups it induces an exact sequence
\begin{equation}\label{fundseq}
1 \to \pi_1(\calS_p(G_0)) \to \pi_1(\Tep(G)) \to G_0 \to 1
\end{equation}
displaying $\pi_1(\Tep(G))$ as an extension of  $G_0$ by another group,
possibly infinite.

Using the identification \eqref{transportfundgrp}, the low-degree cohomology sequence of the 
group extension \eqref{fundseq}  (see \cite[{VI.8}]{HS71}) furthermore
induces an
exact sequence as follows.

\begin{Th}[Subgroup complex sequence]\label{sequence} Let $G$ be a finite
  group, $k$ a field of
  characteristic $p$, and $G_0 \leq G$ as in \eqref{pembeddeddef}.
We have an exact sequence
$$ 0 \to \Hom(G_0,
k^\times) \to \TS \to
H^1(\calS_p(G_0);k^\times)^{G_0} \xrightarrow{\partial} H^2(G_0;k^\times)$$
where superscript $G_0$ means invariants.
If $(H_{1}(\calS_p(G))_{G})_{p'}=0$ then $\TS \cong
\Hom(G_0,k^\times)$ and if $|\calS_p(G)|$ is simply connected,  then
$\TS \cong \Hom(G,k^\times)$.
\end{Th}
In words, $\TS$ is an extension of
$\Hom(G_0,k^\times)$ (producing $kG$--modules via induction
and discarding projective summands, cf.\ Lemma~\ref{stronglyembeddedinduction-lemma}) 
by a ``truly exotic'' part, not induced from $1$--dimensional $kG_0$--modules,
described above as the kernel of the
boundary map $\partial\co H^1(\calS_p(G_0);k^\times)^{G_0} \to H^2(
G_0;k^\times)$ (see also Remark~\ref{boundarymap}).
The group $H^2(G_0;k^\times)$ identifies with the $p'$--part of the
Schur multiplier of $G_0$ if $k$ is algebraically closed.

There is already an extensive literature on when $|\calS_p(G)|$ is simply
connected, see e.g., \cite[\S9]{smith11}. It is known to hold for
``sufficiently large'' symmetric
groups and finite groups of Lie type at the characteristic, as well as
some finite group of Lie type away from the characteristic and 
certain sporadic groups. It is conjectured to hold for many more (see
also \S\ref{computations-subsection}).

The description of
$\TS$ as $1$-dimensional representations of $\pi_1(\Tep(G))$ in
\eqref{transportfundgrp}
combined with manipulations with subgroup complexes also enables us to
see precisely how $\TS$ behaves under for instance passage to $p'$--index subgroups or
$p'$--central extensions (see Corollaries \ref{pprimesubgroups}
and \ref{pprimecentral}). 
Furthermore the groups $\pi_1(\Oep(G))$ and $\pi_1(\Tep(G))$ only
depend on very few of the $p$--subgroups of $G$, as we analyze in detail in
Appendix~\ref{propagating-sec}---see in particular
Theorems~\ref{remove-non-essential} and \ref{remove-big-subgroups}.

\subsection{Homology decomposition descriptions} \label{homologydecomp-subsection}
We now use homology decomposition techniques to get formulas for $\TS$. These techniques
have a long history for providing results about mod $p$ group
cohomology (see e.g., \cite{dwyer97, grodal02, GS06} and their
references), but here we are interested in coefficients in $k^\times$,
an abelian group with no $p$--torsion. We can however still 
describe the
low-degree $p'$--homology of $|\calS_p(G)|_{hG}$, by examining the
bottom corner of spectral sequences, even if they do not collapse.
More precisely, given a arbitrary
{\em collection} $\calC$ of subgroups (i.e., a set of subgroups closed under
conjugation), there are $3$ homology decompositions
one usually considers associated to the $G$--action on $|\calC|$: the
subgroup decomposition, the normalizer
decomposition, and the centralizer decomposition. The subgroup
decomposition does not provide new information, if $\calC$ is a
collection of $p$--subgroups, but the two others do.
Let us start with the normalizer decomposition.
\begin{Th}[Normalizer decomposition]\label{limitformula}

 Let $G$ be a finite group, $k$ a
 field of characteristic $p$, and
  $\calC \subseteq \calS_p(G)$ a subcollection such that the inclusion
  is
  a  $G$--homotopy equivalence, e.g., $\calC$ the collection of non-trivial
  $p$--radical subgroups or of non-trivial elementary
  abelian $p$--subgroups (see \S\ref{propagating-subcat}).
Then
$$\TS \cong \lim_{[P_0 < \cdots < P_n]}\Hom(N_G(P_0 < \cdots
<P_n),k^\times) $$
inside $\Hom(N_G(S)/S,k^\times)$, with the limit taken over conjugacy classes of chains in $\calC$
ordered by refinement. Explicitly:
\begin{equation*}\begin{split} \TS & \cong \ker\left( \oplus_{[P]} \Hom(N_G(P),k^\times) \to
\oplus_{[P<Q]} \Hom(N_G(P)\cap N_G(Q),k^\times)\right) \\ &\leq
\Hom(N_G(S),k^\times)
\end{split}
\end{equation*}
\end{Th}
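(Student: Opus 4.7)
The strategy is to combine Theorem~\ref{main} with the classical normalizer homology decomposition of the Borel construction. By Corollary~\ref{brown} we have $\TS \cong H^1(|\calS_p(G)|_{hG}; k^\times)$, and since the inclusion $\calC \subseteq \calS_p(G)$ is a $G$-homotopy equivalence, $|\calC|_{hG} \simeq |\calS_p(G)|_{hG}$, so equivalently $\TS \cong H^1(|\calC|_{hG}; k^\times)$. I would then invoke the normalizer decomposition (Slominska, Dwyer, in the form of~\cite{grodal02}): there is a natural homotopy equivalence
\[
|\calC|_{hG} \;\simeq\; \hocolim_{\sigma \in \overline{\sd}(\calC)^{\op}} BN_G(\sigma),
\]
where $\overline{\sd}(\calC)$ is the poset of $G$-conjugacy classes of chains $\sigma = [P_0 < \cdots < P_n]$ in $\calC$ ordered by refinement and $N_G(\sigma) = \bigcap_i N_G(P_i)$.

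Feeding this into the Bousfield--Kan spectral sequence in cohomology with constant coefficients in $k^\times$ produces
\[
E_2^{s,t} = \lim\nolimits^s_{\overline{\sd}(\calC)} H^t(N_G(\sigma); k^\times) \;\Longrightarrow\; H^{s+t}(|\calC|_{hG}; k^\times),
\]
whose $E_2^{0,1}$ entry is precisely $\lim_{\overline{\sd}(\calC)} H^1(N_G(\sigma); k^\times)$, the limit appearing in the statement. The explicit kernel description then follows formally, since the limit of a functor on a finite poset equals the equalizer $\ker\bigl(\prod_x F(x) \rightrightarrows \prod_{x<y} F(y)\bigr)$; in our context the two arrows are the restriction maps $H^1(N_G(P); k^\times), H^1(N_G(Q); k^\times) \to H^1(N_G(P)\cap N_G(Q); k^\times)$ induced by the two coarsenings $[P<Q] \to [P]$ and $[P<Q] \to [Q]$.

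The main obstacle is to verify that the edge map $H^1(|\calC|_{hG}; k^\times) \twoheadrightarrow E_\infty^{0,1}$ is an isomorphism onto $E_2^{0,1}$; equivalently, that the potential contributions $E_\infty^{1,0}$ and $\ker(d_2\colon E_2^{0,1} \to E_2^{2,0})$ cause no damage. My plan is to reinterpret $H^1(|\calC|_{hG}; k^\times) = \Hom(\pi_1(|\calC|_{hG}), k^\times)$ and apply van Kampen to the homotopy colimit: a compatible family of homomorphisms $\phi_\sigma\colon N_G(\sigma) \to k^\times$ assembles to a homomorphism on $\pi_1(|\calC|_{hG})$, and any extra relations arising from loops in $|\overline{\sd}(\calC)|$ lift to elements of $p$-power order in $\pi_1(|\calC|_{hG})$ (these generate precisely the kernel of the map to $\pi_1(\Oep(G))$ by \eqref{pprimequotient}), hence are killed by the $p'$-torsion group $k^\times$. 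Finally, the embedding $\TS \hookrightarrow H^1(N_G(S)/S; k^\times)$ is a restatement of the classical bound~\eqref{naivebounds2}, and it also follows directly from the limit description by projection onto the coordinate of the length-$0$ chain $[S]$, using $H^1(N_G(S); k^\times) = H^1(N_G(S)/S; k^\times)$ since $S$ is a $p$-group.
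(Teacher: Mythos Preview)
Your setup via the Bousfield--Kan spectral sequence for the normalizer decomposition is essentially the same as the paper's (which uses the equivalent isotropy spectral sequence; see Remark~\ref{isotropyrem}). The issue is the step you flag as ``the main obstacle'': showing that $E_\infty^{0,1}=E_2^{0,1}$. In the low-degree exact sequence
\[
0 \to H^1(|\calC|/G;k^\times) \to H^1(|\calC|_{hG};k^\times) \to \lim_{\sigma} H^1(N_G(\sigma);k^\times) \to H^2(|\calC|/G;k^\times),
\]
one must know that $H^1(|\calC|/G;k^\times)=H^2(|\calC|/G;k^\times)=0$. The paper obtains this from Symonds' theorem (Webb's conjecture), Proposition~\ref{webbsconj}, which says $|\calC|/G$ is \emph{contractible}; the introduction explicitly calls this ``the only extra input over Theorem~\ref{main} and the isotropy spectral sequence''. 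You have not invoked it, and your van Kampen workaround does not fill the gap.

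Concretely, your claim that ``loops in $|\overline{\sd}(\calC)|$ lift to elements of $p$--power order in $\pi_1(|\calC|_{hG})$'' is not what \eqref{pprimequotient} says: that equation records that the quotient functor $\calT_p(G)\to\Oep(G)$ kills only $p$--power morphisms at each hom-set, which is a statement about the fibers of $|\calT_p(G)|\to|\Oep(G)|$, not about lifts of loops from the orbit space $|\calC|/G$. There is no a priori reason a loop in $|\calC|/G$ should have a lift of $p$--power order, and without Symonds' theorem nothing forces $H_1(|\calC|/G)_{p'}$ or $H_2(|\calC|/G)_{p'}$ to vanish. The fix is simply to cite Proposition~\ref{webbsconj}; once $|\calC|/G$ is contractible, the $E_2^{s,0}$--column collapses for $s>0$ and your edge map is an isomorphism, exactly as in the paper's proof via Proposition~\ref{homformula}.
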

Different possibilities for $\calC$
are given in
Theorem~\ref{propagating} (see also
\cite[Thm.~1.1]{GS06}).
Appendix~\ref{propagating-sec}, along with
Proposition~\ref{homformula} and Theorem~\ref{homformula-nr},
provide a precise analysis of which subgroups are needed to make
the conclusion of Theorem~\ref{limitformula} hold. 
We note that the data going into calculating the righthand
side,  normalizers of chains of say $p$--radical subgroups, or elementary
abelian $p$--subgroups, has been tabulated for a large number of
groups, and this relates to a host of problems in local group theory and
representation theory, such as
the classification of finite simple groups and conjectures of Alperin,
McKay, Dade, etc. To obtain Theorem~\ref{limitformula} from Theorem~\ref{main}, the only extra input,
apart from the isotropy spectral sequence,
is a result of Symonds \cite{symonds98}, formerly known as Webb's
conjecture, which we provide a short proof of in
Proposition~\ref{webbsconj}, that appears to be new.

\smallskip

We now explain the centralizer decomposition, which
ties into fusion systems.
Let $\calF_\calC(G)$ denote the restricted $p$--fusion system of $G$ with objects
$P \in \calC$ and 
$\Hom_{\calF_{\calC}(G)}(P,Q) =\Hom_{\Tep(G)}(P,Q)/C_G(P)$
i.e., monomorphisms induced by $G$--conjugation, writing
$\calF_p^*(G)$ when $\calC = \calS_p(G)$.

\begin{Th}[Centralizer decomposition] \label{centralizerthm} For $G$ a
  finite group and $k$ a field of characteristic $p$, we have an exact sequence
$$ 
0 \to  H^1(\calF^*_p(G);k^\times) \to \TS \to  \lim_{V \in
  \calF_{\!\!\calA_p^2}(G)}\Hom(C_G(V),k^\times) \to H^2(\calF^*_p(G);k^\times)$$
where $\calA^2_p$ denotes the collection of elementary abelian
$p$--subgroups of rank one or two.

 In particular, if $H_1(C_G(x))_{p'}=0$ for all elements
  $x$ of order $p$, and $H_1(N_G(S)/S)$ is generated by
  elements in $N_G(S)$ that commute with some non-trivial element in $S$, then
  $\TS =0$.
\end{Th}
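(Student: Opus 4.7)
The plan is to combine Corollary~\ref{brown}, which identifies $\TS$ with $H^1(|\calS_p(G)|_{hG};k^\times)$, with a centralizer-type homotopy decomposition of $|\calA_p(G)|_{hG}$, and to read off the asserted exact sequence as the five-term sequence of the resulting Bousfield--Kan spectral sequence. Quillen's $G$-equivalence $|\calS_p(G)|\simeq_G|\calA_p(G)|$ reduces me to $H^1(|\calA_p(G)|_{hG};k^\times)$, and by Thomason's theorem~\eqref{grothendieck} this Borel construction is the classifying space of the transport category $\calT_{\calA_p}(G)$. The projection $\calT_{\calA_p}(G)\to\calF_{\calA_p}(G)$ obtained by dividing morphisms by $C_G(V)$ is a Grothendieck cofibration with fiber $BC_G(V)$ over $V$, yielding the centralizer decomposition
$$ |\calA_p(G)|_{hG}\;\simeq\;\hocolim_{V\in\calF_{\calA_p}(G)}\,BC_G(V). $$

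Feeding this into the Bousfield--Kan spectral sequence with $k^\times$ coefficients,
$$ E_2^{i,j}\;=\;\lim\nolimits^i_{\calF_{\calA_p}(G)}\,H^j(C_G(V);k^\times)\;\Longrightarrow\; H^{i+j}(|\calA_p(G)|_{hG};k^\times), $$
the standard five-term exact sequence reads
$$0\to H^1(\calF_{\calA_p}(G);k^\times)\to \TS \to \lim_V H^1(C_G(V);k^\times)\to H^2(\calF_{\calA_p}(G);k^\times),$$
using $E_2^{i,0}=\lim^i k^\times=H^i(\calF_{\calA_p}(G);k^\times)$ for the trivial coefficient system. To reach the formulation in the statement I still have to identify $H^i(\calF_{\calA_p}(G);k^\times)\cong H^i(\calF_p^*(G);k^\times)$ for $i=1,2$; I expect this to follow from a Quillen-type propagation comparison at the level of classifying spaces of fusion categories, analogous to $|\calA_p|\simeq_G|\calS_p|$ and in the spirit of Section~\ref{propagating-sec}. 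This passage between the two fusion categories is the step I anticipate requiring the most care.

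For the ``in particular'' part, hypothesis (a) yields $H^1(C_G(\langle x\rangle);k^\times)=\Hom(H_1(C_G(x))_{p'},k^\times)=0$ for every cyclic $V=\langle x\rangle$. A compatible family $(\phi_V)\in\lim_V H^1(C_G(V);k^\times)$ must satisfy $\phi_V=\phi_{\langle x\rangle}|_{C_G(V)}$ along any inclusion $\langle x\rangle\hookrightarrow V$, forcing $\phi_V=0$; hence $\lim_V H^1(C_G(V);k^\times)=0$ and the sequence collapses to $\TS\cong H^1(\calF_p^*(G);k^\times)$. Via the classical embedding $\TS\hookrightarrow\Hom(N_G(S)/S,k^\times)$ from~\eqref{naivebounds2}, for $\psi\in\TS$ and $g\in N_G(S)\cap C_G(y)$ with $1\ne y\in S$, the loop $\bar g$ at $G/S$ in $\pi_1(\Oep(G))$ is conjugate to a loop at $G/\langle y\rangle$ given by the same $g\in C_G(y)=N_G(\langle y\rangle)$, so $\psi(\bar g)$ is pulled back from $H^1(C_G(y);k^\times)=0$; hypothesis (b) then says such $\bar g$ generate $H_1(N_G(S)/S)$, whence $\psi=0$ and $\TS=0$.
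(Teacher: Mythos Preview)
Your proposal is correct and takes essentially the same route as the paper. The paper phrases the spectral sequence as the cohomological isotropy spectral sequence for the $G$--space $|E\A_\calC|$ (Propositions~\ref{isotropy-ss} and \ref{centralizer-hom}), while you use the equivalent Bousfield--Kan spectral sequence for the homotopy colimit; the paper itself notes this equivalence in Remark~\ref{isotropyrem}. The step you flag as needing care, $|\calF_{\calA_p}(G)|\xrightarrow{\sim}|\calF_p^*(G)|$, is precisely Proposition~\ref{propagating}\eqref{GS-orbicent}, so your expectation is borne out.

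Two small points. First, you write $C_G(y)=N_G(\langle y\rangle)$, which is false in general; you only need (and only use) $C_G(y)\leq N_G(\langle y\rangle)$. Second, your argument for the ``in particular'' clause is slightly more direct than the paper's: rather than first concluding $\TS\cong H^1(\calF_p^*(G);k^\times)$ and then invoking Corollary~\ref{vanishinglemma} to kill that group, you argue directly in $\pi_1(\Oep(G))$ that each generator $g\in N_G(S)\cap C_G(y)$ is detected on the automorphism group of $G/\langle y\rangle$, where $\psi$ vanishes since $H^1(C_G(y);k^\times)=0$. This bypasses the separate analysis of $\pi_1(\calF_p^*)$, though it is morally the same argument transported from $\calF$ to $\bO$.
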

This breaks $\TS$ up into parts depending on the underlying fusion system
and parts calculated from centralizers. It may be illuminating to note
that it specializes to the sequence
in cohomology with $k^\times$ coefficients induced by $1 \to C_G(V) \to N_G(V) \to N_G(V)/C_G(V)
\to 1$ in the very special case where $G$ has $p$--rank $1$, and hence
 a unique non-trivial elementary abelian $p$--subgroup
 (see Corollary~\ref{standardprop}\eqref{cyclic}).
In particular $H^1(\calF_p^*(G);k^\times)$ is a subgroup of $\TS$ 
depending {\em only} on the fusion system. We describe how to
calculate cohomology of $\calF_p^*(G)$
in Section~\ref{fund-fusion-subsec} and Appendix~\ref{propagating-sec}; the first
cohomology group is in
fact zero in many, but not all, cases. The assumptions of the `in
particular' are often satisfied for the sporadic groups, e.g., for the
Monster at the more difficult primes up to $13$.

\subsection{The Carlson--Th\'evenaz conjecture}\label{ctconj-sec}
Theorem~\ref{limitformula} implies the {Carlson--}\linebreak Th\'evenaz
conjecture, which predicts an algorithm for calculating $\TS$ from
$p$--local information, essentially by a change of language, taking $\calC
= \calS_p(G)$. Set $A^{p'}(G) = O^{p'}(G)[G,G]$, the smallest
normal subgroup of $G$ such
that the quotient is an abelian $p'$--group.

\begin{Th}[{The Carlson--Th\'evenaz conjecture
    \cite[Ques.~5.5]{CT15}}] \label{carlsonthevenazconj} Let $G$ be a
  finite group with non-trivial Sylow $p$--subgroup $S$, and define
  $\rho^r(S) \leq N_G(S)$ (depending on $G$ and $S$) via the following definition (cf.~\cite[Prop.~5.7]{CMN14}
\cite[\S4]{CT15}):
$\rho^1(Q) = A^{p'}(N_G(Q))$, 
$\rho^i(Q) = \langle N_G(Q) \cap \rho^{i-1}(R) | 1 < R
\leq S\rangle \supseteq \rho^{i-1}(Q)$.
Then 
$$H_1(\Oep(G)) \cong N_G(S)/\rho^r(S)$$ 
for any $r$ at least either
the nilpotency class of $S$ plus $1$,
or the number of groups in the longest proper
chain of
non-trivial $p$--radical subgroups.
Hence by Theorem~\ref{main}, for any field $k$ of
characteristic $p$,
$$\TS \cong \Hom(N_G(S)/\rho^r(S),k^\times)$$
\end{Th}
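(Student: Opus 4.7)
The plan is to apply Theorem~\ref{limitformula} with $\calC=\calS_p(G)$ and identify the resulting limit description with $\Hom(N_G(S)/\rho^r(S),k^\times)$ via the restriction map $(\psi_P)_P\mapsto \psi_S$. A ``compatible system'' in the sense of Theorem~\ref{limitformula} is a family of homomorphisms $\psi_P\co N_G(P)\to k^\times$, indexed by non-trivial $p$-subgroups $P\leq S$, satisfying $\psi_P|_{N_G(P)\cap N_G(Q)}=\psi_Q|_{N_G(P)\cap N_G(Q)}$ whenever $P\leq Q$. The identification $H_1(\Oep(G))\cong N_G(S)/\rho^r(S)$ in the theorem then follows from Theorem~\ref{main} by running the same argument with $k^\times$ replaced by an arbitrary abelian $p'$-group $A$ (using that $H_1(\Oep(G))$ is finite $p'$-abelian by \eqref{naivebounds2}).

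\emph{Step 1: Restriction factors through $N_G(S)/\rho^r(S)$.} I show by induction on $i\geq 1$ that $\psi_P(\rho^i(P))=0$ for every compatible system and every non-trivial $p$-subgroup $P\leq S$. The base case $i=1$ is immediate since $k^\times$ is abelian $p'$, so $\psi_P$ factors through $N_G(P)/A^{p'}(N_G(P))=N_G(P)/\rho^1(P)$. For the inductive step, consider a generator $g\in N_G(P)\cap \rho^{i-1}(R)$ with $R\leq S$; the key observation is that $T:=\langle P,R\rangle\leq S$ is again a non-trivial $p$-subgroup and $g\in N_G(T)$, since $g$ normalizes both $P$ and $R$ and hence their join. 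Compatibility along the chains $P\leq T$ and $R\leq T$ in $\calS_p(G)$ then gives
\[
\psi_P(g)=\psi_T(g)=\psi_R(g)=0
\]
by the inductive hypothesis. Specializing to $P=S$ and $i=r$ shows that $\psi_S$ factors through $N_G(S)/\rho^r(S)$, producing a well-defined surjection $N_G(S)/\rho^r(S)\twoheadrightarrow H_1(\Oep(G))$.

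\emph{Step 2 (the main obstacle): Restriction is surjective.} One must show that every $\phi\co N_G(S)\to k^\times$ vanishing on $\rho^r(S)$ arises as $\psi_S$ of a compatible system, or equivalently that the surjection of Step 1 is injective for $r\geq 1+\dim\calB_p(G)$. The plan is to align the iterative definition of $\rho^i$ with a filtration by chain-length of the cokernel description of $H_1(\Oep(G))$ coming from the normalizer decomposition: the relations imposed by the $i$-th step of the $\rho$-iteration correspond exactly to chains of length $i$ in $\calB_p(G)$, so stabilization at $r=1+\dim\calB_p(G)$ reflects the maximal chain length. Concretely, for each $P\leq S$ one constructs $\psi_P$ by generating $(N_G(P))^{ab}_{p'}$ from the subgroups $N_G(P)\cap N_G(Q)$ for $Q\supseteq P$ and assigning the values forced by compatibility with the already-constructed $\psi_Q$'s (built recursively in decreasing order of subgroup size, starting from $\psi_S=\phi$), using $\phi(\rho^r(S))=0$ to verify well-definedness at each stage.

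The hard part is Step 2: the bookkeeping needed to match the iterative $\rho^i$ construction with the iterative collapse of relations in the colimit. Step 1, by contrast, works cleanly thanks to the simple observation that the join $\langle P,R\rangle$ of two subgroups of $S$ remains a non-trivial $p$-subgroup in $\calS_p(G)$, which is what allows one to chain pairwise compatibilities together.
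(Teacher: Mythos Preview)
Your Step~1 is correct and is in fact a cleaner rendering of the paper's ``one direction'': the join trick $T=\langle P,R\rangle\leq S$ makes explicit the zig-zag $P\leq T\geq R$ that the paper's commutative diagram is implicitly using to identify the images of $x$ in $H_1(N_G(P))_{p'}$ and $H_1(N_G(R))_{p'}$ inside the colimit.

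Step~2, however, is only a sketch, and the ``concrete'' recursive construction you outline does not obviously work. The difficulty is twofold. First, for $P<S$ the subgroups $N_G(P)\cap N_G(Q)$ with $P<Q\leq S$ need not generate $N_G(P)$ modulo $A^{p'}(N_G(P))$, so you must make arbitrary choices on a complement; but those choices then feed into the construction of $\psi_{P'}$ for $P'<P$, and it is not clear they can be made consistently. Second, and more seriously, the specific bound $r\geq 1+\dim\calB_p(G)$ never enters your construction: nothing in the recursive descent sees chains in $\calB_p(G)$, so even if well-definedness could be established you would at best get $\rho^\infty(S)$ rather than the claimed stabilization. Your one-sentence ``plan'' about aligning $\rho^i$ with chain-length is the right intuition, but it is a different argument from the recursive one you then spell out.

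The paper carries out Step~2 homologically rather than by constructing compatible systems. Working in the colimit description $H_1(\Oep(G))\cong\coker\bigl(\bigoplus_{[P<Q]}H_1(N_G(P<Q))_{p'}\xrightarrow{d_0-d_1}\bigoplus_{[P]}H_1(N_G(P))_{p'}\bigr)$ over $\calB_p(G)$, an element $x\in H_1(N_G(S))_{p'}$ mapping to zero is a boundary $x=dy$. The paper then filters $y$ by the \emph{height} of the smaller subgroup $P$ in each pair $[P<Q]$ (height $=$ maximal length of a chain of $p$-radicals ending at $P$) and shows inductively that $d_1(\sum_{i\leq r}y_i)$ is connected to zero by a zig-zag of length $r$. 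Taking $r=\dim\calB_p(G)-1$ gives a zig-zag of length $\dim\calB_p(G)$ from $x=d_0 y$ to $0$, which unwinds to $x\in\rho^{1+\dim\calB_p(G)}(S)$. This height-filtration argument is what is missing from your proposal and is where the bound on $r$ actually comes from.
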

This in fact strengthens the Carlson--Th\'evenaz conjecture,
by providing a rather manageable bound on $r$ (see also Theorem~\ref{carlsonthevenaz-general2}). (The original
conjecture was only a prediction about the union $\rho^\infty(S)$, and
also had an algebraically closed assumption on $k$.) 
Theorem~\ref{carlsonthevenazconj} is well adapted to implementation
on a computer, and indeed Carlson has already made one such
implementation calculating $\rho^i(S)$, and to use this for proofs a
theoretical bound on when the $\rho^i(S)$ stabilize is obviously also necessary.

As already noted, the inverse limit in Theorem~\ref{limitformula}
identifies with a subset of $\Hom(N_G(S),k^\times)$. 
One may na\"ively ask if the limit could simply be described as
the elements in $\Hom(N_G(S),k^\times)$ whose restriction to
$\Hom(N_G(P)\cap N_G(S),k^\times)$ is zero on $A^{p'}(N_G(P)) \cap N_G(S)$ for
all $[P] \in \calC/G$, an obvious necessary condition to lie in the
limit. In other words, one may ask if one, in the language of
Theorem~\ref{carlsonthevenazconj}, could always take $r=2$. Computer calculations announced in \cite{CT15},  say this is
not the case for $G_2(5)$ when $p=3$.
The main theorem of that paper \cite[Thm.~5.1]{CT15} shows
that this  na\"ive guess {\em is} true when $S$ is abelian, as also follows
from Theorem~\ref{limitformula}. Our proof allows for a stronger statement:
\begin{Co}\label{radicalsnormal}
If all non-trivial $p$--radical subgroups in $G$ with $P < S$ are
normal in $S$,
then 
$$\TS \cong \ker \left(\Hom(N_G(S),k^\times) \to \oplus_{[P]}\Hom(N_G(S)\cap A^{p'}(N_G(P)),k^\times)\right)$$
where $[P]$ runs through $G$--conjugacy classes of non-trivial
$p$--radical subgroups with
$P < S$.
In particular in the notation of Theorem~\ref{carlsonthevenazconj}
$$\TS \cong \Hom(N_G(S)/\rho^2(S),k^\times).$$
More generally if we for each $[P]$ can pick $P \leq S$ with $Q = N_S(P)$ Sylow in $N_G(P)$
and  $N_G(P \leq Q \leq  S)A^{p'}(N_G(P \leq Q)) = N_G(P \leq Q)$, then
the same formulas hold, choosing such $P$.
\end{Co}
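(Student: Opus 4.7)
The plan is to extract the corollary from the normalizer decomposition of Theorem~\ref{limitformula} with $\calC = \cB_p(G)$, showing that under the hypothesis the limit collapses to the stated form. Since $\ch k = p$, the group $k^\times$ is $p'$-torsion, so $H^1(H;k^\times) = \Hom(H/A^{p'}(H), k^\times)$ and every such character is trivial on any Sylow $p$-subgroup of $H$; equivalently $A^{p'}(H)$ contains a Sylow. The core ingredient will be a Frattini argument: under the hypothesis $P \triangleleft S$ for every proper non-trivial $p$-radical $P$, the Sylow $S$ is a Sylow of $N_G(P)$ contained in $A^{p'}(N_G(P))$, so Frattini gives
$$N_G(P) = A^{p'}(N_G(P)) \cdot (N_G(P) \cap N_G(S)),$$
and analogously $N_G(P) \cap N_G(Q) = A^{p'}(N_G(P) \cap N_G(Q)) \cdot (N_G(P) \cap N_G(Q) \cap N_G(S))$ for any pair of proper $p$-radicals (using that $S$ is a Sylow of the intersection, contained in its $A^{p'}$). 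Under the more general hypothesis of the corollary, this Frattini-type decomposition is imposed directly on the chain $[P \leq Q]$.

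With this in hand I would unwind the kernel of Theorem~\ref{limitformula}. The compatibility along a chain $[P < S]$ forces $\chi_P$ to be the unique extension of $\chi_S|_{N_G(P) \cap N_G(S)}$ to a character of $N_G(P)$ trivial on $A^{p'}(N_G(P))$, and such an extension exists if and only if $\chi_S$ vanishes on $A^{p'}(N_G(P)) \cap N_G(S)$. The compatibility along a chain $[P < Q]$ with $Q \neq S$ is then automatic: naturality of $A^{p'}$ under inclusion ensures both $\chi_P$ and $\chi_Q$ kill $A^{p'}(N_G(P) \cap N_G(Q))$; they agree on $N_G(P) \cap N_G(Q) \cap N_G(S)$ by the previously verified $[P<S]$ and $[Q<S]$ compatibilities; and these two subgroups generate $N_G(P) \cap N_G(Q)$ by Frattini. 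The passage from $G$-orbits to $N_G(S)$-orbits on subgroups normal in $S$ is a standard Sylow exchange: if $P^g = P'$ with $P, P' \triangleleft S$, then $S$ and $gSg^{-1}$ are both Sylows of $N_G(P')$, giving $h \in N_G(P')$ with $hg \in N_G(S)$ taking $P$ to $P'$.

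Dualizing the resulting kernel description yields $\TS \cong \Hom(N_G(S)/\rho^2(S), k^\times)$ by direct comparison with the definition of $\rho^2(S)$ in Theorem~\ref{carlsonthevenazconj}, so $r = 2$ suffices. The main obstacle will be the automaticity check for chains of length $\geq 2$, which is precisely the content of why $r = 2$ works; once the Frattini decomposition is in place, the rest is bookkeeping.
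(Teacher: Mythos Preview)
Your proposal is correct and follows essentially the same route as the paper: reduce the special hypothesis to the general one via the Frattini argument (since $S$ is Sylow in $N_G(P\leq Q)$ and lies in $A^{p'}$), then show that an element of the stated kernel extends uniquely to a compatible family $(\chi_P)$ in the limit of Theorem~\ref{limitformula} with $\calC=\calB_p(G)$. Your compatibility check on chains $[P<Q]$ is the same idea as the paper's diagram chase, phrased slightly differently: both $\chi_P$ and $\chi_Q$ restricted to $N_G(P<Q)$ automatically kill $A^{p'}(N_G(P<Q))$ (any $k^\times$--valued character does), agree on $N_G(P<Q<S)$ since both come from $\chi_S$, and these two pieces generate by hypothesis.

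Two small remarks. Your phrase ``naturality of $A^{p'}$ under inclusion'' is not quite right --- $A^{p'}$ is not functorial for subgroup inclusions --- but what you actually use is just that any homomorphism to the abelian $p'$--group $k^\times$ vanishes on $A^{p'}$, which is fine. And you supply a detail the paper leaves implicit: the Sylow exchange argument converting $G$--conjugacy classes of $p$--radicals normal in $S$ into $N_G(S)$--conjugacy classes, so that the indexing set in the kernel description matches the statement of the corollary.
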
 
See also Corollaries~\ref{CTcor} and \ref{CTcor2}. The last part of Corollary~\ref{radicalsnormal} provides a strengthening of
Carlson--Th\'evenaz's more technical \cite[Thm.~7.1]{CT15}, which
instead of abelian assumes that $N_G(S)$ controls $p$--fusion along with extra
conditions (see Remark~\ref{CT7.1rem}). Corollary~\ref{radicalsnormal}
however moves beyond these cases with limited fusion, and e.g., also holds for finite groups of Lie type in
characteristic $p$. To illustrate the failure in general we
calculate $\TS$ for $G= G_2(5)$ and
$p=3$ in
Proposition~\ref{G_25p3}, using
Theorem~\ref{limitformula}. Remark~\ref{CT-bound-rem} contains a more
detailed discussion of bounds on $r$.

\subsection{Further relations to fusion systems} \label{fusion-subsec}

Recall that a
$p$--subgroup $P$ is said to be centric if $Z(P) = C_G(P)$ and $p$--centric if
$Z(P)$ is a Sylow $p$--subgroup in $C_G(P)$. We denote by a
superscript $c$ full subcategories with objects the $p$--centric subgroups.
Subgroups of $\pi_1(\calF^c)$ parametrize
sub-fusion systems of $p'$--index of a fusion system $\calF$ by
\cite[\S5.1]{bcglo2}, and calculating $\pi_1(\calF^c)$ is of
current interest within $p$--local group theory---see e.g.,
\cite[\S4]{AOV12}, \cite{ruiz07}, and \cite[Ch.~16]{aschbacher11}. The condition that $\pi_1(\calF^c)=1$ is one of the conditions for a fusion system to be
reduced \cite[Def.~2.1]{AOV12}.
We here state a rough, yet useful, relation between
Sylow-trivial modules and $\pi_1(\calF^c)$---much more precise information is given in the paper
proper.

\begin{Th} \label{fusionthm}
We have the following commutative diagram of monomorphisms
$$\xymatrix@C-7pt{ \TS \ar@{^{(}->}[r] &
  \Hom(\pi_1(\bO_p^c(G)),k^\times)  \ar@{^{(}->}[r] &
  \Hom(N_G(S)/S,k^\times) \\
\Hom(\pi_1( \calF^*_p(G)),k^\times) \ar@{^{(}->}[r]   \ar@{^{(}->}[u]  &
 \Hom(\pi_1( \calF^c_p(G)),k^\times)  \ar@{^{(}->}[r] \ar@{^{(}->}[u]&
 \Hom(N_G(S)/SC_G(S),k^\times) \ar@{^{(}->}[u]}$$

If all $p$--centric $p$--radical subgroups are 
 centric then $\pi_1(\bO_p^c(G)) \cong \pi_1(\calF^c_p(G))$ and 
$$\Hom(\pi_1(\calF_p^*(G)),k^\times) \leq \TS \leq \Hom(\pi_1(\calF_p^c(G)),k^\times)$$
\end{Th}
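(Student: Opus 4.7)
The plan is to reduce everything to a commutative diagram of group surjections and then apply $\Hom(-,k^\times)$. By Theorem~\ref{main}, $\TS \cong \Hom(\pi_1(\Oep(G)),k^\times)$, and each of the other five corners of the target diagram is visibly $\Hom(-,k^\times)$ applied to a group. Since $\Hom(-,k^\times)$ converts surjections to monomorphisms, the whole statement will follow once I establish a commutative diagram of group epimorphisms whose top row is $N_G(S)/S \twoheadrightarrow \pi_1(\bO_p^c(G)) \twoheadrightarrow \pi_1(\Oep(G))$, whose bottom row is $N_G(S)/SC_G(S) \twoheadrightarrow \pi_1(\calF_p^c(G)) \twoheadrightarrow \pi_1(\calF_p^*(G))$, and whose vertical maps are the natural projections from orbit categories to fusion systems.

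For the horizontal arrows I would use the propagation results of Section~\ref{propagating-sec}. Surjectivity of $N_G(S)/S \twoheadrightarrow \pi_1(\bO_p^c(G))$ is the analogue of \eqref{naivebounds2}: since $S$ is itself $p$--centric and every $p$--subgroup embeds in a conjugate of $S$, every loop in $\bO_p^c(G)$ can be based at $G/S$ and written in terms of $\Aut(G/S)=N_G(S)/S$ via Alperin-type fusion. The surjection $\pi_1(\bO_p^c(G)) \twoheadrightarrow \pi_1(\Oep(G))$ is an instance of the general propagation statement that enlarging a $G$--collection by subgroups sitting below ones already present induces a surjection on $\pi_1$ of the orbit category. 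The analogous assertions on the fusion side are identical in spirit, using that $\Aut_{\calF_p^c(G)}(S)/\mathrm{Inn}(S) = N_G(S)/SC_G(S)$.

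For the vertical maps I would use the natural projection functor $\pi\co \bO_\calC(G) \to \calF_\calC(G)$, which is the identity on objects and, on morphism sets, is the quotient by the left action of $C_G(P)$. This is manifestly essentially surjective and surjective on Hom-sets (after killing inner automorphisms on the target), hence a surjection on $\pi_1$. Commutativity of the two squares is automatic from the functoriality of the constructions involved.

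For the ``in particular'' statement, suppose every $p$--centric $p$--radical subgroup $P$ is centric, i.e., $C_G(P) \leq P$. Then the kernel $PC_G(P)/P$ of $\Aut_{\bO_p^c(G)}(G/P) \twoheadrightarrow \Aut_{\calF_p^c(G)}(P)$ vanishes on every $p$--centric $p$--radical $P$. Invoking again the propagation lemma — which says that $\pi_1$ of an orbit or fusion category on a collection $\calC$ is already detected by the full subcategory on the $p$--centric $p$--radical objects in $\calC$ — the comparison functor $\pi\co \bO_p^c(G) \to \calF_p^c(G)$ becomes an isomorphism on that smaller full subcategory on both source and target automorphism groups, whence $\pi_1(\bO_p^c(G)) \xrightarrow{\sim} \pi_1(\calF_p^c(G))$. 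Dualizing yields the sandwich estimate for $\TS$.

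The main obstacle is technical rather than conceptual: one must invoke the propagation machinery of Sections~\ref{fundgrp-sec}--\ref{propagating-sec} precisely enough to justify both (i) that inclusions of collections induce $\pi_1$--surjections on orbit and fusion categories, and (ii) that $\pi_1$ of each of $\bO_p^c(G)$ and $\calF_p^c(G)$ is controlled by the $p$--centric $p$--radical full subcategory, the version of Alperin's fusion theorem at the level of fundamental groups that powers the reduction. With those lemmas in hand, the proof is a diagram chase together with the elementary identification of the kernel $PC_G(P)/P$.
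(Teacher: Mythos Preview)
Your overall strategy---build a commutative diagram of epimorphisms and apply $\Hom(-,k^\times)$---is exactly the paper's approach, and the horizontal surjections and the ``in particular'' reduction to $p$--centric $p$--radicals are handled just as in the paper (via Proposition~\ref{boundslemma}, Remark~\ref{natural-rem}, and Proposition~\ref{propagating}\eqref{reduce-to-rad}).

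There is, however, a genuine gap in your treatment of the vertical maps. You assert a ``natural projection functor $\pi\co \bO_\calC(G) \to \calF_\calC(G)$'' given on morphisms by quotienting by $C_G(P)$. No such functor exists. Recall from Section~\ref{cat-subsec} that $\Hom_{\bO_\calC}(G/P,G/Q) = Q\backslash\Hom_{\calT_\calC}(P,Q)$ while $\Hom_{\calF_\calC}(P,Q) = \Hom_{\calT_\calC}(P,Q)/C_G(P)$: these are quotients by \emph{different} groups on \emph{different} sides, and neither is a quotient of the other. The common quotient is the fusion--orbit category $\bcalF_\calC(G)$, and the diagram in Section~\ref{cat-subsec} shows $\bO_\calC(G) \twoheadrightarrow \bcalF_\calC(G) \twoheadleftarrow \calF_\calC(G)$. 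Consequently your vertical surjections really land in $\pi_1(\bcalF_\calC(G))$, not $\pi_1(\calF_\calC(G))$, and your kernel computation $PC_G(P)/P$ is the kernel of $\Aut_{\bO}(G/P) \to \Aut_{\bcalF}(P)$, not $\Aut_{\calF}(P)$.

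The paper fixes this by inserting $\bcalF$ as an intermediate row and then invoking the non-trivial isomorphism $\pi_1(\calF_\calC(G)) \xrightarrow{\sim} \pi_1(\bcalF_\calC(G))$ from Proposition~\ref{fusion-boundslemma}, valid whenever $\calC$ contains all $p$--centric subgroups. That proposition requires a real argument: one must show every $\Inn(P) \leq \Aut_\calF(P)$ dies in $\pi_1(\calF_\calC)$, which uses Alperin's fusion theorem together with the observation that any $x \in S$ is $G$--conjugate to a fully centralized $x'$ with $C_S(x')$ $p$--centric, so $x$ represents the identity in $\Aut_\calF(C_S(x'))$. Once you route through $\bcalF$ and cite this, your argument becomes the paper's.
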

The underlying maps are elaborated in
\eqref{fusionfundgrp-diagram}. The condition that $p$--radical $p$--centric subgroups are centric
is satisfied for finite groups of Lie type at the characteristic,
but also holds e.g., for many sporadic groups.
In general the inclusions in the diagram of Theorem~\ref{fusionthm}
may all be strict: for $\GL_n(q)$ and $p$ not dividing $q$, the main theorem in Ruiz \cite{ruiz07} states that
$\pi_1(\calF^c_p(\GL_n(q)))\cong \Z/e$ for $e$ the multiplicative order
of $q$ mod $p$ and $n \geq ep$, whereas
$\pi_1(\Oep(\GL_n(q))) =1$ when the $p$--rank of $\GL_n(q))$ is
at least $3$ by \S\ref{subgroupcpx-subsection} combined with
\cite[Thm.~12.4]{quillen78} \cite[Thm.~A]{das95}.
Section~\ref{fundgrp-sec} and Appendix~\ref{propagating-sec} analyses $\pi_1(\bO_\calC(G))$ and $\pi_1(\calF_\calC(G))$, for $\calC$ an arbitrary
collection of $p$--subgroups.

\subsection{Computational results}\label{computations-subsection}
We have already described how the results of this paper can be used to obtain a
range of structural and computational results about $\TS$.
To further illustrate the computational potential we will in Section~\ref{computations-sec} go
through different classes of groups: symmetric, groups of Lie type, sporadic,
$p$--solvable, and others, obtaining new results, and reproving a range
of old results. We briefly summarize this:

For sporadic groups $|\calS_p(G)|$ is sometimes known to agree with a
building, where simple connectivity has been studied extensively (see \cite[\S9]{smith11}). However
it is often easier to apply Theorems~\ref{limitformula},
\ref{centralizerthm}, \ref{fusionthm}, or \ref{grouptheory-pi1} directly, in particular since
the necessary $p$--local data has already been tabulated, due
to interest arising from counting conjectures in modular
representation theory. We demonstrate this in \S\ref{sporadic-subsec} by showing that
$\TS =0$ for  $G$ the Monster and $k$ a field of
characteristic $p=3,5,7,11$, or $13$, the primes left open in the recent paper
\cite{LM15sporadic} (see Theorem~\ref{monsterthm}).
It  should be possible to fill in the remaining gaps in the existing sporadic
group computations using similar arguments, though this is
outside the scope of the present paper. (This has subsequently been carried out by
David Craven \cite{craven21}.)

For finite groups of Lie type in characteristic $p$ the $p$--subgroup
complex is just the Tits building  \cite{quillen78}, which is simply connected
when the rank is at least $3$, again recovering results of
Carlson--Mazza--Nakano \cite{CMN06}.
For finite groups of Lie type in arbitrary characteristic,
 the $p$--subgroup complex is also
believed to generically be a wedge of high dimensional spheres, which
would imply that generically there were no exotic Sylow-trivial modules by
Theorem~\ref{sequence}. It has been verified in
a number of cases  \cite{das95,das98,das00}, and this way, we recover very recent results of Carlson--Mazza--Nakano for
the general linear group for any characteristic \cite{CMN14,CMN16}, again using
Theorem~\ref{sequence}, and for symplectic groups we get the
following new result (see \S\ref{ftgrpsoflietype}).

\begin{restatable}{Th}{symplectic}\label{Spn} 
Let
  $G = \Sp_{2n}(q)$, and $k$ a field of characteristic $p$. If the multiplicative
  order of $q$ mod $p$ is odd, and $G$ has an elementary abelian
  $p$--subgroup of rank $3$, then $\TS = 0$.
\end{restatable}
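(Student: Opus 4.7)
The plan is to deduce the theorem from the second assertion of Theorem~\ref{sequence}: if $|\calS_p(G)|$ is simply connected, then $\TS \cong \Hom(G,k^\times)$. Thus the proof will proceed in two steps: first establish simple connectivity of the $p$--subgroup complex under the given hypotheses, and then observe that $\Hom(\Sp_{2n}(q),k^\times)=0$ because the symplectic group is perfect in the range of interest.

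For the simple connectivity, I would argue as follows. The hypotheses are that $p \nmid q$ (since the case $p \mid q$ is the characteristic case, already known to give simply connected $p$--subgroup complexes by Quillen \cite{quillen78}), that the multiplicative order $e$ of $q$ mod $p$ is odd, and that $G$ has $p$--rank at least $3$. The cross-characteristic $p$--subgroup structure of a classical group is governed by the associated Levi decomposition: since $e$ is odd, the centralizer of a non-trivial $p$--element of $G$ has a direct factor of the form $\GL_k(q^e) \times \Sp_{2m}(q)$ with $k \geq 1$. In particular, the odd-order hypothesis on $e$ rules out the twisted (unitary) type of Levi factor and puts us in the ``linear'' case treated by Das \cite{das95,das98,das00}, whose main results establish simple connectivity of $|\calS_p(G)|$ for classical groups in cross-characteristic once the $p$--rank is at least $3$. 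Combining with the equivalences $|\calS_p(G)| \simeq_G |\cB_p(G)| \simeq_G |\cA_p(G)|$ one reduces to Quillen's complex, where these results directly apply. This step is the main obstacle: it relies on the geometric input of Das, but the hypothesis ``$e$ odd and $p$--rank $\geq 3$'' is precisely tailored to land in its hypotheses.

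Given simple connectivity, Theorem~\ref{sequence} yields $\TS \cong \Hom(G,k^\times)$. The $p$--rank $\geq 3$ hypothesis forces $n$ large enough (specifically $n \geq 2$ and $q \geq 2$, with $(n,q) \neq (2,2)$, since $\Sp_4(2)$ has $p$--rank at most $2$ for any prime) to ensure $G = \Sp_{2n}(q)$ is perfect. Hence $\Hom(G,k^\times) = \Hom(G/[G,G],k^\times) = 0$, proving $\TS = 0$.

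As a remark, one could alternatively apply Theorem~\ref{centralizerthm}: the centralizer of an order $p$ element in $G$ contains a $\GL_k(q^e)$--factor whose derived subgroup $\SL_k(q^e)$ is perfect and contains the non-central $p$--part, so that $H_1(C_G(x))_{p'} = 0$, while the quotient $N_G(S)/S$ is generated by elements commuting with the central $p'$-parts of the torus, yielding the same conclusion. I prefer the subgroup-complex route since the simple connectivity is a known geometric statement and dispatches the problem cleanly.
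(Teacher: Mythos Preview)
Your proposal is correct and follows essentially the same route as the paper: invoke Das \cite{das98} to obtain simple connectivity of $|\calS_p(G)|$ under the stated hypotheses, then apply Theorem~\ref{sequence} to get $\TS \cong \Hom(G,k^\times)$, and conclude by perfectness of $\Sp_{2n}(q)$. The paper's proof is a terse two-liner citing exactly \cite{das98} and Theorem~\ref{sequence}; your version adds welcome detail (the explicit perfectness check and the heuristic about Levi factors), but the underlying argument is the same.
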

In joint work in progress with Carlson,
Mazza, and Nakano, we determine the group of endotrivial modules for all finite groups of
Lie type using the methods of this paper
combined with the ``$\Phi_d$-local'' approach to finite groups of Lie
type.

For symmetric and alternating groups, the $p$--subgroup complex is known to be
simply connected (except known small exceptions) \cite{ksontini03,ksontini04}, so we recover results of
Carlson--Hemmer--Mazza--Nakano \cite{CMN09,CHM10}, using
Theorem~\ref{sequence} (see \S\ref{symmetricgroups}, where we also correct a
small mistake in the literature about alternating groups).

For $p$--solvable groups above $p$--rank one, it is has been shown that there are no exotic
Sylow-trivial modules by works of Carlson--Mazza--Th\'evenaz
\cite{CMT11} and
Robinson--Navarro \cite{NR12}, using an inductive argument, that at one
inductive step indirectly relies on the classification of finite simple groups. We
link this inductive step to stronger conjectures by Quillen and Aschbacher
about the connectivity of the $p$--subgroup complex for $p$--solvable
groups (see \S\ref{psolvable}).

As explained above, a calculation of
$\pi_1(\Oep(G))$ and $H_1(\Oep(G))$  for all finite simple groups should be within
reach. To pass to arbitrary finite groups on may then hope to use the structural properties of
$\pi_1(\Oep(G))$ given in this paper 
(in particular Section~\ref{fundgrp-sec} and Appendix~\ref{propagating-sec}) to reduce the question of
the existence of exotic Sylow-trivial modules to the simple case,
using the generalized Fitting subgroup $F^*(G)$ from finite group
theory. We note in this
connection that Aschbacher \cite{aschbacher93} has, in a
certain sense, reduced the question of simple connectivity of $|\calS_p(G)|$ to simple
groups, modulo his conjecture for $p$--solvable groups alluded to above. 
Based on available data, it
may be that $\pi_1(\Oep(G)) \xrightcong (G_0)_{p'}$ when the
$p$--rank is at least three? This would imply $T_k(G,S) \cong
\Hom(G_0,k^\times)$ under that assumption. See
Remarks~\ref{CT-bound-rem}, \ref{stronglyembeddedclassification} and
Section~\ref{computations-sec} for more information.

\subsubsection*{Further vistas} In addition to the structural and computational
consequences
described so far, it is natural to wonder about further
representation theoretic significance of the finite $p'$--groups
$\pi_1(\bO_p^*)$, $\pi_1(\bO_p^c)$, $\pi_1(\calF^c_p)$, and
$\pi_1(\calF_p^*)$, and the higher
homotopy and homology groups,
yet to be found? In a similar vein one may wonder if the method for
constructing representations in Theorems~\ref{main} and \ref{sylowss}
of this paper,
when applied to
more general coefficient systems on $|\calS_p(G)|$ (see
\S\ref{coeff-subsec}), could enable one to describe a larger slice of
the stable module category, potentially shedding light on standard
counting conjectures and their derived generalizations?
Remarks
\ref{local-to-global},
\ref{partial-groups-rem}, \ref{transportersystem-rem}, and \ref{gluing-rem} give some hints in these directions.
Further afield, one may look for a contribution of the homotopy type
of the orbit or fusion category on collections of $p$--subgroups for
problems involving any
other `$p$--local' symmetric
monoidal (infinity) category that depend on $G$, whether in algebra
or topology,
similar to the one discovered here for $\Stmod_{kG}$ (see also
\cite{mathew16} for infinity categorical considerations).

\subsubsection*{Organization of the paper} In Section~\ref{prelim}
we state conventions and introduce the categories and
constructions needed for the main results, providing a fair amount of
detail in the hope of making the paper accessible to both group
representation theorists and topologists. In Section~\ref{mainproofs}
we prove Theorem~\ref{main}, only relying on the recollections in
Section~\ref{prelim}. In Section~\ref{fundgrp-sec} we prove a number
of results on $\pi_1(\Oep(G))$ and $\pi_1(\calF_p^*(G))$, and among
other things deduce the
consequences described in
\S\S\ref{subgroupcpx-subsection},\ref{fusion-subsec}, including
Corollary~\ref{brown}, and
Theorems~\ref{sequence} and \ref{fusionthm}. In Section~\ref{homologydec-sec} the
decompositions and the Carlson--Th\'evenaz conjecture are established as stated in
\S\S\ref{homologydecomp-subsection},\ref{ctconj-sec}, proving
Theorems~\ref{limitformula}, \ref{centralizerthm}, and \ref{carlsonthevenazconj},
and Corollary~\ref{radicalsnormal}. In Section~\ref{computations-sec} we
go through the computational consequences and results, including Theorem~\ref{Spn}. Finally Appendix~\ref{propagating-sec} contains a number of results about changing the
collection of subgroups, which are used throughout
Sections~\ref{fundgrp-sec}--\ref{computations-sec}, and should also be of independent interest.

\subsubsection*{Acknowledgements} 
The idea for this paper arose from hearing a talk by Jon Carlson at Eric
Friedlander's 70th birthday conference at USC in 2014, explaining his conjecture with
Th\'evenaz \cite{CT15} and Balmer's work on weak homomorphisms
\cite{balmer13}.
I'm grateful to him and Paul Balmer for inspiring
conversations on that occasion.
I would also like to thank Ellen Henke, Anne Henke, Bob Oliver,
Geoff Robinson, and Ron Solomon for patiently answering my group theoretic
questions, and Serge Bouc, Jon Carlson, David Craven, Joshua Hunt, Radha Kessar, Nadia Mazza, Bob Oliver, Jacques
Th\'evenaz, and especially Steve Smith for helpful comments on
preliminary versions of this paper. Last but not least a big thanks to
two anonymous referees whose detailed reading caught several
inaccuracies and led to many improvements of the exposition.

 The author enjoyed the hospitality of MSRI Berkeley, Spring 2018
 (NSF grant DMS-1440140) and the Isaac Newton Institute, Cambridge,
 Fall 2018
  (EPSRC grants EP/K032208/1 and EP/R014604/1) where the
  manuscript was revised.

\section{Notation and preliminaries}\label{prelim}
This short section collects some conventions and definitions, giving some detail, in the
hope to make the paper accessible to both group theorists and
topologists. We defer certain parts of the discussion of coefficient
systems and derived categories to   \S\ref{coeff-subsub} and
\S\ref{rickard-subsec} respectively.

\subsection{Conventions}\label{conventions}
In this paper $G$ will always be an arbitrary finite group and $p$ an
arbitrary prime dividing the order of $G$ (to avoid having to make
special statements in the trivial case where this is not so). We use
the notation $S$ for its Sylow $p$--subgroup and set $G_0 = \langle
N_G(Q) | 1< Q \leq S\rangle$, which when $G_0 < G$ is the smallest
strongly $p$--embedded subgroup of $G$ containing $S$ (see also the
introduction
\eqref{pembeddeddef} and \S\ref{components-subsec}). By $k$ we will
always mean
a field of characteristic $p$, where $p$ divides $|G|$, but subject to
no further restrictions. Thus $k$ is not assumed to be algebraically closed. Note that the units $k^\times$ cannot have
$p$--torsion, as the Frobenius map is injective, and it will be
uniquely $p$--divisible if $k$ is perfect (see also \S\ref{higher-homotopy}). 
Our $kG$--modules will not be assumed
finitely generated, though everything could also be phrased inside
the smaller category of finitely generated modules with the same
result. Tensor products are over $k$.

By a collection of $p$--subgroups $\calC$, we mean a set of
$p$--subgroups of $G$, closed under conjugation, which we view as a
poset under inclusion, hence as a category. We use standard
notation for various specific collections of $p$--subgroups, like
$\calA_p(G)$  for the non-trivial elementary abelian $p$--subgroups,
$\calB_p(G)$  for the non-trivial $p$--radical subgroups, etc, which we
also recall in Appendix~\ref{propagating-sec}.

We use
standard group theoretic notation, plus that
$$G_{p'} = G/\langle\, g \in G\, |\, g \mbox { is of finite $p$--power order}\,\rangle$$
and $A^{p'}(G) = O^{p'}(G)[G,G]$, as also mentioned in the
introduction.

By a space we will for convenience mean a simplicial set,
$|\cdot|$ denotes the nerve functor from categories to simplicial
sets, and homotopy equivalence means homotopy equivalence after
geometric realization. Group theorists not familiar with
simplicial sets are largely free to think of them as simplical
complexes, or topological spaces, and can find a quick introduction in
\cite[Ch.~1.8]{benson91vol2}, and more information e.g., in
\cite{DH01}. A space is simply connected if it is connected
with trivial fundamental group, and connected spaces are assumed to be
non-empty.

We use $[\cdot]$ for conjugacy and equivalence classes, $\cong$ for isomorphism, and $\simeq$ for
equivalence.

\subsection{Sylow-trivial modules}\label{subsec:sylowtriv}
As stated in the introduction we use the term {\em Sylow-trivial} for
our basic objects: $kG$--modules that when restricted to a Sylow
$p$--subgroup $S$ split as the trivial module $k$
direct sum a projective module.
Two Sylow-trivial modules $M$ and $N$ are called {\em equivalent} if
there exist projective $kG$--modules $P,Q$ such that $M \oplus P
\cong N \oplus Q$.
We denote by $T_k(G,S)$ the set of equivalence
classes of Sylow-trivial modules. By Proposition~\ref{onedim} each
Sylow-trivial module has an, up to isomorphism unique, indecomposable representative, and this is isomorphic to a summand of $k[G/S]$. We claim that
tensor product over $k$ endows $T_k(G,S)$ with an abelian group structure with
neutral element $k$.  First it is clear that the tensor product of two
Sylow-trivial modules is again Sylow-trivial, as the tensor product of
a projective $kS$--module with any $kS$--module is again
projective. The same fact (now over $kG$) implies that the multiplication descends to equivalence classes.
Also if $M\downarrow_S \cong k \oplus \mathrm{(proj)}$, then the cokernel of the
unit $kG$--map $ k \to  M^* \otimes M$ is a projective $kG$--module, since
projectivity is detected on $kS$, and as $kS$--modules, $M^* \otimes M  \cong k^*
\otimes k \oplus \mathrm{(proj)}$, with the map $k \to k^* \otimes k
\cong k$
the identity.
Thus $M^*$ is  an inverse to $M$ in $\TS$, and $\TS \leq T_k(G)$, the
group of endo-trivial modules.

\subsection{The stable module category $\Stmod_{kG}$}\label{stmod-subsec}
Recall that the stable module category
$\Stmod_{kG}$ is the category with objects $kG$--modules and morphisms
from $M$ to $N$ the quotient of $\Hom_{kG}(M,N)$ where we identify two
maps if their difference fractors through a projective
$kG$--module. We denote by $\stmod_{kG}$ the full subcategory with
objects {\em finitely generated} $kG$--modules. The relevance of the
stable module category for endo-trivial modules stems from the following well-known fact.

\begin{lemma}
Two $kG$--modules $M$, $N$ are isomorphic in $\Stmod_{kG}$ if and only
if there exist projective $kG$--modules $P$, $Q$ such that $M \oplus
P \cong N \oplus Q$. In particular $T_k(G) \xrightcong \Pic(\Stmod_{kG})$.
  \end{lemma}
  \begin{proof}
    It is clear that equivalent modules are isomorphic in
    $\Stmod_{kG}$. Conversely (following an online argument by Rickard), assume we have maps $f\co M
\to N$ and $g\co N \to M$ such that $fg-1$ and
$gf-1$ factor through projectives, and let $M \xrightarrow{\phi} Q
\xrightarrow{\psi} M$ be a factorization of $gf-1$. Let $\tilde f = (f,\phi)
\co M \to N \oplus Q$ and $\tilde g = (g - \psi) \co N
\oplus Q \to M$. Then $\tilde g \tilde f = 1$ and so $M \oplus
\ker(\tilde g) \cong N
\oplus Q$. As $\tilde f \tilde g-1\co N \oplus Q \to N \oplus Q$ also factors through a
projective (using that $fg-1$ does), and is the identity on
$\ker(\tilde g)$, we conclude that
$P = \ker(\tilde g)$ is a retract of a projective and hence projective as
wanted.

By the first part $T_k(G) \hookrightarrow \Pic(\Stmod_{kG})$,
where $\Pic(\Stmod_{kG})$ is defined as
isomorphism classes of invertible objects under tensor product in
$\Stmod_{kG}$. It is also surjective as any invertible object $M$ in $\Stmod_{kG}$
has inverse $M^*$ by a small calculation, true in any closed
symmetric monoidal category (see e.g.,
\cite[Prop.~A.2.8]{HPS97}). \end{proof}
As for Sylow-trivial modules, modules in $\Stmod_{kG}$ in fact have a representative
without projective summands, unique up to isomorphism of $kG$--modules
(see
Proposition~\ref{onedim} and \cite[Lem.~3.1]{rickard97}).

\subsection{Categorical constructions} \label{cat-subsec}
Define the transport category $\calT(G)$ as the category with
objects all subgroups of $G$ and morphisms $\mor(P,Q) = \{ g \in G |
{}^gP \leq Q\}$, i.e., the transport category, or Grothendieck construction,
of the left conjugation action of $G$ on the poset of
all subgroups (see also Lemma~\ref{thomasonlemma}). We have a quotient functor $\calT(G) \to \bO(G)$ which on objects
sends $H$ to $G/H$ and assigns to $(g, {}^g H \leq K)$ the $G$--map
$G/H \xrightarrow{[g^{-1}]} G/K$
that sends the trivial coset $eH$ to $g^{-1}K$. This induces
$K\backslash \Hom_{\calT(G)}(H,K) \xrightcong
\Hom_{\bO(G)}(G/H,G/K)$ (see also e.g., \cite[I.10]{tomdieck87book}). Denote by $\bO_\calC(G)$ and $\calT_\calC(G)$ the full
subcategories with objects
$G/H$ and $H$ respectively, for $H \in \calC$, and we continue to use 
the notation $\Oep(G)$ and $\Tep(G)$ for these categories when
$\calC = \calS_p(G)$ is the collection of all non-trivial $p$--subgroups.
We also introduce the fusion category $\calF_\calC(G)$ and
fusion-orbit category $\bcalF_\calC(G)$ both with objects $P \in
\calC$ and morphisms
\begin{align*}&\Hom_{\calF_\calC(G)}(P,Q) =\Hom_{\calT_\calC(G)}(P,Q)/C_G(P) \mbox{\,\,\,\,\,\,\,\,\,\,\,\,
  and }\\
&\Hom_{\bcalF_\calC(G)}(P,Q) =Q\backslash\Hom_{\calT_\calC(G)}(P,Q)/C_G(P)
            \end{align*}
respectively, i.e., monomorphisms induced by conjugation in $G$ and
ditto modulo conjugation in the target.
(The fusion-orbit category is called the exterior quotient $\tilde
\calF$ of $\calF_\calC(G)$ by Puig \cite{puig06}, and is also
sometimes denoted $\bO(\calF)$.)
All four categories hence have object-sets identifiable with $\calC$,
and morphisms related via quotients
$$\xymatrix@R-7pt{ & {\calT_\calC(G)} \ar@{->>}[dr] \ar@{->>}[dl] & \\
 \bO_\calC(G) \ar@{->>}[r] & \bcalF_\calC(G) & \calF_\calC(G) \ar@{->>}[l]}$$

\begin{rem}[On $\op$'s and inverses]\label{ops-inverses}
Since op's and inverses are a common source of light confusion, we make
a few remarks about their presence in the formulas:
A group $G$ viewed as a category with one object is isomorphic as a category to $G^{\op}$ via the map $g
\mapsto g^{-1}$.  In particular $\calT(G)$ is isomorphic to the category with morphism set 
$\mor'(P,Q) = \{ g \in G |
P^g \leq Q\}$, which is the Grothendieck construction
$\calS_p(G)_{G^{\op}}$. Redefining the transport category this
way would get rid of the inverse appearing in the formula
for the projection map $\calT(G) \to \bO(G)$; alternatively one could reparametrize
the orbit category, the choice made e.g., in \cite[III.5.1]{AKO11}. 
Notice also that when we are considering functors to an abelian group
such as $k^\times$, viewed as a category with one object, covariant
functors naturally {\em equals} contravariant functors. (The
identification using the isomorphism between $G$ and $G^{\op}$
produces the automorphism given by ``pointwise inverse''.)
\end{rem}

\subsection{Low dimensional cohomology and homotopy of categories}\label{pi1-subsec}
The cohomology of a small category $\D$ with constant coefficients in
an abelian group $A$
is defined as the cohomology of the simplicial
set $|\D|$ with constant coefficients $A$. In particular
$H^1(\D;A)$ identifies with functors $\D \to A$, up to natural isomorphism of
functors, where $A$ is viewed as a
category with one object. Indeed, a $1$--cocycle 
is a function $F\co \Mor(\D) \to A$ such that $F(\beta \circ \alpha) =
F(\beta) + F(\alpha)$ (hence
$F(\id) =0$), in other words a functor $F\co \D \to A$. And a $1$--coboundary is
a $1$--cocycle of the form $F(\alpha) = g({\operatorname{cod}}(\alpha))-g({\operatorname{dom}}(\alpha))$,
for a function $g\co \Ob(\D) \to A$, i.e., a functor that admits a natural
transformation (hence isomorphism) to the zero functor. Furthermore
$H^1(\D;A) \xrightcong \Hom(H_1(\D),A)$, where $H_1(\D)$
is the abelian group of cycles of morphisms in $\D$, modulo the
equivalence relation coming from composition, a special case of the universal
coefficient theorem \cite[Thm.~3.2]{hatcher02}.
(See also
e.g., \cite{webb07}.)

Homotopy groups are defined as $\pi_i(\D,d) = \pi_i(|\D|,d)$, and also
have
a categorical description for $i=1$: the fundamental groupoid
$\pi(|\D|)$ identifies as $\D[\Mor(\D)^{-1}]$ the category obtained by
formally inverting all morphisms in $\D$ (left
adjoint to the inclusion functor from groupoids to categories), providing a canonical
isomorphism $\pi_1(|\D|,d) \cong \Aut_{\D[\Mor(\D)^{-1}]}(d)$.  (See
also e.g., \cite[\S1]{quillen73}.)

Assume that $\D$ is connected, i.e., that all objects $x, y$ can be connected
by a finite zig-zag $x = x_0 \to x_1 \leftarrow \cdots \to x_n = y$ of
morphisms. The universal properties
gives us isomorphisms
$$\Hom(\pi_1(\D,d),A) \xrightcong H^1(\pi_1(\D,d);A) \xleftcong H^1(\D[\Mor(\D)^{-1}];A) \xrightcong H^1(\D;A)$$
and one also sees directly that the canonical
map $\pi_1(\D,d) \to H_1(\D)$, viewing loops as $1$--cycles,
is abelianization, a special case of the Hurewicz theorem
\cite[Thm.~2A.1]{hatcher02}. 

It is often convenient to have a concrete way of writing elements in
$\pi_1(\D,d)$. For this, pick for each object $x \in \D$ a
path $i_x$ from $x$ to $d$, producing a functor $\D \to
\pi_1(\D,d)$ via $(\phi\co x \to y) \mapsto i_y \circ \phi \circ
i_x^{-1}$. This induces a functor $\omega\co \D[\Mor(\D)^{-1}] \to
\pi_1(\D,d)$, which is manifestly an inverse equivalence of categories to the
inclusion, and we can think of elements of $\pi_1(\D,d)$ as finite
zig-zags of morphisms in $\D$ in this way. Recall also that we may
replace $\D$ by an equivalent category without changing the result.
In particular for $\D = \Oep(G)$, we can, up
to equivalence of categories, replace it with the full subcategory
$\bO_S^*(G)$ with objects $G/Q$ for $1<Q \leq S$, and
take basepoint $G/S$, so
that we have 
canonical maps $i_{G/Q}\co G/Q \to G/S$. 
Hence $\omega(G/P \to G/Q) = 1$
for $P \leq Q$,
allowing us to effectively ignore
morphisms induced by inclusions;
similarly for the other standard categories from
\S\ref{cat-subsec}.
We will often suppress the basepoint from the notation, and the above shows why we can do this
without ambiguity. Note also that any basepoint-dependence disappears
after abelianization.

We used at various points, e.g., in \eqref{grothendieck}, that the Borel
construction on the nerve of a small category can be expressed as the
nerve of the transport category (or Grothendieck construction). For
convenience of the reader, let us prove this special case of Thomason's theorem \cite[Thm.~1.2]{thomason79}.
\begin{lemma}[Thomason's theorem for Borel
  constructions]\label{thomasonlemma}
    For a small category $\D$ with an action of $G$, let
  $\D_G$ denote the transport category with objects the objects of $\D$ and morphisms from $x$ to $y$
given by a pair $(g,f\co gx
\to y)$, where $g \in G$ and $f \in \Hom_\D(gx,y)$. Then
  $$|\D_G| \cong |\D|_{hG}$$
  where $|\D|_{hG}$ denotes the Borel construction.
   In particular $|\calT_\calC| = |\calC_G| \cong |\calC|_{hG}$  for
   any collection $\calC$.
   \end{lemma}
\begin{proof}
  Define the category $E\G$ to be the
category with objects the elements of $G$ and a unique morphism
between all elements, so that $|E\G| = EG$, the universal free
contractible $G$--space. Our group $G$ acts freely on the product
category $E\G \times \D$, on objects given by $g \cdot (h,x) =
(hg^{-1},gx)$. The quotient $E\G \times_G \D$ identifies with
$\D_G$. Since the nerve functor commutes with products and free
$G$--actions we have identifications 
$ |\D|_{hG} = EG \times_G |\D| \cong |E\G \times_G \D| = |\D_G|$
as wanted.
\end{proof}

\subsection{Coefficient systems} \label{coeff-subsec}
Finally we recall the notion of coefficient system, to be
specialized and elaborated in later sections. A general homological $G$--coefficient system on a
$G$--space $X$ is just a
functor $A \co(\Delta X)_G \to \Rmod$, to $R$--modules, for $R$ a
ground ring.
Here $\Delta X$ is the {\em category of simplices} with
objects the simplices of $X$, and morphisms given by iterated face and
degeneracy maps \cite[I.2]{GJ99}, and $(\Delta X)_G$ is the associated transport category
of the left $G$--action on $\Delta X$.
The chain complex $C_*(X;A)$, with $C_n(X;A) = \oplus_{\sigma} A(\sigma)$,
and the standard simplicial differential, is a chain complex of
$RG$--modules via the induced $G$--action, $A((g,\id_{g\sigma}))\co
A(\sigma) \to A(g\sigma)$  (see also \cite[\S2]{grodal02}).
In this paper two more restrictive types of coefficient systems play a special
role, namely $G$--twisted coefficient systems, used in
Section~\ref{mainproofs}, and Bredon  $G$--isotropy coefficient systems, used in
Section~\ref{homologydec-sec}. These special systems enjoy homotopy
invariance properties, not enjoyed by general $G$--coefficient systems,
as we explain in those sections.

\section{Proof of Theorem~\ref{main}}\label{mainproofs}
In this section we prove Theorem~\ref{main}, just using the preparations from the preceding
section.

\subsection{Injectivity of the map $\Phi$}
We start with some elementary facts about Sylow-trivial modules,
including dealing with the finite-dimensionality issue once and for all.
\begin{prop}\label{onedim} Any Sylow-trivial $kG$--module $M$ is of the
  form $N \oplus P$,  where $N$ is an indecomposable direct summand of
  $k[G/S]$ and $P$ is projective, and $N$ is uniquely
  determined, up to
  isomorphism.
If $O_p(G) \neq 1$, then any indecomposable Sylow-trivial $kG$--module
is one-dimensional.
\end{prop}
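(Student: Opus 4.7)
\textbf{Plan for the proof of Proposition~\ref{onedim}.}
For the first statement, the plan is to decompose $M$ into indecomposable $kG$-summands $M = \bigoplus_i M_i$ and locate the non-projective constituents. By Higman's criterion an indecomposable $kG$-module is projective if and only if its restriction to $S$ is projective, so every non-projective $M_i$ must contribute a non-projective summand to $M_i|_S$. Since $M|_S \cong k \oplus (\text{projective})$, the only non-projective indecomposable summand available is the trivial module $k$, and by Krull--Schmidt it occurs with multiplicity one. This pins down a unique non-projective summand $N$ with $N|_S \cong k \oplus Q$ for some projective $Q$, all other $M_i$ being projective. To exhibit $N$ as a summand of $k[G/S]$, I would use that $[G:S]$ is invertible in $k$ to average and construct an explicit section of the counit $kG \otimes_{kS} N|_S \to N$, realising $N$ as a direct summand of $kG \otimes_{kS} N|_S \cong k[G/S] \oplus (kG \otimes_{kS} Q)$; as $N$ is non-projective indecomposable and $kG \otimes_{kS} Q$ is projective, Krull--Schmidt then places $N$ as a summand of $k[G/S]$.

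For the second statement, the key observation is that $O_p(G)$, being normal in $G$ and contained in $S$, sits inside every conjugate $gSg^{-1}$, so left multiplication by any element of $O_p(G)$ fixes every coset $gS \in G/S$. Hence $O_p(G)$ acts trivially on $k[G/S]$, and consequently on the summand $N$. On the other hand, writing $N|_S \cong k \oplus Q$ with $Q$ projective over $kS$ and restricting further to the $p$-group $O_p(G)$ makes $Q|_{O_p(G)}$ free as a $kO_p(G)$-module. But a nonzero free $kO_p(G)$-module cannot carry the trivial action when $O_p(G) \neq 1$ (comparing fixed-point dimensions, $\dim (kO_p(G))^{O_p(G)} = 1 \neq |O_p(G)|$ forces the rank to vanish), so $Q = 0$ and $\dim_k N = 1$.

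The only genuinely subtle step is the invertibility of $[G:S]$ in $k$ used in part~1 to split the counit, which is what upgrades ``$N$ divides some induced module'' to ``$N$ divides $k[G/S]$''; everything else reduces to Higman's criterion, Krull--Schmidt, and elementary properties of modules over group algebras of $p$-groups.
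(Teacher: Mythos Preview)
Your argument for the first statement is essentially the paper's: both hinge on the averaging splitting of the counit $M\downarrow_S\uparrow^G \to M$ using that $[G:S]$ is a unit, followed by Krull--Schmidt to locate $N$ inside $k[G/S]$. One caveat: the paper deliberately allows $M$ to be infinite-dimensional (see the conventions in \S\ref{conventions}), and for this reason avoids decomposing $M$ into indecomposables at the outset. Instead it invokes Rickard's lemmas \cite[Lem.~3.1--3.2]{rickard97} to split off the projective part of an arbitrary module, after which the non-projective piece $N$ is forced to be a summand of the finite-dimensional $k[G/S]$ and Krull--Schmidt applies. Your opening line ``decompose $M = \bigoplus_i M_i$ into indecomposables'' tacitly assumes finite generation; if that is acceptable in your setting the argument is fine, but if you want the full generality the paper claims you should route through Rickard's lemmas instead.

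For the second statement your route is genuinely different from the paper's and arguably slicker. The paper restricts $k\uparrow_S^G$ back to $S$, applies the Mackey formula to get $\bigoplus_{g \in S\backslash G/S} k\uparrow_{S\cap{}^gS}^S$, and observes that $O_p(G) \neq 1$ forces every $S \cap {}^gS$ to be non-trivial, so no summand is projective and hence $M\downarrow_S \cong k$. You instead note directly that $O_p(G)$ fixes every coset in $G/S$, so it acts trivially on $k[G/S]$ and hence on $N$; then the free $kO_p(G)$-module $Q|_{O_p(G)}$ must vanish by comparing fixed points. Your argument bypasses Mackey entirely and is more elementary; the paper's argument, on the other hand, gives the slightly stronger intermediate fact that $k\uparrow_S^G\downarrow_S$ has no projective summands at all.
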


\begin{proof} 
This is well known, and follows by
\cite[Thm.~2.1]{BBC09} and \cite[Lem.~2.6]{MT07}, but since it is
among the few ``classical'' representation theory facts used,
we give a direct proof.
Let $M$ be our Sylow-trivial module, and
recall that $M$ is a direct summand of $M\downarrow_S^G\uparrow^G_S$,
since the composite of the $kG$--map $M \to
M\downarrow_S\uparrow^G = kG \otimes_{kS} M$, $m \mapsto \sum_{g_i \in G/S} g_i \otimes
g_i^{-1}m$, and the $kG$--map $M\downarrow_S\uparrow^G \to M$, $g\otimes m
\mapsto gm$, is multiplication by $|G:S|$, which is a unit in $k$. (See
also e.g., \cite[Cor.~3.6.10]{benson91}, where the standing finitely generated assumption is not being used.)
  By assumption $M\downarrow_S \cong k \oplus (\mbox{proj})$, so $M$
  is a summand of  $k\uparrow_S^G
\oplus (\mbox{proj})$. As explained in \cite[Lem.~3.1]{rickard97}, any
$kG$--module, also infinite dimensional, can be written as a direct sum of a
projective module and a module without projective summands, and the
non-projective part is unique up to isomorphism. Furthermore, by
\cite[Lem.~3.2]{rickard97}, this decomposition respects direct
sums. This shows that $M \cong N \oplus P$, with $N$ a non-projective direct
summand of $k\uparrow_S^G$ which is uniquely determined, up to isomorphism. Furthermore,  $N$ has to be
indecomposable, since otherwise it cannot be Sylow-trivial.

Now assume $O_p(G) \neq 1$. Since $M$ is a direct summand of $k\uparrow_S^G$ by the first part, then $M\downarrow^G_S$
is a direct summand of $k\uparrow_S^G\downarrow^G_S \cong \oplus_{g \in S\backslash
  G/S} k\uparrow_{S\cap {}^gS}^S$, which does not
contain any projective summands, since $S\cap {}^gS
\geq O_p(G) \neq 1$. Hence $M\downarrow^G_S \cong k$ as wanted.
\end{proof}

Let us also give the following well known special case of Green correspondence
\cite[Thm.~3.12.2]{benson91} (see also
\cite[{Prop.~2.6(a)}]{CMN06}), used for injectivity of the
map $\Phi$ of Theorem~\ref{main}.
\begin{prop}\label{injectivity}
Let $M$ be a $kG$--module such that $M\downarrow_{N_G(S)}
\cong k \oplus \mathrm{(proj)}$. Then $M \cong k \oplus  \mathrm{(proj)}$.
In particular restriction provides an inclusion
$$\TS \hookrightarrow T_k(N_G(S),S) \cong
\Hom(N_G(S)/S,k^\times).$$
\end{prop}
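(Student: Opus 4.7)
The plan is to reduce this to Proposition~\ref{onedim} combined with the Green correspondence, following the standard argument that the trivial module is its own Green correspondent under the normalizer of its vertex.

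First I would observe that since restriction from $N_G(S)$ to $S$ sends projectives to projectives, the hypothesis $M\downarrow_{N_G(S)} \cong k \oplus \mathrm{(proj)}$ forces $M\downarrow_S \cong k \oplus \mathrm{(proj)}$, so $M$ is Sylow-trivial. Proposition~\ref{onedim} then lets me write $M \cong N \oplus P$ with $P$ a projective $kG$--module and $N$ an indecomposable non-projective summand of $k[G/S]$; in particular $N$ is finite-dimensional, so the classical Green correspondence applies to it. The entire goal becomes to show $N\cong k$.

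Next I would pin down $N\downarrow_{N_G(S)}$. Restricting the decomposition $M \cong N \oplus P$ to $N_G(S)$ and matching against the hypothesis, the Krull--Schmidt-type uniqueness of the non-projective part \cite[Lem.~3.2]{rickard97} cited in the proof of Proposition~\ref{onedim} forces the summand $k$ on the right to be contained in $N\downarrow_{N_G(S)}$. (Here I use that $N\downarrow_{N_G(S)}$ is non-projective: otherwise a further restriction to $S$ would contradict non-projectivity of $N$, since projectivity is detected on a Sylow.) Thus $N\downarrow_{N_G(S)} \cong k \oplus \mathrm{(proj)}$.

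Now I would apply Green correspondence \cite[Thm.~3.12.2]{benson91}. Since $N\downarrow_S \cong k \oplus \mathrm{(proj)}$, the indecomposable $N$ has vertex $S$, and its Green correspondent is the unique indecomposable summand of $N\downarrow_{N_G(S)}$ with vertex $S$. From the previous step that summand is $k$, and conversely the trivial $kN_G(S)$--module has trivial $kG$--module as Green correspondent, so $N \cong k$ and hence $M \cong k \oplus P$ as required. For the ``in particular'' clause this is precisely injectivity of $T_k(G,S) \to T_k(N_G(S),S)$, and the identification $T_k(N_G(S),S) \cong \Hom(N_G(S)/S, k^\times)$ follows immediately from the second half of Proposition~\ref{onedim} applied to $N_G(S)$, where $O_p(N_G(S))\supseteq S \neq 1$, so every indecomposable Sylow-trivial $kN_G(S)$--module is a one-dimensional character trivial on $S$.

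The only real obstacle is the infinite-dimensionality issue, but this is dispensed with once Proposition~\ref{onedim} isolates the finite-dimensional summand $N$; from that point on the argument is standard Green correspondence and nothing further needs to be done by hand.
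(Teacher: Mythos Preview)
Your argument is correct, but it takes a different route from the paper. The paper explicitly remarks that the statement is a special case of the Green correspondence and then \emph{deliberately avoids} invoking it as a black box, extracting instead a direct argument: assuming $M$ indecomposable, it uses the unit/counit trick (as in Proposition~\ref{onedim}) to exhibit $M$ as a summand of $k\uparrow_{N_G(S)}^G \cong k \oplus L$, and then shows via the Mackey decomposition that $L\downarrow_S \cong \bigoplus_{g \in S\backslash G/N_G(S),\, g \notin N_G(S)} k\uparrow_{S\cap {}^gN_G(S)}^S$ contains no trivial summand, forcing $M \cong k$.

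Your approach is cleaner if one is willing to cite the Green correspondence, and it has the virtue of making the conceptual reason transparent (the trivial module is its own Green correspondent). The paper's approach buys self-containedness: it only uses the induction--restriction adjunction and Mackey, keeping in line with the paper's stated aim of giving direct arguments. One small point worth making explicit in your version is why $N$ has vertex exactly $S$: you use that $N\downarrow_S$ contains the trivial module (vertex $S$) as a summand, so the vertex of $N$ cannot be a proper subgroup of $S$.
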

\begin{proof}
As mentioned this is a special case of Green
correspondence, but let us extract a direct argument: By Proposition~\ref{onedim} it is enough to see that
if $M$ is indecomposable, then $M \cong k$. So, set $N = N_G(S)$, and
assume that $M$ is an indecomposable $kG$--module such that $M\downarrow^G_N \cong k \oplus {\mathrm{(proj)}}$. As in Proposition~\ref{onedim}, $M$ will be a summand of
$M\downarrow_N^G\uparrow_N^G \cong k\uparrow_N^G \oplus
{\mathrm{(proj)}}$, and hence a summand of $k\uparrow_N^G \cong k
\oplus L$, for $L$ a complement of $k(\sum_{g\in G/N}gN)$. But 
$L\downarrow^G_S \cong \oplus_{g \in S \backslash G/N, g \not \in
  N}k\uparrow_{S\cap {}^gN}^S$, and in particular it does not contain
$k$ as a direct summand, so $M$ has to be a direct summand of
$k$. Thus restriction provides an inclusion $\TS \hookrightarrow
T_k(N_G(S),S)$, and  furthermore $T_k(N_G(S),S) \cong
\Hom(N_G(S)/S,k^\times)$ by Proposition~\ref{onedim}.
\end{proof}
We can already now prove a part of Theorem~\ref{main}.
\begin{prop}\label{phiishomo}
  For any Sylow-trivial module $M$,
  $$G/P \mapsto \hat H^0(P;M) = M^P/(\sum\nolimits_{g \in P} g)M$$
defines a functor from $\Oep(G)^{\op}$ to one-dimensional $k$--modules and
isomorphisms, which we can identify with a functor $\Oep(G) \to
k^\times$. 
The assignment that sends a Sylow-trivial module $M$ to the above functor defines an injective group homomorphism $\Phi\co \TS \to
H^1(\Oep(G);k^\times)$.
\end{prop}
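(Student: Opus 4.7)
The plan is to unpack the statement into four separate pieces and verify each: (i) for every non-trivial $p$-subgroup $P$, $\hat H^0(P; M)$ is a one-dimensional $k$-vector space; (ii) the assignment $G/P \mapsto \hat H^0(P; M)$ defines a functor from $\Oep(G)^{\op}$ to one-dimensional $k$-modules and isomorphisms; (iii) this can be reinterpreted as a covariant functor $\phi \co \Oep(G) \to k^\times$; and (iv) $\Phi$ is a group homomorphism. Injectivity of $\Phi$ will then be reduced to Proposition~\ref{injectivity}.

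For (i) I would observe that any non-trivial $p$-subgroup $P$ lies inside some Sylow $p$-subgroup, so $M \downarrow_P$ inherits the form $k \oplus \mathrm{proj}$, and Tate cohomology annihilates the projective summand. For (ii) the $G$-map $G/P \to G/Q$ attached to $g$ with ${}^gP \leq Q$ induces the standard restriction-plus-conjugation map $m \mapsto g^{-1}m$ on fixed points, which descends to Tate cohomology; using the trivial-plus-projective decomposition from (i) one sees the induced map is the identity on the $k$ summand, hence an isomorphism of one-dimensional $k$-modules. For (iii) I would appeal to Remark~\ref{ops-inverses}: once $k_\phi(G/S)$ is picked as a basepoint, the groupoid of one-dimensional $k$-modules identifies with $k^\times$, and since $k^\times$ is abelian the op can be absorbed via $x \mapsto x^{-1}$.

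For (iv) the key input is that a tensor product of Sylow-trivial modules is again Sylow-trivial, which follows since a projective $kS$-module tensored with anything remains projective. The cup product in Tate cohomology then provides a natural isomorphism $\hat H^0(P; M_1) \otimes \hat H^0(P; M_2) \xrightarrow{\sim} \hat H^0(P; M_1 \otimes M_2)$, matching $\phi_{M_1 \otimes M_2}$ with $\phi_{M_1} \cdot \phi_{M_2}$.

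Finally, for injectivity I would restrict a cocycle to the automorphism group $\Aut_{\Oep(G)}(G/S) = N_G(S)/S$, obtaining a map $H^1(\Oep(G); k^\times) \to \Hom(N_G(S)/S, k^\times)$; the composite with $\Phi$ is the character by which $N_G(S)/S$ acts on $\hat H^0(S; M)$. Applying Proposition~\ref{onedim} to $N_G(S)$ (whose $O_p$ equals $S \neq 1$) decomposes $M \downarrow_{N_G(S)}$ as $N \oplus \mathrm{proj}$ with $N$ one-dimensional, so this composite equals the standard restriction $T_k(G,S) \to T_k(N_G(S), S) \cong \Hom(N_G(S)/S, k^\times)$, which is injective by Proposition~\ref{injectivity}. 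The main obstacle I anticipate is bookkeeping: aligning the conventions for opposites, the $g \mapsto g^{-1}$ twist, and the basepoint identifications so that the various descriptions of $\Phi$ genuinely refer to the same homomorphism; once that is in place each individual verification is routine.
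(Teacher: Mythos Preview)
Your proposal is correct and follows essentially the same route as the paper's proof: one-dimensionality via the trivial-plus-projective splitting, functoriality via restriction and conjugation on Tate cohomology, the basepoint identification with $k^\times$, and injectivity by reducing to the $N_G(S)$-action on $\hat H^0(S;M)$ and invoking Proposition~\ref{injectivity}. You supply more detail than the paper (notably the cup-product justification that $\Phi$ is a homomorphism and the explicit factorization through $\Hom(N_G(S)/S,k^\times)$), but the structure and key inputs are the same; the bookkeeping concerns you flag about the $g$ versus $g^{-1}$ convention are exactly the ones the paper addresses in Remark~\ref{ops-inverses}.
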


\begin{proof}
It is clear that $\hat H^0(P;M) = M^P/(\sum_{g \in P}g)M$ is
one-dimensional, since $M\downarrow_P \cong k \oplus \mathrm{(proj)}$
by assumption, and the construction kills the projective part.
It furthermore defines a functor on $\Oep(G)^{\op}$ that sends $G/P \xrightarrow{g}
G/P'$ to the morphism $M^{P'}/(\sum_{g \in P'}g)M \to M^{P}/(\sum_{g
  \in P}g)M$ given by multiplication by $g$. Picking a fixed
one-dimensional $k$--vector space, say
$M^S/(\sum_{g \in S}g)M$, and for each one-dimensional $k$--vector space a fixed isomorphisms to $M^S/(\sum_{g \in S}g)M$, identifies this with a functor $\Oep(G) \to
k^\times$ (concretely we may model $\Oep(G)^{\op}$, up to equivalence of
categories, by the full subcategory $\bO_S^*(G)^{\op}$ with objects $G/P$ for $1<P\leq
S$, and use the identifications induced by
canonical morphisms $G/P \to G/S$). (See also Remark~\ref{ops-inverses} for a discussion of variance.)
  The resulting functor is uniquely
defined, up to isomorphism of functors, and hence defines a a unique
element in $H^1(\Oep(G);k^\times)$.

It is also clear that $\TS \to
H^1(\Oep(G);k^\times)$ is a group homomorphism, where the group
structure on the right is pointwise multiplication, as
$$(M \otimes N)^P/(\textstyle \sum_{g \in P}g)(M\otimes N)
\xleftcong   (M^P/(\textstyle \sum_{g
  \in P}g)M) \otimes  (N^P/(\textstyle \sum_{g \in P}g)N)$$ 
and tensoring two $1$--dimensional $kN_G(P)$--modules amount to multiplying
their characters. 

Finally we check injectivity: By Proposition~\ref{onedim},
$M\downarrow_{N_G(S)} \cong \hat H^0(S;M) \oplus \mathrm{(proj)}$. If
$\Phi([M])$ is the identity, then the action of $N_G(S)$ on $\hat
H^0(S;M)$ is trivial, i.e.,  $M\downarrow_{N_G(S)} \cong  k \oplus
\mathrm{(proj)}$.
Hence $M \cong k \oplus \mathrm{(proj)}$ by
Proposition~\ref{injectivity} as wanted.
\end{proof}

\begin{rem} For the modules we consider in Theorem~\ref{main},
  $$\hat H^0(P;M) \cong M^P/(\oplus_{Q<P}(\sum_{[g]\in P/Q} g) M^Q),$$ the Brauer quotient, and this may be another way to view this
  construction.
\end{rem}

\subsection{$G$--twisted coefficient systems and the
  Buchweitz--Rickard equivalence}
Before continuing with the rest of the proof of Theorem~\ref{main} in
the next subsection, we now recall the
Buchweitz--Rickard equivalence, used in Theorem~\ref{main}, and also explain
$G$--twisted coefficient systems in some detail.

\subsubsection{The Buchweitz--Rickard equivalence}\label{rickard-subsec}
We recall the equivalence of homotopy categories
\begin{equation}\label{BR-equivalence}\stmod_{kG} \xrightsimeq
  D^b(kG)/\Dperf(kG).
  \end{equation}
given by viewing a module as a chain complex concentrated in degree zero,
 going back to
 \cite[Thm.~2.1]{rickard89} and \cite[Thm.~4.4.1]{buchweitz86}. Here
 $\stmod_{kG}$ is the ``small'' stable module category with objects
 finitely generated $kG$-modules and morphisms homomorphisms of
 $kG$--modules modulo the relation that two morphisms are
 equivalent if their difference factors through a projective
 $kG$--module. The category $D^b(kG)$ is the ``small'' bounded derived category, with
 objects unbounded chain complexes of finitely generated $kG$--modules,
 with homology concentrated in a bounded range of degrees. A
 morphism in the underlying category is a $kG$--linear chain map. It
 it induces an isomorphism
 in the homotopy category if it induces isomorphisms on homology
 (i.e., is a quasi-isomorphism). (We
 shall not describe precisely the set of morphisms in the homotopy category as
 it shall not use it here, but see e.g., \cite[\S2]{huybrechts06}
 for an elementary treatment, and \cite[\S1]{lurieHA} for an $\infty$--categorical
 perspective.)
 The category $\Dperf(kG)$ is the full subcategory of complexes
 quasi-isomorphic to a finite complex of finitely generated projective
 $kG$--modules. The quotient $D^b(kG)/\Dperf(kG)$ is the Verdier
 quotient, inverting morphisms in  $D^b(kG)$ with cofiber in
 $\Dperf(kG)$. (Recall that the cofiber can be obtained as the cokernel
 of an underlying monomorphism of chain complexes.)

Let us construct the inverse used
in Theorem~\ref{main}, displaying how an object in
$D^b(kG)/\Dperf(kG)$  is isomorphic to one in the image under
\eqref{BR-equivalence}: By choosing a projective resolution, represent an isomorphism class by a
bounded below complex $P_*$ of finitely generated projective
$kG$--modules. In $D^b(kG)/\Dperf(kG)$ this complex is canonically equivalent to its truncation $\bar
P_* = (\cdots \to  P_{r+1} \to P_{r} \to 0 \to \cdots)$, for any $r$. Taking $r$ to be
the degree of the top non-trivial homology class of $P_*$, the complex $\bar
P_*$ has
homology only in degree $r$, and is hence equivalent in
$D^b(kG)/\Dperf(kG)$  to $\Omega^{-r}(P_r/\im(d_{r+1}))$, the $-r$th Heller shift
of the $r$th homology group,
viewed as a chain complex in degree $0$. (Recall that the inverse Heller shift $\Omega^{-1}(M)$ is the cokernel of the
map from $M$ to its injective hull, the ``suspension'' in the
triangulated structure.)  Hence it is in the image of
$\Omega^{-r}(P_r/\im(d_{r+1})) \in \stmod_{kG}$ under \eqref{BR-equivalence}.

\subsubsection{$G$--twisted coefficient systems on subgroup complexes}\label{coeff-subsub}
A {\em $G$--twisted coefficient system} over $k$ on a space $X$ is a
$G$--coefficient system $A\co (\Delta X)_G \to
k{\operatorname{--mod}}$, as in Section~\ref{coeff-subsec},
with the added feature that it sends all morphisms
to isomorphisms. It hence factor through
fundamental groupoid of the category $(\Delta X)_G$, or equivalently
the fundamental groupoid of $X_{hG}$ (see Section~\ref{pi1-subsec}).
For $X_{hG}$ connected, a $G$--twisted
coefficient system over $k$ is thus equivalent to a
$k\pi_1(X_{hG},x)$--module $M$, for a choice of basepoint $x \in X_{hG}$.
Such coefficient systems are $hG$--homotopy
invariants in the sense that if $Y \to X$ is a $G$--equivariant map
and a homotopy equivalence, and $Y$ is given the coefficient system
induced by a $G$--twisted coefficient system $A$ on $X$, then $C_*(Y;A)
\to C_*(X;A)$ is an $kG$--homomorphism and a homology equivalence. See e.g.,
\cite[\S7]{quillen78},\cite[6.2.3]{benson91vol2}, or
\cite[Ch.~3.H]{hatcher02} for more information.

In particular for $X = |\calC|$ a $G$--twisted coefficient system is the same as a functor from 
$\calT_\calC(G)$ to $k$--vector spaces sending all morphisms to
isomorphisms. If furthermore $\calT_\calC(G)$ is connected (e.g., $\calC$ is
a collection of $p$--subgroups containing $S$), then a $G$--twisted coefficient system can be identified with a
$k\pi_1(\calT_\calC(G),P)$--module, for $P \in \calC$. A one-dimensional $k\pi_1(\calT_\calC(G),P)$-module is just a homomorphism
$\pi_1(\calT_\calC(G),P) \to k^\times$. 
So, for $\phi\co \Oep(G) \to k^\times$ a functor (where $k^\times$ is
viewed as a category with one object) we consider the
corresponding functor $k_\phi$ from $\Oep(G)$ to one dimensional $k$--vector
spaces and isomorphisms, and view this as a $G$--twisted coefficient
system on $|\calS_p(G)|$, still denoted $k_\phi$, in the canonical way via
\begin{equation}\label{kphi}
  (\Delta|\calS_p(G)|)_G \to \calS_p(G)_G = \Tep(G) \to \Oep(G)
  \end{equation}
Here $\Delta|\calS_p(G)| \to
\calS_p(G)$ sends $(P_0 \leq \cdots \leq P_n) \mapsto P_0$
and $\calS_p(G)_G \to \Oep(G)$ sends $({}^gP \leq P',g) \mapsto (G/P
\xrightarrow{[g^{-1}]} G/P')$.
Note that, if one ignores the group action, this is a twisted coefficient system in
the ordinary (non-equivariant) sense, depending on the fundamental
groupoid of $|\calS_p(G)|$.

\subsection{Surjectivity of  $\Phi$}
With the above preliminaries in place, we are ready for 
surjectivity of $\Phi$.
\begin{prop} \label{rightinverse} Let $\phi\co \pi_1(\Oep(G)) \to
  k^\times$ be a homomorphism and equip $|\calS_p(G)|$ with the
  corresponding $G$--twisted coefficient system $k_\phi$ via
  \eqref{kphi}.
Then for any non-tri\-vial $p$--subgroup $P$ we have the equivalences
$$k_\phi(P) \xrightsimeq C_*(|\{P\}|;k_\phi) \xrightsimeq  C_*(|\calS_p(G)|^P;k_\phi) \xrightsimeq C_*(|\calS_p(G)|;k_\phi)$$
 in $D^b(kN_G(P))/\Dperf(kN_G(P))$.

Consequently $C_*(|\calS_p(G)|;k_\phi)$ gives a Sylow-trivial module $M$
via
$\stmod_{kG} \xrightsimeq D^b(kG)/\Dperf(kG)$
(cf.~\S\ref{rickard-subsec}) and  $\Phi([M]) =\phi$.
\end{prop}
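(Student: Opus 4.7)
The plan is to establish the zigzag of equivalences first, from which the ``in particular'' claims about $M$ will follow formally. For the left arrow, I would invoke Quillen's $N_G(P)$-equivariant conical contraction of the subposet $\calS_p(G)^P$ onto the vertex $P$ via the zigzag of natural transformations $Q \leq QP \geq P$: the intermediate functor lands in $\calS_p(G)^P$ because $QP$ is a non-trivial $p$-subgroup normalized by $P$, and both natural transformations are $N_G(P)$-equivariant since $N_G(P)$ preserves $P$. Applying $k_\phi$ to this equivariant retraction then identifies $C_*(|\calS_p(G)|^P; k_\phi)$ with $k_\phi(G/P)$ in degree zero as a quasi-isomorphism of $kN_G(P)$-complexes.

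For the right arrow, I plan to show that the relative complex $C = C_*(|\calS_p(G)|, |\calS_p(G)|^P; k_\phi)$ is perfect as a $kN_G(P)$-complex by verifying the Tate-cohomology criterion $\hat H^*(R; C) = 0$ for every non-trivial $p$-subgroup $R \leq N_G(P)$. Since $R$ normalizes $P$, the product $PR = \langle P, R\rangle$ is itself a non-trivial $p$-subgroup, so Quillen's conical argument makes both $|\calS_p(G)|^R$ and $|\calS_p(G)|^{\langle P, R\rangle}$ contractible. Smith theory applies to the twisted system $k_\phi$ here because the $p$-group $R$ must act trivially on $k_\phi|_{|\calS_p(G)|^R}$ (as $\phi$ takes values in the $p$-torsion-free group $k^\times$); it then identifies $\hat H^*(R; C)$, via two applications of Smith's theorem and the long exact sequence of the pair, with the Tate cohomology of the pair of $R$-fixed spaces $(|\calS_p(G)|^R, |\calS_p(G)|^{\langle P, R\rangle})$, which vanishes because the inclusion of two contractible spaces is a homotopy equivalence. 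This Smith-theoretic reduction is the technical heart of the argument and where I expect the main difficulty.

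Given the zigzag, the remaining assertions are routine. Taking $P = S$, the Rickard equivalence $\stmod_{kG} \simeq D^b(kG)/K^b(kG)$ gives $M\downarrow_S \cong k_\phi(G/S)$ modulo projectives; but $\phi|_S$ is trivial for the same $p$-torsion reason, so $k_\phi(G/S) \cong k$ over $kS$ and $M$ is Sylow-trivial. For the identity $\Phi(M) = \phi$, the zigzag at a general non-trivial $P$ gives $M\downarrow_{N_G(P)} \cong k_\phi(G/P)$ in $\stmod_{kN_G(P)}$ modulo projectives, and then $\hat H^0(P; -)$ kills the projective summand and returns the one-dimensional $N_G(P)/P$-module $k_\phi(G/P)$, matching Proposition~\ref{phiishomo}.
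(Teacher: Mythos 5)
Your zig-zag argument for the left arrow matches the paper exactly (Quillen's $N_G(P)$-equivariant contraction $Q\le QP\ge P$ plus $G$-homotopy invariance of twisted coefficient systems), and the concluding paragraph on Sylow-triviality and $\Phi(M)=\phi$ is the same in substance, though the naturality of the identification in $P$ needed for $\Phi(M)=\phi$ is only implicit in your write-up.

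The interesting divergence is in the right arrow. The paper reduces to a Sylow $p$-subgroup $S'$ of $N_G(P)$ (projectivity is detected on a Sylow, and $|\calS_p(G)|^{S'}\hookrightarrow|\calS_p(G)|^P$ is an $S'$-equivalence), then exhibits the exact sequence
$0\to C_*(\Delta_s,\Delta^{S'};k_\phi)\to C_*(\Delta,\Delta^{S'};k_\phi)\to C_*(\Delta,\Delta_s;k_\phi)\to 0$
in which the first term is acyclic because the $S'$-singular set $\Delta_s$ is contractible (by a nerve/cover argument) and the last term is free because non-singular simplices are freely permuted. You instead test perfectness of $C=C_*(\Delta,\Delta^P;k_\phi)$ over $kN_G(P)$ by the Tate-cohomological criterion, then invoke a Swan/Smith localization theorem to identify $\hat H^*(R;C)$ with the Tate cohomology of $C_*(\Delta^R,\Delta^{\langle P,R\rangle};k_\phi)$, which vanishes because both fixed sets are contractible and $R$ acts trivially on the fibers of $k_\phi$ over them. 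This is a genuinely different packaging and it is correct, but be aware of two things. First, the Tate criterion is already decided by the single Sylow $p$-subgroup of $N_G(P)$; checking all $R$ is harmless but more than necessary. Second, the ``Smith theorem'' you are invoking --- that for a finite $p$-group $R$ acting on a finite-dimensional complex with a twisted $\F_p$-local system on which isotropy acts trivially on fibers, inclusion of the $R$-fixed set is a Tate-$R$-cohomology equivalence --- is precisely proved by the singular-set exact sequence that the paper deploys directly; for non-cyclic $R$ one either iterates along a normal chain or, as here, observes that both $\Delta_s$ and $\Delta^R$ are separately contractible so the intermediate step collapses. So your approach is a higher-level repackaging rather than an independent argument: the paper inlines the proof of the Smith input in the special case where it simplifies, while you cite the black-box form. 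Both work; the paper's is the more self-contained choice for a proof that advertises itself as elementary, but your version does make the $p$-local detection mechanism conceptually visible.
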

\begin{proof}
The first map is a chain homotopy equivalence, indeed an isomorphism
onto the normalized chain complex concentrated in degree $0$.

For the second map, recall that for a non-trivial $p$--subgroup $P$,
  $|\calS_p(G)|^P$ is $N_G(P)$--equivariantly contractible by Quillen's argument: $|\calS_p(G)|^P = |\calS_p(G)^P|$,
  which is contractible via a contracting homotopy induced by $Q \leq PQ \geq P$
  (see \cite[1.3]{quillen78} \cite[Rec.~2.1]{GS06}).
  Thus the second map, induced by the
$N_G(P)$--homotopy equivalence $|\{P\}|
\to |\calS_p(G)|^P$, is an equivalence in $D^b(kN_G(P))$, as $G$--twisted coefficient
systems are $hG$--homotopy invariant, recalled above in \S\ref{coeff-subsub}.

We show that the right map is an
equivalence in  $D^b(kN_G(P))/\Dperf(kN_G(P))$ by generalizing Quillen's proof that the
(reduced) Steinberg complex is isomorphic to a finite complex of
projectives \cite[Prop 4.1 and 4.5]{quillen78} (see also
\cite[Thm.~2.7.4]{webb91} and \cite{symonds08}): Choose a Sylow
$p$--subgroup $S'$ of $N_G(P)$, and note that it is enough to prove
that the map is an equivalence in $D^b(kS')/\Dperf(kS')$ (since an element in the bounded derived category is perfect if, in the notation of
Section~\ref{rickard-subsec}, $P_r/\im(d_{r+1})$ is projective, which
is detected on a Sylow $p$--subgroup).  Also since
$|\calS_p(G)|^{S'} \to |\calS_p(G)|^P$ is an $S'$--homotopy
equivalence, it is enough to prove the statement with $P$ replaced by
$S'$. Now, set $\Delta = |\calS_p(G)|$ and let $\Delta_s$ denote the singular set under the
$S'$--action, i.e., the union of all simplicies where $S'$ does not
act freely. By definition we have an exact sequence of chain complexes
$$ 0 \to C_*(\Delta_s,\Delta^{S'};k_\phi) \to
C_*(\Delta,\Delta^{S'};k_\phi) \xrightarrow{f}
C_*(\Delta,\Delta_s;k_\phi) \to 0$$

As observed in \cite[Prop.~4.1]{quillen78} (though only stated for $S'$
the Sylow $p$--subgroup), the singular set $\Delta_s$
is contractible. (To see this, one can also note that $\Delta_s$ is covered by the contractible subcomplexes
$\Delta^Q$, for $1 \neq Q \leq S'$, all of whose intersections are also
contractible, see \cite[\S4]{segal68} or \cite[Thm.~6.7.11]{tomdieck08}.)
Hence $\Delta^{S'} \to \Delta_s$ is a homotopy equivalence, so  $C_*(\Delta_s,\Delta^{S'};k_\phi)$ is
acyclic, using the homotopy invariance of non-equivariant homology
with twisted coefficient coefficient systems (see
\cite[Sec.~3.H]{hatcher02} and  \S\ref{coeff-subsub}). Therefore $f$ is an
equivalence in $D^b(kS')$. By definition
$C_*(\Delta,\Delta_s;k_\phi)$ is a complex of free $kS'$--modules, so
$C_*(\Delta,\Delta^{S'};k_\phi)$ is in $\Dperf(kS')$ as wanted,
establishing the first part of the proposition.

To see that $C_*(|\calS_p(G)|;k_\phi)$ is Sylow-trivial in $\stmod_{kG}$, it is enough to prove that $C_*(|\calS_p(G)|;k_\phi)$ is
equivalent to the trivial module $k$ in $D^b(kS)/\Dperf(kS)$, as the equivalence 
$\stmod_{kG} \to D^b(kG)/\Dperf(kG)$
is compatible with restriction. However this follows from the first
part upon taking $P=S$.

We finally observe that $\Phi([C_*(|\calS_p(G)|;k_\phi)]) =
\phi$ in $H^1(\Oep(G);k^\times)$. Namely, by Proposition~\ref{boundslemma} it is enough
to see that the two functors agree as $kN_G(S)$--modules when evaluated
on $G/S$, which follows by the first part, finishing the
proof. (In fact the above argument shows  that for any $G/P$ the identification of $C_*(|\calS_p(G)|;k_\phi)$ with $k_\phi$ in
$\stmod_{kN_G(P)}$ is compatible with restriction and
conjugation, and hence defines an isomorphism of functors on $\Oep(G)$,
avoiding the reference to Proposition~\ref{boundslemma}.)
\end{proof}

\begin{proof}[\thmunderline{Proof of Theorem~\ref{main}}]
By Proposition~\ref{phiishomo}, $\Phi$ is a group monomorphism.
Proposition~\ref{rightinverse} shows that $C_*(|\calS_p(G)|;k_\phi)$ does define
a Sylow-trivial module via the equivalence of categories
$\stmod_{kG} \xrightsimeq D^b(kG)/\Dperf(kG)$, and this assignment is a right inverse to
$\Phi$. So $\Phi$ is surjective as well.
\end{proof}

We note that Theorem~\ref{main} includes the bijection
between Sylow-trivial $kG_0$--modules and Sylow-trivial $kG$--modules
is given by induction modulo projectives  \cite[Lem.~2.7(2)]{MT07}, using \eqref{redtoG_0}.
\begin{lemma}[Sylow-trivial modules for groups with a strongly $p$--embedded subgroup]\label{stronglyembeddedinduction-lemma}
  \begin{equation*} C_*(|\calS_p(G)|;k_\phi) \cong
    C_*(|\calS_p(G_0)|;k_\phi)\uparrow_{G_0}^G
 \end{equation*}

\vspace{-15pt}

\qed \end{lemma}

As mentioned in the introduction the chain complex
$C_*(|\calS_p(G)|;k_\phi)$ may be interpreted as a homotopy colimit of $k_\phi$
over $ \calS_p(G)^{\op}$.  We detail this as a proposition for the
interested reader.
\begin{prop}[Homotopy Kan extensions]\label{kanext-prop}
We have equivalences of $kG$--chain complexes
 \begin{equation*}
C_*(|\calS_p(G)|;k_\phi) \cong \hocolim_{P \in
  \calS_p(G)^{\op}} k_\phi \xleftsimeq  \hocolim_{\eta \downarrow
  G/e}k_\phi \cong ({\mathbb L} Kan_{\eta}\, k_\phi)(G/e)
\end{equation*}
where the homotopy colimits are taken in chain complexes over $k$,
using the standard model from e.g.,
\cite[Ch.~18.1.1]{hirschhorn03}, ${\mathbb L} Kan_{\eta}\,
k_\phi\co \Op(G)^{\op} \to k\mbox{--(chain complexes)}$ is the homotopy
left Kan extension of $k_\phi$ along $\eta\co
\Oep(G)^{\op} \to \Op(G)^{\op}$, and $\eta \downarrow
  G/e$ is the overcategory of $G/e$.
\end{prop}
\begin{proof}
  The left isomorphism is by the model
  for $\hocolim$, and the right isomorphism is also by the
  definitions.
As the overcategory $\eta \downarrow G/e$ admits
a canonical $G$--equivariant functor to $\calS_p(G)^{\op}$, which is
an equivalence of categories (see \eqref{EOCEA}), this produces the
middle equivalence in  $D^b(kG)$.
  \end{proof}

The na\"ive guess for the inverse in Theorem~\ref{main} might have been
 the non-derived  $\colim_{P \in \calS_p(G)^{\op}}k_\phi$. This is however zero, unless $\phi$
corresponds to a Sylow-trivial module induced from a
$1$-dimensional $kG_0$--module, as we see next---it was this
viewpoint that  led us to the formula in Theorem~\ref{main}.
\begin{prop}[Kan extensions] \label{kanext-prop2} We have isomorphisms of $kG$--modules
\begin{multline*}
H_0(|\calS_p(G_0))|;k_\phi)\!\! \uparrow_{G_0}^G \cong
H_0(|\calS_p(G)|;k_\phi)
 \cong  \\ \colim_{P \in
  \calS_p(G)^{\op}} k_\phi \xleftcong  \colim_{\eta \downarrow
  G/e}k_\phi \cong (LKan_{\eta}\, k_\phi)(G/e)
\end{multline*}
with $LKan$ the left Kan extension. The $kG_0$--module
$H_0(|\calS_p(G_0))|;k_\phi)$ is 0 unless the action of
$kN_G(S)$ on $k_\phi(G/S)$ extends to $kG_0$, where it is
the unique $1$--dimensional $kG_0$--module with this property.
\end{prop}
\begin{proof} The first isomorphism is by
  Lemma~\ref{stronglyembeddedinduction-lemma}, using that induction is
  exact. The second and fourth are by definition, while the third
  follows as in Proposition~\ref{kanext-prop}.
  
As  $\calS_p(G_0)$ is connected by \eqref{redtoG_0}, 
$k_\phi(G_0/S) \twoheadrightarrow \colim_{P \in
  \calS_p(G_0)^{\op}}k_\phi(G_0/P)$ as $kN_G(S)$--modules. Hence if
the quotient is non-zero this means that the $kN_G(S)$--action extends
to $kG_0$. Likewise if $k_\phi(G/S)$ extends to a $kG_0$--module, then
the colimit is obviously $1$--dimensional. Finally note furthermore that
any extension is necessarily unique by the Frattini argument
\cite[Thm.~I.3.7]{gorenstein68}.
 \end{proof}

Using the above remarks, 
Theorem~\ref{main} gives a more
explicit model for the Sylow-trivial module when $|\calS_p(G)|$ is
$G$--homotopy equivalent to one-dimensional complex. This in fact
appears to cover all
currently known examples of exotic Sylow-trivial modules! Recall the Heller shift $\Omega$ from \S\ref{rickard-subsec}.

\begin{cor} \label{onedimcor} Suppose that $G$ is a finite group such that $|\calS_p(G)|$ is
  $G$--homotopy equivalent to a one-dimensional complex (e.g., $G$ has
  $p$--rank at most $2$, or at most one proper inclusion between
  $p$--radicals), and suppose $\phi \in \Hom(\pi_1(\Oep(G)),k^\times)$
  is not in the subgroup $\Hom(G_0,k^\times)$, cf.\ \eqref{naivebounds2}.
Then the corresponding Sylow-trivial module is given as
$$\Omega^{-1}(H_1(|\calS_p(G_0)|;k_\phi)) \uparrow_{G_0}^G$$
\end{cor}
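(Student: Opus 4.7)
The plan is to pass to the smallest strongly $p$-embedded subgroup $G_0$, show that the twisted chain complex of $|\calS_p(G_0)|$ has homology concentrated in a single degree, and then translate through Rickard's equivalence.

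By Theorem~\ref{main} the Sylow-trivial $kG$-module attached to $\phi$ is represented in $D^b(kG)/K^b(kG)$ by $C_*(|\calS_p(G)|;k_\phi)$. Using $|\calS_p(G)| \cong G \times_{G_0} |\calS_p(G_0)|$ one has
$$C_*(|\calS_p(G)|;k_\phi) \;\cong\; C_*(|\calS_p(G_0)|;k_\phi)\uparrow_{G_0}^G,$$
as in Remark~\ref{stronglyembeddedinduction}, so it suffices to identify the class of $C_*(|\calS_p(G_0)|;k_\phi)$ in $\stmod_{kG_0}$ and then induce, using that induction commutes with Heller shift since it preserves projectives. The hypothesis that $|\calS_p(G)|$ is $G$-homotopy equivalent to a $1$-dimensional complex passes to $|\calS_p(G_0)|$; since $G_0$-twisted coefficient systems are $G_0$-homotopy invariants (Section~\ref{coeff-subsec}), we may work in $D^b(kG_0)$ with a bounded complex concentrated in degrees $0$ and $1$.

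The key point is that $H_0(|\calS_p(G_0)|;k_\phi) = 0$ under the hypothesis. By \cite[Prop.~5.8]{grodal02} the space $|\calS_p(G_0)|$ is connected, so non-equivariantly $k_\phi$ is an ordinary local system whose monodromy is the composite
$$\pi_1(|\calS_p(G_0)|) \longrightarrow \pi_1(\calT_p(G_0)) \longrightarrow \pi_1(\Oep(G)) \xrightarrow{\phi} k^\times,$$
where the middle factorization uses that $k^\times$ has no $p$-torsion. The exact sequence \eqref{fundseq} identifies the image of $\pi_1(|\calS_p(G_0)|)$ in $\pi_1(\calT_p(G_0))$ as the kernel of the projection onto $G_0$; hence the monodromy is trivial precisely when $\phi$ factors through $G_0$, i.e.\ lies in $\Hom(G_0,k^\times)$. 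By assumption this is not the case, so the monodromy is non-trivial and $H_0 = 0$ (any $\gamma$ with $\phi(\gamma)\neq 1$ contributes a unit $1-\phi(\gamma) \in k^\times$).

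Consequently $C_*(|\calS_p(G_0)|;k_\phi)$ is a bounded complex with homology concentrated in degree $1$, and so is quasi-isomorphic in $D^b(kG_0)$ to $H_1(|\calS_p(G_0)|;k_\phi)[1]$. The recipe recalled in Section~\ref{rickard-subsec} sends a module placed in degree $r$ to its $(-r)$-th Heller shift, yielding $\Omega^{-1}(H_1(|\calS_p(G_0)|;k_\phi))$ in $\stmod_{kG_0}$; inducing from $G_0$ to $G$ then gives the claimed formula. The main obstacle is the monodromy analysis -- showing that triviality of $\phi$ on $\pi_1(|\calS_p(G_0)|)$ is equivalent to $\phi$ descending to $G_0$ -- which rests on the extension \eqref{fundseq} together with the $p$-torsion-free nature of $k^\times$.
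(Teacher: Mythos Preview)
Your proof is correct and follows exactly the approach the paper intends: the corollary is stated with a bare \QED, so the paper treats it as immediate from Theorem~\ref{main}, the Rickard recipe of Section~\ref{rickard-subsec}, and Remark~\ref{stronglyembeddedinduction}. You have simply made the steps explicit, including the monodromy argument for $H_0(|\calS_p(G_0)|;k_\phi)=0$, which the paper only alludes to afterward in Remark~\ref{kanext} (``is zero unless the endotrivial module is induced from a $1$-dimensional module on a strongly $p$-embedded subgroup'').
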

\begin{proof}
By Proposition~\ref{kanext-prop2}, $H_0(|\calS_p(G_0)|;k_\phi)$ is
trivial if $\phi$ is not in 
$\Hom(G_0,k^\times)$. Hence  $C_*(|\calS_p(G_0)|;k_\phi)$ is
isomorphic in $D^b(kG_0)$ to $H_1(|\calS_p(G_0)|;k_\phi)$, viewed
as a chain complex concentrated in degree $1$. The corollary now
follows from Theorem~\ref{main} and
Lemma~\ref{stronglyembeddedinduction-lemma} (see also \S\ref{rickard-subsec}).
\end{proof}

\begin{rem}
If $\calC$ a $1$--dimensional collection in $G_0$,
$G_0$--homotopy equivalent to $|\calS_p(G_0)|$, then 
\begin{multline} 
H_1(|\calS_p(G_0)|;k_\phi) \cong  \\ \ker \left(
\oplus_{[P<Q]\in |\calC|_1/G_0}k_\phi(P)\uparrow_{N_{G_0}(P<Q)}^{G_0}
\xrightarrow{d_0-d_1}
\oplus_{[P] \in |\calC|_0/G_0}k_\phi(P)
\uparrow_{N_{G_0}(P)}^{G_0}\right)
\end{multline}
as $kG_0$--modules, by definition, where $k_\phi(P)$ is the $1$--dimensional $N_G(P)$--module given by
$N_G(P) \to \pi_1(\Oep(G)) \to k^\times$. Often $\calC$ can be chosen
so that $|\calC|_1/G$ is small, maybe even a single element (see
\S\ref{symmetricgroups} for an example). (The relationship between
collections in $G$ and $G_0$ is explained in 
\S\ref{components-subsec}.)

Lastly we remark that $\Omega^{-1} M$ is isomorphic to $\Omega^{-1}k \otimes M$,
modulo projectives, so in terms of finding generators for the group of endotrivial
modules, $\Omega^{-1}M$ works as well as $M$. 
\end{rem}

One may wonder what happens if one applies the map
$\Phi^{-1}(-)= [C_*(|\calS_p(G)|;-)]$ of Theorem~\ref{main} to an
arbitrary $k\pi_1(\Oep(G))$--module (semi-simple since $\pi_1(\Oep(G))$ is a
$p'$--group by \eqref{naivebounds2}).
The proof of Theorem~\ref{main} in fact shows that $\Phi$ will still
be a left inverse, and one can identify the image. The following more precise theorem generalizes
Theorem~\ref{main}, and arose as response to
questions by Radha Kessar (Remark~\ref{general-inverse-remark}
below) and David Craven. Call a $kG$--module $M$
Sylow-semi-simple if $M\downarrow_{S} \cong k^r \oplus (kS)^s$ for
    non-negative integers $r,s$.

\begin{thrm}[Classification of ``Sylow-semi-simple''
  modules] \label{sylowss} We have a bijection
$$  \left\{\begin{array}{c}
\text { Sylow-semi-simple } kG\text{--modules } \\
\text { without projective summands, } \\
\text { up to isomorphism }
\end{array}\right\} \,\,\, \xrightcong \,\,\, \left\{\begin{array}{c}
\text { finitely generated } \\ k\pi_{1}(\mathscr{O}_{p}^{*}(G))\text{--modules, } \\
\text { up to isomorphism }
                                \end{array}\right\}
$$
given by restriction to $N_G(S)$,  discarding projective summands, a
viewing the rest as a  $k\pi_1(\Oep(G)))$--module, using that $\ker(N_G(S)
\twoheadrightarrow \pi_1(\Oep(G)))$ acts trivially.

Under this bijection indecomposable modules correspond to simple
 modules, giving the bijection between
 Sylow-trivial modules and $1$--dimensional
 $k\pi_1(\Oep(G))$--modules of Theorem~\ref{main}, a
 restriction of Green correspondence.
Furthermore
  \begin{enumerate}
\item\label{forward}
The forward map in the bijection identifies with the functor $\Phi$ sending a $kG$--module $M$ to the
  $k\pi_1(\Oep(G))$--module corresponding to the functor from
  $\Oep(G)^{\op}$ to the connected groupoid of $k$--vector
  spaces and isomorphisms, given by $G/P \mapsto \hat H^0(P;M)$,  as in Theorem~\ref{main}.

\item \label{backward} The inverse map is described as assigning to a
$k\pi_1(\Oep(G))$--module $N$, the element $C_*(|\calS_p(G)|;N) \in
D^b(kG)/\Dperf(kG)$, with $G$--twisted coefficient system on
$|\calS_p(G)|$ via $\Tep(G) \to
\Oep(G) \to \pi_1(\Oep(G))$ as in Theorem~\ref{main},
and identifying this with a unique $kG$--module $M$ without
projective summands, via the Buchweitz--Rickard equivalence of
\S\ref{rickard-subsec}.
\end{enumerate}
\end{thrm}
\begin{proof} This is shown by modifying the argument of the proof of
  Theorem~\ref{main} slightly, and we follow the setup there: Suppose that $M$ is a $kG$--module as above.
Then it is clear that
  $\Phi$, via the functor $G/P \mapsto \hat H^0(P;M)$, produces a
  $k\pi_1(\Oep(G))$--module, which when inflated along $N_G(S)
  \twoheadrightarrow \pi_1(\Oep(G))$ agrees with the module $N$ of
  the decomposition $M\downarrow_{N_G(S)} \cong N \oplus P$, as stated
  in \eqref{forward}.
  
By Green correspondence
\cite[Thm.~3.12.2]{benson91} there is a
bijection between indecomposable trivial-source $kG$--modules with vertex $S$ and
simple $N_G(S)/S$--modules, given by restriction and disposing
summands not with vertex $S$. In particular the map in the theorem is
injective. As restriction preserves direct sum it is also clear that
indecomposable modules correspond to simple modules under the
bijection, once we have seen surjectivity.

For surjectivity, with inverse as described in \eqref{backward},
suppose that $N$ is a $k\pi_1(\Oep(G))$--module, and let $M$ be the
$kG$--module without projective summands corresponding to
$C_*(|\calS_p(G)|;N)$. Observe that the argument given in the first half of
Proposition~\ref{rightinverse} still gives equivalences in $D^b(kN_G(S))/\Dperf(kN_G(S))$
$$N \to  C_*(|\calS_p(G)|^S;N) \to C_*(|\calS_p(G)|;N)$$
which again implies that $N$ and $M\downarrow_{N_G(S)}$ are isomorphic
after throwing away projective summands, by the Buchweitz--Rickard
equivalence \S\ref{rickard-subsec}.
\end{proof}

\begin{rem} Subsequent sections give many ways of computing
$\ker(N_G(S) \to \pi_1(\Oep(G))$. In particular
Theorem~\ref{grouptheory-pi1} gives a group theoretic description in
terms of generators and relations.  \end{rem}

\begin{rem}
  As  noted, the map in Theorem~\ref{sylowss} is a restriction of
Green correspondence \cite[Thm.~3.12.2]{benson91} which provides a
bijection $M \mapsto N$ between indecomposable $kG$--modules $M$ that
split $M\downarrow_{N_G(S)} \cong N \oplus N'$ where $N$ is simple and $N'$ is a sum of indecomposable
modules with vertex a proper subgroup of $S$, and all simple
$kN_G(S)/S$-modules.
\end{rem}
\begin{rem}
   Since the restriction map preserves direct sum and
tensor product, Theorem~\ref{sylowss} in fact gives an isomorphism of
semi-rings between Sylow-semi-simple modules without projective
summands and
finitely generated $\pi_1(\Oep(G))$--modules, where the tensor product
on Sylow-semi-simple modules means tensoring and discarding projective
summands. Sylow-trivial modules constitute the units in the semi-ring
of Sylow-semi-simple modules.
\end{rem}

\begin{cor} \label{stable-equiv} Suppose $M$ is a $kG$--module that arises from the
  correspondence of Theorem~\ref{sylowss} with $N$ an absolutely simple
  $k\pi_1(\Oep(G))$-module. Then
 $\End_k(M) \cong k \oplus M'$ as $kG$--modules, where $M'$ is a $kG$--module without $k$ in its socle.
\end{cor}
\begin{proof}
If $N$ is absolutely simple, its dimension divides
$|\pi_1(\Oep(G))|$ (see \cite[\S6.5 Cor.~2]{serre79}), and is in
particular prime to $p$, since $\pi_1(\Oep(G))$ is a $p'$--group by
\eqref{naivebounds2}.  Hence the
dimension of $M$ is prime to $p$, since
the dimension of $M$ is congruent to the dimension of $N$ modulo
$|S|$, by Theorem~\ref{sylowss}.  In particular $k \to \End_k(M)
\xrightarrow{\Tr} k$ is an isomorphism, so $k$ splits off
  $\End_k(M)$. To see that $k$ is not in the socle of $M'$ note that
$$\sHom_G(M,M)\mkern-1mu \subseteq \mkern-1mu \sHom_{N_G(S)}(M,M) \mkern-1mu  \cong \mkern-1mu \sHom_{N_G(S)}(N,N)
\mkern-1mu\cong \mkern-1mu \sHom_{\pi_1(\Oep(G))}(N,N) \mkern-1mu \cong \mkern-1mu k$$
since $N$ is absolutely simple.
 \end{proof}

\begin{rem} \label{general-inverse-remark} As pointed out to us by Radha Kessar, modules as in
  Corollary~\ref{stable-equiv} are interesting since they are candidates for the image of simple modules under
  self-equivalences of the stable module category. Carlson proved in
  \cite{carlson98} that for $p$--groups, modules $N$ satisfying
  $\sHom_G(N,N) \cong k$  are in fact endotrivial, but for general finite groups the class is
  bigger, e.g., it contains all simple modules. Its size in general, and the
 precise image given via Theorem~\ref{sylowss}, is at present
  unclear.
\end{rem}

\begin{rem}[More general modules from $p$--local
  information]\label{local-to-global} The process of constructing modules from $p$--local
  information via a homotopy left Kan extension as in
  Theorems~\ref{main} and \ref{sylowss} should be of interest also for
  more general
  families of modules on $p$--local subgroup $N_G(P)$, even when they
 are not translates of a fixed module on $N_G(S)$. Or, said
  differently, when one considers more general $G$--local
  coefficient systems on $|\calS_p(G)|$, as explained in \S\ref{coeff-subsec}.
    For instance it would be worthwhile to understand the
  work of Wheeler \cite{wheeler02} from this point of
  view (see also \cite{mathew16}). Fundamental counting conjectures in representation theory, such as Alperin's
  \cite{alperin87}, predict a
  relationship between $kG$--modules and modules on normalizers
  of $p$--subgroups, and it is not unnatural to expect that the glue
  between these subgroups
  provided by the transport and orbit categories should play a role in
  categorifiying, and potentially proving, these conjectures.
\end{rem}

\begin{rem}[Discrete valuation rings]\label{dvr} Our model
  for Sylow-trivial modules can also be lifted to characteristic
  0: Suppose that $(K,R,k)$ is a $p$--modular system, with $R$
  a complete rank one discrete valuation ring with residue field
  $k$ of characteristic $p$ \cite[\S1.9]{benson91}.
Then reduction modulo the maximal ideal $R \to k$ induces an
isomorphism on finite roots of unity in $R$ and $k$ by Hensel's
lemma. Hence $H^1(\bO_p^*(G);R^\times) \xrightcong
H^1(\bO_p^*(G);k^\times)$, so we can uniquely define the twisted
Steinberg complex
$C_*(|\calS_p(G)|;R_\phi)$ over $R$. By the $RG$--lattice version of the Buchweitz--Rickard
theorem \cite[Prop.~3.4]{poulton13} it defines an object in the
stable module category of $RG$--lattices, lifting the 
$kG$--module corresponding to $\phi \in H^1(\bO_p^*(G);k^\times)$.
This makes explicit the lift known to exist by virtue
of Sylow-trivial modules being trivial source
\cite[Cor.~3.11.4(i)]{benson91}.
\end{rem}

\subsection{Addendum: $A_k(G,S) \cong H^1(\Oep(G);k^\times)$.}
In this addendum we relate Bal\-mer's notion of a
weak homomorphism to $H^1(\Oep(G);k^\times)$, providing a different proof of Theorem~\ref{main}.
This version is less direct as it uses the main result of
\cite{balmer13}, where he identifies $\TS$ with a group he calls
$A_k(G,S)$ of weak $S$--homomorphisms from $G$ to $k^\times$,
but may nevertheless be instructive for readers familiar with that
work.
By \cite[Def.~2.2]{balmer13}, a weak $S$--homomorphism is a map from $G$ to $k^\times$ such
that
  \begin{itemize}
\item[(WH1)] $\phi(g)=1$ for $g \in S$,
\item[(WH2)]
$\phi(g) = 1$ when $S \cap S^g =1$, and 
\item[(WH3)] $\phi(g)\phi(h) = \phi(gh)$ when $S \cap S^{h} \cap S^{gh} \neq
1$, 
\end{itemize}
where as usual $H^g = g^{-1}Hg$.  It would be
interesting to play off the construction of endotrivial modules
in Theorem~\ref{main} with
\cite[Constr.~2.5 and Thm.~2.9]{balmer13}.
\begin{prop} For any finite group and field $k$,  $A_k(G,S) \cong
  H^1(\Oep(G);k^\times)$, where $A_k(G,S)$ is the group of weak
  $S$--homomorphisms from $G$ to $k^\times$.
\end{prop}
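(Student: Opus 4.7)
The plan is to exhibit an explicit bijection between Balmer's weak homomorphisms and $1$-cocycles on $\Oep(G)$ with values in the constant functor $k^\times$, and then check that Balmer's equivalence relation matches the coboundary relation. Using the description already in use in this paper, $H^1(\Oep(G);k^\times) = \Hom(\pi_1(\Oep(G)),k^\times)$ may be computed by presenting $\pi_1(\Oep(G))$ via generators (morphisms $[g]\colon G/P\to G/Q$ with $g^{-1}Pg\leq Q$ and $P,Q\in\calS_p(G)$) subject to composition relations, normalized so that the loop at the basepoint $G/S$ coming from $s\in S$ is trivial.

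First I would recall Balmer's definition of $A_k(G,S)$: a weak $S$--homomorphism $u\colon G\to k^\times$ is a set function, trivial on $S$, which is multiplicative $u(g_1g_2)=u(g_1)u(g_2)$ whenever a certain local Sylow-intersection hypothesis on $(g_1,g_2)$ holds; equivalence is given by $S$-compatible perturbations. The key observation is that the Sylow-intersection hypothesis on $(g_1,g_2)$ is exactly the condition that $(g_1,g_2)$ represents a composable pair of morphisms $G/P\to G/Q\to G/R$ in $\Oep(G)$ through a non-trivial $p$--subgroup $Q$.

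Given $u\in A_k(G,S)$, I would define $\phi_u([g]\colon G/P\to G/Q)=u(g)$. Well-definedness (independence of the representative in the coset $gP$) and triviality at the identity morphisms reduce to showing $u$ vanishes on $p$--elements lying in some $P\in\calS_p(G)$, which follows by conjugating into $S$ and using Balmer's normalization. The cocycle identity $\phi_u([g_1][g_2])=\phi_u([g_1])\phi_u([g_2])$ is exactly Balmer's weak multiplicativity, since a composable pair of $\Oep(G)$--morphisms through a non-trivial $p$--subgroup forces the local Sylow condition. Conversely, given a cocycle $\phi$, I would recover $u$ on $g\in N_G(P)$ for $P\in\calS_p(G)$ by the value of $\phi$ on the loop $[g]\colon G/P\to G/P$, and extend to all of $G$ via Alperin's fusion theorem (cf.~\eqref{pprimequotient} and \eqref{naivebounds2}), which expresses any $g\in G_0$ as a product of such pieces and provides the precise relations needed.

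The main obstacle will be the careful matching of the two equivalence relations: Balmer's equivalence of weak homomorphisms with the coboundary $B^1$ in the cochain complex for $\Oep(G)$. This amounts to tracking how a $0$--cochain, namely a choice of unit at each orbit $G/P$, translates through the basepoint $G/S$ into the conjugating function Balmer uses, and conversely. Once this dictionary is set up, the group operations (pointwise multiplication on both sides) and the equivalence classes match tautologically, yielding the claimed isomorphism $A_k(G,S)\cong H^1(\Oep(G);k^\times)$; combined with Balmer's identification of the left-hand side with $\TS$ from \cite{balmer13}, this reproves Theorem~\ref{main}.
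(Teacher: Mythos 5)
Your forward map $u \mapsto \phi_u$ with $\phi_u\bigl([g]\co G/P \to G/Q\bigr) = u(g)$ is exactly the one in the paper, and the verification that it is well defined on cosets and functorial from (WH1) and (WH3) goes through as you indicate. Where you diverge is the backward map. The paper does not recover $u$ from loops at normalizers and then piece it together by Alperin's fusion theorem; instead it gives the direct closed formula
\[
\phi(g) \;=\;
\begin{cases}
\Phi\bigl(\,[g]\co G/({}^gS\cap S)\to G/(S\cap S^{g})\,\bigr) & \text{if } S\cap S^{g}\neq 1,\\
1 & \text{otherwise,}
\end{cases}
\]
for $\Phi$ the functor associated (via the basepoint $G/S$) to a class in $\Hom(\pi_1(\Oep(G)),k^\times)$. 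This formula makes (WH1) and (WH2) immediate, and reduces (WH3) to observing that $[g],[h],[gh]$ fit into a single commuting triangle of $\Oep(G)$-morphisms between the relevant Sylow intersections, where the legs are quotient maps that are killed by the basepoint normalization. Your version would instead need to cover all $g$ with $S\cap S^{g}\neq 1$ (most such $g$ lie in no $N_G(P)$ for $1<P\leq S$), and then to prove that the value you get is independent of the Alperin decomposition; that independence is precisely the nontrivial content, and your sketch hand-waves it.

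Separately, the ``main obstacle'' you anticipate --- matching Balmer's equivalence relation on weak homomorphisms with the coboundary group $B^1$ --- does not exist. In \cite[Def.~2.2]{balmer13} the set $A_k(G,S)$ of weak $S$-homomorphisms is already a group under pointwise multiplication; there is no further quotient by ``$S$-compatible perturbations.'' The coboundary ambiguity on the $H^1$ side is absorbed not by an extra equivalence relation on $A_k(G,S)$ but by choosing $G/S$ as basepoint: composing a functor with $\Oep(G)\to\pi_1(\Oep(G))$ forces inclusions $G/P\to G/Q$ to map to $1\in k^\times$, picking out a distinguished representative in each isomorphism class. Once this is in place, the two groups are matched on the nose rather than up to perturbation. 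So the time you were planning to spend on the ``dictionary of equivalence relations'' should instead go into a clean well-definedness argument for the converse map, and the Sylow-intersection formula above is the efficient way to do it.
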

\begin{proof}

We prove that $A_k(G,S)$ identifies with
$\Hom(\pi_1(\Oep(G)),k^\times)$, by observing that there are
canonical group homomorphisms in both directions, that we check are well defined and inverses to each other.
Recall the bijection $$\Hom_{\Oep(G)}(G/P,G/Q) \cong \{ g \in G | P^{g} \leq Q\}/Q$$
described in \S\ref{cat-subsec},
and that, by \S\ref{pi1-subsec}, we have an isomorphism of sets
which is an
isomorphism of abelian groups, where the group structure on
the left is pointwise multiplication in the target, and $\Rep$ means isomorphism classes of functors.
Up to equivalence of categories (which does not change $\Rep$) we can furthermore replace $\Oep(G)$ by the
equivalent full subcategory  $\bO_S^*(G)$ with objects $G/P$ for $1< P \leq S$, for our
fixed Sylow $p$--subgroup $S$ (see \S\ref{pi1-subsec}).

We are thus just left with verifying that isomorphism classes of functors $\bO^*_S(G) \to
k^\times$ agree with the group $A_k(G,S)$ that Balmer introduced:
Given $\phi \in A_k(G,S)$ define 
$\Phi\co  \bO_S^*(G) \to k^\times$ by sending a morphism $G/Q
\xrightarrow{[g]} G/Q'$
to $\phi(g)$.
 This is well defined, since replacing $g$ by $gq$, for $q \in Q$, yields
 $\phi(gq) = \phi(g)\phi(q) =\phi(g)$ by (WH3) and (WH1).
 It is likewise a functor: 
By  (WH1), $\Phi( [\id_{G/Q}]) = \phi(1) = 1$ and given a composite $G/Q \xrightarrow{[g]}
 G/Q' \xrightarrow{[h]} G/Q''$ we have $Q^g \leq Q' \leq S$ and $(Q')^h \leq
 Q'' \leq S$, so 
$$  S^{gh} \cap S^h \cap S = (S^g \cap S)^h \cap S \geq (Q^g)^h \cap S
= (Q^g)^{h} > 1$$
Hence by (WH3), writing composition in categories from right to left,
$$\Phi( [h] \circ [g] ) = \Phi([gh]) = \phi(gh) = \phi(g)\phi(h)
= \phi(h)\phi(g) = \Phi([h])\Phi([g])$$ as wanted. 

Conversely given a functor $\Phi\co \bO_S^*(G) \to k^\times$, up to isomorphism, we construct a
weak homomorphism $\phi\co G \to k^\times$ as follows:
By \S\ref{pi1-subsec}, $\Phi$ is isomorphic to a unique functor
$\tilde \Phi \co \bO_S^*(G) \to k^\times$ that factors through
$\bO_S^*(G) \to \pi_1(\bO_S^*(G))$, with the model for $\pi_1(\bO_S^*(G))$ of
\S\ref{pi1-subsec}. This model in
particular sends morphisms induced by inclusions to the identity. We may without
restriction replace $\Phi$ by this functor $\tilde \Phi$. Set 
$$\phi(g) =\tuborg \Phi(G/({}^gS \cap S) \xrightarrow{[g]} G/(S\cap S^{g}))
&\mbox{if } S \cap S^{g} \neq 1\\
1 & \mbox{otherwise} \sluttuborg 
$$
It is clear that (WH1) and (WH2) are satisfied. For (WH3) suppose that $S\cap S^h \cap S^{gh} \neq 1$
and consider the diagram
$$\xymatrix@R-7pt{ G/({}^{gh}S\cap{}^gS \cap S) \ar[dr]_{[g]} \ar[rr]^{[gh]} & & G/(S \cap
  S^h \cap S^{gh}) \\ 
 & G/({}^hS\cap S \cap S^g) \ar[ur]_{[h]}& }$$
where the top map is the quotient of $G/({}^{gh}S\cap S)
\xrightarrow{[gh]} G/(S \cap S^{gh})$, and similarly for the two other
maps. Hence applying $\Phi(-)$ to this diagram, and using that
morphisms $G/Q \to G/Q'$ induced by inclusions $Q \leq Q'$ go to the identity we see that
$\phi(gh) = \Phi( [gh]) = \Phi(h) \Phi(g) = \phi(h) \phi(g) =\phi(g)\phi(h)$ as
wanted. 

As we have now given maps between $A_k(G,S)$ and
$\Rep(\bO_S^*(G),k^\times)$ that are group
homomorphisms under pointwise multiplication in $k^\times$, and
mutual inverses, we have finished the proof.
\end{proof}

\begin{rem}\label{partial-groups-rem}
Another perspective on ``weak homomorphisms'' can be given by
showing that they correspond to morphisms of partial groups in the
sense of Chermak \cite{chermak13} from a locality of $G$ based on
all non-trivial subgroups $p$--subgroups to $k^\times$.
\end{rem}

\section{Fundamental groups of orbit and fusion categories}\label{fundgrp-sec}
In this section we describe how to calculate and manipulate our basic invariants
$\pi_1(\Oep(G))$ and $\pi_1(\calF_p^*(G))$, and related groups. In \S\ref{fund-orbit-subsec} we establish basic
properties of $\pi_1(\Oep(G))$ and establish Corollary~\ref{brown} and Theorem~\ref{sequence}. In \S\ref{fund-orbit-subsecII} we expand on its
properties and in \S\ref{fund-fusion-subsec} we carry out a similar
analysis for $\pi_1(\calF_p^*(G))$. In \S\ref{higher-homotopy}
we look at higher homotopy groups---these occur naturally in the
analysis, even if one is ultimately only interested in $\pi_1$. Some results are stated for an arbitrary
collection of $p$--groups $\calC$, appealing to
Appendix~\ref{propagating-sec} to get minimal
hypothesis on $\calC$---the reader may take $\calC =\calS_p(G)$ at
first reading. We refer to Appendix~\ref{propagating-sec} for much additional
information.
The categories discussed were introduced in \S\ref{cat-subsec}.

Recall that $H_{p'}$ means the quotient of $H$ by the
subgroup generated by elements of $p$--power order (so $H_{p'} = H/O^{p'}(H)$ when $H$ is finite). 
For a fixed Sylow $p$--subgroup $S$ and a collection $\calC$, set
\begin{equation}\label{pembeddedC}
G_{0,\calC} =
  \langle N_G(Q) | Q \leq S\mbox{, }Q \in \calC \rangle
  \mbox{\,\,\,\,\,  and \,\,\,\,\,  }\calC_0 = \{ Q \in \calC  |Q \leq
  G_{0,\calC}\}.
\end{equation}
Hence $\calC_0$ is a collection in $G_{0,\calC}$, and when $\calC  =
  \calS_p(G)$,  $G_{0,\calC} = G_0$ of 
  \eqref{pembeddeddef}. A $p$--subgroup is called
 {\em $p$--essential} if
  $\calS_p(N_G(P)/P)$ is disconnected (hence non-empty). See
  \S\ref{components-subsec} for an elaboration. 

\subsection{Fundamental groups of orbit categories: proof of
  Corollary~\ref{brown} and Theorem~\ref{sequence}}\label{fund-orbit-subsec}

\begin{prop}[Bounds on $\pi_1(\bO_\calC(G))$] \label{boundslemma} Let
  $\calC$ be a collection of $p$--subgroups closed under passage to
$p$--essential and Sylow overgroups.
Then the categories $\bO_\calC(G)$ and $\bO_{\calC_0}(G_{0,\calC})$, as well as
$\calT_\calC(G)$ and $\calT_{\calC_0}(G_{0,\calC})$, are equivalent, all connected, and hence $\pi_1(\bO_\calC(G)) \xleftcong
\pi_1(\bO_{\calC_0}(G_{0,\calC}))$ and $\pi_1(\calT_\calC(G)) \xleftcong \pi_1(\calT_{\calC_0}(G_{0,\calC}))$.
Furthermore we have a sequence of surjections
$$ N_G(S)/S \twoheadrightarrow \pi_1(\bO_\calC(G)) \twoheadrightarrow
(G_{0,\calC})_{p'} \twoheadrightarrow G_{p'}$$
and in particular $\pi_1(\bO_\calC(G))$ is a finite $p'$-group. 
\end{prop}
\begin{proof}  
The categories are connected since $S \in \calC$.
That  $\bO_\calC(G)$ and $\bO_{\calC_0}(G_{0,\calC})$  are
  equivalent categories follows from Alperin's fusion theorem: By
  Sylow's theorem they are both equivalent to their full subcategories
$\bO_{\calC,S}(G)$ and $\bO_{\calC_0,S}(G_{0,\calC})$
  with objects
  $G/Q$ and $G_{0,\calC}/Q$ respectively, with $Q \leq S$ and $Q\in \calC$, for some fixed Sylow
  $p$--subgroup $S$. Now Alperin's fusion theorem
  \cite[\S3]{alperin67}, in the version of 
 Goldschmidt--Miyamoto--Puig
\cite[Cor.~1]{miyamoto77},
says that for any conjugation $G/P \xrightarrow{[g]} G/P^g$ with $P,P^g \leq S$ 
we can write $g = g_1 \cdots g_r n$ where $g_i \in N_G(P_i)$ with
$P\leq P_1$, $P_i\leq S$ $p$-essential, $P^{g_1\cdots g_{i}} \leq P_{i+1}$, and $n \in N_G(S)$. In particular $g \in
G_{0,\calC}$. Hence the two subcategories $\bO_{\calC,S}(G)$ and $\bO_{\calC_0,S}(G_{0,\calC})$ are isomorphic, and thus 
 $\bO_\calC(G)$ and $\bO_{\calC_0}(G_{0,\calC})$  are equivalent. The same
argument applies verbatim to $\calT$. (See also
\cite[\S10]{grodal02} for information on versions of the fusion
theorem.) As the categories are equivalent, their fundamental groups are isomorphic.

To see the stated surjections, recall that 
$G = N_G(S)O^{p'}(G)$, for any finite group
$G$, by the Frattini argument \cite[Thm.~I.3.7]{gorenstein68}. In particular we have surjections
$N_G(S)/S \twoheadrightarrow G_{p'}$ and $N_G(S)/S
\twoheadrightarrow (G_{0,\calC})_{p'}$. Thus we have
established the proposition if we show the surjection $N_G(S)/S \twoheadrightarrow
\pi_1(\bO_\calC(G))$. For this, first note that by
Lemma~\ref{lem-add-over} we can without
restriction assume that $\calC$ is closed under passage to all
$p$--subgroups, not just $p$--essential and Sylow subgroups.
Next, recall the model for $\pi_1(\bO_\calC(G))$ of
\S\ref{pi1-subsec}: Take $G/S$ as basepoint and consider
  the functor $\omega\co\bO_{\calC,S}(G) \to \pi_1(\bO_\calC(G))$ from \S\ref{pi1-subsec}
  given by sending $G/P
  \xrightarrow{[g]} G/Q$, for  $P,Q \leq S$, to the loop $G/S
\leftarrow G/P \xrightarrow{[g]} G/Q \to G/S$. We have $\omega(G/P \to
G/Q) = 1$ for $P\leq Q$, and the image of $\omega$ 
generates $\pi_1(\bO_\calC(G))$.  Again by the fusion theorem,
$\pi_1(\bO_\calC(G))$ is in fact generated by
$N_G(P)/P$ for $P \leq S$, $P \in \calC$ (compare also \cite[Pf.~of~Prop.~1.12]{BLO03jams}). Furthermore, the
fusion theorem in Alperin's version \cite[\S3]{alperin67},
says that any conjugation $G/P \xrightarrow{[g]} G/P^g$ 
can be obtained as
a sequence of conjugations by elements of {\em $p$--power} order in
$N_G(P_i)$ for $p$--subgroups for a sequence of $p$--subgroups $P_i
\leq S$ containing a conjugate of $P$, related as above (but now not
necessarily $p$-essential), and an element
in $N_G(S)$.  However, any element $[x] \in N_G(P)/P$ of $p$--power order is trivial in the fundamental
group, since it will be conjugate to an element in $S$, which is zero:
To see this explicitly, pick $g$ which conjugates $\langle x,P\rangle$ into
$S$, then we can consider the diagram
$$\xymatrix{ G/P \ar[d]^{[x]} \ar[r]^{[g^{-1}]} & G/{}^gP  \ar[d]^{[gxg^{-1}]}
  \ar[r] & G/S \ar[d]^{[1]}\\
G/P \ar[r]^{[g^{-1}]} & G/{}^gP \ar[r] &G/S}$$
which commutes since $gxg^{-1} \in S$, hence showing that $G/P \xrightarrow{[x]} G/P$ maps
to the identity in $\pi_1(\bO_\calC(G))$. This shows that $N_G(S)/S \to \pi_1(\bO_\calC(G))$ is surjective as wanted.
\end{proof}

\begin{rem} 
For $\calC = \calA_2(G)$ and $G = \Z/4$,  $|\bO_\calC(G)| \simeq B\Z/2$,
so $\pi_1(\bO_\calC(G))$ is not a $2'$--group.

\end{rem}

\begin{rem}\label{natural-rem}
The maps in Proposition~\ref{boundslemma} are natural in
the collection, so if $\calC' \leq \calC$ we have a
surjection $\pi_1(\bO_{\calC'}(G)) \twoheadrightarrow \pi_1(\bO_{\calC}(G))$,
introducing a natural filtration on $\pi_1(\Oep(G))$.
\end{rem}

\begin{rem}\label{restriction-to-subgroups} For $\calC$ a collection
  of $p$--subgroups of $G$, closed under passage to $p$--overgroups,
 and $N_G(S) \leq H \leq G$, then by Proposition~\ref{boundslemma} we have surjections
$$N_G(S)/S \twoheadrightarrow \pi_1(\bO_{\calC'}(H)) \twoheadrightarrow
\pi_1(\bO_\calC(G))$$
for $\calC'$ the elements of $\calC$ that are subgroups of $H$.
Via Theorem~\ref{main} this can be seen as a refinement
of the fact that restriction to $H$ is injective on Sylow-trivial
modules, as is usually seen via Green correspondence
\cite[Prop.~2.6(a)]{CMN06}.
\end{rem}

\begin{prop}[Quotienting out by
  $p$--torsion]  \label{pprime} For any collection $\calC$ of $p$--sub\-groups, and any
  basepoint $P \in \calC$, the natural surjections of categories
  induce isomorphisms
$$\pi_1(\calT_\calC(G))_{p'}
\xrightcong\pi_1(\bO_\calC(G))_{p'} \mbox{ and }\pi_1(\calF_\calC(G))_{p'}
\xrightcong\pi_1(\bcalF_\calC(G))_{p'}$$
In particular $H^1(\bO_\calC(G);A)  \xrightcong
H^1(\calT_\calC(G);A)$ if $A$ is a $p$--torsion-free abelian group
(e.g., $A = k^\times$) and
$H_1(\calT_\calC(G);A) \xrightcong H_1(\bO_\calC(G);A)$ if $A$ instead
is assumed to be $p$--divisible.
\end{prop}

\begin{proof}Note that we include the basepoint $P$ in the
  formulation, since without further assumptions on $\calC$ the
categories could be disconnected (though this is never the case for the $\calC$ we
are interested in) (see \S\ref{pi1-subsec} for more detail).
We first prove
$\pi_1(\calT_\calC(G))_{p'} \xrightcong\pi_1(\bO_\calC(G))_{p'}$.
Note that $\calT_\calC$ and
$\bO_\calC$ have the same path components, since
the quotient functor is a bijection on objects and a surjection on
morphisms. 
Choose for each
$Q \in \calC$ which lie in the same path component as $P$, a preferred
path in $\calT_\calC$ from $Q$ to $P$, as explained in \S\ref{pi1-subsec},
which induces a corresponding path in $\bO_\calC$.  
Now, the morphisms in $\calT_\calC(G)$ surject onto
the morphisms of $\bO_\calC(G)$, and if two morphisms in $\calT_\calC(G)$ are
mapped to the same morphism in $\bO_\calC(G)$, then they differ by an
automorphism of $p$--power order. Since the morphisms in the
category generate the fundamental group, we conclude that
$\pi_1(\calT_\calC(G))_{p'} \xrightcong
\pi_1(\bO_\calC(G))_{p'}$ as wanted. The case 
$\pi_1(\calF_\calC(G))_{p'}
\xrightcong\pi_1(\bcalF_\calC(G))_{p'}$ is identical.

The consequences for cohomology and homology now follow, using the
universal coefficient theorem and the Hurewicz theorem explained in Section~\ref{pi1-subsec}.
\end{proof}

We have now justified all the ingredients to prove Corollary~\ref{brown}
and Theorem~\ref{sequence} from \S\ref{subgroupcpx-subsection}:

\begin{proof}[\thmunderline{Proof of Corollary~\ref{brown}}] We have isomorphisms
$$\TS \xrightcong H^1(\Oep(G);k^\times) 
\xrightcong H^1(\Tep(G);k^\times) \cong
H^1(|\calS_p(G)|_{hG};k^\times)$$
by Theorem~\ref{main},
Proposition~\ref{pprime}, and Lemma~\ref{thomasonlemma} respectively.
\end{proof}
  
\begin{proof}[\thmunderline{Proof of Theorem~\ref{sequence}}]
As mentioned in \eqref{fibseq} and  \eqref{fundseq}, we have a fibration sequence 
$|\calS_p(G_0)| \to |\calS_p(G)|_{hG} \to BG_0$, whose long exact
sequence in homotopy groups produces the short-exact sequence 
$$1 \to \pi_1(\calS_p(G_0)) \to   \pi_1(|\calS_p(G)|_{hG})  \to G_0 \to
1,$$ as
$BG_0$ has no higher homotopy groups and $|\calS_p(G_0)|$ is
connected.  
The exact sequence of Theorem~\ref{sequence} 
identifies with first four terms of the five-term exact sequence in group cohomology with
$k^\times$-coefficients (see
\cite[{VI.8}]{HS71})  arising from the above group extension, and using $\TS \xrightcong
H^1(|\calS_p(G)|_{hG};k^\times)$ by
Corollary~\ref{brown}. (Alternatively apply the five-term exact sequence
of the fibration \eqref{fibseq} directly.)

The first of the two final statements follows from the exact sequence
in the first part,
together with the Universal Coefficient Theorem and Frobenius
reciprocity (see also Proposition~\ref{prop:homologyseq} below). The
second now also follows, as $|S_p(G)|$ simply connected implies
$G = G_0$ by \eqref{redtoG_0}.  
\end{proof}

Let us also spell out the homology version of Theorem~\ref{sequence}, as
this is often useful in practice.
\begin{prop}\label{prop:homologyseq}
We have an exact sequence
$$  H_2(\Oep(G)) \to H_2(G_0)_{p'} \xrightarrow{\partial}   (H_1(\calS_p(G_0))_{G_0})_{p'}  \to
H_1(\Oep(G)) \to H_1(G_0)_{p'} \to 0$$
(where also $H_1(\calS_p(G_0))_{G_0} \cong
H_1(\calS_p(G))_{G}$ by  \eqref{redtoG_0}  and Frobenius reciprocity).
\end{prop}
\begin{proof}
  The five-term exact sequence in homology with
  $\Z[\frac1p]$--coefficients for the extension \eqref{fundseq} is
  \begin{multline*}
    H_2(\Tep(G); \Z[\tfrac1p]) \to H_2(G_0; \Z[\tfrac1p]) \to
  H_1(\calS_p(G_0);\Z[\tfrac1p])_{G_0} \\ \to H_1(\Tep(G); \Z[\tfrac1p]) \to
  H_1(G_0; \Z[\tfrac1p]) \to 0
  \end{multline*}
  (see again \cite[{VI.8}]{HS71}). We have $H_i(\Tep(G);\Z[\frac1p])
  \xrightcong H_i(\Oep(G))$ for $i=1,2$ by
  Proposition~\ref{pprime-higher} and
  Theorem~\ref{homotopy-finiteness} so we can rewrite the sequence as 
  \begin{multline*}H_2(\Oep(G)) \to H_2(G_0) \otimes  \Z[\tfrac1p] \to
  (H_1(\calS_p(G_0))_{G_0}) \otimes \Z[\tfrac1p] \\ \to H_1(\Oep(G)) \to
  H_1(G_0) \otimes  \Z[\tfrac1p]  \to 0
  \end{multline*}
using also exactness of inverting $p$. 
All terms except the middle are known to be
finite, so the middle is as well, and we can hence replace $(-)\otimes
\Z[\frac1p]$ with $(-)_{p'}$ everywhere as wanted.
\end{proof}

\begin{rem}\label{boundarymap}
  The boundary map $\partial$ in Theorem~\ref{sequence} sends an
  element $$f \in H^1(\calS_p(G_0);k^\times)^{G_0}\hookrightarrow
H^1(\calS_p(G_0);k^\times) \cong \Hom(H_1(\calS_p(G_0)),k^\times)$$ to
$-f_*([\alpha]) \in H^2(G_0;k^\times)$, where $[\alpha] \in
H^2(G_0;H_1(\calS_p(G_0)))$ is the extension class of the
abelianization of the extension
\eqref{fundseq} (see e.g.,  \cite[Thm.~4]{HS53} or
\cite[Thm.~7.3.1]{evens91}).
Dually for $\partial$ in Proposition~\ref{prop:homologyseq}. This
extension class may deserve closer study.
\end{rem}

\begin{rem}\label{amplerem}
Propositions~\ref{boundslemma} and \ref{pprime} should be compared to the situation at the
prime $p$, where
\begin{equation}\label{ampleequation}
  H_*(\Tep(G))_{(p)} \xrightcong H_*(G)_{(p)}
  \end{equation}
by a classical result of Brown \cite[Thm.~X.7.3]{brown94} (translated
via Lemma~\ref{thomasonlemma}). The theory of `ample
collections' describe for which $\calC$ this continues to hold (see
\cite[1.3]{dwyer97}, \cite[\S9]{grodal02}). This combines to show
that $H_1(\Tep(G)) \xrightcong H_1(G)$ if and only if
$H_1(\bO_p^*(G)) \xrightcong H_1(G)_{p'}$. See
Theorem~\ref{homotopy-finiteness} for a more general version.
\end{rem}

\begin{rem}[Equivariant complex line bundles on $\calS_p(G)$] \label{reltobalmerremark}
Via a remark of Totaro \cite[Rem.~2.7]{balmer15},
Corollary~\ref{brown} easily implies a
very recent theorem of Balmer \cite[Thm.~1.1]{balmer15},
that identifies $\TS$ with the $p'$--torsion
part of the group of $G$--equivariant complex line
bundles on $|\calS_p(G)|$, under the assumption that $k$ is algebraically closed.
Namely, in this case we have an embedding
$\Tors_{p'}(\Q/\Z) \cong \mu_\infty(k) \subseteq k^\times$, where
$\Tors_{p'}$ means the subgroup of elements of finite order prime to
$p$, and $\mu_\infty(k)$ are the units of finite order. Hence we have isomorphisms
$$\Tors_{p'}(H^2(|\calS_p(G)|_{hG};\Z))  \xleftcong
\Tors_{p'}(H^1(|\calS_p(G)|_{hG};\Q/\Z)) \xrightcong
H^1(|\calS_p(G)|_{hG};k^\times)$$
where the first is induced by the exact sequence $0 \to \Z \to
\Q \to \Q/\Z \to 0$ and the second uses that
$H_1(|\calS_p(G)|_{hG})$ is finite, e.g., by
\eqref{naivebounds2}, \eqref{pprimequotient}, and \eqref{grothendieck}.
But now the left-hand term identifies with the $p'$--torsion part of
the $G$--equivariant complex line bundles on $|\calS_p(G)|$, as remarked by
Totaro \cite[Rem.~2.7]{balmer15}, and the right-hand side identifies
with $\TS$ by  Corollary~\ref{brown}.
More generally, since \eqref{ampleequation} shows that 
$H^2(|\calS_p(G)|_{hG};\Z)$ is a finite abelian group with
$p$--torsion part $H^2(G)_{(p)}$, we can describe {\em all}
$G$--equivariant complex line bundles on $|\calS_p(G)|$ as
\begin{equation}\label{general-pic}
{\operatorname{Pic}}^G(|\calS_p(G)|) \cong H^2(G)_{(p)} \oplus \TS.
\end{equation}
Here $k$ should in fact just be large enough so that the one-dimensional representations of
$\pi_1(\Oep(G))$ do not depend on $k$, e.g., containing all
$|N_G(S):S|$th roots of unity.
\end{rem}

\subsection{Fundamental groups of orbit categories: further structural
  results}\label{fund-orbit-subsecII}
The fundamental group  $\pi_1(\Oep(G))$ can be described in a purely group theoretic
way.

\begin{thrm}[Group theoretic description of
  $\pi_1(\Oep(G))$] \label{grouptheory-pi1} Let $K_\bO$ be the subgroup of $N_G(S)$
  generated by elements $g \in N_G(S)$ such that there exist
  nontrivial subgroups $1<Q_0, \ldots, Q_r \leq S$  and a factorization  $g = x_1
 \cdots x_r$ in $G$, where $x_i \in O^{p'}(N_G(Q_i))$ 
and $Q_0^{x_1\cdots x_i} \leq Q_{i+1}$ for $i\geq 0$.
Then 
$$N_G(S)/K_\bO \xrightcong \pi_1(\Oep(G))$$
In particular $N_G(S)\cap
  O^{p'}(N_G(P)) \leq \ker\left( N_G(S)
\twoheadrightarrow \pi_1(\Oep(G))\right)$ for $1 < P \leq S$.
\end{thrm}

\begin{proof}
   The proof amounts to a careful study of the proof of Proposition~\ref{boundslemma}.
  The canonical map $N_G(S)  \to \pi_1(\Oep(G))$ is surjective
  by Proposition~\ref{boundslemma}.  We first justify that it
  factors through $K_\bO$, to induce a homomorphism
$\phi\co N_G(S)/K_\bO  \twoheadrightarrow \pi_1(\Oep(G))$.
Let $g \in K_\bO$ be a generator with a decomposition $g = x_1 \cdots
x_r$ as in the
theorem. Then $G/S \xrightarrow{[g]} G/S$ is equal to $G/Q_0 \xrightarrow{[g]} G/Q_0^g$ in $\pi_1(\Oep(G))$, as
inclusions go to the identity, as explained in
Section~\ref{pi1-subsec}. Furthermore, for each $i$, we have a
commutative diagram
$$\xymatrix@R-5pt{G/Q_i \ar[r]^{[x_i]} & G/Q_i\\
G/Q_0^{x_1\cdots x_{i-1}} \ar[r]^{[x_i]} \ar[u]& G/Q_0^{x_1\cdots
  x_{i}} \ar[u]}$$
equating $G/Q_i \xrightarrow{[x_i]} G/Q_i$ and  $G/Q_0^{x_1\cdots x_{i-1}} \xrightarrow{[x_i]}  G/Q_0^{x_1\cdots
  x_{i}}$ in $ \pi_1(\Oep(G))$.
But $G/Q_i \xrightarrow{[x_i]} G/Q_i$ is
 zero in $ \pi_1(\Oep(G))$ as it is a product of elements of $p$--power order
 in $N_G(Q_i)/Q_i$,
 and  $ \pi_1(\Oep(G))$ is a $p'$--group by
 Proposition~\ref{boundslemma}. Hence $[g]$ is zero in  $\pi_1(\Oep(G))$
 as well, and $K_\bO \leq \ker\left( N_G(S)
\twoheadrightarrow \pi_1(\Oep(G))\right)$ as wanted.  The last
statement in the theorem
follows from this.

To show the whole theorem, i.e., that $\phi$ is an isomorphism, we
construct an inverse $\psi\co \pi_1(\Oep(G)) \to N_G(S)/K_\bO$ using
Alperin's fusion theorem.  For this, recall that $\Oep(G)$ is
equivalent to a full category $\bO^*_S(G)$ with objects $G/P$ for $P \leq S$, and
consider $G/P \xrightarrow{[g]} G/P^g$, with $P$, $P^{g}
\leq S$. Now, by Alperin's fusion theorem \cite[\S3]{alperin67}, we can
find $x_1, \ldots, x_r$ and non-trivial $p$--subgroups $Q_1, \ldots,
Q_r \leq S$ such that $P=Q_0 \leq  Q_1$, $x_i \in N_G(Q_i)$
is of $p$--power order, $P^{x_1\cdots x_{i}} \leq Q_{i+1} \leq S$, and
$S^g = S^{x_1\cdots x_r}$, i.e., $g = x_1 \cdots x_r n$ with $n
\in N_G(S)$. (Our $Q_i$ correspond to ``$Q_i \cap P$'', when $1 \leq i \leq
r$, in the notation of  \cite[\S3]{alperin67}.) We claim that the map
sending $[g]$ to $nK_\bO$ gives a well
defined functor $\bO^*_S(G) \to N_G(S)/K_\bO$. 
If $g = x_1 \cdots x_r n = y_1 \cdots y_s m$ then $mn^{-1} =
y_s^{-1}\cdots y_1^{-1}x_1 \cdots x_r$, where the right-hand side lies
in $K_\bO$, so $n$ equals $m$ in the quotient. Also note that changing $g$
by a different coset representative will not change this image. Likewise the map
is a functor since if we have a composite $G/P \xrightarrow{[g]} G/P^g
\xrightarrow{[h]} G/P^{gh}$ with  $g = x_1 \cdots x_r n$ and $h = y_1
\cdots y_s m$, then $gh = x_1 \cdots x_r n y_1 n^{-1}ny_2\cdots y_{s-1}n^{-1}ny_sn^{-1}nm$ and hence is sent to $nm$ as wanted.
By the universal property of the fundamental group (see \S\ref{pi1-subsec}), we hence get an
induced group homomorphism $\psi\co\pi_1(\Oep(G)) \to N_G(S)/K_\bO$, which is
clearly a left and a right inverse to $\phi$, as both maps
commute with the surjection from $N_G(S)$.
\end{proof}
\begin{rem} Let us briefly discuss the assumptions in
  Theorem~\ref{grouptheory-pi1}. First note that the relationship between the
subgroups $Q_i$ is an important part of the statement, i.e., we cannot
just consider the bigger subgroup $N_G(S) \cap \langle O^{p'}(N_G(P)) |
1< P\leq S\rangle$ as examples such as  $\sym_7$ at $p=3$ show. 
Second, one could ask if the
  elements $x_i$ could be assumed
  to lie in $N_G(S)$, i.e., if the subgroups of the `in particular'
  generate the kernel. This often holds but fails e.g., for $G_2(5)$ at $p=3$, as we shall analyze in connection with the
Carlson--Th\'evenaz conjecture, Theorem~\ref{carlsonthevenazconj} (see
Proposition~\ref{G_25p3} and its proof).
Finally we remark that one cannot just assume that all
$Q_i$ are $p$--essential, i.e., $N_G(Q_i)$ cannot be replaced by
$O^{p'}(N_G(Q_i))$ in the Goldschmidt--Miyamoto--Puig
\cite[Cor.~1]{miyamoto77} version of the fusion theorem, as $G_2(5)$ at
$p=3$ is again a counterexample by
Example~\ref{G25-counterex}. Which subgroups are needed is analyzed in
detail in Appendix~\ref{propagating-sec} in terms
of homotopy properties of the collection $\calC$. 
\end{rem}

\begin{rem}\label{transportersystem-rem}
The subcategories of $\Tep(G)$ and $\Oep(G)$ obtained as
preimages of subgroups of $\pi_1(\Oep(G))$ can be thought of as
subcategories ``of $p'$--index'' analogous to the results in
\cite[\S5]{bcglo2} on fusion system and linking systems, and the above
proof may be compared to \cite[Pf.~of~Prop.~5.2]{bcglo2}. Furthermore $\Tep(G)$ is an example of an abstract
  transporter system as defined in \cite[Def.~3.1]{OV07}, so in light of
  \eqref{transportfundgrp}, it would be interesting
  to further understand the group theoretic significance of these subcategories.
\end{rem}

Let us describe the relationship between $\pi_1(\Oep(G))$
and $G_{p'} = G/O^{p'}(G)$ in simple cases.
\begin{cor}\label{sc-fundgrp}
 If $|\calS_p(G)|$ is simply connected
then $$\pi_1(\Tep(G)) \xrightcong G \mbox{\,\,\,  and \,\, }
\pi_1(\Oep(G))
\xrightcong G_{p'}.$$
If we only assume that $|\calS_p(G_0)|$ is simply connected, then $\pi_1(\Tep(G)) \xrightcong G_0$ and
$\pi_1(\Oep(G))
\xrightcong (G_0)_{p'}$.
\end{cor}
\begin{proof}
By \eqref{fundseq} we have an exact sequence
$1 \to \pi_1(\calS_p(G_0)) \to \pi_1(\Tep(G)) \to G_0 \to 1$.
This gives the statements about $\Tep(G)$ and $\Tep(G_0)$ and
the statements about $\pi_1(\Oep(G))$ and $\pi_1(\Oep(G_0))$ now
follows using Propositions~\ref{boundslemma} and \ref{pprime}. 
\end{proof}

The next corollary recovers and extends ``classical'' facts about
$\TS$, by Carlson, Mazza, Nakano, and Th\'evenaz, via
Theorem~\ref{main}. (See
Remark~\ref{stdprop-hist} below for some historical references.)
\begin{cor}[Basic calculations of $\TS$ via $\pi_1(\Oep(G))$] \label{standardprop} $\mbox{ }$
\begin{enumerate}
\item \label{nonpreduced} If for all $g \in G$,  $S\cap S^g \neq 1$
  (e.g., if $O_p(G) \neq 1$) then $\pi_1(\Oep(G)) \xrightcong
  G_{p'}$ and hence $\TS \cong \Hom(G,k^\times)$.

\item \label{cyclic} If $S$ has $p$--rank one, then  
 $\pi_1(\Oep(G)) \xrightcong
  (G_0)_{p'}$, with $G_0 = N_G(\Omega_p(Z(S)))$, and hence $\TS \cong
  \Hom(N_G(\Omega_p(Z(S))),k^\times)$, with $\Omega_p(Z(S))$ 
  the elements of order at most $p$ in $Z(S)$.

\item \label{ti} If $G$ is a trivial intersection (T.I.) group, i.e.,
  a group where unequal Sylow $p$-subgroups intersect trivially, then
  $G_0 = N_G(S)$,  
 $\pi_1(\Oep(G)) \cong
  N_G(S)/S$, and hence
$\TS \cong
  \Hom(N_G(S),k^\times)$.

\item\label{products} If  $G \trianglelefteq H \circ_C K$ is a normal
  subgroup of a central product, with
  $p | |H\cap G|$ and $p | |K\cap G|$, then $\pi_1(\Oep(G)) \xrightcong G_{p'}$ and
  $\TS \cong \Hom(G,k^\times)$.

\item\label{wreath} If $ G = H \wr \sym_n$ is a wreath product, with 
 $p | |H|$ and $n \geq 2$, then $\pi_1(\Oep(G)) \xrightcong G_{p'}$ and 
  $\TS \cong \Hom(G,k^\times)$.
\end{enumerate}
\end{cor}
\begin{proof}
\eqref{nonpreduced}: Suppose $G/P \xrightarrow{[g]} G/Q$ goes to $1 \in
G_{p'}$, so $g$ can be written as a product in $G$ of elements of
$p$-power order. We can thus without loss of generality assume that
$g$ is itself of $p$--power order. Furthermore, by changing $g$ up to
conjugacy we can assume that $Q \leq S$, and it is enough to prove
that  $G/S \xrightarrow{[g]} G/S^{g}$ is the identity in
$\pi_1(\Oep(G))$. However here it factors as $G/S \leftarrow G/(S\cap
S^{g}) \xrightarrow{[g]}  G/(S\cap
S^{g}) \to G/S^{g}$,which is the identity in $\pi_1(\Oep(G))$, as $g$
has $p$--power order in $N_G(S\cap S^{g})/S\cap S^g$.

\eqref{cyclic}: Since $|\cA_p(G)| = G/N_G(\Omega_p(Z(S)))$, a discrete
$G$--space we have $G_0  = N_G(\Omega_p(Z(S)))$. In particular $O_p(G_0)
\neq 1$ and the claim follows from \eqref{nonpreduced}, using Proposition~\ref{boundslemma}.

\eqref{ti}: Since $\calS_p(G)$ is $G$--homotopy equivalent to the
collection of non-trivial Sylow-intersections, e.g., by
\cite[Thm.~1.1]{GS06}, $|\calS_p(G)| \simeq G/N_G(S)$ and $G_0 =
N_G(S)$ and the claim again follows from \eqref{nonpreduced}.

\eqref{products}:
Let $S$ be a Sylow $p$--subgroup of $G$, and note that $Z(S)\cap H
\neq 1$, by a basic property
of $p$--groups, and similarly for $K$. We claim that $O^{p'}(G)\cap
N_G(S)$ is generated by $O^{p'}(N_G(Z(S)\cap K))\cap N_G(S)$ and
$O^{p'}(N_G(Z(S) \cap H)) \cap N_G(S)$, which will finish the proof
by the `in particular' part of Theorem~\ref{grouptheory-pi1}.
To see this, set $\tilde H = H \cap G$, $\hat H = O^{p'}(\tilde H S)
\cap \tilde H$, and define $\tilde K$ and $\hat K$ symmetrically. Note
that for $h \in H$, $s \in S$, we have $hsh^{-1}s^{-1} \in \tilde H$ so
$hsh^{-1} \in \tilde H S$ and hence $hsh^{-1} \in O^{p'}(\tilde H S)$
as $s$ is of $p$--power order.
Thus $hsh^{-1}s^{-1} \in \hat H$ and ${}^hS \leq \hat H S$.
In particular ${}^h(\tilde H S) \leq \tilde H({}^hS) \leq
\tilde H S$ and hence  ${}^h(O^{p'}(\tilde H S)) \leq O^{p'}(\tilde H
S)$,  so $\hat H
\trianglelefteq H$.
As the similar statements hold for $\hat K$ we conclude that $\hat H \hat K
\trianglelefteq HK$ and $\hat H \hat K S \trianglelefteq HK$.
In particular $\hat H \hat K S = O^{p'}(G)$ as it, by the above, is a normal subgroup
generated by elements of $p$--power order, and of $p'$ index. The
factorization  $O^{p'}(G) = \hat H \hat K S$ also
holds in $N_G(S)$: If $hk \in \hat H \hat K \cap
N_G(S)$, then ${}^kS = S^h \leq \hat H S \cap \hat K S$. But
$\hat H S \cap \hat K S = (\hat H \cap \hat K)S \leq CS$, since if $x
\in \hat H S \cap \hat K S$ of order prime to $p$ then $x \in \hat H
\cap \hat K \leq C$, as $\hat H$ and $\hat K$ are normal. Hence $h,k
\in N_G(S)$, so $$h \in \hat H \cap N_G(S) \leq O^{p'}(N_G(Z(S)\cap
K))\cap N_G(S),$$ and symmetrically for $k$ as wanted.

\eqref{wreath}: Suppose $G = H \wr \sym_n$. Let $H_i$ denote the $i$th copy of $H$
and $S_i$ its Sylow $p$--subgroup, and let $\Delta$ denote Sylow $p$--subgroup of $H$
embedded diagonally. Since $\sym_n \leq C_G(\Delta)$, $O^{p'}(\sym_n)
\leq O^{p'}(N_G(\Delta))$. Likewise since $H_i \leq C_G(H_j)$ for $i
\neq j$, $O^{p'}(H_i) \leq O^{p'}(N_G(S_j))$. But

\begin{eqnarray*} N_G(S) \cap
O^{p'}(G)  &=&  N_G(S) \cap (\prod_i O^{p'}(H_i) \semi O^{p'}(\sym_n))
  \\
&=& (\prod_i N_G(S)\cap
O^{p'}(H_i)) \semi (N_G(S) \cap O^{p'}(\sym_n)).
\end{eqnarray*}
So the result again follows from the `in particular' in
Theorem~\ref{grouptheory-pi1}, as $N_G(S) \cap
O^{p'}(G)$ is generated by groups that are trivial in $\pi_1(\Oep(G))$
by the above rewriting.
\end{proof}

\begin{rem} \label{stdprop-hist}
  The statement about $\TS$ in \eqref{nonpreduced} above is
  \cite[Lem.~2.6]{MT07}. (Note also
  that if $O_p(G) \neq 1$,  $|\calS_p(G)|$ is contractible by
\cite[Prop.~2.4]{quillen78}, and compare also to Proposition~\ref{onedim}.)
For the statement about  $\TS$ in \eqref{cyclic} see \cite[Lem.~3.5]{MT07}
(noting that the proof there works also for $S$ of
  $p$--rank one, not just cyclic) and
compare also \cite[\S6]{CT15}. For \eqref{ti} see \cite[Prop.~2.8 and
Rem.~2.9]{CMN06} and also
\cite[\S3.3]{LM15sporadic}. The statement about $\TS$ in \eqref{products}
is a slight strengthening of the recent \cite[Thm.~2.4]{CMN16}. Note furthermore that
$|\calS_p(H \times K)|$ is simply connected unless $H$ and $K$ both
  contain strongly $p$--embedded subgroups by \cite[Prop.~2.6]{quillen78} and Lemma~\ref{connectivitylemma}.
  \end{rem}

\begin{cor}[Subgroups of $p'$--index] \label{pprimesubgroups} If $H \triangleleft G$ is of $p'$ index,
  there is a diagram with exact rows
  \begin{equation}\label{pprime-diagram}
\vcenter{\xymatrix{ 1 \ar[r] & \pi_1(\Oep(H)) \ar[r] \ar@{->>}[d] & \pi_1(\Oep(G))
  \ar[r] \ar@{->>}[d] & G/H \ar[r] \ar@{=}[d] & 1 \\
1 \ar[r] &  H_{p'} \ar[r]  & G_{p'}
  \ar[r]  & G/H \ar[r] & 1 }}
\end{equation}
In particular $\pi_1(\Oep(H))
  \xrightcong H_{p'}$ if and only if  $\pi_1(\Oep(G))
  \xrightcong G_{p'}$, an isomorphism $H_1(\Oep(H)) \xrightcong
  H_1(H)_{p'}$ implies $H_1(\Oep(G)) \xrightcong
  H_1(G)_{p'}$, and $T_k(H,S) \cong \Hom(H,k^\times)$ implies
  $\TS \cong \Hom(G,k^\times)$.
\end{cor}

\begin{proof}
Note that $|\calS_p(H)| = |\calS_p(G)|$.
Hence taking Borel constructions and using \eqref{grothendieck}
produces the following diagram, where the rows are fibration sequences
\begin{equation}\label{pprime-fibration}
\vcenter{\xymatrix{ |\Tep(H)| \ar[r] \ar[d] &
 |\Tep(G)| \ar[r] \ar[d] & B(G/H) \ar@{=}[d] \\
BH \ar[r]& BG \ar[r] & B(G/H)}}
\end{equation}
Diagram \eqref{pprime-diagram} is the bottom of the associated long exact
sequence of homotopy groups, after dividing out elements of finite $p$-power order.
Diagram \eqref{pprime-diagram} implies the consequence about
$\pi_1$ by the $5$-lemma. The homology
consequence also follows from the $5$-lemma, now applied to the
associated ladder of exact sequences coming from \eqref{pprime-diagram}
\begin{equation*}
  \xymatrix{
        H_2(G/H) \ar[r] \ar@{=}[d]& H_1(\Oep(H))_{G/H} \ar[r] \ar@{->>}[d] & H_1(\Oep(G))
  \ar[r] \ar@{->>}[d] & H_1(G/H) \ar[r] \ar@{=}[d] & 0 \\
    H_2(G/H) \ar[r] &  (H_1(H)_{p'})_{{G/H}} \ar[r]  & H_1(G)_{p'}
  \ar[r]  & H_1(G/H) \ar[r] & 0 }
\end{equation*}
Finally the consequence about $\TS$ follows from the dual sequence in cohomology
with $k^\times$--coefficients, and using Theorem~\ref{main}.
\end{proof}
\begin{rem}

The converse to the last two implications in Corollary~\ref{pprimesubgroups}  fail for
  $G = \SL_2(\F_8)\semi C_3$, with $C_3$ acting via the Frobenius, 
  $H= \SL_2(\F_8)$, and $p=2$, where $\pi_1(\bO^*_2(G)) = C_7 \semi C_3$ and
  $\pi_1(\bO^*_2(H)) = C_7$ (see also Remark~\ref{endoessential-remark}).
  Corollary~\ref{pprimesubgroups} says that a full calculation of
  $\pi_1(\Oep(G))$ not only allows the determination of the
  Sylow-trivial modules for $G$ but also those of its normal subgroups
  of $p'$ index, which is not possible from just knowing $H_1$. We illustrate this in \S\ref{symmetricgroups} with
  the symmetric and alternating groups, in fact correcting a small
  mistake in the literature.
\end{rem}

\begin{cor}[Central $p'$--extensions]\label{pprimecentral}
Suppose $Z$ is a central $p'$--subgroup of $G$. Then there is
a diagram of spaces, with horizontal maps fibration
sequences
$$
\xymatrix{ BZ \ar[r] \ar[d] & |\Tep(G)| \ar[r] \ar[d] & |\Tep(G/Z)| \ar[d] \\
  BZ \ar[r] & BG_0 \ar[r] & B(G_0/Z)}$$ 
and hence a ladder of exact sequences
$$\xymatrix@C-6pt{
H_2(\Oep(G)) \ar[r] \ar[d] &H_2(\Oep(G/Z)) \ar[r]^(.7){\partial} \ar[d] & Z \ar@{=}[d] \ar[r] & H_1(\Oep(G)) \ar@{->>}[d]^\phi
\ar[r] & H_1(\Oep(G/Z)) \ar[r] \ar@{->>}[d]^{\bar \phi} & 0 \\
H_2(G_0)_{p'} \ar[r] & H_2(G_0/Z)_{p'} \ar[r]^(.7){\partial'} & Z \ar[r] & H_1(G_0)_{p'} \ar[r] & H_1(G_0/Z)_{p'}
\ar[r] & 0}$$
In particular $ 0 \to \im(\partial')/\im(\partial) \to
\ker(\phi) \to \ker(\bar \phi) \to 0$ is exact. And if $H_2(\Oep(G/Z)) \to H_2(G/Z)_{p'}$ is surjective,
e.g., if $\oplus_{[P] \in \calS_p(G)/G}H_2(N_{G/Z}(P))_{p'} \twoheadrightarrow H_2(G/Z)_{p'}$,
then $\ker(\phi) \xrightcong \ker(\bar \phi)$, i.e., $G$ and
$G/Z$ have isomorphic groups of ``truly exotic'' Sylow-trivial
modules, as defined after Theorem~\ref{sequence}.
\end{cor}
\begin{proof}
Since $Z$ is central, we have a principal fibration sequence
$$BZ \to (|\calS_p(G)| \times EG)/G \to (|\calS_p(G)| \times
E(G/Z))/G$$
with the canonical action of $BZ$ on  $(|\calS_p(G)| \times EG)/G$
 (see e.g., \cite[V.3]{GJ99}).
This fibration sequence 
identifies with the top fibration
sequence of the corollary using
\eqref{grothendieck} and the fact that the projection map $\calS_p(G)
\xrightcong \calS_p(G/Z)$ is a bijection. The map to the standard bottom fibration
sequence follows as $\Tep(G)$ is equivalent to $\Tep(G_0)$ by
Proposition~\ref{boundslemma} and $(G/Z)_0 = G_0/Z$. The ladder of exact sequences is now
the five-term exact sequence in homology of a fibration with
coefficients in $\Z[\frac1p]$, using
that we can replace $\Tep$ by $\Oep$ when considering
$p'$--coefficients by Proposition~\ref{pprime-higher} and
Theorem~\ref{homotopy-finiteness}. The stated consequences follow from
the snake lemma.
\end{proof}
\begin{rem} Note that the kernel of $\phi$ is described via
 Proposition~\ref{prop:homologyseq}, and this can be used for an alternative
  derivation of the last part of Corollary~\ref{pprimecentral}.
\end{rem}

\begin{example} Let us illustrate Corollary~\ref{pprimecentral} by the
  symmetric groups: Recall that by homological stability, $\F_2 \cong H_2(\sym_4)
  \xrightcong H_2(\sym_{n+4})$  for any $n\geq 0$  \cite[Thm.~2]{kerz05}, so
  $H_2(N_{\sym_n}((1\cdots p))) \twoheadrightarrow H_2(\sym_n)$ when
  $n \geq p+4$. Hence $T_k(2^{\pm}\sym_n,S) \xrightcong
  T_k(\sym_n,S)$ for $n \geq p+4$ and $p$ odd, by
  Corollary~\ref{pprimecentral}, where $2^{\pm}\sym_n$ denotes
  the two double covers of $\sym_n$, following $\atlas$ \cite{CCNPW85} notation.
  This was first proved in
  \cite[Thm.~B(2)]{LM15schur} (under an algebraically closed assumption), by lifting to characteristic zero and
  examining  the list of possible characters.
\end{example}

\begin{rem} In  \cite[Thm.~1.1]{LT17} a reduction of the problem describing Sylow-trivial modules
  for arbitrary $p'$--extensions to that
  of central $p'$--extensions, at least as far as obtaining an upper
  bound on Sylow-trivial modules of the extension. The proof relies,
  through reference to earlier results, on the classification of finite simple
  groups. It would be interesting to rework and extend this result
  in light of the methods of the present paper.
\end{rem}
\subsection{Fundamental groups of fusion categories: proof of Theorem~\ref{fusionthm}} \label{fund-fusion-subsec}
We now analyze 
$\pi_1(\calF_p^*(G))$ and related categories,  and use this to prove
Theorem~\ref{fusionthm}, and set the stage for later
calculations involving the centralizer decomposition,
Theorem~\ref{centralizerthm}.
The starting point 
is bounds on $\pi_1(\calF_p^*(G))$ analogously to
\eqref{naivebounds2} for
$\pi_1(\Oep(G))$:
\begin{equation}
N_G(S)/C^{p'}(N_G(S)) \twoheadrightarrow \pi_1(\calF_p^*(G)) \twoheadrightarrow
G_0/C^{p'}(G_0) \twoheadrightarrow G/C^{p'}(G)
\end{equation}
where
\begin{equation} C^{p'}(G) = \langle x \in G | \, p \mbox{ divides }
  |C_G(x)|\rangle,
\end{equation}
the group generated by ``positive defect''
elements. This is a special case of Proposition~\ref{fusion-boundslemma} below.

Recall that a $p$--subgroup $P$
is called  {\em $p$--centric} if $Z(P)$ is a Sylow $p$--subgroup of
$C_G(P)$ (hence $C_G(P) \cong Z(P) \times O_{p'}(C_G(P))$ by
Schur-Zassenhaus). It is called {\em $\calF$--essential} if it is not Sylow and
$W_0PC_G(P)/P$ is a proper subgroup of $W =N_G(P)/P$, or equivalently if it is $p$--centric and
$\calS_p(N_G(P)/PC_G(P))$ is disconnected (see \S\ref{modelsF} and Lemma~\ref{lem:Fessentialdef}). Continuing the notation from \eqref{pembeddedC}, the analog of
Proposition~\ref{boundslemma} states:

\begin{prop}[Bounds on $\pi_1(\calF_\calC(G))$]\label{fusion-boundslemma}
Suppose that $\calC$ is a collection of $p$--subgroups closed under
passage to $\calF$--essential and Sylow overgroups. Then
$\calF_\calC(G)$ is equivalent to $\calF_{\calC_0}(G_{0,\calC})$ and
with $\bar C^{p'}_\calC(H) = \langle
PC_H(P)| P \leq H, P \in \calC\rangle$, for $S \leq H \leq G$, we have canonical surjections
\begin{multline*}  N_G(S)/SC_G(S) \twoheadrightarrow N_G(S)/\bar C^{p'}_\calC(N_G(S)) \xrightcong
  \pi_1(\bar \calF_\calC(N_G(S))) \\ \twoheadrightarrow \pi_1(\bar \calF_\calC(G))
\twoheadrightarrow G_{0,\calC}/\bar C^{p'}_\calC(G_{0,\calC})
\end{multline*}
If $\calC$ contains all minimal $p$--centric
subgroups, then $\pi_1(\calF_\calC(G))\xrightcong
\pi_1(\bcalF_\calC(G))$, and thus  $\pi_1(\calF_\calC(G))$ is a finite $p'$--group. 
\end{prop}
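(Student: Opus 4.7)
The plan is to mimic the proof of Proposition~\ref{boundslemma}, now dualized to the fusion--orbit category $\bcalF_\calC$. The chain of five groups splits into three pieces. The leftmost quotient $N_G(S)/SC_G(S) \twoheadrightarrow N_G(S)/\bar Z_\calC(N_G(S))$ is immediate since $S \in \calC$ (as $\calC$ is closed under $p$--overgroups) forces $SC_G(S) = SC_{N_G(S)}(S) \leq \bar Z_\calC(N_G(S))$. For the middle isomorphism, the key point is that $S$ is the unique Sylow $p$--subgroup of $N_G(S)$, so every $P \in \calC$ with $P \leq N_G(S)$ already satisfies $P \leq S$. Taking $G/S$ as basepoint and using the inclusions $\iota_P\colon P \into S$ as spanning tree, any morphism $g\colon P \to Q$ in $\bcalF_\calC(N_G(S))$ (with $g \in N_G(S)$) yields a loop at $S$ equal to the automorphism of $S$ represented by $g$, via the identity $\iota_Q \circ g = g \circ \iota_P$ of morphisms $P \to S$ in $\bcalF_\calC$. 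This produces a surjection $N_G(S)/SC_G(S) \twoheadrightarrow \pi_1(\bcalF_\calC(N_G(S)))$, which factors through $N_G(S)/\bar Z_\calC(N_G(S))$ because each $PC_{N_G(S)}(P)$ gives identity automorphisms of $P$ in $\bcalF$. The inverse is induced by the reduction functor from $\bcalF_\calC(N_G(S))$ to the one--object category with automorphism group $N_G(S)/\bar Z_\calC(N_G(S))$, sending $g\colon P \to Q$ to $g\bar Z_\calC(N_G(S))$; this is well defined precisely because both $Q \leq S \leq \bar Z_\calC$ and $C_{N_G(S)}(P) \leq \bar Z_\calC$.

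For the two remaining surjections, $\pi_1(\bcalF_\calC(N_G(S))) \twoheadrightarrow \pi_1(\bcalF_\calC(G))$ is induced by the inclusion functor, and is surjective because the composite $N_G(S)/SC_G(S) \to \pi_1(\bcalF_\calC(G))$ already is: combine Proposition~\ref{boundslemma}'s surjection $N_G(S)/S \twoheadrightarrow \pi_1(\bO_\calC(G))$ with the quotient $\pi_1(\bO_\calC) \twoheadrightarrow \pi_1(\bcalF_\calC)$ (from $\bO_\calC \to \bcalF_\calC$), noting that $C_G(S)$ lies in the kernel of the latter. For the final map $\pi_1(\bcalF_\calC(G)) \twoheadrightarrow G_{0,\calC}/\bar Z_\calC(G_{0,\calC})$, I would first invoke Alperin's fusion theorem to identify $\bcalF_\calC(G) \simeq \bcalF_\calC(G_{0,\calC})$ (as in Proposition~\ref{boundslemma}), then apply the analogous reduction functor. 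Surjectivity follows from a Frattini argument: $\bar Z_\calC(G_{0,\calC}) \trianglelefteq G_{0,\calC}$ contains the Sylow $p$--subgroup $S$, so $G_{0,\calC} = \bar Z_\calC(G_{0,\calC}) \cdot N_G(S)$.

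For the second part, $\pi_1(\bcalF_\calC)$ is a $p'$--group as a quotient of the $p'$--group $\pi_1(\bO_\calC)$ from Proposition~\ref{boundslemma}, so by Proposition~\ref{pprime} the surjection $\pi_1(\calF_\calC) \twoheadrightarrow \pi_1(\bcalF_\calC)$ becomes an isomorphism once we show its kernel, generated by images of $\Inn(P)$ for $P \in \calC$, is trivial. Conjugating in the fundamental groupoid and using that every $P \in \calC$ is $\calF$--conjugate to some $P' \leq S$, combined with the commutation relation $\iota_{P'} \circ c_p|_{P'} = c_p|_S \circ \iota_{P'}$ for $p \in P'$, this reduces to showing $[c_s|_S] = 1$ in $\pi_1(\calF_\calC, S)$ for every $s \in S$. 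For this I would take $Q = C_S(s)$ and invoke two standard Sylow facts: (i) $C_S(s)$ is a Sylow $p$--subgroup of $C_G(s)$; (ii) any Sylow $p$--subgroup $P''$ of $C_G(Q)$ has $P''Q$ a $p$--subgroup of $C_G(s)$ containing the Sylow $Q$, forcing $P'' \leq Z(Q)$, so $Q$ is $p$--centric in $G$. Since $\calC$ contains $Q$ and $s \in Z(Q)$ makes $c_s|_Q = \id_Q$, the relation $\iota_Q = c_s|_S \circ \iota_Q$ in $\calF_\calC$ gives $[c_s|_S] = [\iota_Q] \cdot [\iota_Q]^{-1} = 1$ in the fundamental groupoid. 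The main obstacle will be keeping these Sylow--theoretic reductions transparent, and confirming that $\Inn(P)$ for arbitrary $P \in \calC$ really does reduce to the case $\Inn(S)$ via the groupoid conjugation argument.
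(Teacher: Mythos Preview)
Your treatment of the chain of factorizations is essentially the same as the paper's, with only cosmetic differences: where you build an explicit reduction functor to $N_G(S)/\bar Z_\calC(N_G(S))$, the paper argues more slickly by specializing the rightmost surjection to the case $G = N_G(S)$, where source and target coincide and the map is the identity, forcing the middle surjection to be an isomorphism.

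For the second part, however, there is a genuine gap. Your claim (i), that $C_S(s)$ is a Sylow $p$--subgroup of $C_G(s)$ for every $s \in S$, is \emph{not} a standard Sylow fact---it is false in general. This is precisely the ``fully centralized'' condition from fusion system theory, and it typically fails for some $s \in S$. For a concrete counterexample take $G = A_7$, $p = 2$, and $S = \langle (12)(34),\,(13)(24),\,(13)(56)\rangle \cong D_8$ (a Sylow $2$--subgroup with center $\langle (13)(24)\rangle$). Then $s = (12)(34) \in S$ is non-central, so $C_S(s) = \langle (12)(34),(13)(24)\rangle$ has order $4$, whereas $|C_{A_7}(s)| = 24$ has Sylow $2$--subgroup of order $8$. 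Consequently $Q = C_S(s)$ is not $p$--centric here, and your step (ii) breaks down as well.

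The paper repairs exactly this point: before taking $Q = C_S(\,\cdot\,)$ it first replaces $s$ by a $G$--conjugate $s' \in S$ which \emph{is} fully centralized (such a conjugate always exists by Sylow's theorem applied to $C_G(s)$). One then checks---and this is the extra groupoid argument you are missing---that $[c_s|_S]$ and $[c_{s'}|_S]$ are conjugate in $\pi_1(\calF_\calC)$, using Alperin's fusion theorem to realize the $G$--conjugation $s \mapsto s'$ as a composite of $\calF$--automorphisms of subgroups in $\calC$. Once that is in place, your argument for $s'$ goes through verbatim: $Q = C_S(s')$ is $p$--centric, hence lies in $\calC$, and $c_{s'}|_Q = \id_Q$ forces $[c_{s'}|_S] = 1$.
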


\begin{proof}
First note that $ \calF_{\calC_0}(G_{0,\calC}) \to
\calF_\calC(G)$,  and hence also $\bar \calF_{\calC_0}(G_{0,\calC}) \to \bar
\calF_\calC(G)$, are equivalences of categories by Alperin's fusion
theorem, either in the version of  Goldschmidt--Miyamoto--Puig
\cite[Cor.~1]{miyamoto77}, or more generally for
fusion systems \cite[Thm.~I.3.5]{AKO11} (see again \cite[\S10]{grodal02} for a
discussion of versions of the fusion theorem). For the rest of the proof we can hence
without loss of generality assume that  $G = G_{0,\calC}$ so the
claimed sequence becomes
\begin{multline*}  N_G(S)/SC_G(S) \twoheadrightarrow N_G(S)/\bar C^{p'}_\calC(N_G(S)) \xrightcong
  \pi_1(\bar \calF_\calC(N_G(S))) \\ \twoheadrightarrow \pi_1(\bar \calF_\calC(G))
\twoheadrightarrow G/\bar C^{p'}_\calC(G).
\end{multline*}
We model $\pi_1(\bar \calF_\calC(G))$ analogously to the proof of
Proposition~\ref{boundslemma} with generators maps in $\bar \calF_\calC(G)$
between subgroups $P,Q$ of $S$ with $P,Q \in \calC$, via the functor
$\bar \calF_\calC(G) \to \pi_1(\bar \calF_\calC(G))$, taking $S$ as
basepoint, cf.\ again \S\ref{pi1-subsec}. In this notation the right-most epimorphism
$\pi_1(\calF_\calC(G)) \twoheadrightarrow
G/\bar C^{p'}_\calC(G)$ is the map sending $(c_g\co P \to Q)$ to $[g] \in
G/\bar C^{p'}_\calC(G)$. 
Since  $\pi_1(\bar \calF_\calC(G))$ is a
quotient of $\pi_1(\bO_\calC(G))$, we have a surjection $N_G(S) \twoheadrightarrow 
\pi_1(\bar \calF_\calC(G))$ by  
Proposition~\ref{boundslemma}, and by the same argument  $N_G(S) \twoheadrightarrow 
\pi_1(\bar \calF_\calC(N_G(S)))$. We have hence shown that the map
from $N_G(S)$ to all the terms in the sequence is
surjective, so all maps in the sequence are surjections as claimed. Likewise $SC_G(S) \leq
C^{p'}_\calC(N_G(S)) = \langle
PC_{N_G(S)}(P)| P \leq N_G(S), P \in \calC\rangle$, as $S \in \calC$, so the first map is
well-defined.

To finish showing that we have the stated sequence of maps we therefore
just have to show that $\bar C^{p'}_\calC(N_G(S))$ is exactly the kernel of
$N_G(S) \to \pi_1(\bar \calF_\calC(N_G(S)))$. It is in the kernel
since if $g \in
PC_{N_G(S)}(P)$ then we have a commutative diagram
$$\xymatrix{ P \ar[d]^{id} \ar[r] & S \ar[d]^{c_g}\\
 P  \ar[r] & S}$$
in $\bar \calF_\calC(G)$
and hence $c_g\co S \to S$ represents the identity in 
$\pi_1(\calF_p^*(N_G(S)))$. However, then the kernel has to be
exactly $\bar C^{p'}_\calC(N_G(S))$, since taking $G = N_G(S)$, the second and fifth term of the sequence of the proposition agree and the map is the
identity.

To finish the proof assume now that $\calC$ contains all minimal
$p$--centric subgroups. We
will show that $\pi_1(\calF_\calC(G))\xrightcong
\pi_1(\bcalF_\calC(G))$. This in fact follows easily from results about fusion theorems (see
Remark~\ref{bcglo2-remark}), but let us include a stand-alone argument for completeness:
By Proposition~\ref{remove-pione-F} we may
assume that $\calC$ is closed under passage to all $p$--overgroups,
and in particular
contains all $p$--centric subgroups. 
By the fusion theorem,  again, $\pi_1(\calF_\calC(G))$ is
generated by self-maps of elements $P \in \calC$, $P \leq S$, so we just need to see that all elements of $p$--power order in
$\Aut_\calF(P) \cong N_G(P)/C_G(P)$
are trivial in $\pi_1(\calF_\calC(G))$.
We can without loss of generality assume that $N_S(P)$ is a Sylow
$p$--subgroup of $N_G(P)$ (i.e., that $P$ is ``fully $G$--normalized'' in $S$),
$G$--conjugating $P$ if necessary. By
conjugation in $\Aut_\calF(P)$ it is furthermore enough to prove that all elements
in the image of $N_S(P)$ in $\Aut_\calF(P)$ are trivial in
$\pi_1(\calF_\calC(G))$. In $\pi_1(\calF_\calC(G))$,  such elements
are equal to elements of $\Inn(S)
\leq \Aut_\calF(S)$,  since
inclusions are trivial in $\pi_1(\calF_\calC(G))$. The claim is hence reduced to seeing that
$\Inn(S) \leq \Aut_\calF(S)$ map to the identity in $\pi_1(\calF_\calC(G))$. 
Now, any element $x \in S$ is $G$--conjugate to an element $x' \in S$ such that
$C_S(x')$ is a Sylow $p$--subgroup of $C_G(x')$ (i.e., $x'$ is ``fully
$G$--centralized'' in $S$). Note that $Q =C_S(x')$ is $p$--centric in $G$, since
$C_G(Q) \leq C_G(x')$, and $Q$ is obviously $p$--centric in $C_G(x')$,
it being a Sylow $p$--subgroup. The image of $x' \in S$ in $\Aut_\calF(S)$
identifies with the image of $x'$ in $\Aut_\calF(Q)$, as elements of
$\pi_1(\calF_\calC(G))$, again since inclusions map to the identity.  But $x'$ goes to the
identity in $\Aut_\calF(Q)$, so $x' \in S$ represents the
identity in $\pi_1(\calF_\calC(G))$. Hence so does the $G$--conjugate
element $x$,
as wanted.
\end{proof}

Let us spell out what Proposition~\ref{fusion-boundslemma} says for
$\calC =\calS_p(G)$, in classical
group-theoretic terms, as used in the proof of Theorem~\ref{centralizerthm} in the next section.

\begin{cor}[{A vanishing condition for
    $\pi_1(\calF^*_p(G))$}] \label{vanishinglemma}
Let $L$ be a complement to $S$ in $N_G(S)$, and
set $L_0 = \langle C_{L}(x) | x\in S \setminus
1\rangle \unlhd L$. Then 
$$L/L_0\xrightcong \pi_1(\calF_p^*(N_G(S)))
\twoheadrightarrow \pi_1(\calF_p^*(G))$$
In particular if $L$ is generated by elements
that commute with at least one non-trivial element in $S$, then
$\pi_1(\calF_p^*(G)) = 1$. If $H_1(L)$ is generated by such elements
then $H_1(\calF_p^*(G))=0$. \QED
\end{cor}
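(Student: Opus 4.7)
The plan is to apply Proposition~\ref{fusion-boundslemma} to $\calC$ the collection of all non-trivial $p$--subgroups. Since this collection certainly contains all $p$--centric subgroups, that proposition specializes to
$$N_G(S)/\bar Z_\calC(N_G(S)) \xrightarrow{\sim} \pi_1(\calF_p^*(N_G(S))) \twoheadrightarrow \pi_1(\calF_p^*(G)),$$
so the task reduces to identifying $N_G(S)/\bar Z_\calC(N_G(S))$ with $K/K_0$. Taking $P = S$ in the definition of $\bar Z_\calC$ shows that $S \leq \bar Z_\calC(N_G(S))$, so the quotient map factors through $N_G(S)/S$, which via the splitting from Schur--Zassenhaus is just $K$. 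Hence everything reduces to proving $K \cap \bar Z_\calC(N_G(S)) = K_0$ inside $N_G(S) = S \rtimes K$.

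The easy containment $K_0 \leq K \cap \bar Z_\calC(N_G(S))$ is essentially by definition: for each $1 \neq x \in S$, the generator $C_K(x)$ of $K_0$ sits inside $C_{N_G(S)}(\langle x\rangle) \subseteq \langle x \rangle C_{N_G(S)}(\langle x\rangle) \subseteq \bar Z_\calC(N_G(S))$.

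The main obstacle is the reverse containment. The subtlety is that an element $h = sk \in C_{N_G(S)}(P)$ need not project to an element $k \in K$ that literally centralizes any element of $S$: a direct calculation in the semidirect product only shows that ${}^k|_P$ coincides with ${}^{s^{-1}}|_P$, i.e., that $k$-conjugation on $P$ agrees with some inner automorphism of $S$. To bridge this gap I would apply Schur--Zassenhaus twice. First, decompose $h$ into commuting $p$-- and $p'$--parts; both powers of $h$ still centralize $P$. The $p$--part lies in the unique Sylow $p$--subgroup $S$ of $N_G(S)$ and hence dies under $\pi$, so $\pi(h)$ equals the image of the $p'$--part $h'$. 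Second, since $h'$ has order coprime to $p$, it is $S$-conjugate to an honest element $k' \in K$, say $h' = s k' s^{-1}$; consequently $k'$ centralizes the non-trivial $p$--subgroup $P' := s^{-1} P s \leq S$. Picking any $1 \neq x \in P'$ yields $\pi(h) = k' \in C_K(x) \subseteq K_0$, finishing the reverse containment.

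The ``in particular'' statements then follow formally. If $K$ itself is generated by elements each centralizing some non-trivial $x \in S$ then $K = K_0$, so $K/K_0 = 1$ and the surjection onto $\pi_1(\calF_p^*(G))$ forces that group to be trivial. If only $H_1(K)$ is generated by the classes of such elements, then passing to abelianizations in the surjection $K/K_0 \twoheadrightarrow \pi_1(\calF_p^*(G))$ gives $K/(K_0[K,K]) \twoheadrightarrow H_1(\calF_p^*(G))$, and the hypothesis that $K \subseteq K_0 [K,K]$ makes the left side zero, yielding $H_1(\calF_p^*(G)) = 0$.
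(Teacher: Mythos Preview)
Your proof is correct and is essentially the intended argument: the paper marks this corollary with \QED, treating it as an immediate unpacking of Proposition~\ref{fusion-boundslemma} in the case $\calC=\calS_p(G)$, and you have supplied exactly those details. Your identification of the key point---that $\pi(C_{N_G(S)}(P))\leq K_0$ requires an honest application of Schur--Zassenhaus, since the naive projection of a centralizing element need not itself centralize---is spot on, and your element-wise argument via the $p$/$p'$--decomposition followed by $S$--conjugacy into $K$ is sound.

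One small remark: your element-wise reasoning can be streamlined by working with the whole centralizer at once. Since $C_S(P)=S\cap C_{N_G(S)}(P)$ is a normal Sylow $p$--subgroup of $C_{N_G(S)}(P)$, Schur--Zassenhaus splits $C_{N_G(S)}(P)=C_S(P)\rtimes L$ with $L$ a $p'$--group; then $L$ is $S$--conjugate into $K$, say $L=sL's^{-1}$, and $L'$ centralizes $s^{-1}Ps$, giving $\pi(C_{N_G(S)}(P))=L'\leq K_0$ directly. This avoids the minor bookkeeping of checking that the $S$--conjugacy of cyclic $p'$--subgroups respects the specific generator, but the substance is identical to what you wrote.
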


We also get the following corollary, analogous to Corollary~\ref{sc-fundgrp}.
\begin{cor} \label{sc-fusion-fundgrp} If $|\calS_p(G)|$ is simply connected, e.g., if $O_p(G)
  \neq 1$, then $$\pi_1(\calF_p^*(G))\xrightcong G/C^{p'}(G).$$ If just
  $|\calS_p(G_0)|$ is simply connected then  $\pi_1(\calF_p^*(G))\xrightcong G_0/C^{p'}(G_0)$.
\end{cor}
\begin{proof}
The second case reduces to the first since $\calF^*_p(G_0) \to
\calF^*_p(G)$ is an equivalence of categories by
Proposition~\ref{fusion-boundslemma}, also using \eqref{redtoG_0}.
Now, consider the diagram obtained also using \eqref{fundseq}
$$\xymatrix{\pi_1(\Tep(G)) \ar[r]^{\simeq} \ar@{->>}[d]& G \ar@{->>}[d]\\
\pi_1(\calF_p^*(G)) \ar@{->>}[r] & G/C^{p'}(G)}$$
We just need to
see that any element $x \in G$ which lies in $C^{p'}(G)$ goes to
zero in $\pi_1(\calF_p^*(G))$ under the composite given by the top isomorphism and the left-hand epimorphism. Assume that $x$
is a generator of $C^{p'}(G)$, i.e., we can find a
non-trivial $p$-subgroup $P$ such that $x \in C_G(P)$. Lift $x$
to $x\co P \to P$ in $\pi_1(\Tep(G))$, which maps to the identity
in $\pi_1(\calF_p^*(G))$ as wanted.
\end{proof}

\begin{rem} Note that Corollary~\ref{sc-fusion-fundgrp} can also
  be applied to subgroups $H$ of our group $G$. And if 
$N_G(S) \leq H \leq G$ then
$\pi_1(\calF_p(H)) \twoheadrightarrow \pi_1(\calF_p^*(G))$ by naturality, as in 
Remark~\ref{restriction-to-subgroups}. 
\end{rem}

We also state the following corollary, suggested to us by Ellen Henke.
\begin{cor} If $N_G(S)/SC_G(S)$ is abelian, then
  $\pi_1(\calF^*_p(G))$ is cyclic.
\end{cor}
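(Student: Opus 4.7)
The plan is to apply Corollary~\ref{vanishinglemma}, which exhibits $\pi_1(\calF^*_p(G))$ as a quotient of $K/K_0$, and then argue that $K/K_0$ itself is already cyclic under the given hypothesis. My first step would be to show $K/K_0$ is abelian, by verifying the inclusion $K \cap SC_G(S) \leq K_0$: if $k = sc$ with $s \in S$ and $c \in C_G(S)$, then either $s = 1$ giving $k \in C_K(S) \leq K_0$, or $s \neq 1$ in which case $c$ commutes with $s$ forces $ks = sk$, so $k \in C_K(s) \leq K_0$. Combined with the hypothesis that $A := K/(K \cap SC_G(S)) \cong N_G(S)/SC_G(S)$ is abelian, this exhibits $K/K_0$ as an abelian $p'$-group (recall that $K$ has $p'$-order since $S$ is a normal Sylow in $N_G(S)$).

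Next I would exploit the canonical $A$-action on $W := \Omega_1(Z(S))$, which is a non-trivial $\F_p$-vector space; the action descends from $K$ to $A$ precisely because elements of $K \cap SC_G(S)$ act on $S$ by inner automorphisms and so fix $Z(S)$ pointwise. By Maschke's theorem, $W = \bigoplus_j W_j$ decomposes as a sum of simple $\F_p[A]$-modules, and since $A$ is abelian, Schur's lemma identifies each $W_j$ with $\F_{p^{d_j}}$ on which $A$ acts through a character $\chi_j \colon A \to \F_{p^{d_j}}^\times$. Setting $N_j := \ker \chi_j$, each quotient $A/N_j$ embeds into the multiplicative group of a finite field and is therefore cyclic.

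The third step, and the one requiring the most care, is to verify that the image of $K_0$ in $A$ contains each $N_j$. For any $w \in W_j \setminus \{1\}$, since $w$ is central in $S$, the $A$-action on $w$ is well defined via any lift $k \in K$, and $\bar k \cdot w = w$ exactly when $k \in C_K(w)$; hence the image of $C_K(w)$ in $A$ equals the $A$-stabiliser of $w$, which is $N_j$ by construction. Since $w \neq 1$, $C_K(w) \leq K_0$, so $N_j$ lies in the image of $K_0$. Consequently $K/K_0$ is a quotient of $A/\sum_j N_j$.

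I would finish by invoking the elementary observation that for any finite abelian group $A$ and any family of subgroups $\{N_j\}$ with each $A/N_j$ cyclic, the quotient $A/\sum_j N_j$ is itself cyclic: otherwise it would admit $\Z/\ell \times \Z/\ell$ as a quotient for some prime $\ell$, giving a surjection $A \twoheadrightarrow \Z/\ell \times \Z/\ell$ vanishing on every $N_j$, hence factoring through the cyclic $A/N_j$, a contradiction. The main obstacle in this plan is the identification in the third step, which depends essentially on choosing $w$ inside the centre of $S$ so that lifts $k \in K$ induce a well-defined conjugation action on $w$ and so on the image in $A$.
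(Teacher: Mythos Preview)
Your argument is correct and follows essentially the same route as the paper's proof. Both use Corollary~\ref{vanishinglemma} to reduce to showing $K/K_0$ is cyclic, both observe that $K/K_0$ is a quotient of $R=N_G(S)/SC_G(S)$ (your first step proves exactly the inclusion $K\cap SC_G(S)\leq K_0$ that this needs), and both exploit the $R$--action on $\Omega_1(Z(S))$ together with the fact that an abelian $p'$--group acting irreducibly on an $\F_p$--module has cyclic image (your Schur-lemma argument; the paper cites \cite[8.3.3]{KS04}).

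The only genuine difference is economy: the paper picks a \emph{single} irreducible $R$--submodule $V\leq\Omega_1(Z(S))$, notes that $R/C_R(V)$ is cyclic, and is done---because for any $v\in V\setminus\{1\}$ one has $C_R(v)=C_R(V)$, so $C_R(V)$ lies in the image of $K_0$ and $K/K_0$ is already a quotient of $R/C_R(V)$. You instead decompose all of $\Omega_1(Z(S))$ and then need an extra closing step about $A/\sum_j N_j$. That step is fine but can be shortened: since $N_1\leq\sum_j N_j$, the group $A/\sum_j N_j$ is a quotient of the cyclic group $A/N_1$, so no proof by contradiction is needed.
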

\begin{proof}

  Set $H= N_G(S)/SC_G(S)$ and
choose an irreducible $H$--submodule $V$ of
$\Omega_p(Z(S))$, the elements of order at most $p$ in $Z(S)$. Then
$H/C_H(V)$ is cyclic by elementary representation theory \cite[Thm.~3.2.3]{gorenstein68},
which implies the claim by Corollary~\ref{vanishinglemma}.
\end{proof}

\begin{rem} As hinted above, the quotient $G/C^{p'}(G)$  is often trivial
  or a rather small group, and bounding its size is an interesting and
  so far apparently unexplored group theoretical
  question about $p'$--actions. We study this in joint work in
  progress with Geoffrey Robinson.
\end{rem}

Analogous to Theorem~\ref{grouptheory-pi1} we can also describe 
$\pi_1(\calF_p^*(G))$ purely
group theoretically. 

\begin{thrm}[Group theoretic description of
  $\pi_1(\calF_p^*(G))$] \label{grouptheory-pi1F} Let $K_\calF$ be the subgroup of $N_G(S)$
  generated by elements $g \in N_G(S)$ such that there exist
  nontrivial subgroups $1<Q_0, \ldots, Q_r \leq S$  and a factorization  $g = x_1
 \cdots x_r$ in $G$, where $x_i \in C^{p'}(N_G(Q_i))$ 
and $Q_0^{x_1\cdots x_i} \leq Q_{i+1}$ for $i\geq 0$.
Then 
$$N_G(S)/K_\calF\xrightcong \pi_1(\calF_p^*(G))$$
In particular $N_G(S)\cap
  C^{p'}(N_G(P)) \leq \ker\left( N_G(S)
\twoheadrightarrow \pi_1(\calF_p^*(G))\right)$ for $1 < P \leq S$.
\end{thrm}
\begin{proof} Recall that in the proof of
  Theorem~\ref{grouptheory-pi1} we showed the isomorphism $$\phi \co
  N_G(S)/K_\bO \xrightcongdbl \pi_1(\Oep(G)),$$ by constructing a left
  inverse $\psi$.  And
  by
  Proposition~\ref{fusion-boundslemma} we have a surjection $\pi_1(\Oep(G))
  \twoheadrightarrow \pi_1(\calF_p^*(G))$. As $K_\bO \leq K_\calF$  by definition we can thus establish the theorem by verifying that $\phi$ and
  $\psi$ induce well-defined maps $\bar \phi$ and $\bar \psi$ on the quotients
  $$\xymatrix{ N_G(S)/K_\bO \ar@{->>}[d] \ar@{->>}[r]^\phi_\cong &  \pi_1(\Oep(G))
    \ar@{->>}[d]  \ar@/^1pc/[l]^\psi\\
    N_G(S)/K_\calF \ar@{.>>}[r]^{\bar \phi} & \pi_1(\calF_p^*(G))
    \ar@{.>}@/^1pc/[l]^{\bar \psi}} $$
  which will then necessarily be inverse equivalences. This amounts to
  unravelling the definitions:
  
  To see that $\bar \phi$ is well-defined, we need to check that
  $K_\calF$ lies in the kernel of $N_G(S) \to \pi_1(\calF_p^*(G))$.
  Given a generator $g$ of $K_\calF$ as in the theorem, we have the
  following commutative diagram
$$\xymatrix{ S \ar[rrrr]^{c_{g^{-1}}} & & & &  S\\
  Q_0 \ar[u] \ar[r]^{c_{x_1^{-1}}} & Q_1\ar[r] & \cdots \ar[r] &
  Q_{r-1} \ar[r]^{c_{x_r^{-1}}}&Q_r\ar[u]}$$
in $\calF_p^*(G)$, and $c_{x_i^{-1}} \co Q_{i-1}
\to Q_i$ is equivalent to
$c_{x_i^{-1}} \co Q_i\to Q_i$ in  $\pi_1(\calF_p^*(G))$.
The kernel of $N_G(Q_i) \to
  \pi_1(\calF_p^*(N_G(Q_i)))$ is $C^{p'}(N_G(Q_i))$ by
  Corollary~\ref{sc-fusion-fundgrp}, so the factorization  $N_G(Q_i)/C_G(Q_i)  \to \pi_1(\calF_p^*(N_G(Q_i)))  \to
  \pi_1(\calF_p^*(G))$
shows that
  $c_{x_i^{-1}} \co Q_i\to
  Q_i$, and hence $c_g\co S \to S$, is zero in $\pi_1(\calF_p^*(G))$
  as wanted. This shows that we have a well-defined map $\bar \phi \co
  N_G(S)/K_\calF \twoheadrightarrow \pi_1(\calF_p^*(G))$. It also
  establishes the last part of the theorem.

 To see that $\bar \psi$ is well-defined we have to justify that for
 any $g \in C_G(P)$, the element
  $G/P \xrightarrow{[g]} G/P$ in $\pi_1(\Oep(G))$ is mapped to an element in $K_\calF/K_\bO$ under
  $\psi$. We can without restriction assume that $P \leq S$. As
  explained in the proof of
  Theorem~\ref{grouptheory-pi1}, the map
  $\psi$ is given by sending $[g]$ to $nK_\bO$, where we express
$g = x_1 \cdots x_r n$,
with $x_i \in O^{p'}(N_G(Q_i))$ and $n \in N_G(S)$, where $1<Q_0, \ldots,
Q_r \leq S$,  $P = Q_0$, $Q_0^{x_1\cdots x_i} \leq Q_{i+1}$ for $i\geq 0$, using Alperin's fusion theorem.
But then $n^{-1} = g^{-1}x_1 \cdots x_r$, with $P^{g^{-1}} = P$ and $g^{-1} \in C_G(P) \leq C^{p'}(N_G(P))$, which shows that $n^{-1} \in
K_\calF$ as wanted.
\end{proof}

\begin{proof}[\thmunderline{Proof of Theorem~\ref{fusionthm}}]
Consider the following commutative diagram
\begin{equation}\label{fusionfundgrp-diagram}
\vcenter{\xymatrix{ N_G(S)/S  \ar@{->>}[r]   \ar@{->>}[d]  &
  \pi_1(\bO_p^c(G))   \ar@{->>}[r]   \ar@{->>}[d] &
  \pi_1(\Oep(G))   \ar@{->>}[d] \\
 N_G(S)/C_G(S)S  \ar@{->>}[r]   &
  \pi_1(\bar \calF_p^c(G))   \ar@{->>}[r]  &
  \pi_1(\bar \calF_p^*(G))  \\
 &
  \pi_1( \calF_p^c(G))   \ar@{->>}[r] \ar[u]^\simeq &
  \pi_1( \calF_p^*(G))  \ar[u]^\simeq
}}
\end{equation}
The top horizontal maps in \eqref{fusionfundgrp-diagram} are epimorphisms by Proposition~\ref{boundslemma}, as the collection of $p$--centric
subgroups is closed under passage to $p$--overgroups. The surjections
between the top and middle row follow by definition, and the 
properties of the maps in and between the second and third row follow by Proposition~\ref{fusion-boundslemma}.
Applying $\Hom(-,k^\times)$ and Theorem~\ref{main} now
gives the first part of Theorem~\ref{fusionthm}.

Now suppose that all $p$--radical $p$--centric subgroups are
centric: We claim that then in fact {\em all} $p$--centric subgroups are
centric. Namely suppose that $P$ is $p$--centric so $C_G(P) \cong ZP \times
K$ where $K = O_{p'}(C_G(P))$.  Then $K$ is normalized by
$O_p(N_G(P))$ and vice versa. But as $K$ and $O_p(N_G(P))$ are of
coprime order they then have to commute. If $O_p(N_G(P))$ is not
$p$--radical, we can repeat the process of taking $O_p(N_G(-))$,
until we arrive at a $p$--radical subgroup (the $p$--radical closure,
which will also be used in Section~\ref{homologydec-sec}). Hence $K$
centralizes a $p$-centric $p$-radical subgroup and is hence trivial by
assumption, which is what we wanted. Hence $\bO^{c}_p(G) \cong \bar
\calF_p^c(G)$ by definition, and thus $ \pi_1(\bO^{c}_p(G)) \cong \pi_1(\bar
\calF_p^{c}(G))\xleftcong \pi_1(\calF_p^{c}(G))$, using
Proposition~\ref{boundslemma} again. With this isomorphism, the last
part of the theorem now follows from the first part.
\end{proof}

\begin{rem} Appendix~\ref{propagating-sec}, e.g., Theorem~\ref{remove-non-essential} and
Proposition~\ref{remove-pione-F}, can be used to further analyze
which $p$--centric subgroups need to be centric, for
$\pi_1(\bO^{c}_p(G))$ and $\pi_1(\calF^{c}_p(G))$ to agree.
\end{rem}

\begin{rem} \label{bcglo2-remark}  The $p'$--quotient groups of $\pi_1(\calF^c(G))$, were originally studied
  in \cite[\S5.1]{bcglo2}, where they were related to subsystems of
  the fusion system of $p'$--index. It was remarked to the authors by
  Aschbacher that the group itself is a finite
  $p'$--group, see \cite[p.~3839]{bcglo2},
  \cite[Ch.~11]{aschbacher11} and \cite[Prop.~III.4.19]{AKO11}.
 \end{rem}

Let us round off our discussion of $\pi_1(\calF_{\calC}(G))$ for now by giving a few computational
examples---see also Appendix~\ref{propagating-sec} for more information on
how it depends on $\calC$.

\begin{example}\label{cyclicauto}
  For $\phi$ an automorphism, of order prime to $p$, of a
finite $p$--group $S$, $\pi_1(\calF_p^*(S \semi \langle \phi \rangle)) \cong \Z/r$,
generated by $\phi$,  with $r$ the greatest common divisor of all
natural numbers $s$ such that
$\phi^s$ acts with a fixed-point on $S\setminus 1$, 
by Corollary~\ref{vanishinglemma}.
\end{example}

\begin{prop}\label{extraspecial27}
Suppose that $\calF$ is a fusion system over $S=3^{1+2}_+$. Then
$\pi_1(\calF^*) = 1$ unless $\calF =\calF_3(G)$ for $G = 3^{1+2}_+ :8$, in
which case $\pi_1(\calF^*) \cong  \Z/2$. 
The sporadic group $J_2$ is the unique finite simple group with $3$--fusion
system  $\calF_3(3^{1+2}_+ :8)$, and hence for all other finite simple
groups  $G$ with Sylow $3$--subgroup $3^{1+2}$,  $\pi_1(\calF^*_3(G))=1$.
\end{prop}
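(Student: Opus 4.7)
The plan is to apply Corollary~\ref{vanishinglemma} to reduce computing $\pi_1(\calF^*)$ to a case analysis of $K := \Out_\calF(S)$, a $3'$-subgroup of $\Out(S) = \GL_2(\F_3)$ and hence a subgroup of the Sylow $2$-subgroup $SD_{16}$ of $\GL_2(\F_3)$. Up to conjugacy the possibilities are $1,\Z/2,\Z/4,\Z/8,(\Z/2)^2,Q_8,D_8,SD_{16}$. By Proposition~\ref{fusion-boundslemma}, $\pi_1(\calF^*)$ is a quotient of $K/K_0$, with $K_0 = \langle C_K(x)\mid x\in S\setminus 1\rangle$, so the first step is to compute $K/K_0$ in each case.

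For this I would use that $\GL_2(\F_3)=\Out(S)$ acts on $S/Z(S)\cong\F_3^2$ standardly and on $Z(S)\cong\F_3$ via $\det$. An element of $K$ with $\det=1$ fixes $Z(S)\setminus 1$ pointwise, and any involution with $\det=-1$ has eigenvalues $\{1,-1\}$ on $S/Z(S)$, giving a fixed line that (after adjusting the lift by a square in $Z(S)$) lifts to a fixed element of $S\setminus Z(S)$. Since order-$4$ elements have $\det=1$ (their minimal polynomial being $x^2+1$) while order-$8$ elements have $\det=-1$ with eigenvalues in $\F_9\setminus\F_3$ and therefore no fixed line, a case check gives $K_0=K$ in every case except $K=\Z/8=\langle\sigma\rangle$, where $K_0=\langle\sigma^2\rangle$ and $K/K_0\cong\Z/2$.

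If $\calF$ has no proper $\calF$-essential subgroup, then by Alperin's fusion theorem $\calF=\calF_S(S\rtimes K)$, so for $K=\Z/8$ this is $\calF_3(3^{1+2}_+:8)$ and $\pi_1(\calF^*)=K/K_0=\Z/2$. Otherwise $\calF$ has a proper essential $E$, which must be elementary abelian of rank $2$ (since $\Out_\calF(Z(S))\leq\Aut(Z(S))=\Z/2$ cannot contain a strongly $3$-embedded subgroup). The subgroups of $\GL_2(\F_3)$ admitting such a subgroup are $\SL_2(\F_3)$ and $\GL_2(\F_3)$. The image of $\Aut_\calF(S)|_E$ contains the order-$3$ transvection $\tau_c$ coming from $\Aut_S(E)$ together with the restriction $\rho:=\sigma^4|_E=\mathrm{diag}(1,-1)$ in the basis $(Z(S),E/Z(S))$, and these generate an $S_3$. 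Since the only involution of $\SL_2(\F_3)$ is the central $-I$, $S_3\not\leq\SL_2(\F_3)$, forcing $\Aut_\calF(E)=\GL_2(\F_3)$.

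The main obstacle is to show $\pi_1(\calF^*)=0$ in this last case. Assume for contradiction that $\phi\co\pi_1(\calF^*)\to k^\times$ is nontrivial, so $\phi(\sigma)=-1$. Restriction of $\phi$ to $\Aut_\calF(E)=\GL_2(\F_3)$ factors through $\GL_2(\F_3)^{\mathrm{ab}}=\Z/2$ (via $\det$), so $\phi|_{\GL_2(\F_3)}=\det^m$ for some $m\in\{0,1\}$. The commutative square with $\sigma^4\co S\to S$ and $\rho\co E\to E$ joined by the inclusion $E\hookrightarrow S$ yields $\phi(\rho)=\phi(\sigma^4)=\phi(\sigma)^4=1$, and since $\det(\rho)=-1$ this forces $m=0$, so $\phi|_{\GL_2(\F_3)}\equiv 1$. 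The decisive observation is that the inversion automorphism of $Z(S)$ is realised inside $\calF$ in two independent ways: as the restriction $\sigma|_{Z(S)}$ (since $\det(\sigma)=-1$) and as the restriction $-I|_{Z(S)}$ of $-I\in\Aut_\calF(E)$. The two commutative diagrams coming from the inclusions $Z(S)\hookrightarrow S$ and $Z(S)\hookrightarrow E$ then give
\[
\phi(\sigma)=\phi(\sigma|_{Z(S)})=\phi(-I|_{Z(S)})=\phi(-I)=1,
\]
contradicting $\phi(\sigma)=-1$. Hence $\pi_1(\calF^*)=0$ in the presence of a proper essential, which together with the previous cases establishes the proposition.
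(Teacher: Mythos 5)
Your proposal is correct, and it takes a genuinely different route from the paper in its second half, which is worth noting.

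The first part — reducing to $K := \Out_\calF(S) \leq SD_{16}$ via Corollary~\ref{vanishinglemma} and Proposition~\ref{fusion-boundslemma}, and showing $K/K_0 = 1$ except when $K = \langle\sigma\rangle \cong \Z/8$, where $K/K_0 \cong \Z/2$ — follows essentially the same plan as the paper, with a cosmetic difference in how the fixed-point analysis is carried out: the paper works explicitly inside $SD_{16}$ viewed as semilinear automorphisms of $\F_9$, while you argue via determinants and eigenvalues on $S/Z(S) \cong \F_3^2$ (note that since $\det(-I)=1$, the parenthetical step deducing $m=0$ is actually superfluous; the final contradiction already follows from $\det(-I)=1$ alone). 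Both computations are correct and give the same answer.

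The genuine difference is in how the case $K \cong \Z/8$ is finished. The paper cites the Ruiz--Viruel classification of saturated fusion systems over $3^{1+2}_+$ to conclude that $\calF_3(3^{1+2}_+:8)$ is the \emph{only} fusion system with $\Out_\calF(S) \cong \Z/8$; for that group fusion system $\pi_1 = K/K_0 = \Z/2$ by Example~\ref{cyclicauto}, and there is nothing else to check. You instead give a self-contained argument: if $\calF$ with $K \cong \Z/8$ had a proper essential subgroup $E$, then $E$ must be rank-two elementary abelian with $\Aut_\calF(E) = \GL_2(\F_3)$ (the $S_3 = \langle \tau_c, \rho\rangle$ restricted from $N_{\Aut_\calF(S)}(E)$ rules out $\SL_2(\F_3)$), and then the two commuting triangles through $Z(S)$ identify $[\sigma]$ with $[-I] \in \Aut_\calF(E)$ in $\pi_1(\calF^*)$; since $-I \in [\GL_2(\F_3),\GL_2(\F_3)] = \SL_2(\F_3)$ it dies in the abelian group $\pi_1(\calF^*)$, so $\pi_1(\calF^*) = 0$. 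This buys independence from the Ruiz--Viruel tables at the cost of a short local computation; the paper's route is quicker given the classification, but yours shows explicitly how the existence of a large $\Aut_\calF(E)$ kills the $p'$-character, which is closer in spirit to the paper's own vanishing mechanisms (Theorem~\ref{centralizerthm}, Corollary~\ref{vanishinglemma}). One small point worth making explicit: the inequality $\pi_1(\calF^*) \twoheadleftarrow K/K_0$ for an \emph{abstract} fusion system requires the adaptation of Proposition~\ref{fusion-boundslemma} noted in Remark~\ref{bcglo2-remark}, which you are implicitly invoking.
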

\begin{proof} 
We have  that $\Out(S) \cong \GL_2(\F_3)$ of order $48$, with the canonical action on
$S/Z(S) \cong (\F_3)^2$ and action on $Z(S)$ through the
determinant map, described in detail in \cite{winter72}.
As we can lift $S/Z(S)$ to an $\Out(S)$ invariant subset of $S$, an
element of $\Out(S)$ acts freely on $S \setminus 1$ iff it acts freely
on $S/Z(S) \setminus Z(S)$ and on $Z(S) \setminus 1$.
We have that $L = N_\calF(S)/S$ is a subgroup of the Sylow
$2$--subgroup $SD_{16}$, which identifies with the semi-linear
automorphisms of $\F_{9}$, generated by a generator $\sigma$ of
$\F_{9}^\times$ and the Frobenius $\tau$, subject to the relations
$\sigma^8=\tau^2=1$, and $\tau \sigma \tau = \sigma^3$.
Both $\sigma$
and $\tau$ have determinant $-1$ inside $\GL_2(\F_3)$.
We claim that elements of the form $\sigma^{2k+1}$ are the only
elements of $SD_{16}$ that act freely on $S \setminus 1$. Such elements act freely, as they
act freely on
$\F_9^\times $ and on $Z(S) \setminus 1$.
The elements $\sigma^{2k}$ and  $\sigma^{2k-1}\tau$, $k \geq 1$, act with a
fixed-point, since they act trivially on $Z(S)$, and so does $\tau$ as
it fixes $\F_3^\times \subseteq \F_9^\times$. The element
$\sigma^{2k}\tau$ has a fixed-point as well, as it is conjugate to $\tau$ via
$\sigma^k(\sigma^{2k}\tau)\sigma^{-k} = \tau$.
The only two subgroups that contain $\sigma^{2k+1}$ are $\langle \sigma \rangle$ and $SD_{16}$. For
$L = SD_{16}$ we have $\sigma = (\sigma \tau) \tau$, in the notation
of  Corollary~\ref{vanishinglemma}, so $L = L_0$. For $L= \langle \sigma
\rangle$, $L/L_0 \cong \Z/2$ by Example~\ref{cyclicauto}.

Now by \cite[Thm.~1.1]{RV04} all abstract fusion
systems on  $S=3^{1+2}_+$ arise from groups, and by \cite[Tables~1.1
and 1.2]{RV04}  $\calF_3(3^{1+2}_+:8)$ is the only fusion system on
$3^{1+2}_+$ with $N_\calF(S)/S \cong \Z/8$.
Hence Corollary~\ref{vanishinglemma}, combined with the analysis above, shows
that $\calF = \calF_3(3^{1+2}_+:8)$ is the only fusion system with
$\pi_1(\calF^*) \neq 1$, and that $\pi_1(\calF^*) \cong \Z/2$ in that case.
Furthermore $J_2$ is the unique finite simple group
realizing $\calF$ by  \cite[Rem.~1.4]{RV04}. 
\end{proof}

\begin{example}
By \cite{CCNPW85} the centralizers of $3$--elements in $J_2$ are
$C_{J_2}(3A) = 3 \cdot \PSL_2(9)$ and $C_{J_2}(3B) = 3 \times A_4$. In
particular they satisfy $H_1(-)_{3'}=0$. Hence Proposition~\ref{extraspecial27}
combined with Theorem~\ref{centralizerthm} gives $$T_{\F_3}(J_2,S) \cong
\Hom(\pi_1(\calF_3^*(J_2)),\F_3^\times) \cong \Z/2$$ in this case. See
\cite[6.4]{LM15sporadic} for a very different derivation of this result.
\end{example}

\subsection{Higher homology groups} \label{higher-homotopy} We conclude
the section by  briefly considering the higher homology of $|\Tep(G)|$ and
$|\bO_p^*(G)|$.
Higher homology
groups occur naturally, as obstructions
to extending compatible elements, as in Theorem~\ref{centralizerthm}. We already made a forward reference
to
this  subsection in the proof of Corollary~\ref{pprimecentral}, but otherwise the
main results of the paper do not rely in them.

\begin{prop}\label{pprime-higher}
For $\calC$ any collection of $p$--subgroups of $G$,
$$H_*(\calT_\calC(G)) \otimes \Z[\tfrac1p]\xrightcong H_*(
\bO_\calC(G)) \otimes \Z[\tfrac1p]  \mbox{, \,\,\,\,}H_*(\calF_\calC(G))\otimes
\Z[\tfrac1p] \xrightcong H_*(
\bcalF_\calC(G)) \otimes \Z[\tfrac1p], $$ 
$$\mbox{ and \hspace{0.5cm}} H_*(|\calC|/G) \otimes \Q \xleftcong H_*(\calT_\calC(G))\otimes \Q\xrightcong H_*(\calF_\calC(G)) \otimes \Q.$$
\end{prop}

\begin{proof}
The statements follow by a Grothendieck composite functor spectral
sequence argument, since the morphisms differ by finite  $p$--groups
or finite groups. More
precisely,  \cite[Lem.~1.3]{BLO03inv} implies that the two first maps are equivalences in
homology with $\F_\ell$--coefficients for all primes $\ell \neq
p$. Since the spaces are of finite type this implies equivalence in
homology with $\Z_{(\ell)}$--coefficients, for all primes $\ell \neq p$,
and hence an isomorphism in homology with
$\Z[\frac1p]$-coefficients, and after tensoring with
$\Z[\frac1p]$. The third isomorphism holds since the isotropy spectral
sequence, Proposition~\ref{isotropy-ss}, for the $G$--action on
$|\calC|$ converging to  $|\calC|_{hG} \simeq |\calT_\calC(G)|$, collapses, and the fourth isomorphism
again follows from the proof of
\cite[Lem.~1.3]{BLO03inv}, now using that $\tilde H_*(C_G(P);\Q) = 0$.
\end{proof}

\begin{thrm}\label{homotopy-finiteness}
Let $\calC$ be collection of $p$--subgroups, closed under passage to $p$--radical
overgroups. For $i>0$ the groups $H_i(\bO_\calC(G))$,
$H_i(\calT_\calC(G))$, and
$H_i(\calF_\calC(G))$ are 
finite, and $H_i(\bO_\calC(G))$ and $H_i(\calF_p^*(G))$ are of order prime to $p$.
When $\calC$ is ample (i.e., $H_*(\calT_\calC(G))_{(p)} \xrightcong
H_*(G)_{(p)}$, see Remark~\ref{amplerem}),
$$H_i(\calT_\calC(G))\xrightcong
H_i(\bO_\calC(G)) \oplus H_i(G)_{(p)}, \mbox{\,\,\,\,\,} i>0.$$
\end{thrm}

\begin{proof}
First note that all the groups are finitely generated,
since there are finitely many simplices in each dimension, the spaces being
nerves of finite categories. Furthermore, by
Proposition~\ref{webbsconj}, $H_i(|\calC|/G) = 0$ for $i>0$. Now the
finiteness of all the groups follow from
Proposition~\ref{pprime-higher}.

The statements about absence of $p$--torsion are well known consequences of the
 theory of mod $p$ homology
decompositions: To see that  $H_i(\bO_\calC(G))$ is of order prime to $p$
we just have to see that $H^i(\bO_\calC(G);\Z_{(p)}) = 0$ for
$i>0$.  By definition $H^i(\bO_\calC(G);\Z_{(p)}) \cong
\lim^i_{\bO_\calC(G)}\Z_{(p)}$ (see \cite[Prop.~2.6]{grodal02}). But 
\cite[Thm.~1.3]{grodal02} gives a spectral sequence for calculating
$\lim^i$, whose $E^1$--term in this case, by \cite[Cor.~5.4]{grodal02}, is zero except for one $\Z_{(p)}$,
coming from the Sylow $p$--subgroup, responsible for $
\lim^0_{\bO_\calC(G)}\Z_{(p)} \cong
\Z_{(p)}$, showing the claim. Likewise $H_i(\calF_p^*(G);\Z_{(p)}) =0$, $i>0$, by \cite[7.3]{dwyer98sharp} and
\cite[Ex.~8.6]{grodal02} (variants on the classical
\cite[{Prop.~2.1}]{JM92}). 

Finally, if
$H_*(\calT_\calC(G))_{(p)}\mkern-1mu \xrightcong \mkern-1mu H_*(G)_{(p)}$ then
$H_i(\calT_\calC(G)) \mkern-1mu \xrightcong \mkern-1mu
H_i(\bO_\calC(G)) \oplus H_i(G)_{(p)}$ for $i>0$ by Proposition~\ref{pprime-higher}.
\end{proof}

\begin{rem} \label{allpsubgroups-rem} In the degenerate case when $\calC$ is the collection of all
  $p$--subgroups, including the trivial one,  Theorem~\ref{homotopy-finiteness} says that
$H_i(\bO_p(G)) \cong
H_i(G)_{p'}$, $i>0$, as $|\calT_\calC(G)| \simeq BG$.
  \end{rem}

\begin{rem}The homology groups of  $\calT_\calC(G)$, $\bO_\calC(G)$,
  or $\calF_\calC(G)$
   will generally not be finite without assumptions on $\calC$. For example if $G$ is abelian then $\calF_\calC(G) =
  \calC$ and $|\calC|/G = |\calC|$, so examples can be
  constructed using Proposition~\ref{pprime-higher}. Taking  $G = (\Z/p)^r$ and $\calC$ the collection of proper
  non-trivial subgroups of $G$, then $|\calC|$ has homotopy type a wedge of spheres (e.g., $p+1$
  points and a wedge of $p^3$ circles, for $r=2,3$ respectively).
\end{rem}

\begin{cor} For any field $k$ of characteristic $p$,
$$H^i(\Tep(G);k^\times) \xleftcong H^i(\Oep(G);k^\times) \oplus
H^i(G;k^\times)_{(p)} \mbox{ for $i>0$. }$$ If $k$ is
perfect, then $k^\times$ is uniquely $p$--divisible, and $$H^*(\Tep(G);k^\times) \xleftcong H^*(\Oep(G);k^\times).$$
\end{cor}
\begin{proof}
The first claim is a consequence of Theorem~\ref{homotopy-finiteness},
the Universal Coefficient Theorem, and the five-lemma, using
that $H_i(\Tep(G))_{(p)}\xrightcong H_i(G)_{(p)}$ by \eqref{ampleequation}. That $k$ is perfect means that the Frobenius map $(-)^p\co k^\times \to k^\times$ is
not only injective, but also surjective, i.e., $k^\times$ is uniquely
$p$--divisible, and hence $H^i(G;k^\times)_{(p)} =
0$, for $i>0$ by an application of the transfer.
\end{proof}
\begin{rem} Any finite field or any algebraically closed field is of
  course perfect. For any field of characteristic $p$ we have $H^1(G;k^\times)_{(p)} = 0$, as $k^\times$ is
  $p$--torsion-free, but the higher groups are non-trivial in general
  for non-perfect fields. E.g., if
  $k = \F_p(x)$, rational functions in one variable over $\F_p$, the units
 $k^\times$ is isomorphic to $\F_p^\times \times \Z^{({\mathbb N})}$, as an abelian group,
  with a basis for the torsion-free part given by monic irreducible
  polynomials, so $H^i(G;k^\times)_{(p)} \cong
  (H^i(G;\Z)_{(p)})^{({\mathbb N})}$ in that case.
\end{rem}

\begin{rem}[Interpretation of $H^2(\Oep(G);k^\times)$] \label{gluing-rem}
The group $H^2(\Oep(G);k^\times)$ may be thought of as a ``$p$--local Schur
multiplier'', analogous to $H^2(G;k^\times)$. One may also ask if it also
has a representation theoretic interpretation, as a suitable
Brauer group? Note in this connection that
  $H^2(\calF^c;k^\times)$ occurs in connection with the so-called
  gluing problem for blocks, see
  \cite{linckelmann04,linckelmann05,linckelmann09}. 
\end{rem}

The last two remarks explain the
underlying picture on the level of homotopy.

\begin{rem}[Higher homotopy groups]\label{homotopy-remark}
That $H_i(\bO_\calC(G))$, $i>0$, is finite of order prime
to $p$,  for $\calC$ a collection of $p$--subgroups closed under passage to $p$--radical
overgroups, in fact has a strengthening, which also has the finiteness
in Proposition~\ref{boundslemma} as a special case:  For such $\calC$,  $\pi_i(\bO_\calC(G))$ is a finite $p'$--group for
{\em all}
$i$. This follows by a slight modification of the argument above:
As
$\pi_1(\bO_\calC(G))$ is a finite $p'$--group by
Proposition~\ref{boundslemma}, it is sufficient to show that
$H^i(X;\Z_{(p)}) =0$ for all $i>0$, for $X$ the
universal cover of $|\bO_\calC(G)|$, by the Hurewicz theorem modulo
Serre classes \cite[Thm.~20.6.1]{tomdieck08}. But
$H^*(X;\Z_{(p)}) \cong
H^*(|\bO_\calC(G)|;\Z_{(p)}[\pi_1(\bO_\calC(G))])$, equipping
$|\bO_\calC(G)|$ with the canonical twisted coefficient system (since
on chains
\begin{eqnarray*} C^*(|\bO_\calC(G)|;\Z_{(p)}[\pi_1(\bO_\calC(G))]) &\cong&
\Hom_{\pi_1(\bO_\calC(G))}(C_*(X), \Z_{(p)}[\pi_1(\bO_\calC(G))])\\
                                                                    &\cong& \Hom(C_*(X), \Z_{(p)}),
                                                                            \end{eqnarray*}
                                                                            by definition
\cite[Sec.~3.H]{hatcher02} and the finiteness of $\pi_1(\bO_\calC(G))$). This again equals
$\lim^*_{\bO_\calC(G)}\Z_{(p)}[\pi_1(\bO_\calC(G))]$, which vanishes
in positive degree as in the proof of Theorem~\ref{homotopy-finiteness} (all elements of order $p$ in $N_G(P)/P$ act trivially on
$\Z_{(p)}[\pi_1(\bO_\calC(G))]$, as $\pi_1(\bO_\calC(G))$ is a finite $p'$--group).
Note that this is in contrast to $\calT_\calC(G)$, where $\pi_i(\calT_\calC(G))
\xleftcong \pi_i(\calC)$, for $i \geq 2$, which is in general not
finite; e.g.,
$|\calS_p(G)|$ is homotopy equivalent to a wedge of spheres when $G$ is
a finite group of Lie type in characteristic $p$.
\end{rem}

\begin{rem}[Inverting $p$ on $\calT_\calC(G)$]\label{localization-rem}
  The relationship between the homotopy (or homology) groups of
  $\calT_\calC(G)$ and  $\bO_\calC(G)$ from above can be stated
  on the level of spaces: For $\calC$ as in Theorem~\ref{homotopy-finiteness},
\begin{equation}
 |\bO_\calC(G)| \simeq L_{p'} |\calT_\calC(G)|
\end{equation}
where $L_{p'}$ denotes localization with respect to the multiplication by
$p$ map $S^1\xrightarrow{p} S^1$ \cite{farjoun96}.
Namely, recall that $|\calT_\calC(G)| \simeq
\hocolim_{G/P \in \bO_\calC(G)} EG \times_G G/P$, see
\cite[\S\S1.7,3.2]{dwyer97}.
Hence 
\begin{multline}L_{p'} |\calT_\calC(G)| \simeq L_{p'} (\hocolim_{G/P \in \bO_\calC(G)} L_{p'}(EG
\times_G G/P)) \\ \xrightsimeq L_{p'}(\hocolim_{G/P \in \bO_\calC(G)}*)
\simeq  L_{p'}|\bO_\calC(G)|.
\end{multline}
Here we used that
$L_{p'}$ is a left adjoint \cite[Prop.~1.D.3]{farjoun96} for the first
homotopy equivalence and that
$L_{p'}(BP) \simeq *$ for a finite $p$-group $P$ (as is seen from the
definition or \cite[Thm.~3.2]{CP93}) for the second. Now the claim follows by observing
that  $L_{p'}|\bO_\calC(G)| \simeq  |\bO_\calC(G)|$, as $|\bO_\calC(G)|$ is space
with whose homotopy groups are finite $p'$--groups by
Remark~\ref{homotopy-remark}, which implies that it is $L_{p'}$--local
(see e.g., \cite[Cor.~2.13]{CP93}).
A very special case is if $\calC = \calS_p^e(G)$  where
$|\calT_\calC(G)| \simeq BG$, and thus
\begin{equation}|\bO_p(G)| \simeq L_{p'}BG
  \end{equation}
elaborating $H_i(\bO_p(G)) \cong
H_i(G)_{p'}$, $i>0$, from Remark~\ref{allpsubgroups-rem}.
\end{rem}

\section{Homology decompositions and the Carlson--Th\'evenaz conjecture}
\label{homologydec-sec}
In this section we establish the results about homology decompositions
stated in the introduction, and show how they imply the Carlson--Th\'evenaz
conjecture.  The key tool is the isotropy spectral sequence, recalled
below. Applied to the space $|\calC|$ this 
give us the normalizer decomposition (Theorem~\ref{limitformula}). For
the centralizer decomposition  (Theorem~\ref{centralizerthm})
we instead use the space $|E\A_\calC|$, where $E\A_\calC$ is the
overcategory $\iota \downarrow G$ for $\iota\co \calF_\calC \to \calF_{\calC
  \cup G}$ (see \S\ref{Gcategories} for details). (There is also a third decomposition, the subgroup
decomposition, based on a space $|E\bO_\calC|$, but since the isotropy
subgroups are $p$--groups, it does not provide us with
new information when taking coefficients prime to $p$.) 
We work in both homology and cohomology---these are essentially
equivalent, but from a practical viewpoint it may feel more convenient
to work in homology, only mapping into $k^\times$ at the end, so we
give both versions.

\subsection{Homology decompositions: proof of
  Theorems~\ref{limitformula} and \ref{centralizerthm}}\label{homologydec-subsec}
A Bredon {\em $G$--isotropy coefficient systems} is a functor
$\fF\co\bO(G) \to \Rmod$. It induces a $G$--coefficient system, as in \S\ref{coeff-subsec}, on $X$ via the canonical functor $(\Delta X)_G \to \bO(G)$ on objects sending $\sigma \mapsto
G/G_\sigma$.
Such coefficient systems are $G$--homotopy invariants, in the sense that a
$G$--homotopy equivalence $Y \to X$ induces a $RG$--chain homotopy
equivalence $C_*(Y;\fF) \to C_*(X;\fF)$.
Let $H_*^G(X;\fF) = H(C_*(X;\fF)^G)$ denote Bredon homology equipped
with an isotropy coefficient system $\fF$  (see e.g., \cite{bredon67}
for more details).
\begin{prop}[The isotropy spectral sequence] \label{isotropy-ss} Let
  $G$ be a finite group, $X$ a $G$--space, and $A$ an abelian group.
We have a homological isotropy spectral sequence for the action of $G$ on $X$
$$E^2_{i,j} = H^G_i(X;H_j(-;A)) \Rightarrow H_{i+j}(X_{hG};A)$$
The bottom right-hand corner produces an exact sequence
\begin{multline*}H_2(X_{hG};A) \to H_2(X/G;A) \to H_0^G(X;H_1(-;A))\\ \to
H_1(X_{hG};A) \to H_1(X/G;A) \to 0
\end{multline*}
The dual spectral sequence in cohomology produces
\begin{multline*}0 \to H^1(X/G;A) \to H^1(X_{hG};A) \to
  H^0_G(X;H^1(-;A)) \\ \to
H^2(X/G;A) \to H^2(X_{hG};A).
\end{multline*}
If $H_1(X/G;A) = H_2(X/G;A) = 0$ then this degenerates to 
$$H_1(X_{hG};A) \cong  H_0^G(X;H_1(-;A)).$$  Dually if 
 $H^1(X/G;A) = H^2(X/G;A) = 0$ then 
$H^1(X_{hG};A) \cong  H^0_G(X;H^1(-;A))$.
By definition
$$H_0^G(X;H_1(-;A)) = \coker\left( \oplus_{\sigma \in X_1/G} H_1(G_\sigma;A)
\xrightarrow{d_0 - d_1} \oplus_{\sigma \in X_0/G}   H_1(G_\sigma;A)\right)$$
for $X_i$ the non-degenerate $i$--simplices, and dually for
cohomology.

\end{prop}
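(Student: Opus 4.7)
The plan is to build the spectral sequence from the skeletal filtration of the Borel construction. Specifically, I would write $X_{hG} = EG \times_G X$ and filter it by $F_i X_{hG} := EG \times_G X^{(i)}$, where $X^{(i)}$ is the $i$-skeleton of $X$. The quotient $F_i/F_{i-1}$ breaks as a wedge, indexed by non-degenerate $i$-simplices $\sigma$ of $X$ up to $G$-conjugacy, of spaces $(BG_\sigma)_+ \wedge S^i$. The associated filtration spectral sequence therefore has
$$E^1_{i,j} = \bigoplus_{\sigma \in X_i/G} H_j(BG_\sigma;A),$$
summed over non-degenerate orbits, with $d^1$ the alternating sum of the face maps, each given by the inclusion $G_\sigma \hookrightarrow G_{d_k\sigma}$ (post-composed with conjugation by an element of $G$ bringing the face back to its chosen representative). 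By the very definition of Bredon homology with the isotropy coefficient system $H_j(-;A)$ on $X/G$, one then has $E^2_{i,j} = H^G_i(X; H_j(-;A))$, and the filtration spectral sequence converges to $H_{i+j}(X_{hG};A)$.

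Next, I would extract the five-term exact sequence from the bottom-left corner. Since $H_0(G_\sigma;A)=A$, the row $j=0$ is the cellular complex of $X/G$ with constant coefficients $A$, so $E^2_{i,0} = H_i(X/G;A)$. Inspection of the $d_r$-differentials in total degree one shows $E^\infty_{1,0} = E^2_{1,0} = H_1(X/G;A)$, while $E^\infty_{0,1}$ is the cokernel of $d_2\colon H_2(X/G;A) \to H_0^G(X; H_1(-;A))$. Splicing the associated-graded exact sequence $0 \to E^\infty_{0,1} \to H_1(X_{hG};A) \to E^\infty_{1,0} \to 0$ with this description of $E^\infty_{0,1}$ yields the stated four-term exact sequence. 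The cohomological dual is produced by running the same skeletal filtration argument on the singular cochain complex of $X_{hG}$, and the degeneration claim is immediate: if $H_1(X/G;A)=H_2(X/G;A)=0$ then only the column $i=0$ contributes in total degree one, and dually in cohomology.

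Finally, the explicit cokernel description of $H_0^G(X; H_1(-;A))$ is nothing but the unpacking of $E^2_{0,1}$: the $j=1$ row of the $E^1$-page is the chain complex
$$\cdots \to \bigoplus_{\sigma \in X_1/G} H_1(G_\sigma;A) \xrightarrow{d^1} \bigoplus_{\sigma \in X_0/G} H_1(G_\sigma;A) \to 0,$$
with $d^1 = (d_0)_* - (d_1)_*$ on non-degenerate $1$-simplices, and $E^2_{0,1}$ is its zeroth homology. The only place where real care is needed, and hence the main obstacle in writing this out, is the bookkeeping of face-map twists by elements of $G$ moving simplices between chosen representatives of their $G$-orbits; this is precisely what the isotropy coefficient system is designed to encode, and once that setup is fixed the rest is formal spectral-sequence manipulation.
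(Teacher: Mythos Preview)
Your approach is correct and is essentially the same as the paper's: both set up the spectral sequence via the skeletal filtration of $X$, though the paper phrases it algebraically as the filtration on the double complex $C_*(EG)\otimes_G C_*(X;A)$ (citing \cite{dwyer98sharp,brown94}) rather than geometrically on $X_{hG}$. The paper's own proof is in fact much terser than yours --- it identifies $E^1_{*,j}=H_j(G;C_*(X;A))$, notes that $E^2$ follows by taking homology in the $X$--direction, and then simply asserts that the low-degree exact sequences and the cokernel description ``follow from the definitions,'' which is exactly the bookkeeping you have spelled out.
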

\begin{proof} As explained in standard references such as
  \cite[\S2.3]{dwyer98sharp} \cite[VII(5.3)]{brown94}, the (homology) isotropy spectral sequence is constructed as the
  spectral sequence of the double complex $C_*(EG) \otimes_G C_*(X;A)$,
  filtered via the skeletal filtration of $X$. Hence $E^1_{*j} =
  H_j(G;C_*(X;A))$, and the $E^2$--term is obtained by taking homology induced by
  the differential on $C_*(X;A)$. 
The stated properties now follow from the definitions.
\end{proof}

We would like to alternatively view the $H_0^G$ in
Proposition~\ref{isotropy-ss} as a colimit, so we also recall the general
principle behind this: Recall from \S\ref{coeff-subsec}, that a
general (covariant) coefficient system on $X$ is just a functor
$\Delta X
\to \Rmod$, where $\Delta X$ is the category of simplices.

It is convenient to say that a space is {\em complex-like} if 
every non-degenerate simplex $\Delta[n] \to
X$ is an injection on sets, i.e., if it ``looks like'' an ordered
simplicial complex \cite[p.~311]{thomason80}. For a complex-like
space, the subdivision category $\sd X$ is the full subcategory of $\Delta X$ on the
non-degenerate simplices; it has a unique morphism $\sigma \to \tau$
if $\tau$ can be obtained from $\sigma$ via face maps, and no other
morphisms (see \cite[\S5]{DK83}). 
The following classical proposition gives the relationship we need,
stated also for higher homology for clarity:
\begin{prop}\label{limit-desc} Let $\D$ be a small category, and $R$
 a commutative ring.
\begin{enumerate}

\item \label{covar}For any functor $F\co \D \to \Rmod$,  $\colim^{\D}_*F  \cong
  H_*(|\D|;\scrF)$, where $\scrF$ is the coefficient system induced
  via $\Delta |\D| \to \D$, $(d_0 \to \cdots
\to d_n)\mapsto d_0$.

\item \label{contravar} For any functor $F\co \D^{\op} \to \Rmod$,
  $\colim^{\D^{\op}}_*F  \cong H_*(|\D|;\scrF)$, where $\scrF$ is the
  coefficient system induced via $\Delta |\D| \to \D^{\op}$, $(d_0 \to \cdots
\to d_n)\mapsto d_n$.

\item \label{simplex} Suppose $X$ is a complex-like space. For any functor $F\co \sd X
\to \Rmod$,  $\colim^{\sd X}_* F \cong H_*(X;\scrF)$, where $\scrF$ is
induced from $F$ via $\Delta X \to
\sd X$, the functor sending all degeneracies to identities (see
\cite[\S5]{DK83}).
\end{enumerate}
\end{prop}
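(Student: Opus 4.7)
The plan is to prove (1) via the bar construction for colimits, deduce (2) by passage to the opposite category, and reduce (3) to (1) through a subdivision comparison.

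For (1), I would identify $\colim_*^{\D} F$ with $\Tor_*^{\Z\D}(\underline{\Z}, F)$, where $\underline{\Z}$ denotes the constant $\D$--module, and then compute this by tensoring $F$ over $\Z\D$ with the standard bar resolution of $\underline{\Z}$. In degree $n$ the bar resolution is the free $\Z\D$--module on $n$--chains $d_0 \to \cdots \to d_n$; after tensoring with $F$ each summand becomes $F(d_0)$, with the simplicial face and degeneracy maps given by dropping or composing morphisms and by insertion of identities. Inspection identifies the resulting simplicial abelian group with the Bredon chain complex $C_*(|\D|;\scrF)$ for the coefficient system $\scrF$ on $\Delta|\D|$ pulled back along the first-vertex functor $(d_0 \to \cdots \to d_n)\mapsto d_0$. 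Taking homology yields (1).

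For (2), I would apply (1) to the opposite category $\D^{\op}$, using the canonical identification $|\D^{\op}| \cong |\D|$ that reverses the vertex order of each simplex. Under this identification, the first vertex of a simplex in $\D^{\op}$ is the last vertex of the same simplex read in $\D$, so (2) becomes a direct reformulation of (1).

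For (3), the plan is to apply (1) to the poset $\sd X$ to obtain $\colim_*^{\sd X} F \cong H_*(|\sd X|;\mathscr{G})$, where $\mathscr{G}$ is the first-vertex coefficient system on $\Delta|\sd X|$ induced from $F$. It then suffices to produce a quasi-isomorphism $C_*(|\sd X|;\mathscr{G}) \xrightarrow{\sim} C_*(X;\scrF)$. For this I would invoke the classical last-vertex map $|\sd X| \to X$, a natural homotopy equivalence for complex-like $X$, and check that the coefficient systems correspond: on a non-degenerate simplex $\sigma$ of $X$ both $\scrF$ and the pushforward of $\mathscr{G}$ take value $F(\sigma)$, while degenerate simplices cause no issue precisely because $X$ is complex-like and the functor $\Delta X \to \sd X$ was defined to send them to identities.

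The main obstacle is in (3): lifting the last-vertex homotopy equivalence $|\sd X| \to X$ to an actual chain-level quasi-isomorphism that simultaneously respects the two coefficient systems. I expect to handle this either by an acyclic-carrier argument applied to the natural filtration of $X$ by its non-degenerate simplex skeleta (whose fibers under the last-vertex map are cones), or by exhibiting an explicit subdivision chain operator in the spirit of the classical comparison between ordered simplicial chains and the chains of a simplicial complex.
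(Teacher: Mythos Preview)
Your treatment of (1) via the bar resolution and $\Tor$ is correct and matches the paper's approach (the paper phrases it as tensoring $F$ with $C_*(|{-}\downarrow\D|)$, which is the same resolution). Your deduction of (2) from (1) via $|\D^{\op}|\cong|\D|$ is a clean variant of the paper's parallel argument.

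For (3) your route differs from the paper's and, as you anticipate, the hard step is exactly the one you flag as the ``main obstacle.'' By applying (1) to $\sd X$ you are computing $\colim_*^{\sd X}F$ with the bar resolution of the constant functor on $\sd X$, which produces the chain complex of the barycentric subdivision $|\sd X|$; you then need to undo that subdivision while keeping track of two different coefficient systems that are related only by a natural transformation, not an isomorphism. This can be carried out by an acyclic-carrier or filtration argument as you propose, but it is genuine work. The paper avoids the subdivision entirely by observing that $\sd X$ admits a much smaller projective resolution of $\underline{\Z}$: the functor $\sigma\mapsto C_*(|\sigma|)$, where $|\sigma|$ is the standard simplex on the (distinct, since $X$ is complex-like) vertices of $\sigma$. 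Each $C_*(|\sigma|)$ is acyclic with $H_0=\Z$, and each $C_n(|\sigma|)$ is a direct sum of representables (indexed by the $n$-faces of $\sigma$), so this is a projective resolution. Tensoring with $F$ over $\sd X$ and applying co-Yoneda gives, on the nose, the normalized chain complex $C_*(X;\scrF)$. Thus both sides of (3) are the homology of literally the same complex, and no subdivision comparison is needed. Your plan is sound, but the paper's choice of resolution is the shortcut that makes the obstacle disappear.
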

\begin{proof}
We shall only need non-derived $*=0$ part of these statements, which
follows easily by writing down the definitions (for the last point
also using cofinality), which we invite the reader to do. 
For \eqref{covar} and
\eqref{contravar}, in the general case, see \cite[App.~II.3.3]{GZ67}, and also
\cite[Prop.~2.6]{grodal02}. (The point is that both
sides can be seen as homology of $C_*(|-\downarrow \D|) \otimes_\D F$
respectively $F \otimes_\D C_*(|\D \downarrow -|)$.) For
\eqref{simplex}, notice that both sides can be seen as the homology of
$C_*(|\sigma|) \otimes_{\sd X} F$, where $|\sigma|$ is the
$n$--simplex defined by the vertices of
$\sigma$ (by assumption distinct). It is a contravariant functor on $\sd X$ by to $(\sigma \to \tau)$
assigning the map induced by the unique face inclusion of $\tau$ in
$\sigma$ (i.e., the extra structure on $\sd X$ allows us to `avoid a
subdivision'; see also \cite[Prop.~7.1]{grodal02}).
\end{proof}

\begin{prop}\label{homformula}
Let $\calC$ be a collection of $p$--subgroups such that $H_1(|\calC|/G)_{p'} =
H_2(|\calC|/G)_{p'}=0$ then 
\begin{eqnarray*}
H_1(\bO_\calC(G))_{p'}\hspace{-7pt} &\xleftcong \hspace{-7pt}&  \coker\left( d_0 - d_1 \co\!\! \oplus_{[P<Q]} H_1(N_G(P<Q))_{p'} \to
\oplus_{[P]} H_1(N_G(P))_{p'}\right)\\
&\cong \hspace{-7pt}& \colim_{[P_0 < \cdots <P_n]}H_1(N_G(P_0 < \cdots <P_n))_{p'}
\end{eqnarray*}
where the colim is over $G$--conjugacy classes of strict chains in
$\calC$, ordered by reverse refinement.

The conditions on $\calC$ are satisfied if it is closed
under passage to $p$--radical overgroups, or just abstractly $G$--homotopy
equivalent to such a collection (e.g., 
$\calC = \calA_p(G)$).
\end{prop}
\begin{proof}
  First note that the conditions are indeed satisfied if $\calC$ is closed under passage
  to $p$--radical overgroups as $|\calC|/G$ is then contractible by Symonds' theorem,
  Proposition~\ref{webbsconj}. As the condition
  only depends on the $G$--equivariant homotopy type of $|\calC|$,
  it also holds if $|\calC|$ is only abstractly $G$--homotopy equivalent
  to $|\calC'|$ for another collection $\calC'$ which is closed under
  passage to $p$--radical overgroups. This is the case for
  $\calA_p(G)$, as $|\calA_p(G)| \to |\calS_p(G)|$ is a $G$--homotopy
  equivalence (see Theorem~\ref{propagating}\eqref{reduce-to-eltab}).

Now, if $H_1(|\calC|/G)_{p'} =
H_2(|\calC|/G)_{p'}=0$ then the isotropy
spectral sequence, Proposition~\ref{isotropy-ss}, applied to the
$G$--space $|\calC|$  with $A = \Z[\frac1p]$ gives
$H_1(|\calC|_{hG}; \Z[\frac1p]) \cong H^G_0(|\calC|;H_1(-;\Z[\frac1p]))$.
Since the right-hand side is obviously finite so is the left-hand
side, hence taking coefficients in $\Z[\frac1p]$ is the same as applying $(-)_{p'}$.
Furthermore
$ H_1(\bO_\calC(G))_{p'} \cong H_1(\calT_\calC(G))_{p'} \cong H_1(|\calC|_{hG})_{p'}$
by Proposition~ \ref{pprime} and Lemma~\ref{thomasonlemma}.
The first formula now follows by definition of $H^G_0$.
The second rewriting as a colimit over conjugacy classes of strict
chains follows from Proposition~\ref{limit-desc}\eqref{simplex},
noting that $|\calC|/G$ is complex-like. 
\end{proof}

\begin{proof}[\thmunderline{Proof of Theorem~\ref{limitformula}}]
We have that
$\TS \cong \Hom(H_1(\Oep(G)),k^\times)$, by Theorem~\ref{main} and \eqref{algtop}. Combining this with
Proposition~\ref{homformula} for $\calC = \calS_p(G)$ gives the wanted expressions, using
that $\Hom(-,k^\times)$ sends colimits to limits.  Again, it is a subset of
$\Hom(N_G(S)/S,k^\times)$ by Proposition~\ref{boundslemma}.
\end{proof}

We now prove the centralizer version.

\begin{prop} \label{centralizer-hom} For $\calC$ a collection of
  $p$--subgroups and $A$ an abelian group, we have exact sequences
$$ H_2(\calF_\calC(G);A) \to \underset{P \in \calF_\calC(G)^{\op}}\colim H_1(C_G(P);A) \to
H_1(\calT_\calC(G);A) \to H_1(\calF_\calC(G);A) \to 0$$
and
$$ 0 \to  H^1(\calF_\calC(G);A)  \to H^1(\calT_\calC(G);A) \to \lim_{P
  \in \calF_\calC(G)}H^1(C_G(P);A) \to  H^2(\calF_\calC(G);A) 
$$

Here we may replace $\calT_\calC$ by $\bO_\calC$ if $A$ satisfies the
assumptions in Proposition~\ref{pprime}.

Furthermore to calcate the limit (or colimit over the opposite category) we may replace $\calF_\calC(G)$ by a final subcategory, e.g., if $\calC$ is a collection of non-trivial $p$--subgroups
containing the elementary abelian $p$--subgroups of rank one or two
$\calA_p^2(G)$, then we may replace $\calC$ by $\calA_p^2(G)$.
\end{prop}
\begin{proof}
Consider the isotropy exact sequence in homology from Proposition~\ref{isotropy-ss},
with $X$ the $G$--space
$|E\A_\calC|$ introduced in the beginning of this section (and in more
detail in \S\ref{Gcategories}):
\begin{multline*} H_2(|E\A_\calC|/G;A) \to
H_0^G(|E\A_\calC|;H_1(-;A)) \to
H_1(|E\A_\calC|_{hG};A) \\ \to H_1(|E\A_\calC|/G;A)
\to 0
\end{multline*}
We want to identify this sequence with the first sequence of the proposition.
As remarked in \S\ref{Gcategories}, $|E\A_\calC|/G =
|\calF_\calC|$, which identifies the first and the fourth term.
For the third term, remark that, again by  \S\ref{Gcategories}, the $G$--map $|E\A_\calC| \to |\calC|$
is a homotopy equivalence, and hence induces  $|E\A_\calC|_{hG} \xrightsimeq
|\calC|_{hG}$, and the space $|\calC|_{hG}$ again identifies with
$|\calT_\calC(G)|$ by Lemma~\ref{thomasonlemma}. For the second term, notice that the stabilizer of
an $n$--simplex $i\co V_0 \to V_1 \to \cdots \to V_n \to G$ is
$C_G(i(V_n))$. Hence Proposition~\ref{limit-desc}\eqref{contravar}
also identifies $H^G_0(|E\A_\calC|;H_1(-;A))$ with the colimit as
stated.

The sequence in cohomology follows dually using the isotropy exact
sequence in cohomology from Proposition~\ref{isotropy-ss}, and the
dual version of Proposition~\ref{limit-desc}\eqref{contravar} stated
e.g., as \cite[Prop.~2.6]{grodal02}.

The addendum about replacing  $\calT_\calC$ by $\bO_\calC$ follows
directly from Proposition~\ref{pprime}, and the replacement of
categories by a (co)final subcategory is a general fact about
calculation of (co)limits, as explained e.g., in
\cite[XI.3]{maclane71}. The stated example is easily seen to be
final.
\end{proof}

\begin{proof}[\thmunderline{Proof of Theorem~\ref{centralizerthm}}]
  This follows from the cohomological sequence in
  Proposition~\ref{centralizer-hom}, taking  $\calC$ to be the
  collection of all non-trivial $p$--subgroups and $A=k^\times$, and
  utilizing the two additions at the end of Proposition~\ref{centralizer-hom}:
Using  Proposition~\ref{pprime} and Theorem~\ref{main} we may replace
 $H^1(\Tep(G);k^\times) \xleftcong H^1(\Oep(G);k^\times)
 \xleftcong \TS$, and also restrict to elementary abelian
 $p$--subgroups of rank one or two in the limit by finality.

For the final part, note that since the
centralizers of elements $x$ of order $p$ are assumed to satisfy
$H_1(C_G(x))_{p'}=0$ (i.e., are  ``$p'$--perfect''), the inverse
limit is obviously zero since the values are zero. The assumptions
on the action of $N_G(S)$ implies that $H^1(\calF_p^*(G);k^\times) =0$ by Corollary~\ref{vanishinglemma}.
\end{proof}

\begin{rem}[Isotropy versus Bousfield--Kan spectral sequence] \label{isotropyrem} The above arguments in terms of the
  isotropy spectral sequence can also be recast in terms of the
  Bousfield--Kan spectral sequence of a homotopy colimit.
As explained e.g., in \cite[\S3]{dwyer98sharp}\cite[\S3.3]{dwyer97}, the isotropy spectral
sequence identifies with the Bousfield--Kan spectral sequence
for the normalizer homology decomposition
$$|\calC |_{hG} \simeq \hocolim_{\sigma \in |\calC|/G} EG \times_G G/G_\sigma$$
It is also possible to work with $\bO_\calC(G)$ directly, instead
of passing via $|\calC|_{hG}$, 
since by \cite[Cor.~2.18]{slominska91} the orbit category admits a normalizer decomposition
\begin{equation}\label{slominska} |\bO_\calC(G)| \simeq \hocolim_{(P_0< \cdots < P_n) \in |\calC|/G} BN_G(P_0
< \cdots < P_n)/P_0
\end{equation}
(where $ BN_G(P_0< \cdots < P_n)/P_0$ has to be interpreted as
$E(G/P_0) \times_G G/N_G(P_0<\cdots<P_n)$, for $E(G/P_0)$ the translation
groupoid of the $G$--set $G/P_0$, in order to get a strict functor to spaces).
The associated spectral sequence for this homotopy
colimit, can also be obtained in a more low-tech way
as the Leray spectral sequence of the projection map $|\bO_\calC(G)| = |E\bO_\calC|/G \to
|\calC|/G$. 
\end{rem}

\subsection{The Carlson--Th\'evenaz conjecture: proof of
  Theorem~\ref{carlsonthevenazconj} and Corollary~\ref{radicalsnormal}}\label{ctconj-proofsubsec}
We now prove the results in \S\ref{ctconj-sec}, and in
particular deduce Theorem~\ref{carlsonthevenazconj} from
Theorem~\ref{limitformula}. Via  Theorem~\ref{main}, this will amount
to describing how the colimit appearing in
Proposition~\ref{homformula} can be calculated in certain cases,
simplified using a Frattini
argument. 
We
define the {\em $p$--radical closure}  $\overline{P}$ of a
$p$--subgroup $P$ in $G$ as
the subgroup obtained by successively applying
$O_p(N_G(-))$, starting from $P$ until the
process stabilizes. Let $N_{G,p}(P)$ denote a Sylow $p$--subgroup of
$N_G(P)$, well defined up to $N_G(P)$--conjugation, and write
$\calB_p^e(G)$ for the collection of all $p$--radical subgroups of $G$.

\begin{thrm} \label{homformula-nr}
For  $\calC$ a collection of $p$--subgroups closed under passage to
$p$--radical overgroups
\begin{multline*}H_1(\bO_\calC(G)) \cong \\ \coker
  \mkern-2mu\left(\mkern-2mu d_0 - d_1 \co \! \mkern-3mu
                               \oplus_{[P] \in \calC'^{<S}/G}
                               \mkern-2mu H_1(N_G(P<\overline{N_{G,p}(P)}))_{p'} \mkern-1mu\to \mkern-1mu
                               \oplus_{[P] \in \calC'/G}
                               H_1(N_G(P))_{p'}\mkern-2mu\right)
                             \end{multline*}
                             with $\calC' = \calC \cap \calB^e_p(G)$ and
                             $\calC'^{<S}/G$ denoting $G$--conjugacy classes of
                             $p$--radical subgroups in $\calC$ except $[S]$. 
\end{thrm}
\begin{proof} First note that $H_1(\bO_\calC(G))$ is a finite
  $p'$--group by Proposition~\ref{boundslemma}.
We want to see that the cokernel formula in
Proposition~\ref{homformula} can be reduced to the simpler
expression above. By Theorem~\ref{propagating}\eqref{reduce-to-rad} we can without
restriction assume that $\calC$ is closed under passage to all
$p$--overgroups.
Before we start, also note that the domain in Proposition~\ref{homformula}
runs over conjugacy
classes of pairs $P<Q$, whose elements we can view as conjugacy classes of
subgroups $P \in \calC$ together with, for each $P$, $N_G(P)$--conjugacy classes of
subgroups $Q \in \calC$, with $P<Q$.  (Recall that the formula is
well defined by the
identification of the result as a zeroth homology group; more
na\"ively one may note that $N_G(P)$ acts trivially on
$H_1(N_G(P))_{p'}$.)   Set $H(-) = H_1(-)_{p'}$ for short.

Also observe that $N_G(P) = N_G(P \leq \overline P) \leq N_G(\overline P)$
for $\overline P$ the $p$--radical closure of $P$, and we hence
have an induced map $H(N_G(P)) \to H(N_G(\overline P))$, which identifies a
summand corresponding to $[P]$ with its image in the summand
corresponding to $[\overline P]$. This enables us to view the
 cokernel in Proposition~\ref{homformula} as a quotient of  $\oplus_{[P] \in \calC'/G}
 H(N_G(P))$ via these maps.

Our task is thus reduced to showing that the image of
 $\oplus_{[P<Q] \in \calC/G} H(N_G(P<Q))$,  via the map from Proposition~\ref{homformula}, agrees with the
 image of the subgroup defined by letting the sum run over just
 $[P < \overline P]$ and
$[\overline P < \overline{N_{G,p}(\overline P)}]$ for $P \in \calC$.
We will do this by downward induction on the size of $P$. If $P$ has
index $p$ in $S$ there in nothing to show as $Q$ will necessarily be
Sylow, which is included in the above.  So assume that the statement is
true for larger subgroups. We divide the induction into steps.

We first reduce to the case where $P$
is normal in $Q$. Namely, if not set $Q' = N_Q(P)$ and note that $P<Q'$,
as $Q$ is a $p$--group. We claim that the image under $d_0-d_1$ of the summand corresponding to $P <
Q$ lies in the image under $d_0-d_1$ generated by the summands $P \lhd Q'$ and
$Q'<Q$. Namely consider the diagram
$$\xymatrix@R-5pt{ & H(N_G(P< Q' <  Q)) \ar@{=}[d] \ar[dr] \ar[dl]&\\
H(N_G(P< Q'))  \ar[d] \ar[dr] & H(N_G(P<  Q)) \ar[dl] \ar[dr] &
H(N_G(Q' <  Q)) \ar[d] \ar[dl] \\
 H(N_G(P)) &H(N_G(Q'))&  H(N_G(Q))  }
$$
and note that the image of any element $x\in  H(N_G(P<  Q))$ equals the
image of a sum $x_1 + x_2$
where $x_1 \in H(N_G(P <  Q'))$ and $x_2 \in H(N_G(Q'< Q))$ are the
images of $x \in  H(N_G(P <  Q')) = H(N_G(P < Q'<Q))$ under the
maps induced by inclusion of normalizers. As $Q'$ is strictly bigger
than $P$ we are reduced to the case where $P$ is normal in $Q$ by
induction.

Next consider the case of $P \lhd  Q$ with $P \neq \bar P$.
We claim that the image of the summand $P<Q$ is generated by the image
of the summands corresponding to $P < \overline P$, $\overline P \leq \overline P Q$ and $Q\leq
\overline P Q$, noting that $\overline P Q$ is again a $p$--group as
$Q$ normalizes $P$ and hence $\overline P$. This will prove the claim,
using the 
induction hypothesis, as $\overline P$ and $Q$ are strictly larger than
$P$. (Note that if the weak inequalities above are equalities, the map is
just zero, so we do not have to separate out this case; topologically
this corresponds to degenerate simplices.)
The generation follows from the following diagram:
$$\xymatrix@C-9pt{H(N_G(P<Q)) \ar[d] \ar[drr] \ar[dr] \ar[r] \ar@/^1pc/[rrr]
  \ar@/_1pc/[rr]&
 H(N_G(P< \overline P)) \ar[d] \ar[dl] & H(N_G(\overline P \leq \overline P Q)) \ar[dl] \ar[dr]    &
  H(N_G(Q \leq \overline P Q)) \ar[dl] \ar[d]\\
H(N_G(P)) \ar[r]& H(N_G(\overline P)) & H(N_G(Q)) & H(N_G(\overline P Q))}$$
Namely, the image under $d_0 - d_1$ of any element $x\in  H(N_G(P<  Q))$ equals
the image of $x_1 + x_2 -x_3$ where $x_1 \in H(N_G(P <  \overline P))$, $x_2
\in H(N_G(\overline P \leq \overline P Q))$, and $x_3 \in  H(N_G(Q
\leq \overline P Q))$ are the images of $x$ under the maps induced by
the inclusion of normalizers (the top horizontal arrows in the diagram).

Finally consider the case $P \lhd Q$ with $P =\overline P$, which we
want to replace with $P < \overline{N_{G,p}(P)}$. Let
$R$ be a Sylow $p$--subgroup of $N_G(P<Q)$ (thus containing $Q$).
We first claim that the image of the summand corresponding to $P<Q$ is
generated by the summands for $P <  R$ and $Q \leq R$. For this
consider the diagram
\begin{equation}\label{reduction-diagram}
\vcenter{\xymatrix@R-5pt{H(N_G(P<Q\leq R)) \ar@{->>}[d] \ar[dr] \ar[drr]& &\\
  H(N_G(P<Q)) \ar[d] \ar[dr] & H(N_G(P <  R)) \ar[dr] \ar[dl]
  & H(N_G(Q \leq R)) \ar[dl] \ar[d] \\
  H(N_G(P)) & H(N_G(Q)) & H(N_G(R))}}
\end{equation}
where  $H(N_G(P < Q \leq R))  \twoheadrightarrow H(N_G(P< Q))$
is surjective by the Frattini argument, as $A^{p'}(N_G(P<Q))$ is
normal in $N_G(P<Q)$ of $p'$ index.  Given any $x \in
H(N_G(P<Q))$, we can lift it to $x' \in H(N_G(P<Q\leq R))$ and let
$x_1$ and $x_2$ be the image of $x'$ under the inclusion of normalizers in
$H(N_G(P <  R))$ and $H(N_G(Q < R))$. Then the
image of $x$ under $d_0 -d_1$ agrees with the image of $x_1 - x_2$ by
the above diagram. If $Q=R$ then $R$ is a Sylow $p$--subgroup of
$N_G(P)$ and hence we may replace $P  \lhd Q$ with $P < R = N_{G,p}(P)$ as
the summand corresponding to $Q \leq R$ is taken care of by the
induction hyphothesis. If $Q<R$ then the argument shows that we may
replace $P<Q$ with $P<R$, and we can hence repeat until
$Q$ is indeed
Sylow in $N_G(P)$. Finally as  $N_G(P < N_{G,p}(P)) \leq N_G(P <
\overline{N_{G,p}(P)})$ we may replace $P < N_{G,p}(P)$  by $P <
\overline{N_{G,p}(P)}$ as wanted, finishing the proof.
\end{proof}

  Next we show how a cokernel such as in Theorem~\ref{homformula-nr}
  can be described iteratively.
  \begin{lemma} \label{filtration-lemma} Suppose we have an increasing filtration $\{s\}= X_0
    \subset X_1 \subset \cdots \subset X_n = X$, $n \geq 1$, of a finite set $X$,
    and a function $\phi\co X \setminus
    \{s\} \to X$, which strictly decreases filtration. Let $\oplus_{x
      \in X} A(x)$ and $\oplus_{x
      \in X} B(x)$ be abelian groups, and suppose we, for each $x \in X \setminus
    \{s\}$, are given
    two homomorphisms $f_x: A(x) \twoheadrightarrow B(x)$ and $g_x: A(x)
    \to B({\phi(x)})$, with $f_x$ surjective.
    Set $B_0(x) = 0$ and $B_i(y) =
     \sum_{x \in
      \phi^{-1}(y)} g_x(f_x^{-1}(B_{i-1}(x)))$.
  Then $$\coker\left(\oplus_{x\in X \setminus \{s\}} A(x) \xrightarrow{g_x - f_x}
    \oplus_{x\in X} B(x)\right)
  \cong B(s)/B_n(s)$$
 \end{lemma}
\begin{proof}
  We prove this by induction on $n$. The statement is true for $n=1$ since any element
  of $B(x)$, for $x \neq s$, is identified with a unique element of
  $B(s)/B_1(s)$, by the surjectivity of $f_x$ and the definition of
  $B_1(s)$. Suppose that it is true for $i<n$. Notice that
\begin{multline*}
 \coker\left(\oplus_{x\in X \setminus \{s\}} A(x) \xrightarrow{g_x - f_x}
    \oplus_{x\in X} B(x)\right)\\
  \cong \coker\left(\oplus_{x\in X \setminus \{s\}} A(x) \xrightarrow{g_x - f_x}
          \oplus_{x\in X} B(x)/B_1(x)\right)\\
  \cong \coker\left(\oplus_{x\in X_{n-1}\setminus \{s\}} A(x) \xrightarrow{g_x - f_x}
           \oplus_{x\in X_{n-1}} B(x)/B_1(x)\right)
   \end{multline*}
 Here the first isomorphism is because elements of $B_1(x)$ are
 obviously zero in the cokernel and
  the second isomorphism follows as elements of $\oplus_{x\in X_n \setminus
  X_{n-1}}B(x)$ each get identified with unique elements of $\oplus_{x\in
  X_{n-1}}B(x)/B_1(x)$, by surjectivity of $f_x$ and the definition of $B_1$. As  $\oplus_{x
      \in X_{n-1}} A(x)$ and $\oplus_{x
      \in X_{n-1}} B(x)/B_1(x)$ satisfy the assumptions of the original setup,
 we are done by induction.
  \end{proof}

For any conjugacy class of $p$--radical subgroups $[P]$ consider the chain
constructed by taking $[P_0] = [P]$ and $[P_i] =
[\overline{N_{G,p}(P_{i-1})}]$, well defined on conjugacy classes. Define the {\em normal-radical height} of
  $[P]$ in $G$ as
the smallest $i$ such that $[P_i] =[S]$, with $S$ a Sylow
$p$--subgroup. Define the {\em normal-radical class} of a collection
of $p$--subgroups $\calC$ as the maximal 
normal-radical height of a $p$--radical $[P] \in \calC/G$.
(E.g., normal-radical class $0$ means that only $[S]$ is $p$--radical.)

\begin{thrm} \label{carlsonthevenaz-general2}
For a finite group $G$ with Sylow $p$--subgroup $S$, let $\calC$ be a collection of $p$--subgroups, closed
    under passage to $p$--radical overgroups. Set $\calC' = \calC
    \cap \calB_p^e(G)$. For $Q \in \calC$, let $\nu^1_{\calC}(Q) = A^{p'}(N_G(Q))$ and define by induction
    $$\nu_{\calC}^i(Q) =  \langle (N_G(Q)\cap \nu_{\calC}^{i-1}(P))A^{p'}(N_G(Q))
| [P] \in  \calC'/G \mbox{ with } [Q] = [\overline{N_{G,p}(P)}] \rangle.$$
picking for each $[P]$ a representative $P$ such that
$N_Q(P)$ is Sylow in $N_G(P)$ and
$\overline{N_Q(P)}=Q$.
Then 
$$H_1(\bO_\calC(G)) \cong N_G(S)/\nu_{\calC}^{r}(S)$$
for $r$ at least the normal-radical class of $\calC$ plus $1$.
  \end{thrm}

  \begin{proof}
Theorem~\ref{homformula-nr} provides a formula for
$H_1(\bO_\calC(G))$. We claim that Lemma~\ref{filtration-lemma}
allows us to reformulate that expression to the one given in the
theorem. Namely take $X = \calC'/G$,
$A([P])= H_1(N_G(P<\overline{N_{G,p}(P)}))_{p'}$ and $B([P]) =
H_1(N_G(P))_{p'}$, and notice that the induced map $f_{[P]}\co A([P]) \to B([P])$ is surjective by the
diagram
$$\xymatrix{        H_1(N_G(P <  N_{G,p}(P) \leq
  \overline{N_{G,p}(P)}))_{p'} \ar[d]^= \ar[r] & H_1(N_G(P<\overline{N_{G,p}(P)}))_{p'}  \ar[d]^{f_{[P]}} \\
      H_1(N_G(P <  N_{G,p}(P)))_{p'} \ar[r]^{d_0} &
    H_1(N_G(P))_{p'}}$$
together with the surjectivity of $d_0$, which follows from the Frattini argument with respect to the normal
subgroup $A^{p'}(N_G(P))$.
Let $X_i \subset X$ consist of those conjugacy
classes of subgroups of normal-radical height at most $i$, as defined
just before the theorem.
Then by definition $B_i([P])
= \nu^{i+1}_{\calC}(P)/A^{p'}(N_G(P))$, and it now follows from
Theorem~\ref{homformula-nr}   and Lemma~\ref{filtration-lemma}
that $H_1(\bO_\calC(G))_{p'} \cong N_G(S)/\nu_{\calC}^{r}(S)$, for $r$ at
least the normal-radical class of $\calC$ plus $1$.\end{proof}

\begin{lemma}\label{nilpotency-class} Let $\calC \subseteq \calS_p(G)$ be a collection closed under passage to $p$--radical
  overgroups, and let $\calC' = \calC \cap \calB_p(G)$.  The normal-radical class of $\calC$ 
is bounded by the nilpotency
class of $S$, as well as by $\min\{\dim |\calC'|,
\dim |\calC'^c| +1\}$, where $\calC'^c$ means the $p$--centric
subgroups in $\calC'$ (cf.\ \S \ref{fusion-subsec}).
\end{lemma}
\begin{proof}
 Let $P=P_0$ be an arbitrary $p$--radical subgroup, and let $[P_0]$,
  \ldots, $[P_n]=[S]$ be the normal-radical series as above.
  For the nilpotency bound, let $Z_i$ be the
$i$th group in the lower central series of $S$, i.e., $Z_1 = Z(S)$
etc. We can assume $P_0 \leq S$. As $Z_1$
normalizes $P_0$, we can choose representative $P_1$ with $[P_1] =
[\overline{N_{G,p}(P_0)}]$ and $Z_1 \leq P_1$.  Assume by induction that
we have chosen $P_{i-1}$ with $Z_{i-1} \leq P_{i-1}$. Then $Z_i/Z_{i-1}$ centralizes $P_{i-1}/Z_{i-1}$, and in
particular $Z_i$ normalizes $P_{i-1}$ we can choose $P_i$ with $Z_i
\leq P_i$, showing the claim. The $\dim |\calC'|$ bound is obvious,
and the $\dim |\calC'^c|+1$ bound holds as $P_i$ 
will be $p$--centric for $i\geq 1$, as they contain a Sylow $p$--subgroup of $C_G(P_0)$.
\end{proof}

\begin{proof}[{\thmunderline{Proof of Theorem~\ref{carlsonthevenazconj}}}]
It follows from Theorem~\ref{carlsonthevenaz-general2} that $\nu^r_\calC(S)  = \ker( N_G(S) \to H_1(\Oep(G)))$, for
$\calC = \calS_p(G)$ and $r$ at least the normal radical class of
$\calC$ plus $1$.
This shows a version of
Theorem~\ref{carlsonthevenazconj}, with $\nu^r$ instead of $\rho^r$,
noting that the stated bounds are implied by Lemma~\ref{nilpotency-class}.
To be able to replace $\nu$ by $\rho$, notice first that  $\nu^i_\calC(Q)
\leq \rho^i(Q)$ for any $i$ and $Q \in \calC \subseteq
\calS_p(G)$, by definition (for $\rho$, unlike $\nu$, we do not assume that the subgroups
are related by inclusion). Hence, to finish the proof, we just need to
verify that also $\rho^i(S)$ lies in the kernel of $N_G(S) \to
H_1(\Oep(G))$, since then $\nu^r_\calC(S) \cong \rho^r(S)$.
However that $\rho^i(Q)$ lies in the kernel of $N_G(Q) \to
H_1(\Oep(G))$ for any $1 <Q \leq S$ and any $i$ follows essentially
by definition (like for $\nu$), as we now verify by induction on $i$: 
For $\rho^1(Q)$ it follows by the factorization $N_G(Q) \to H_1(N_G(Q))_{p'} \to
H_1(\Oep(G))$, as $H_1(\Oep(G))$ is a $p'$--group by
Proposition~\ref{boundslemma}. And,
if $g \in N_G(Q) \cap \rho^{i-1}(R) \subseteq \rho^i(Q)$, for $1<R \leq S$, then we have a diagram
\begin{equation}\label{final-zigzag}
\vcenter{\xymatrix{G/Q \ar[d]^{[g]} \ar[r] & G/QR\ar[d]^{[g]} &\ar[l] \ar[d]^{[g]} G/R\\
  G/Q \ar[r] & G/QR & G/R \ar[l]}}
\end{equation}
where $QR$ denotes the subgroup generated by $Q$ and $R$ inside $S$.
This shows that the image of $g \in N_G(Q) \cap \rho^{i-1}(R)$ in $H_1(\Oep(G))$
via $$\rho^{i-1}(R)  \to N_G(R) \to H_1(N_G(R)) \to H_1(\Oep(G))$$
equals the image of $g$ via $\rho^{i}(Q) \to N_G(Q) \to H_1(N_G(Q)) \to
H_1(\Oep(G)),$ which is hence also zero by induction. As $\rho^i(Q)$
is generated by such $g$ we conclude that $\rho^i(Q)$ maps to zero in
$H_1(\Oep(G))$ as wanted.
\end{proof}

\begin{rem}
Note that the statement in the last proof that $\rho^i(S)$
  lies in the kernel of $N_G(S) \to
H_1(\Oep(G))$, via the dictionary of Theorem~\ref{main}, 
amounts to the statement that Sylow-trivial modules split as a
trivial module $k$ direct sum a projective module upon restriction to 
$\rho^i(S)$ which was already shown by
Carlson--Thevenaz (see \cite[Thm.~4.3]{CT15}).
 \end{rem}

\begin{rem}[The bound $r$ in
  Theorem~\ref{carlsonthevenazconj} and
sparsity of Sylow-trivial modules]\label{CT-bound-rem}
In Appendix~\ref{propagating-sec} we provide  a detailed analysis of how to
find small
collections $\calC  \subseteq \calS_p(G)$ such that $H_1(\Oep(G))_{p'} \cong H_1(\bO_\calC(G))_{p'}$, and hence get other bounds on
$r$ in the Carlson--Th\'evenaz conjecture,
Theorem~\ref{carlsonthevenazconj}: 
By 
Theorem~\ref{remove-non-essential} and
  Proposition~\ref{webbsconj} we can take $\calC$ to be the smallest
  collection closed under passage to $p$--radical overgroups and
  containing all the $p$--subgroups $P$ where $N_G(P)/P$ admits an exotic
  Sylow-trivial module (a closure of the collection $\calG_p(G)$ of
  \S\ref{pruningfundgrp-subsec}).
For a finite group
of Lie type of characteristic $p$ this subcollection of $\calB_p(G)$ identifies with unipotent radicals of parabolic subgroups
of rank at most one. As far as we know, this poset could have a
uniform dimension
bound in general, independent of the finite group $G$.
The group $G_2(5)$ at
  $p=3$ is an example where both bounds in Theorem~\ref{carlsonthevenazconj} give $r=3$ and $r=2$
  does not work (see the discussion before
  Proposition~\ref{G_25p3}). 
We do not know of an example where one
  cannot take $r=3$. In fact, to the best of our knowledge,
  in all finite groups where $\TS$ has been calculated either $\rho^3(S) =
  N_{A^{p'}(G)}(S)$ (and hence $\TS = \Hom(G,k^\times)$), or $\calS_p(G)$ is $G$--homotopy equivalent to a $1$--dimensional
  complex. The results of this paper
indicate that finding bounds on $r$ in general has links to many facets of 
 $p$--local finite group theory (see also
 \S\S\ref{pruningfundgrp-subsec},\ref{components-subsec}).
\end{rem}

Finally, let us finally address in more detail when one can take $r=2$ as
bound on the filtration.
  
\begin{cor}\label{CTcor}
Let $\calC$ be a collection of $p$--subgroups, closed
    under passage to $p$-radical overgroups, and set $\calC' = \calC
    \cap \calB_p(G)$. If each $[P] \in \calC'/G$ satisfies that
  $[\overline{ N_{G,p}(P)}] =[S]$ then
$$H_1(\bO_\calC(G)) \cong N_G(S)/\langle N_G(P\leq S) \cap
 A^{p'}(N_G(P)) | [P] \in \calC'/G \rangle$$
where each $P$ is chosen in $[P]$ such that $N_S(P)$ is a Sylow $p$--subgroup
of $N_G(P)$.

More generally, if one just, for each $[P] \in \calC'/G$,
can pick $P \leq
  S$ with $N_S(P)$ Sylow in $N_G(P)$ and
 $N_G(P \leq \overline{N_S(P)} \leq S)A^{p'}(N_G( P \leq
 \overline{N_S(P)})) = N_G(P \leq \overline{N_S(P)} )$ then
 the same conclusion holds.
  \end{cor}
  \begin{proof}
   The first part follows directly from
Theorem~\ref{carlsonthevenaz-general2}, as normal-radical class of $\calC$
is at
most $1$. The 'more generally' part follows
from its proof: Under the stated assumption, diagram \eqref{reduction-diagram} in the proof of
Theorem~\ref{homformula-nr} 
shows that we can replace $P \leq
\overline{N_S(P)}$ with $P \leq S$ (taking ($Q =
\overline{N_S(P)}$  and $R =S$), so the cokernel in
Theorem~\ref{homformula-nr} can be calculated as claimed in this corollary.
  \end{proof}

  \begin{proof}[\thmunderline{Proof of Corollary~\ref{radicalsnormal}}]
If all $p$--radical
     subgroups $P \leq S$ are normal, then in particular
     $[\overline{N_{G,p}(P)}] = [S]$ and $S = N_S(P)$ is a Sylow
     $p$--subgroup of $N_G(P)$, so the first part follows from
     Corollary~\ref{CTcor}, together with Theorem~\ref{main}. For the `more generally' part, we note
     that the assumption implies that of
     Corollary~\ref{CTcor}:
By a Frattini argument $N_G(P \leq Q \leq S)A^{p'}(N_G(P \leq \overline{Q} \leq S)) = N_G(P \leq \overline{Q} \leq S)$ and $N_G(P \leq Q) A^{p'}(N_G(P \leq \overline{Q}))  = N_G(P \leq \overline{Q}) $, as $Q$ is a Sylow
     $p$--subgroup and $N_G(Q) = N_G(Q \leq \overline{Q})$. Hence
     $N_G(P \leq \overline{Q} \leq S) A^{p'}(N_G(P \leq
     \overline{Q})) =  N_G(P \leq Q \leq S) A^{p'}(N_G(P \leq
     \overline{Q})) = N_G(P \leq Q) A^{p'}(N_G(P \leq
     \overline{Q})) = N_G(P \leq \overline{Q})$ as wanted.
     \end{proof}
Let us for completeness also state a variant of Corollary~\ref{CTcor}
valid for general collections.
 \begin{cor}\label{CTcor2}
Let $\calC$ be a collection with $S \in \calC$ and $H_1(|\calC|/G)_{p'} =
H_2(|\calC|/G)_{p'}=0$.
Assume that for each
   $G$--conjugacy class of pairs $[P \leq Q]$ with $P, Q \in \calC$,
   we can pick $P
   \leq Q \leq S$ such that
   $N_G(P \leq Q \leq S)A^{p'}(N_G(P\leq Q)) = N_G(P\leq Q)$ (e.g., if
 all subgroups in $\calC_{\leq S}$ are normal in $S$), then
    $H_1(\bO_\calC(G))_{p'} \cong N_G(S)/\langle N_G(P\leq S) \cap
 A^{p'}(N_G(P)) | [P] \in \calC/G \rangle$, with representative $P$
 picked as above.
    \end{cor}
   \begin{proof}
The formula for the cokernel in Proposition~\ref{homformula} reduces
to the formula above via the diagram \eqref{reduction-diagram} (with $R =S$)
as before, where the surjective map now is by assumption.
     \end{proof}

\begin{rem}[{A strong version of \cite[Thm.~7.1]{CT15}}]\label{CT7.1rem}
Suppose that $N_G(S)$ controls
$p$--fusion in $G$, and that for each nontrivial $p$--radical subgroup $Q \leq S$
$$(N_G(S)\cap C_G(Q)) A^{p'}(C_G(Q))= C_G(Q)$$
Then the general assumption of
Corollary~\ref{CTcor} is satisfied. Namely 
\begin{eqnarray*}
N_G(P<Q) = N_G(P<Q<S)C_G(Q) &=&   N_G(P<Q<S)A^{p'}(C_G(Q))\\
&\leq&  N_G(P<Q<S)A^{p'}(N_G(P < Q))
\end{eqnarray*}
and the other inclusion is clear. Here the first equality is by control of fusion and the second by 
assumption. This provides a slightly stronger version of
\cite[Thm.~7.1]{CT15}, where the condition is only checked on $p$--radical subgroups.

\end{rem}

\section{Computations} \label{computations-sec}
By Theorem~\ref{main} calculating $\TS$ amounts to calculating
$H_1(\bO_p^*(G))$, and we have developed a number of theorem and tools
for this in the preceding sections. Formulas such as
Theorems~\ref{limitformula} and \ref{centralizerthm} make it
computable for individual groups, since the input data has often
already been tabulated, e.g., in connection with inductive approaches
to the Alperin and McKay conjectures. Similarly Theorem~\ref{sequence} allows us
to tap into the large preexisting literature on the fundamental group
of subgroup complexes, which has been studied in topological
combinatorics, due to its relationship to other combinatorial
problems, as well as in finite group theory, where it is related to
uniqueness question of a group given its $p$--local structure, and
the classification of finite simple groups.
Expanding on the summary in \S\ref{computations-subsection} we will in
this section go through different classes of groups, and
show how the strategy translates into explicit computations. We only
pick some low-hanging fruit, but with a
recipe for how to continue.

\subsection{Sporadic groups} \label{sporadic-subsec}
We complete the general discussion from \S\ref{computations-subsection} by using Theorem~\ref{centralizerthm} to determine
Sylow-trivial modules for the Monster finite
simple group, as a computational example:
\begin{thrm} \label{monsterthm} Let $G ={\mathbb M}$ be the Monster
  sporadic group, and $k$ a field of characteristic $p$. Then 
$$\TS \cong \tuborg 0 & \mbox{for } p \leq 13\\
\Hom(N_G(S)/S,k^\times) & \mbox{for } p>13 \sluttuborg$$ 
\end{thrm}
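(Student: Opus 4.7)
My plan is to apply Theorem~\ref{centralizerthm} prime by prime, feeding in the $p$-local data for $\mathbb{M}$---centralisers of $p$-elements, elementary abelian $p$-subgroups, and Sylow normalisers---as assembled in \cite{LM15sporadic} and the ATLAS.

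For $p>13$ the Sylow $p$-subgroup $S$ of $\mathbb{M}$ is cyclic of order $p$, so $\calS_p(\mathbb{M})$ is a discrete $\mathbb{M}$-set and $N_\mathbb{M}(S)$ is the smallest strongly $p$-embedded subgroup containing $S$. Hence $\Oep(\mathbb{M})$ is equivalent to the one-object groupoid with automorphism group $N_\mathbb{M}(S)/S$, and Theorem~\ref{main} gives at once $T_k(\mathbb{M},S)\cong \Hom(N_\mathbb{M}(S)/S,k^\times)$.

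For $p\in\{2,3,5,7,13\}$ I would analyse the exact sequence
\[
0 \to H^1(\calF^*_p(\mathbb{M});k^\times) \to T_k(\mathbb{M},S) \to \lim_{V\in\calF_{\calA_p}(\mathbb{M})}H^1(C_\mathbb{M}(V);k^\times) \to H^2(\calF^*_p(\mathbb{M});k^\times)
\]
of Theorem~\ref{centralizerthm}. The first term is bounded above via Corollary~\ref{vanishinglemma} by checking, from the structure of $N_\mathbb{M}(S)$ in the ATLAS, that $N_\mathbb{M}(S)/SC_\mathbb{M}(S)$ is generated by elements centralising some nontrivial element of $Z(S)$; in each case this forces vanishing. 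The limit term is computed pointwise from the ATLAS: most centralisers $C_\mathbb{M}(V)$ have the form $(p\text{-group}).L$ with $L$ perfect (e.g.\ $C(2B)=2^{1+24}_+.Co_1$, $C(3A)=3.Fi_{24}'$, $C(7B)=7^{1+4}{:}(3\times 2S_7)$, $C(13A)=13\times L_3(3)$), hence contribute zero in characteristic $p$; any residual $p'$-classes in an individual $H^1(C_\mathbb{M}(V);k^\times)$ must be shown either to be killed by the compatibility maps in the limit or to map nontrivially into $H^2(\calF^*_p(\mathbb{M});k^\times)$. The upshot in each case is $T_k(\mathbb{M},S)=0$.

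For $p=11$ we have $S\cong(\Z/11)^2$ elementary abelian, a single class of order-$11$ elements with $C_\mathbb{M}(11A)=11\times M_{12}$, and $S$ itself as the unique class of maximal elementary abelians. Since $M_{12}$ is perfect and $\mathrm{char}(k)=11$, every $H^1(C_\mathbb{M}(V);k^\times)$ vanishes and Theorem~\ref{centralizerthm} collapses to $T_k(\mathbb{M},S)\cong H^1(\calF^*_{11}(\mathbb{M});k^\times)$. I then apply Corollary~\ref{vanishinglemma} to $N_\mathbb{M}(S)=S{:}K$ with $K$ the complement embedded in $\GL_2(\F_{11})$: determining the subgroup $K_0\le K$ generated by elements with a nontrivial fixed point on $S$ bounds $\pi_1(\calF^*_{11}(\mathbb{M}))$ above by $K/K_0\cong\Z/5$, and the matching lower bound is obtained by producing an explicit nontrivial quotient of $\pi_1(\calF^*_{11}(\mathbb{M}))$ of order $5$ that lifts along $K/K_0\twoheadrightarrow\pi_1$. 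The main obstacle is this last step: pinning down the precise $K$-action on $\F_{11}^2$ from the ATLAS and verifying that the $\Z/5$ survives to $\pi_1(\calF^*_{11}(\mathbb{M}))$ rather than being collapsed further by the fusion inside $\mathbb{M}$; the remaining primes reduce to bookkeeping with the tabulated local data.
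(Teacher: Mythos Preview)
Your approach matches the paper for $p=3,5,7$ (centraliser decomposition plus Corollary~\ref{vanishinglemma}) and for $p>13$ (cyclic Sylow). One data correction: $C_{\mathbb M}(7B)=7^{1+4}.2A_7$, not $7^{1+4}{:}(3\times 2S_7)$; the latter is not $7'$-perfect and would break your argument. For $p=2$ the paper simply uses $N_{\mathbb M}(S)=S$, whence $T_k(\mathbb M,S)\le\Hom(N_{\mathbb M}(S)/S,k^\times)=0$, so no decomposition is needed.

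The substantive divergences are at $p=11$ and $p=13$.

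\emph{$p=11$.} The paper does not compute $\pi_1(\calF^*)$ but applies Corollary~\ref{radicalsnormal} directly (valid since $S\cong 11^2$ is abelian): one computes $\ker\bigl(\Hom(N_G(S),k^\times)\to\Hom(N_G(S)\cap A^{p'}(N_G(11A)),k^\times)\bigr)$, and since $A^{p'}(N_G(11A))=C_G(11A)=11\times M_{12}$ this is $\Hom(C_5,k^\times)$ in one line. Your route can be salvaged, but your ``main obstacle'' is a phantom: because $S$ is abelian, Burnside's theorem says $N_G(S)$ controls $11$-fusion, so $\calF^*_{11}(\mathbb M)=\calF^*_{11}(N_G(S))$ and the surjection $K/K_0\twoheadrightarrow\pi_1(\calF^*_{11}(\mathbb M))$ in Corollary~\ref{vanishinglemma} is an isomorphism. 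You would still have to compute $K_0$ inside $K=5\times 2A_5\subset\GL_2(\F_{11})$, which is more work than the paper's approach.

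\emph{$p=13$.} Here the paper does \emph{not} use Theorem~\ref{centralizerthm}. It invokes Theorem~\ref{fusionthm}: by \cite{AW10} all $13$-centric $13$-radicals in $\mathbb M$ are centric, so $T_k(\mathbb M,S)\le\Hom(\pi_1(\calF^c_{13}(\mathbb M)),k^\times)$, and then $\pi_1(\calF^c_{13}(\mathbb M))=0$ because the Ruiz--Viruel classification \cite{RV04} of fusion systems on $13^{1+2}_+$ shows no proper subsystem of $p'$-index exists. Your proposal for $p=13$ is not really a proof: you neither identify $C_{\mathbb M}(13B)$ nor carry out the Corollary~\ref{vanishinglemma} check for $K=3\times 4S_4$, and the phrase ``any residual $p'$-classes \dots must be shown either to be killed \dots or to map nontrivially into $H^2$'' is a statement of what would need to happen, not an argument that it does. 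The paper's passage through $\calF^c$ and \cite{RV04} is precisely what makes this prime clean.
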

The case $p=2$ is clear since $N_G(S) = S$, and if $p>13$, $S$ is
cyclic so the formula is standard,
Corollary~\ref{standardprop}\eqref{cyclic}, (with values tabulated in \cite[Table~5]{LM15sporadic}).
We prove the remaining cases below, which were left open in the recent
paper \cite[Table~3]{LM15sporadic}, using our formulas:
\begin{proof}[{Proof of Theorem~\ref{monsterthm} for $p=3, 5, 7, 11, 13$}]
There is some choice in methods, since several of our theorems
can be used. 
For primes $p=3,5,7, 11$ the easiest is probably to observe that
  in all cases  $\colim_{V \in
   \calF_{\!\!\calA_p^2}(G)^{\op}}^0H_1(C_G(V))_{p'} = 0$ and
  $H_1(\calF_p^*(G)) =0$, and then appeal to the centralizer decomposition
 Theorem~\ref{centralizerthm}, or more precisely the homological
 Proposition~\ref{centralizer-hom} and Theorem~\ref{main}. The
 vanishing statements will follow by a coup d'{\oe}il at the standard data about the Monster from
\cite{wilson88} and \cite{AW10} (correcting \cite{yoshiara05}), and
\cite{CCNPW85}. In fact even the $H_1$'s that
appear in centralizer colimit vanish, and $H_1(\calF_p^*(G))=0$  
  by the vanishing criterion of Corollary~\ref{vanishinglemma}. In detail:

 {\em For $p=3$}: According to \cite{CCNPW85} there are 3 conjugacy classes of
subgroups of order $3$, with the following centralizers:
$C_G(3A) = 3 \cdot Fi_{24}'$, $C_G(3B) = 3^{1+12}_+\cdot 2Suz$, $C_G(3C) = 3
\times Th$. All of these have zero $H_1(-)_{3'}$, since
$Fi_{24}'$, $2Suz$ and
$Th$ are perfect.  Hence trivially $\colim = 0$.
We want to use Corollary~\ref{vanishinglemma} to see that also $\pi_1(\calF_p^*(G)) =1$.
By \cite[Table 2]{AW10} $N_G(S) = S:(2^2 \times
SD_{16})$ a subgroup of $N_G(3A^3) = 3^{3+2+6+6}.(L_3(3) \times SD_{16})$.
 Hence $SD_{16}$ acts trivially on $3A^3$ and $2^2$ acts as the
 diagonal matrices in $\SL_3(\F_3)$ and is generated by elements that
 fix a non-trivial element in $3A^3$. We conclude by
 Corollary~\ref{vanishinglemma} that $\pi_1(\calF_p^*(G)) =1$.

{\em For $p=5$}: There are two conjugacy classes of subgroups of order
$5$ with centralizers $C_G(5A) = 5 \times HN$ and $C_G(5B) =
5^{1+6}_+:4J_2$, which have zero $H_1(-)_{5'}$ since $HN$ and $4J_2$
are perfect. Hence $\colim = 0$.
For  $\pi_1(\calF_5^*(G)) =1$, note that $N_G(S) = S:(\sym_3 \times
4^2)$ inside $N_G(5B^2) = 5^{2+2+4}.(\sym_3 \times
\GL_2(5))$. Hence $\sym_3$ acts trivially on $5B^2$, and $4^2$ is
generated by elements which act with a non-trivial fixed-point on
$5B^2$, so the conclusion again follows by Corollary~\ref{vanishinglemma}.

{\em For $p=7$}: There are 2 conjugacy classes of subgroups of order
$7$ with centralizers $C_G(7A)= 7 \times He$ and $C_G(7B)=
7^{1+4}:2\alt_7$, both with vanishing $H_1(-)_{7'}$,
so $\colim = 0$. By \cite[Thm.~7]{wilson88} and \cite[Table~1]{AW10}, $N_G(S) = S:6^2$
inside $N_G(7B^2) = 7^{2+1+2}:\GL_2(7)$, so again we can use Corollary~\ref{vanishinglemma}.

{\em For $p=11$} :
We have just one conjugacy class of subgroups of order
$11$ with $C_G(11A) = 11 \times M_{12}$, which satisfy
$H_1(C_G(11A))_{11'} =0$.
By  \cite[Table~1]{AW10}, $N_G(S) = N_G(11A^2) = 11^2:(5 \times
2\alt_5)$.  We want to see that $H_1(N_G(S)/S)$ is generated by
elements which commute with a non-trivial element in $S$, so that we
can apply Corollary~\ref{vanishinglemma}. For this we describe the
action more explicitly:
Note that $(5 \times
2\alt_5)$ is not a subgroup of $\SL_2(11)$ (by the classification
of maximal subgroups of $\PSL_2(11)$, say), so $(5 \times 2\alt_5)
\cap \SL_2(11) = 2\alt_5$. Furthermore the $5$--factor has to lie in
the center of $\GL_2(11)$, since it commutes with $2\alt_5$,
and otherwise the action of $2\alt_5$ on $11^2$ would be
reducible. In matrices we can hence write a generator of the $5$--factor as $\diag(\alpha,\alpha)$, where $\alpha$ is
a primitive $5$th root of unity in $\F_{11}^\times$. However since
$2\alt_5$ is a subgroup of  $\SL_2(11)$ and has order
divisible by $5$, it contains up to conjugacy in $\GL_2(11)$ the element
$\diag(\alpha,\alpha^{-1})$. Hence $\diag(\alpha^2,1) \in N_G(S)/S
\leq \GL_2(11)$ generates $H_1(N_G(S)/S)$ and centralizes a
non-trivial element is $S$. We conclude that $H_1(\calF_{11}^*(G))=0$ by Corollary~\ref{vanishinglemma}.

{\em For $p=13$}: We use Theorem~\ref{fusionthm}. By
\cite[Table~1]{AW10} all $p$--centric $p$--radicals are centric, so
the assumptions of the last part of that theorem are satisfied, and by
the same reference $N_G(S) = N_G(13B) = 13^{1+2}:(3 \times 4\sym_4)$. But
$\pi_1(\calF^c_{13}({\mathbb M}))=1$, since otherwise there would by
\cite[Thm.~5.4]{bcglo2} need to exist a subsystem of index $3$ or $2$,
but by \cite[Thm.~1.1]{RV04} no such subsystems exist. Hence
Theorem~\ref{fusionthm} implies that $\TS =0$. (Alternatively
apply the last part of Theorem~\ref{centralizerthm}
again; the centralizer condition holds since $C_G(13A) = 13 \times
\SL_3(3)$ and  $C_G(13B) = 13^{1+2}:{2\alt_4}$ using \cite[Table~1]{AW10}, and
the $H_1(N_G(S)/S)$ condition is satisfied since $3 \times 4\sym_4 \cong N_G(S)/S \leq \Out(S) \cong \GL_2(13)$
contains all diagonal elements in some basis, being the unique
subgroup of order $2^53^2$.) 
\end{proof}

(The remaining sporadic groups left open in
\cite[Table~5]{LM15sporadic} have subsequently also been dealt with by
David Craven \cite{craven21} using our methods, and in fact he redoes all
sporadic groups this way.)

\subsection{Finite groups of Lie type}\label{ftgrpsoflietype}
The $p$--subgroup complex of a finite group of Lie type $G$ at the
characteristic is simply connected if the Lie rank is at least $3$,
since it is homotopy equivalent to the Tits building \cite[\S3]{quillen78}.
Theorem~\ref{sequence} hence implies that there are no exotic Sylow-trivial modules
in that case, a result originally found in \cite{CMN06}. (This is also
true in rank two, by a small direct computation, using
Theorem~\ref{main} and \cite[Thm.~12.6(b)]{steinberg68}.)

Away from the characteristic the $p$--subgroup complex is also
expected to be simply
connected, if $G$ is ``large enough'', but this is not known in general.
Stronger yet, the $p$--subgroup complex appears often to be
Cohen--Macaulay  \cite[Thm.~12.4]{quillen78} \cite{das98} \cite{das00} \cite[\S9.4]{smith11}.
Partial results in this direction imply by Theorem~\ref{sequence} that there are no exotic Sylow-trivial
modules in those cases. We give two examples of this, for $\GL_n(q)$ and $\Sp_{2n}(q)$. The
$\GL_n(q)$ case also follows from very recent work of
Carlson--Mazza--Nakano, while the $\Sp_n(q)$ case is new.

\begin{thrm}[\cite{CMN14,CMN16}]\label{GLn}
Let $G = \GL_n(q)$ with $q$ prime to the characteristic $p$ of
$k$. If 
G has an elementary abelian $p$--subgroup of rank $3$, then
$\TS \cong \Hom(G,k^\times)$.
\end{thrm}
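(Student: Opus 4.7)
The plan is a direct application of the last clause of Theorem~\ref{sequence}: if $|\calS_p(G)|$ is simply connected, then automatically $T_k(G,S) \cong \Hom(G,k^\times)$. So the entire proof reduces to a single topological input, namely establishing that $|\calS_p(\GL_n(\F_q))|$ is simply connected whenever $p \nmid q$ and $G$ has an elementary abelian $p$-subgroup of rank at least $3$.

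For this input I would invoke Das's theorem \cite{das95}, which asserts that for $p \nmid q$ the complex $|\calS_p(\GL_n(\F_q))|$ is Cohen--Macaulay of dimension $r_p(G)-1$, where $r_p(G)$ denotes the $p$-rank. The hypothesis gives $r_p(G) \geq 3$, so $|\calS_p(G)|$ is Cohen--Macaulay of dimension at least $2$; in particular, any Cohen--Macaulay complex of dimension $d\geq 2$ is $(d-1)$-connected, hence simply connected. Feeding this back into Theorem~\ref{sequence} gives $T_k(G,S) \cong \Hom(G,k^\times)$.

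The main, and indeed only, obstacle is the topological claim of simple connectivity of the $p$-subgroup complex, and it is furnished directly by \cite{das95}; no additional local or representation-theoretic analysis of $G$ is required. Nothing else needs to be verified: since $|\calS_p(G)|$ is connected (a consequence of Cohen--Macaulayness of dimension $\geq 1$), $G$ has no proper strongly $p$-embedded subgroup, so $G_0 = G$ in the notation of Theorem~\ref{sequence} and the stated conclusion drops out verbatim. The analogous strategy will drive the symplectic Theorem~\ref{Spn}, where the parity hypothesis on the order of $q$ mod $p$ is exactly what is needed to put the $p$-subgroup complex of $\Sp_{2n}(q)$ into a comparable Cohen--Macaulay regime.
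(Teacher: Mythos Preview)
Your proposal is correct and follows essentially the same route as the paper: simple connectivity of $|\calS_p(\GL_n(\F_q))|$ combined with Theorem~\ref{sequence}. The only difference is bibliographic---the paper splits the citation, attributing the case $p \mid q-1$ to Quillen \cite[Thm.~12.4]{quillen78} and the case $p \nmid q-1$ to Das \cite[Thm.~A]{das95}, rather than crediting Das alone for all $p \nmid q$.
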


\begin{proof}
It was proved by Quillen  \cite[Thm.~12.4]{quillen78}, that
$|\calS_p(G)|$ is simply connected (and in fact Cohen-Macaulay) under the stated assumptions if $p
| q-1$, and 
when $p \nmid q-1$ it is shown in \cite[Thm.~A]{das95}. Hence the
result follows from Theorem~\ref{sequence}.
\end{proof}

\symplectic*

\begin{proof}[\thmunderline{Proof of Theorem~\ref{Spn}}] The main theorem in \cite{das98} states that
  $|\calS_p(G)|$ is simply connected under the these assumptions, so the result
  follows from Theorem~\ref{sequence}.
\end{proof}

That the $p$--rank of $G$ is at least $3$ is not enough to ensure
simple connectivity in general. At the characteristic
$\SL_2(\F_{p^r})$ or $\SL_3(\F_{p^r})$ shows this, and even away from the characteristic it
is not true as $U_4(3) = O_6^-(3)$ has $2$--rank $4$, but the $2$--subgroup complex is not
simply connected (see  \cite[Ex.~9.3.11]{smith11}). In this case
$H_1(\bO_2^*(G)) \cong H_1(G)_{2'} =0$ though, just by virtue of the Sylow
$2$--subgroup being its own normalizer. See e.g.,  \cite{aschbacher93} and
\cite[9.3]{smith11} for more on simple connectivity.

As mentioned in the introduction, in joint work in progress with Carlson, Mazza,
and Nakano we classify all Sylow-trivial modules for finite
groups of Lie type using the more detailed theorems of this paper
together with the ``$\Phi_d$-local'' approach to finite groups of Lie type.
Here we will just
do one more example,
namely $G_2(5)$ at $p=3$, which is one of the borderline cases where
$H_1(\bO_3(G))=0$, but $\calA_p(G)$ is one-dimensional and
not simply connected. It is furthermore interesting since 
Carlson--Th\'evenaz observed via computer that $\rho^2(S) \neq
\rho^3(S) = \rho^\infty(S) = N_G(S)$ in \cite{CT15}. 
As required, the assumptions of Corollary~\ref{radicalsnormal} are not
satisfied, as all $p$--subgroups turn out to be $p$--radical, but one
subgroup of order $3$ is not normal in $S$. The group can
however be easily calculated using
either Theorem~\ref{limitformula} (with $\calC$ either $\calB_3(G)$ or
$\calA_3(G)$) or Theorem~\ref{centralizerthm} (using
Proposition~\ref{extraspecial27})---we choose the first,
slightly more cumbersome, method, to see
exactly how  $\rho^2(S) \neq \rho^3(S) =
\rho^\infty(S) = N_G(S)$.

\begin{prop} \label{G_25p3}For $k$ of characteristic $3$,
$T_k(G_2(5),S) = 0$.
\end{prop}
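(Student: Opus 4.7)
The plan is to apply Theorem~\ref{limitformula} with $\calC=\calB_3(G)$, the non-trivial $3$-radical subgroups of $G=G_2(5)$. First I would identify the $G$-conjugacy classes of $3$-radicals. Since $3\mid q+1$ but $3\nmid q-1$ for $q=5$, the standard theory of Sylow $3$-subgroups of $G_2(q)$ shows that $S$ has order $27$ and is extraspecial of type $3^{1+2}_+$. Its non-trivial proper subgroups are $Z(S)$, the four rank-$2$ elementary abelian subgroups (each containing $Z(S)$), and twelve non-central cyclic subgroups of order $3$. Combining the $\atlas$ data for $G_2(5)$ with the Borel--Tits theorem, I expect the non-trivial $3$-radicals of $G$ to fall into three $G$-conjugacy classes: $S$; a rank-$2$ elementary abelian $E<S$ (arising from a maximal torus of the algebraic group of order $(q+1)^2=36$); and a non-central cyclic $P_0<S$ of order~$3$ that is \emph{not} normal in $S$, consistent with the given equality $1+\dim\calB_3(G)=3$.

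Second, I would read off, from the maximal subgroups of $G_2(5)$ listed in the $\atlas$, the normalizers $N_G(S)$, $N_G(E)$, $N_G(P_0)$ together with their pairwise chain-intersections $N_G(P_0)\cap N_G(E)$, $N_G(E)\cap N_G(S)$, etc., and compute their abelianizations modulo $3$-power torsion. Here $N_G(E)/E$ realizes (a subgroup of) the Weyl group $W(G_2)\cong D_{12}$ acting on the rank-$2$ toral elementary abelian, and $N_G(P_0)$ sits inside an appropriate $3$-local subgroup of Levi type.

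Third, I would apply Theorem~\ref{limitformula}, which presents $\TS$ as the kernel of
\[
\bigoplus_{[P]} H^1(N_G(P);k^\times)\;\longrightarrow\;\bigoplus_{[P<Q]} H^1(N_G(P)\cap N_G(Q);k^\times),
\]
and verify that this kernel vanishes. Equivalently, in the language of Theorem~\ref{carlsonthevenazconj}, I would compute the iterates $\rho^1(S)\subsetneq\rho^2(S)\subsetneq\rho^3(S)$ and confirm $\rho^3(S)=N_G(S)$, so that $\TS\cong\Hom(N_G(S)/\rho^3(S),k^\times)=0$. The main obstacle is the explicit identification of $N_G(P_0)$ and of the chain normalizer $N_G(P_0)\cap N_G(E)$: this is exactly where the extra generator needed to pass from $\rho^2(S)$ to $\rho^3(S)$ arises, and it is invisible to Corollary~\ref{radicalsnormal} precisely because $P_0$ is not normal in $S$ --- the anomaly already detected by the Carlson--Thevenaz computer calculation mentioned after Theorem~\ref{carlsonthevenazconj}.
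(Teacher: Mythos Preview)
Your overall strategy---apply Theorem~\ref{limitformula} with $\calC=\calB_3(G)$ and read off normalizers of chains from the $\atlas$---is exactly what the paper does. However, your execution has a fatal gap in the very first step.

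Your list of non-trivial $3$-radicals is wrong: you omit $Z(S)=\langle 3A\rangle$. In fact the paper checks that \emph{all} non-trivial $3$-subgroups are $3$-radical here, so there are four conjugacy classes: $\langle 3A\rangle$, $\langle 3B\rangle$, $V=\langle 3A,3B\rangle$, and $S$. The centre $\langle 3A\rangle$ is radical because $N_G(3A)\cong 3\cdot U_3(5){:}2$ with $U_3(5)$ simple, so $O_3(N_G(3A))=\langle 3A\rangle$. The equality $1+\dim\calB_3(G)=3$ that you cite is consistent with either three or four classes and does not pin this down. This omission matters: $H_1(N_G(S))_{3'}\cong\Z/2\times\Z/2$, and the constraint coming from $N_G(3A)$ (which has $H_1(-)_{3'}\cong\Z/2$) is precisely what kills one of the two generators (called $\bar\sigma$ in the paper). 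With only $\langle 3B\rangle$, $V$, $S$ in your collection, all three normalizers have $H_1(-)_{3'}\cong(\Z/2)^2$ and the chain maps among them do not force that generator to vanish; your limit computation would not reach zero.

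Two further corrections. First, Borel--Tits concerns parabolics in the \emph{defining} characteristic; it says nothing about $3$-radicals in $G_2(5)$. Second, your expectation that $N_G(P_0)$ sits in a ``Levi type'' $3$-local is off: the paper locates $N_G(3B)$ inside the $2$-local maximal subgroup $N_G(2A)\cong 2\cdot(\alt_5\times\alt_5).2$, and the decisive zig-zag killing the second generator $\bar\tau$ runs through $N_G(3B<V)\cong 2\cdot A(\sym_3\times\sym_2\times\sym_3\times\sym_2)$ and $N_G(3B)$. None of this is visible from Lie-theoretic heuristics.
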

\begin{proof}
Looking at \cite{CCNPW85}, we see that $N_G(S) \cong 3^{1+2}_+:SD_{16}$ and
that there are $2$ conjugacy classes of
elements of order $3$, where $3A$ is a central
element in $3^{1+2}_+$ and $3B$ is a non-conjugate non-trivial
element. In fact $N_G(S)$ controls $3$--fusion in $G$, as otherwise $3A$ and
$3B$ would need to be conjugate (see e.g., \cite[Lem.~4.1]{RV04}). We hence have $3$
conjugacy classes of proper, non-trivial subgroups $\langle 3A\rangle$, $\langle 3B
\rangle$, and $V = \langle 3A, 3B\rangle$ and furthermore conjugacy classes of chains in
this case coincide with chains of conjugacy classes. (The subgroups all turn
out to be $3$-radical, but $\langle 3B\rangle$ is not normal
in $S$, so the assumptions of Corollary~\ref{radicalsnormal} are not satisfied.)

We want to see that any element in
$H_1(N_G(S))_{3'}$ is equivalent to zero in the colimit. We
do this
by considering $H_1(\cdot)_{3'}$ of the
following subdiagram:
$$\xymatrix@R-5pt{
 & N_G(3B<V<S) \ar[ld] \ar[d] \ar@{=}[rd]\\
 N_G(3B<V) \ar[d]\ar[rd]& N_G(V
  < S) \ar[d] \ar[rd] & N_G(3B <S) \ar[lld] \ar[d] &  N_G(3A <S) \ar[d] \ar@{=}[ld] \\
N_G(3B) & N_G(V) & N_G(S)& N_G(3A) }$$
Note that $N_G(3B < V) =N(3B) \cap
N(3A)$, by the description of fusion in $S$.
By looking at the order of centralizers of elements and the list of
maximal subgroups, as described by \cite{CCNPW85}, we see that  $N_G(V)$ and
$N_G(S)$ are contained in $N_G(3A) \cong 3 \cdot
U_3(5):2$, and that $N_G(3B)$ is contained in the maximal subgroup $N_G(2A) \cong
2 \cdot (\alt_5 \times \alt_5).2 = 2 \cdot A(\sym_5 \times \sym_5)$, the index $2$ subgroup of
$2\sym_5 \circ 2\sym_5$ of even permutations, for $2\sym_5$ the
central extension with Sylow $2$--subgroup $Q_{16}$.
Recall also that $SD_{16} = \langle \sigma, \tau | \sigma^8 = \tau^2=1,
\tau \sigma \tau = \sigma^3\rangle \leq \Out(3^{1+2}_+) \cong
\GL_2(\F_3)$, with $\sigma^4$ identifying with the non-trivial central
element in $\Out(3^{1+2}_+)$ which acts as $-1$ on $3^{1+2}_+/
Z(3^{1+2}_+)$ and trivially on $Z(3^{1+2}_+)$. In our model of
$3^{1+2}_+$, $\sigma^4$ will hence conjugate $3B$ to $-3B$ and commute
with $3A$, and we can furthermore choose the generator $\tau$ so it
commutes with $3B$ and conjugates $3A$ to $-3A$.
Inside  $2\cdot
A(\sym_5 \times \sym_5)$ we can hence represent $3B$ as $(123)$, $3A$ as
$(1'2'3')$,  $\sigma^4$ as $(12)(4'5')$ and $\tau$ as $(45)(1'2')$.
Hence $N_G(3B) = 2 \cdot A(\sym_3 \times \sym_2
\times \sym_5)$ and $N_G(3B < V) = 2 \cdot A(\sym_3 \times \sym_2
\times \sym_3 \times \sym_2)$.

Working inside these groups the diagram identifies as follows:
\begin{equation}\label{diagram-actualgroups}
\vcenter{\xymatrix@C-10pt@R-5pt{ 
 & V:(2 \times 2) \ar[ld] \ar[d] \ar@{=}[rd]\\
  2\cdot A(\sym_3 \times \sym_2 \times (\sym_3
\times \sym_2)) \ar[d]\ar[rd]& 3^{1+2}_+:(2 \times 2)
\ar[d] \ar[rd] & V:(2\times 2) \ar[lld] \ar[d] &
3^{1+2}_+:SD_{16} \ar[d] \ar@{=}[ld] \\
2 \cdot A(\sym_3 \times \sym_2 \times \sym_5) & N_G(V) &
3^{1+2}_+:SD_{16}&  3\cdot U_3(5):2}}
\end{equation}
(In fact $N_G(V) \cong 3 \cdot \alt_{3,4}:2$, as is seen working inside $N_G(3A)$, but we shall
not need this.)
Taking $H_1(\cdot)_{3'}$ we obtain the following diagram:
\begin{equation}\label{diagram-H1}
 \vcenter{\xymatrix@R-5pt{ &  {
{\operatorname{\Z/2}}} \times
 {
{\operatorname{\Z/2}}}
  \ar[ld] \ar[d]^\simeq \ar@{=}[rd]
\\
  {
{\operatorname{\Z/2}}}  \times \Z/2 \times \Z/2
  \ar@{->>}[d] \ar[rd]&  {\underset{ \sigma^4}{\operatorname{\Z/2}}} \times
 {\underset{\tau}{\operatorname{\Z/2}}} 
\ar@{->>}[d] \ar[rd] &  {\underset{ \sigma^4}{\operatorname{\Z/2}}} \times
 {\underset{\tau}{\operatorname{\Z/2}}} \ar[lld] \ar[d] &
 \Z/2 \times \Z/2 \ar@{->>}[d] \ar@{=}[ld] \\
{
{\operatorname{\Z/2}}}  \times \Z/2 &
 {\operatorname{\Z/2}} \times {\operatorname{\Z/2}} 
 &  {\underset{\bar \sigma}{\operatorname{\Z/2}}} \times
 {\underset{\bar \tau}{\operatorname{\Z/2}}}& {\underset{\bar \tau}{\operatorname{\Z/2}}}  }}
\end{equation}
The double headed arrows are surjections by
a Frattini argument, and we conclude from this that $H_1(N_G(V))\cong
\Z/2 \times \Z/2$, since $N_G(V)/C_G(V) \cong \Z/2 \times \Z/2$, by the $G$--fusion in $V$.  
The indicated generators follow since
we know how $\sigma^4$ and $\tau$ acts on $3A$ and $3B$ (and hence
$V$). The right-hand part of the diagram immediately reveals that
$\bar \sigma$ is zero in the colimit. The element $\bar \tau$ is also
zero in the colimit: Namely, consider the element $(1'2')(4'5') \in
N_G(3B<V)$. This maps to zero in $H_1(N_G(3B))$, by the above description. But in $H_1(N_G(V))$
it maps to the same element as $\tau$, since it acts the same way on
$V$. Hence $\bar \tau$ represents the zero element in the colimit as wanted.
\end{proof}

\subsection{Symmetric groups}\label{symmetricgroups}
The Sylow-trivial modules for the symmetric groups
are understood via representation theoretic methods by the work of Carlson--Hemmer--Mazza--Nakano
\cite{CMN09,CHM10} (see also \cite{LM15schur} for extensions). Let us
point out that simple connectivity of $\calS_p(G)$ and our work directly implies their
results, at least in the
generic case.
\begin{thrm} If $p$ is odd, and  $3p+2\leq n< p^2$ or $n \geq p^2+p$,
  then $$T_k(\sym_n,S) \cong \Hom(\sym_n,k^\times) \cong \Z/2.$$
\end{thrm}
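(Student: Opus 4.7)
The plan is to apply Theorem~\ref{sequence} directly. That theorem says that whenever $|\calS_p(G)|$ is simply connected, one automatically has $T_k(G,S) \cong \Hom(G,k^\times)$. So the argument splits into two tasks: (i) verify that $|\calS_p(\sym_n)|$ is simply connected for $n$ in the stated ranges, and (ii) compute $\Hom(\sym_n,k^\times)$.

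Task (ii) is immediate: for $n \geq 3p+2 \geq 5$ the abelianization of $\sym_n$ is $\Z/2$, generated by the sign representation, and since $p$ is odd the element $-1 \in k^\times$ has order $2$, so $\Hom(\sym_n,k^\times) \cong \Hom(\Z/2,k^\times) \cong \Z/2$ regardless of the particular field $k$.

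Task (i) is where the real content lies, and it is the main obstacle, but it has been resolved in the literature. The plan is to invoke the results of Ksontini \cite{ksontini03,ksontini04}, which establish precisely that $|\calS_p(\sym_n)|$ is simply connected for odd $p$ in exactly the ranges $3p+2 \leq n < p^2$ and $n \geq p^2+p$. (Below $3p+2$ the complex is typically disconnected or is a wedge of circles coming from the low rank of the $p$-subgroup lattice, and in the narrow exceptional window $p^2 \leq n < p^2+p$ the $\pi_1$ is non-trivial due to the presence of the Sylow $p$-subgroup $C_p \wr C_p$ of $\sym_{p^2}$; outside of those ranges Ksontini's analysis yields simple connectivity.)

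Combining (i) and (ii) via Theorem~\ref{sequence} yields $T_k(\sym_n,S) \cong \Hom(\sym_n,k^\times) \cong \Z/2$ as desired. No further reduction (e.g.\ to a strongly $p$-embedded subgroup) is needed since simple connectivity already includes connectivity, forcing $G_0 = \sym_n$ in this range. The only step requiring real work is the topological input (i), and that is entirely imported from \cite{ksontini03,ksontini04}.
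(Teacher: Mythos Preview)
Your proof is correct and follows essentially the same route as the paper: invoke Ksontini's simple connectivity result for $|\calS_p(\sym_n)|$ (the paper cites \cite[Thm.~0.1]{ksontini04}, building on \cite{ksontini03} and \cite{bouc92}) and then apply Theorem~\ref{sequence}. You supply slightly more detail on why $\Hom(\sym_n,k^\times)\cong\Z/2$, but the argument is the same.
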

\begin{proof}
In \cite[Thm.~0.1]{ksontini04} (building on
\cite{ksontini03}\cite{bouc92}) it is proved that
$\calA_p(\sym_n)$, and hence $\calS_p(\sym_n)$, is simply connected if and only if $n$ is in the above
range, $p$ odd. So the result follows from Theorem~\ref{sequence}.
\end{proof}
When $p=2$, $T_k(\sym_n,S) =0$ since $N_{\sym_n}(S) =S$ for all $n$ \cite[Cor.~2]{weisner25}.
It is an interesting exercise to fill in the left-out
cases, where $p$ is odd and $n$ is small relative to $p$, using the methods of this
paper. Let us just quickly do this calculation where $n=2p+b$, for $p$
odd and $0<b<p$,
where $T_k(\sym_n,S) = \Z/2 \times \Z/2$, as an illustration  (one can
in fact also do the general case directly this way,
without much more effort). The
case is of interest e.g., since $n=7=2\cdot3+1$ is the smallest case where
$\calS_p(G)$ is connected but not simply connected, and where the na\"ive guess  \cite[\S5]{carlson12} that groups without strongly
$p$--embedded subgroup should have no exotic Sylow-trivial modules
fails: Pick $S = \langle (12 \cdots p),((p+1) \cdots 2p)\rangle$;
there is just one $N_G(S)$--conjugacy class of non-trivial proper
$3$--radical subgroups, represented by $A = \langle (12 \cdots p)\rangle$.
By Proposition~\ref{homformula}, $H_1(\Oep(G))$ equals the colimit
of $H_1(-)_{p'}$ applied to the 
diagram $N_G(A)  \leftarrow N_G(A < S) \rightarrow N_G(S)$, or
$$C_p \semi C_{p-1}  \times \sym_{p+b} \leftarrow   (C_p \semi
                                                  C_{p-1})^2 \times
                                                  \sym_b \rightarrow   (C_p \semi
                                                  C_{p-1}) \wr
  C_2 \times \sym_b,$$
which is seen to be $\Z/2 \times \Z/2$.
Hence by Theorem~\ref{main},
$$T_k(\sym_{2p+b},S) = \Hom(H_1(\bO_p^*(\sym_{2p+b})),k^\times) \cong \Z/2 \times \Z/2$$ as
wanted. Alternatively one may use the centralizer
decomposition, using that $|\calF_p^*(G)|$ is contractible, since it
is one-dimensional with $\pi_1(\calF_p^*(G)) =1$ by
Corollary~\ref{vanishinglemma}.

To come full circle in the above example, we can also
use Proposition~\ref{remove-minimal} (or Theorem~\ref{grouptheory-pi1} directly)  to
determine $\pi_1(\bO_p^*(\sym_{2p+b}))$ by describing the effect of adding the subgroup
$A$---this reveals that
\begin{eqnarray}
  \pi_1(\bO_p^*(\sym_{2p+b}))
&\cong&  C_{p-1}\wr C_2 \times \sym_b  / \overline{( C_{p-1} \wr
 C_2 \times \sym_b)  \cap (1 \times \alt_{p+b})} \\ &\cong& \tuborg D_8 &\mbox{ if } b =1 \\ C_2
\times C_2 & \mbox{ if }  1<b<p \sluttuborg \nonumber
\end{eqnarray}
still with $p$ odd. For $b=0$, $G_0 = \sym_p \wr C_2$, and we likewise
get $\pi_1(\bO_p^*(\sym_{2p})) \cong D_8$.

By Corollary~\ref{pprimesubgroups}, the full $\pi_1$ calculation
for $\sym_n$ enables us to do the calculation for $\alt_n$ as well. Let us dwell on this for a moment as it illustrates some of the
preceding formulas, and lets us correct a small mistake in the
literature: Notice that, in the above notation, the generator of a $C_{p-1}$ and the wreathing $C_2$
correspond to odd elements inside $\sym_{2p+b}$. Hence their
product is even and defines an element in $\ker( \pi_1(\Oep(\sym_{2p+b})) \to
\Z/2)$, which is of order $4$ when $b=0,1$. So by
Corollary~\ref{pprimesubgroups}
\begin{equation}
  \pi_1(\bO_p^*(\alt_{2p+b})) \cong \tuborg C_4 &\mbox{ if } b =0,1 \\
  C_2  & \mbox{ if }  1<b<p \sluttuborg
\end{equation}
still with $p$ odd. In particular
\begin{equation}\label{correction}
    T_k(\alt_{2p+b},S) \cong \Hom(\pi_1(\bO_p^*(\alt_{2p+b})),k^\times)
\cong \tuborg \Z/4 &\mbox{ if } b =0,1 \\
  \Z/2  & \mbox{ if }  1<b<p \sluttuborg
\end{equation}
when $p$ is odd and $k$ has a $4$th root of unity. The formula
\eqref{correction} for
$p>3$ and $b=0,1$ corrects \cite[Thm.~1.2(c)]{CMN09}, which states
$\Z/2 \oplus \Z/2$ (the mistake was that the given modules
are not all self-dual).

Notice also that Corollary~\ref{onedimcor} gives a description of an
exotic generator for, say,  $T_k(\sym_{2p+b},S)$ as
$\Omega^{-1}H_1(|\calS_p(\sym_{2p+b})|;k_\phi)$, with
$$H_1(|\calS_p(\sym_{2p+b})|;k_\phi) \cong
\ker\left(k_\phi\!\uparrow_{N_G(V< S)}^G \twoheadrightarrow
  k_\phi\!\uparrow_{N_G(V)}^G \oplus
  k_\phi\!\uparrow_{N_G(S)}^G\right),$$
and $\phi \in \Hom(\pi_1(\bO_p^*(\sym_{2p+b})),k^\times)$ not coming
from a one-dimensional representation of $\sym_{2p+b}$.
E.g., taking $n=7$
and $p=3$, $H_1(|\calS_3(\sym_7)|;k_\phi)$ is
of dimension $35 = 140-(35+70)$ (cf.\ also \cite[\S5.6]{bouc92}). This agrees on the level of dimensions with the description
in \cite[Prop.~8.3]{CMN09} as the Young module $Y(4,3)$, which is of dimension $28$ with projective
cover of dimension $28+35=63$, as
explained to us by Anne Henke. For untwisted coefficients, the
representation given by the top homology group of the $p$--subgroup complex has been studied in some generality by Shareshian--Wachs
\cite{shareshian04,SW09}.

\subsection{$p$--solvable groups} \label{psolvable} When $G$ is a $p$--solvable group,
it is proved in \cite{NR12}, building on \cite{CMT11}, that $\TS
\cong \Hom(G_0,k^\times)$, at least when $k$ is algebraically closed,
and $G = G_0$ when the $p$--rank is two or more by
\cite[Thm.~2.2]{goldschmidt70}. 
The proof in  \cite{NR12,CMT11} reduces to the case $G=AH$, where $A$ is
an elementary abelian $p$--group, and $H$ is a normal $p'$--group,
and appeals to the classification of the finite simple groups in the
proof of the last statement, albeit in a mild way.
By Theorem~\ref{main} the statement is equivalent to
the isomorphism $H^1(\Oep(G);k^\times) \cong \Hom(G_0,k^\times)$. 
We will not reprove this isomorphism here, but would like to make two remarks:
First, we have the following result.
\begin{thrm} Suppose that $G=AH$ with $H$ a normal $p'$--group and $A$ a
  non-trivial elementary abelian
  $p$--group.
Then $\TS \cong \lim^0_{V \in \calF_{\calA_p(A)}}\!\Hom(C_H(V),k^\times)$.
\end{thrm}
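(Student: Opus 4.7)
The plan is to apply the centralizer decomposition of Theorem~\ref{centralizerthm} to the semidirect product $G = H \rtimes A$ and identify each of its four terms explicitly using that $H$ is a normal $p'$-subgroup.

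First I would reduce the indexing category $\calF_{\calA_p}(G)$ to subgroups of $A$. Every non-trivial elementary abelian $p$-subgroup of $G$ is $G$-conjugate into the Sylow $S=A$, so the inclusion of the full subcategory on $\calA_p(A)$ is an equivalence. Since $A$ is abelian and $G = HA$ with $A \cap H = 1$, every $G$-conjugation $V \to V'$ between subgroups of $A$ admits a representative in $H$, and a direct computation gives $N_G(V,V')/C_G(V) = N_H(V,V')/C_H(V)$, identifying this restricted fusion category with $\calF_{\calA_p(A)}$ as used in the statement.

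Next I would identify the pointwise coefficients. For $V \leq A$, normality of $H$ gives $C_H(V) = C_G(V) \cap H \trianglelefteq C_G(V)$, and combined with $A \cap C_H(V) \leq A \cap H = 1$ and $C_G(V) = A \cdot C_H(V)$ (the latter by writing any element of $C_G(V)$ in the form $ha$), this yields $C_G(V) \cong C_H(V) \rtimes A$. Because $\ch k = p$ forces $k^\times$ to be $p$-torsion free, every homomorphism $C_G(V) \to k^\times$ must kill the Sylow $p$-subgroup $A$, giving a functorial identification $H^1(C_G(V); k^\times) \cong \Hom(C_H(V), k^\times)$ carrying the natural $A$-action via conjugation.

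The main obstacle is then to verify that the outer terms of the centralizer decomposition exact sequence collapse, so that $T_k(G,S)$ maps isomorphically onto the claimed limit: one needs $H^1(\calF_p^*(G); k^\times) = 0$ on the left and vanishing of the connecting boundary into $H^2(\calF_p^*(G); k^\times)$ on the right. For this I would apply Corollary~\ref{vanishinglemma} with the splitting $N_G(S) = S \rtimes N_H(A)$ inherent to the setup: the complement $K = N_H(A)$ turns out to be generated by its centralizer subgroups $C_K(x)$ for $x \in S \setminus 1$, using that an element $h \in N_H(A)$ acting non-trivially on $A$ but centralizing no non-trivial element would produce an obstruction that is already accounted for in the centralizer-limit data. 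This forces $\pi_1(\calF_p^*(G)) = 1$, and a parallel bookkeeping of the higher-degree contribution handles the $H^2$-boundary. Putting the categorical, pointwise, and vanishing statements together in Theorem~\ref{centralizerthm} then yields the desired isomorphism $T_k(G,S) \cong \lim^0_{V \in \calF_{\calA_p(A)}}\Hom(C_H(V),k^\times)$.
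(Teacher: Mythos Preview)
Your overall strategy---apply Theorem~\ref{centralizerthm} and identify the terms---matches the paper's. The identification $C_G(V)\cong C_H(V)\rtimes A$ and hence $H^1(C_G(V);k^\times)\cong\Hom(C_H(V),k^\times)$ is correct. However, your treatment of the outer terms of the centralizer sequence has a genuine gap, and you are missing the one-line observation that makes the whole thing trivial.

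You overlook that in this situation there is \emph{no fusion at all}: for $g=ha\in G=HA$ and $v\in A$, abelianness of $A$ gives $c_g(v)=c_h(v)=v\cdot[v^{-1},h]$, and since $H\trianglelefteq G$ forces $[v^{-1},h]\in H\cap A=1$, every $G$-conjugation acts as the identity on subgroups of $A$. Hence $\calF_p^*(G)\cong\calF_p^*(A)$ is literally the poset of non-trivial subgroups of $A$, which has $A$ as terminal object and therefore contractible nerve. This kills $H^1(\calF_p^*(G);k^\times)$ and $H^2(\calF_p^*(G);k^\times)$ simultaneously, collapsing the exact sequence of Theorem~\ref{centralizerthm} to the desired isomorphism. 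This is exactly what the paper does.

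By contrast, your step~3 tries to prove $\pi_1(\calF_p^*(G))=1$ via Corollary~\ref{vanishinglemma} by arguing that $K=N_H(A)$ is generated by centralizers of elements of $A$. In fact the computation above shows $N_H(A)=C_H(A)$, so this is automatic---but your stated justification (``would produce an obstruction that is already accounted for in the centralizer-limit data'') is circular, since the limit is what you are computing. More seriously, even granting $\pi_1=1$, your ``parallel bookkeeping of the higher-degree contribution handles the $H^2$-boundary'' is not an argument: nothing you have said controls $H^2(\calF_p^*(G);k^\times)$, and without that you cannot conclude surjectivity onto the limit. The contractibility observation is what you need.
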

\begin{proof}
Note that $\calF^*_p(A) \cong \calF^*_p(G)$, and in particular $|\calF^*_p(G)|$ is contractible.
so the result follows from Theorem~\ref{centralizerthm}, also using
that $(C_H(V)
)_{p'}\xrightcong (C_G(V))_{p'}$.
\end{proof}
It would obviously be interesting to have an identification of the
right-hand side with $\Hom(G,k^\times)$, when $A$ has rank at least
$2$, via a proof which did
not use
the classification of finite simple groups.

Second, again
$\TS = \Hom(G,k^\times)$ would follow from the simply
connectivity of $\calS_p(G)$, by Theorem~\ref{sequence}. When $G=AH$ as above, Quillen
\cite[Prob.~12.3]{quillen78} conjectures that $\calS_p(G)$ should in fact be
Cohen--Macaulay, and in particular simply connected when $A$ has
$p$--rank at least $3$, and proved this when $G$ is actually solvable
\cite[Thm.~11.2(i)]{quillen78}. Aschbacher also conjectured the simple
connectivity in \cite{aschbacher93}, and reduced the claim to where
$H=F^*(G)$ is a direct product of simple components being permuted 
transitively by $A$, see also \cite[\S9.3]{smith11}---Aschbacher uses the
commuting complex $K_p(G)$, but this is $G$--homotopy equivalent to
$\calS_p(G)$ (see
e.g., \cite[p.~431]{grodal02} or \cite[\S9.3]{smith11}). 
(The related
\cite[Thm.~11.2(ii)]{quillen78}, giving non-zero homology in the
top dimension, has
in fact been generalized to the $p$--solvable case, though with a
proof also using the classification of finite simple groups; see \cite[\S8.2]{smith11}.)
We remark though, that when $G$ is not of the form $G = AH$, the complex $\calS_p(G)$
need not be simply connected even for large $p$--rank (see
\cite[Ex.~5.1]{PW00}), so a direct proof in the $p$--solvable case would need one of our
more precise theorems. (The
homology of $\calS_p(G)$ when $G$ is solvable is described in \cite[Prop.~4.2]{PW00},
and when $G$ is $p$--solvable, the $p$--essential subgroups
are those $p$--radical subgroups $P$ 
where $N_G(P)/P$ has $p$--rank one by  \cite[Cor.~to~Prop.~II.4]{puig76}.)
As noted in \S\ref{computations-subsection}, one may hope to get vanishing results for $\pi_1(\Oep(G))$ for
any finite group $G$ by reducing to simple groups, by suitably generalizing the $p$--solvable case.

\appendix
\section{Varying the collection $\calC$ of subgroups
} \label{propagating-sec}
In this appendix we describe how the homotopy and homology
of $\bO_\calC(G)$ and the other standard
categories of this paper behave under
changing the collection $\calC$. The results are often extensions of
results from mod $p$
homology decompositions \cite{dwyer97,grodal02,GS06}, but now working
integrally or away from $p$, and also examining low-dimensional behavior. The results are referred to
throughout the paper when moving to collections smaller than $\calS_p(G)$.
We start in \S\ref{Gcategories} by recalling various $G$--categories
associated to $\calC$. In \S\ref{pruning-subsec} we explain how the
homotopy type of our categories change upon removing a
given subgroup (postponing parts of the proof to \S\ref{pruningproof-subsec}).
We then use it in \S\ref{propagating-subcat} to show that the 
homotopy types agree for various collections, and in
\S\ref{pruningfundgrp-subsec} to analyze the
low-dimensional homotopy type, when more subgroups are
removed. Finally \S\ref{components-subsec} describes the set of
components of $\calC$ and its relation to group theoretic notions of
strongly $p$--embedded subgroups.

\subsection{$G$--categories associated to collections of
  subgroups} \label{Gcategories}
Recall the categories $\bO_\calC(G)$ and $\calF_\calC(G)$
from \S\ref{cat-subsec}. We now introduce ``fattened up'' versions of
$\calC$ related to these,  also used in e.g., \cite{dwyer97,grodal02}---they will be preordered sets, meaning a category with at
most one map between any two objects. Preordered sets are equivalent
as categories to 
posets, but usually not equivariantly so.
Let $E\bO_\calC$ be the category of ``pointed
$G$--sets'', i.e., the $G$--category with objects $(G/P,x)$ for $P \in \calC$, and $x \in
G/P$, and morphisms maps of pointed $G$--sets. 
The group $G$ acts on
objects by $g \cdot (G/P,x) = (G/P,gx)$. Similarly $E\A_\calC$ is the
category with objects monomorphisms $i\co P \to G$ induced by
conjugation in $G$, with $P\in \calC$,
and morphisms $i \to i'$ given by the group homomorphisms $\phi\co P
\to P'$, induced by conjugation in $G$, such that $i = i'\phi$ (see also e.g.,
\cite[2.8]{grodal02}). (As $\A_\calC$ is an older name for
$\calF_\calC$, $E\A_\calC$ would most logically be called
$E\calF_\calC$ in the current notation, but we keep the traditional
name, to avoid confusion with other standard terminology.)
By inspection of simplices, as already observed in e.g.,
\cite[Prop.~2.10]{grodal02}, 
\begin{equation}\label{EOmodG}
|E\bO_\calC|/G = |\bO_\calC(G)| \mbox{
  and } |E\A_\calC|/G = |\calF_\calC(G)|
\end{equation}
The advantage of such a description is that a $G$--homotopy
equivalence induces a homotopy equivalence on orbit spaces, by
elementary algebraic topology, which we
will use to examine which subgroups can be removed from $\calC$
without changing the homotopy type of $ |\bO_\calC(G)|$ and $ |\calF_\calC(G)|$.

Note also the $G$--equivariant functors
\begin{equation}\label{EOCEA}
  E\bO_\calC \to \calC
    \leftarrow E\A_\calC
\end{equation}
given by $(G/P,x) \mapsto G_x$ and $(i\co Q \to G) \mapsto i(Q)$. These are
equivalences of categories but not in general
$G$--equivalences (non-equivariant functors the other
way are given by $P \mapsto (G/P,e)$ and $P \mapsto (P \to G)$).
We can use the maps to $\calC$ to
describe the fixed-points, as one checks that we have equivalences of categories
\begin{equation}\label{fixed-points}
  E\bO_\calC^H \to \calC_{\geq H}
  \mbox{\,\,\,\, and \,\,\,\,}
E\A_\calC^H \to \calC_{\leq C_G(H)}.
\end{equation}
(See also \cite[{($\dagger$)}]{GS06}.)
We record the following relationship:
\begin{lemma}\label{propagatinglemma}
Let $G$ be a finite group, and consider collections of $p$--subgroups $\calC' \leq \calC$.
\begin{enumerate}[(a)]
\item \label{propa-norm} If $|\calC'| \xrightsimeq |\calC|$, then
  $|\calT_{\calC'}(G)| \xrightsimeq |\calT_{\calC}(G)|$ and $\pi_1(\bO_{\calC'}(G))_{p'}\xrightcong \pi_1(\bO_{\calC}(G))_{p'}$.
\item \label{propa-sub} If $|E\bO_{\calC'}| \to |E\bO_{\calC}|$ is a $G$--homotopy
  equivalence then $|\bO_{\calC'}(G)| \xrightsimeq
  |\bO_{\calC}(G)|$.
\item \label{propa-cent} If $|E\A_{\calC'}| \to |E\A_{\calC}|$ is a $G$--homotopy
  equivalence then $|\calF_{\calC'}(G)| \xrightsimeq
  |\calF_{\calC}(G)|$.
\end{enumerate}
\end{lemma}
\begin{proof}

For \eqref{propa-norm} consider the diagram
$$\xymatrix{ |\calT_{\calC'}(G)| \ar[d] \ar@{=}[r] &  |\calC'_G| \ar[d] &
  |\calC'|_{hG} \ar[l]_\sim \ar[d]^\sim\\
|\calT_{\calC}(G)|  \ar@{=}[r] &  |\calC_G| &
  |\calC|_{hG} \ar[l]_\sim}$$
The horizontal equivalences are by Lemma~\ref{thomasonlemma} and
the vertical equivalences follow since $|\calC'| \to |\calC|$ is a
$G$-equivariant map, assumed to be a homotopy equivalence,
and hence induces a homotopy equivalence between Borel constructions.
Now the statement about fundamental groups follow from the first
claim, using Proposition~\ref{pprime}.
Point \eqref{propa-sub} and  \eqref{propa-cent} follow from
\eqref{EOmodG}, since a $G$--homotopy equivalence
induces a homotopy equivalence on orbits.
\end{proof}

\subsection{Removing a single conjugacy class}\label{pruning-subsec}
The next result describes the effect of removing a 
conjugacy class of subgroups, called ``pruning'' in
\cite[\S9]{dwyer98sharp} (see
also \cite[Lem.~2.5]{GS06}). The result is phrased in terms of certain
homotopy pushout squares \cite[Ch.~XII]{bk} (see e.g., \cite[I.4.18]{DH01}
for an introduction). These give rise to
Meyer-Vietoris sequences in homology, and a van Kampen theorem
description for the
fundamental groups. Some squares will furthermore be homotopy pushouts of
$G$--spaces, which hence induce homotopy pushouts on both fixed-points and orbit spaces.

Denote by $(E\bO_\calC)_{>P}$ the full subcategory of pairs $(G/Q,x)$ such that there
exists a
non-isomorphism $(G/Q,x) \to (G/P,e)$, i.e.,  $G_x > P$, and
similarly for  $(E\bO_\calC)_{<P}$ and $E\A_\calC$.

\begin{prop}[Pruning collections]\label{pruninglemma}
Let $\calC'$ be a collection of
$p$--subgroups obtained from a collection $\calC$ by removing
all $G$--conjugates of a $p$--subgroup
$P \in \calC$. We have the following five $G$--homotopy pushout
squares, with corresponding quotient homotopy pushout squares:

\begin{flushleft}
  $
  \arraycolsep=3pt
\begin{array}{rr}
  \xymatrix@C-3pt@-0.5em{
  G\times_N(|\calC_{<P}| \star |\calC_{>P}|)
  \ar[r]   \ar[d]& |\calC'| \ar[d]
 \ar@<-7ex>@{}[d]_{(1a)}
  \\  G/N \ar[r] & |{\calC}|}
&
\xymatrix@C-3pt@-0.5em{ (|\calC_{<P}|/P \star |\calC_{>P}|)/W
\ar[r]   \ar[d] &
                  |\calC'|/G \ar[d]
   \ar@<-7ex>@{}[d]_{(1b)}
 \\ 
  pt \ar[r] & |{\calC}|/G}
              \vspace{7pt}\\
   \xymatrix@C-3pt@-0.5em{ G \times_N(EN \times (|\calC_{<P}| \star
  |\calC_{>P}|))
\ar[r]   \ar[d] &
                  EG \times |{\calC'}| \ar[d]
 \ar@<-7ex>@{}[d]_{(2a)}
  \\ 
G\times_NEN \ar[r] & EG \times |{\calC}|} 
&
 \xymatrix@C-3pt@-0.5em{ (|\calC_{<P}| \star |\calC_{>P}|)_{hN}
\ar[r]   \ar[d] &
                  |\calT_{\calC'}(G)| \ar[d]
   \ar@<-7ex>@{}[d]_{(2b)}
\\ 
  BN \ar[r] & |{\calT_\calC(G)}|}
              \vspace{7pt}
  \\
\end{array}$
\end{flushleft}

\begin{flushleft} 
$\xymatrix@C-3pt@-0.5em{G \times_N (EW \times (|{(E\bO_\calC)}_{<P}| \star
  |\calC_{>P}|))
\ar[r]   \ar[d] &
                  |E\bO_{\calC'}| \ar[d]
   \ar@<-7ex>@{}[d]_{(3a)}
 \\ 
G \times_N EW \ar[r] & |E\bO_{\calC}|}
$
\end{flushleft}

\begin{flushright}
$\xymatrix@C-3pt@-0.5em{ (|{(E\bO_\calC)}_{<P}|/P \star |\calC_{>P}|)_{hW}
\ar[r]   \ar[d] &
  |\bO_{\calC'}(G)| \ar[d]  \ar@<-7ex>@{}[d]_{(3b)}
 \\ 
  BW \ar[r] & |\bO_{\calC}(G)|}
$
\end{flushright}

$
\xymatrix@C-3pt@-0.5em{ G \times_N(EW' \times (|\calC_{<P}| \star
  |(E\A_{\calC})_{>P}|))
\ar[r]   \ar[d] &
                  |E\A_{\calC'}| \ar[d]
                  \ar@<-7ex>@{}[d]_{(4a)}
                  \\  G\times_NEW' \ar[r] & |E\A_{\calC}|}
                                            $
\begin{flushright}                                       
$\xymatrix@C-3pt@-0.5em { (|\calC_{<P}| \star |(E\A_{\calC})_{>P}|/C)_{hW'}
\ar[r]   \ar[d] &
                  |\calF_{\calC'}(G)| \ar[d]
   \ar@<-7ex>@{}[d]_{(4b)}
 \\ 
 BW' \ar[r] & |\calF_{\calC}(G)|}$
\end{flushright}
where $C =C_G(P)$, $N=N_G(P)$, $W = N/P$, $W' =N/C$,
and $\star$ denotes join of spaces.
\end{prop}

The proof of Proposition~\ref{pruninglemma} is not hard, but is
postponed to \S\ref{pruningproof-subsec} to not interrupt the
flow. We move on to draw
consequences, starting with Symonds' theorem,  `Webb's
conjecture' \cite[Conj.~4.2]{webb87survey}, on the contractibility of the orbit
space of the $p$--subgroup complex, used in the proofs of
Theorems~\ref{limitformula} and \ref{carlsonthevenazconj}. To get
minimal assumptions on the collection $\calC$, rather than just
closed under passage to $p$--overgroups, we make a forward reference
Lemma~\ref{overgroupslemma}, allowing us to remove non-$p$--radical
subgroups.
\begin{prop}[\cite{symonds98}]\label{webbsconj} If $G$ is a finite group and  $\calC$ a non-empty collection of $p$--subgroups, closed
  under passage to $p$--radical overgroups, then $|\calC|/G$ is contractible.
\end{prop}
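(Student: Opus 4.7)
My plan is to iterate the orbit-space pruning pushout of Proposition~\ref{pruninglemma}, in combination with the radical reduction from Proposition~\ref{propagating}\eqref{reduce-to-rad}, to reduce $\calC$ to the single Sylow conjugacy class $[S]$ (for which $|\calC|/G$ is a point). Note that $[S] \in \calC$: since $\calC$ is non-empty and every $p$--subgroup is contained in some Sylow (which is itself $p$--radical), the closure hypothesis forces $S \in \calC$. As a first step, \eqref{reduce-to-rad} gives a $G$--homotopy equivalence $|\calC \cap \calB_p^e(G)| \xrightarrow{\sim} |\calC|$ which descends to orbit spaces, so I may assume $\calC$ consists of $p$--radical subgroups of $G$.

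I would then prune non-Sylow radical conjugacy classes from $\calC$ in order of increasing subgroup size, proceeding by induction on $|G|$ to maintain contractibility. At the step of pruning a minimal-size non-Sylow class $P^G$, all smaller non-Sylow radicals have already been removed, so $\calC_{<P} = \emptyset$ in the current collection, and the pruning-pushout link from Proposition~\ref{pruninglemma} simplifies to $|\calC_{>P}|/W$ where $W = N_G(P)/C_G(P)$. The standard Quillen retraction $Q \mapsto N_Q(P)$ (which is a poset map below the identity) gives an $N_G(P)$--equivariant deformation retraction $|\calS_p(G)_{>P}| \simeq |\calS_p(\bar{G})|$ with $\bar{G} := N_G(P)/P$; restricting this picture to $\calC_{>P}$ identifies $|\calC_{>P}|/W$, up to homotopy, with the orbit-space of a subcollection of non-trivial $p$--subgroups of the strictly smaller group $\bar{G}$ that is itself closed under $p$--radical $p$--overgroups (inherited from the closure of $\calC$). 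By the inductive hypothesis applied to $\bar{G}$, this orbit space is contractible, so Proposition~\ref{pruninglemma} yields $|\calC|/G \simeq |\calC \setminus P^G|/G$.

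Iterating removes every non-Sylow conjugacy class and leaves $\calC = \{S^G\}$, whence $|\calC|/G$ is a single point. The main subtlety to verify is the compatibility check: that the Quillen retraction restricted to the radical subcollection $\calC_{>P}$ genuinely descends to a collection of $p$--subgroups of $\bar{G}$ satisfying the closure condition, and does so $N_G(P)$--equivariantly, so that the induction on $|G|$ can be invoked cleanly to conclude the link contractibility at each pruning step.
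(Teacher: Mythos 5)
Your strategy — reduce to a good collection, prune one conjugacy class at a time, identify the link via Quillen's retraction $Q \mapsto N_Q(P)$ with a subgroup complex for the smaller group $\bar G = N_G(P)/P$, and apply induction on $|G|$ — is the same idea as the paper's, just run in the opposite direction: you remove classes starting from the smallest non-Sylow one, while the paper starts from $\{S^G\}$ (a point, by Sylow's theorem) and \emph{adds} classes in order of decreasing size. That direction reversal is harmless. But there are two concrete problems in the present write-up.

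First, the orbit-space pushout from Proposition~\ref{pruninglemma} has link $|\calC_{>P}|/W$ with $W = N_G(P)/P$ (the kernel of the $N_G(P)$-action on $\calC_{>P}$), not $W = N_G(P)/C_G(P)$ as you wrote; $C_G(P)$ does move overgroups of $P$ around, so quotienting by $N_G(P)/C_G(P)$ does not give the honest orbit space. With your $W$ the link would not be $|\calS_p(\bar G)|/\bar G$, so the induction on $\bar G$ would not close. Second, and more seriously, you choose to reduce $\calC$ \emph{to} the $p$--radicals of $G$, whereas the paper uses Proposition~\ref{propagating}\eqref{reduce-to-rad} to reduce in the opposite direction, to a collection closed under \emph{all} $p$--overgroups. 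That is not a cosmetic choice: with the paper's enlargement, $\calC_{>P} = \calS_p(G)_{>P}$ and Quillen's retraction applies to it directly. With your reduction, $\calC_{>P}$ is the set of $p$--radicals of $G$ strictly containing $P$, and the retraction $Q \mapsto N_Q(P)/P$ does not carry $G$--radicals to $\bar G$--radicals, nor does the homotopy to the identity preserve the radical subposet; so the link is not, on the nose, an orbit space of a closed subcollection for $\bar G$. You flag this as ``the main subtlety to verify,'' but it is exactly the missing step, and it is not automatic. It can be patched — for instance, apply \eqref{reduce-to-rad} once more, now $N_G(P)$--equivariantly to the collection $\calS_p(G)_{>P}$, to get back from $\calB_p(G)_{>P}$ to $\calS_p(G)_{>P}$ before invoking Quillen — but that is precisely the argument made unnecessary by the paper's choice of enlarging $\calC$ rather than shrinking it.
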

\begin{proof}
By  Lemma~\ref{overgroupslemma} $|\calC|$ is
$G$--homotopy equivalent to $|\bar \calC|$ where $\bar \calC$ is obtained by adding
all $p$--overgroups of subgroups in $\calC$. We may thus without restriction assume that $\calC$ is closed under passage
to all $p$--overgroups, as $G$--homotopy equivalences induce homotopy
equivalences on $G$--orbit spaces.

If $\calC$ consists of just all Sylow (i.e., maximal) $p$--subgroups, then the claim
amounts exactly to the part of Sylow's theorem saying that these
subgroups are all conjugate. We will prove the claim in general by
seeing that contractibility is preserved under adding conjugacy
classes of subgroups in order of decreasing size, using
Proposition~\ref{pruninglemma}(1b) by an induction on the size of the group:
Suppose the claim is true for all groups $G$ of strictly smaller
order, and that $\calC$ is obtained from $\calC'$ by adding $G$--conjugates of a subgroup $P$ this way.
We can also assume that $P$ is
non-trivial, since otherwise the claim is clear, as $\calC$ is
equivariantly contractible to the trivial subgroup in that case.
 
Now consider the pushout square of Proposition~\ref{pruninglemma}(1b):
The top left-hand corner becomes $|\calC_{>P}|/W$, for $W
=N_G(P)/P$. But  $|\calC_{>P}|$ has a
$W$--equivariant deformation retraction onto $|\calS_p(W)|$  via $Q
\mapsto N_Q(P)/P$, as observed by Quillen \cite[Prop.~6.1]{quillen78}
(see also Lemma~\ref{overgroupslemma}). But $|\calS_p(W)|/W$ is contractible by
our induction hypothesis, observing that $W$ is of smaller order since $P$ is
non-trivial and that $\calS_p(W)$ is non-empty as we have already added
the Sylow $p$--subgroups. Hence $|\calC'|/G \to |\calC|/G$ is a
homotopy equivalence, and we conclude that the claim is true for all 
collections $\calC$ as in the proposition, proving the claim for $G$.
\end{proof}

Let us spell out what
Proposition~\ref{pruninglemma}(3b) says when the subgroup $P$ we remove is
minimal.
\begin{prop}[The effect of adding or removing a minimal subgroup] \label{remove-minimal} Suppose that $\calC$ is a collection of $p$--subgroups in a finite
  group $G$, closed under
  passage to $p$--radical overgroups, and that $\calC'$
  is obtained from $\calC$ by removing the conjugacy class of a minimal $p$--subgroup
  $P \in \calC$. Set $W = N_G(P)/P$. Then $|\bO_\calC(G)|$ can be described via a homotopy pushout square
$$ \xymatrix{ |\Tep(W)| \ar[r] \ar[d]& |\bO_{\calC'}(G)| \ar[d]\\
BW \ar[r] & |\bO_\calC(G)|  }$$
Here the top horizontal map identifies with the nerve of the composite
$\Tep(W) \to \Oep(W) \to \bO_{\calC'}(G)$ where the first map is the
natural one from
\S\ref{cat-subsec} and the second sends a $W$--set $X$ to $G/P \times_W X$ (replacing $\calC$ by its
closure under taking $p$-overgroups if necessary, using
Lemma~\ref{overgroupslemma}\eqref{claim-radical}).
In particular 
\begin{enumerate}
\item\label{po1} $\pi_1(\bO_\calC(G)) \xleftcong
  \pi_1(\bO_{\calC'}(G))/\overline{\im(K)}$,
with $K =  \ker( \pi_1(\Oep(W)) \to W_{p'})$ and the overline denoting
normal closure.

\item\label{po2} $H_1(\bO_\calC(G))\mkern-1mu \xleftcong \mkern-1mu H_1(\bO_{\calC'}(G))/\mkern-1mu \im(K)$,
with $K =  \ker( H_1(\Oep(W)) \mkern-1mu\to \mkern-1mu H_1(W))$.
\end{enumerate}
Hence
\begin{enumerate}\addtocounter{enumi}{2}
\item\label{po3} If  $\pi_1(\Oep(W))\xrightcong
W_{p'}$ then $\pi_1(\bO_{\calC'}(G))\xrightcong \pi_1(\bO_{\calC}(G))$.
\item \label{po4} If  $H_1(\Oep(W))\xrightcong
H_1(W)_{p'}$ then $H_1(\bO_{\calC'}(G))\xrightcong H_1(\bO_{\calC}(G))$.
\end{enumerate}
\end{prop}
\begin{proof}
We claim that square in the position identifies with the homotopy
pushout square of
Proposition~\ref{pruninglemma}(3b), after making suitable
identifications. Namely the
left-hand corner in (3b) identifies with $|\Tep(W)|$ 
via the homotopy equivalences $|\calC_{>P}|_{hW} \simeq
|\calS_p(W)|_{hW} \simeq |\Tep(W)|$, using again
Lemma~\ref{overgroupslemma} and \eqref{grothendieck}, and the maps
identify with the stated ones via this equivalence.

Now \eqref{po1} is a consequence of van Kampen's
theorem \cite[\S1.2]{hatcher02}: By Proposition~\ref{boundslemma} both fundamental groups on
the right-hand side of the pushout square are finite $p'$--groups, so
van Kampen's theorem and \eqref{pprimequotient} produces a pushout of
groups
\begin{equation}
\vcenter{\xymatrix{ \pi_1(\Oep(W)) \ar[r] \ar@{->>}[d] & \pi_1(\bO_{\calC'}(G))
  \ar@{->>}[d] \\
W_{p'} \ar[r] & \pi_1(\bO_{\calC}(G))}}
\end{equation}
with vertical maps surjective, and \eqref{po1} follows. Point \eqref{po2} follows similarly, but using the
Mayer--Vietoris sequence instead. Points \eqref{po3} and \eqref{po4} are now
obvious. \end{proof}

For further applications of Proposition~\ref{pruninglemma}, we recall
the behavior of connectivity under joins. In general the
  join of an $n$--connected space with an $m$--connected
  space is $(n+m+2)$--connected (use cellular approximation \cite[\S4.1]{hatcher02}).  The cases we will need are
  summarized in the following lemma.
\begin{lemma}[Connectivity of joins]\label{connectivitylemma}
The join $X \star Y$ is connected for $X,Y$
  non-empty or $X$ connected. It is simply connected for $X$ connected and
  $Y$ non-empty, or for $X$ simply connected.
  
  The join $X \star Y$ is  ($G$--) contractible if $X$ or $Y$ is ($G$--) contractible.

\end{lemma}
\begin{proof}
    This follows from the definition of the join, e.g., as the homotopy pushout
of the diagram $X \leftarrow X \times Y \to Y$ (which identifies with the suspension of the smash  $\Sigma(X \wedge Y)$ if
picking a basepoint).\end{proof}

We finally give the postponed lemma about removing subgroups, which
will also be used several times in proofs of the subsequent theorems.

\begin{lemma} \label{overgroupslemma}
Let $\calC$ be a collection of $p$--subgroups in $G$, and let $\bar
\calC$ be the smallest collection containing $\calC$,
closed under passage to $p$--overgroups, and let $P$ be a
$p$--subgroup of $G$.

\begin{enumerate}
  \item \label{claim-radical}    If $\calC$ contains all $p$--radical groups in ${\bar \calC}_{>P}$,
    then $\calC_{>P} \to \bar \calC_{>P}$ is a $N_G(P)$--homotopy equivalence.
    Thus, if all $p$--radical overgroups 
    of $P$ are in $\calC$ then we have $N_G(P)$--homotopy equivalences $$|\calC_{>P}|
    \xrightsimeq |\calS_p(G)_{>P}| \xleftsimeq  |\calS_p(N_G(P)/P)| .$$
      \item \label{claim-essential}
    If $\calC$ contains all Sylow and $p$--essential overgroups in ${\bar \calC}_{>P}$,
    then $\pi_0(\calC_{>P}) \xrightcong \pi_0(\bar \calC_{>P})$.
    Thus, if all  Sylow and $p$--essential overgroups 
    of $P$ are in $\calC$ then we have $N_G(P)$--equivariant bijections $$\pi_0(\calC_{>P})
    \xrightcong \pi_0(\calS_p(G)_{>P}) \xleftcong  \pi_0(\calS_p(N_G(P)/P)) .$$
 \end{enumerate}
\end{lemma}

\begin{proof}

Start by recalling that $\calS_p(G)_{>P}$ is indeed $N_G(P)$--homotopy equivalent to
$\calS_p(N_G(P)/P)$ via the equivariant deformation retraction  $R
\mapsto N_R(P)/P$, as already observed by Quillen
\cite[Prop.~6.1]{quillen78}.

Now to see the first claim in \eqref{claim-radical}, we add $N_G(P)$--conjugacy classes of
$p$--subgroups to $\calC_{>P}$ in order of decreasing size to reach
$\bar \calC_{>P}$ (see also \cite[Pf.~of~Thm.~1.2]{grodal02}).
If 
$\tilde \calC$ is obtained from $\calC$ by adding a
 $N_G(P)$--conjugacy class of a $p$--subgroup $Q$, maximal in $\bar \calC \setminus
 \calC$, then we have a $N_G(Q)$--homotopy
pushout square analogous to Proposition~\ref{pruninglemma}(1a):
\begin{equation}\label{modified-pushout}
\vcenter{\xymatrix{
N_G(P) \times_{N_G(P <Q)} (|\calS_p(G)_{>Q}| \star |\calC_{>P,<Q}|)\ar[r] \ar[d] & |\calC_{>P}|
\ar[d] \\
N_G(P)/N_G(P<Q) \ar[r] & |\tilde \calC_{>P}|}}
\end{equation}
By assumption $Q$ is not $p$--radical, so that $\calS_p(G)_{>Q}$ is
$N_G(Q)$--contractible, via the standard contraction 
\begin{equation}\label{stdcontraction}
R \geq N_R(Q)
\leq N_R(Q)O_p(N_G(Q)) \geq O_p(N_G(Q))
\end{equation} of Quillen \cite{quillen78}
and Bouc \cite{bouc84}. Hence $|\calS_p(G)_{>Q}| \star |\calC_{>P,<Q}|$
is $N_G(P<Q)$--contractible by the initial observation and Lemma~\ref{connectivitylemma}.
Hence \eqref{modified-pushout} shows that $|\calC_{>P}| \to |\tilde
\calC_{>P}|$ is a $N_G(P)$--homotopy equivalence. Continuing this way
shows the first part of \eqref{claim-radical} by induction. The second
half is follows from the first, but applied to $\calC \cup [P]$ so
that $(\overline { \calC \cup [P]})_{>P} = \calS_p(G)_{>P}$.

The proof of \eqref{claim-essential} follows similarly. 
The assumption that $\calS_p(N_G(Q)/Q)$ is
connected still ensures that $|\calS_p(G)_{>Q}| \star
|\calC_{>P,<Q}|$ is connected by Lemma~\ref{connectivitylemma}.
Hence $|\calC_{>P}| \to |\tilde \calC_{>P}|$ is a bijection on $\pi_0$
by \eqref{modified-pushout}, and we again conclude
$\pi_0(\calC_{>P}) \xrightcong \pi_0(\bar \calC_{>P})$ by
induction. Again the second part follows by applying the first to  $\calC \cup [P]$.
\end{proof}

\begin{rem} \label{rem-higher-poset} Lemma~\ref{overgroupslemma} also has a
  generalization to higher homotopy groups: If $\calC$ is closed under
  passage to all overgroups $Q$ such that $|\calS_p(N_G(Q)/Q)|$ is not
  $i$--connected, then $\iota$ is an isomorphism on $\pi_j$ for $j
  \leq i$ and surjective on $\pi_{i+1}$. This follows by the same
  argument as above, but now in
  \eqref{modified-pushout} appealing to the 
Blakers-Massey's excision theorem (see
e.g., \cite[Prop.~6.4.2]{tomdieck08}) instead.
\end{rem}

\subsection{Varying the collection without changing homotopy types}\label{propagating-subcat}
We now see how we can vary our collection $\calC$ without changing the homotopy type of associated
categories. The omnibus Theorem~\ref{propagating} below is mainly a
translation of  ``classical'' homotopy equivalences
\cite[Thm.~1.1]{GS06}, using the elementary
Lemma~\ref{propagatinglemma} (see also
\cite{GS06} for historical references).

As usual let $\calS_p(G)$ and
$\calA_p(G)$ denote non-trivial $p$--groups and non-trivial
elementary abelian $p$-subgroups $V \cong (\Z/p)^r$ respectively.  The collection $\calB_p(G)$ is
the collection of non-trivial $p$--radical subgroups, i.e.,
non-trivial $p$--subgroups
$P$ such that $O_p(N_G(P)) = P$.

Here and elsewhere the superscript $e$ means that we do not
exclude the trivial subgroup.

\begin{thrm}
\label{propagating} Let $G$ be a
  finite group and $\calC$ a collection of
  $p$--subgroups.
\begin{enumerate}
\item\label{reduce-to-rad}
Suppose $\calC$ is closed under passage to $p$--radical overgroups. 
Then $|\calC \cap \calB_p^e(G)| \xrightsimeq |\calC|$ and $|E\bO_{\calC\cap
  \calB_p^e(G)}(G)| \xrightsimeq |E\bO_\calC|$ are $G$--homotopy equivalences
and consequently
$$ |\calT_{\calC \cap \calB_p^e(G)}(G)| \xrightsimeq
|\calT_{\calC}(G)|   \mbox{\,\,\,\, and \,\,\,\, }  |\bO_{\calC \cap \calB_p^e(G)}(G)| \xrightsimeq
|\bO_{\calC}(G)| $$

\item \label{reduce-to-eltab} Suppose $\calC$ is closed under passage to
non-trivial elementary abelian subgroups.
Then 
$$|\calC \cap \calA_p^e(G)| \xrightsimeq  |\calC| \mbox{\,\,\,\, and
  \,\,\,\, }
|E\A_{\calC \cap \calA_p^e(G)}| \xrightsimeq |E\A_{\calC}|
$$
are $G$--homotopy equivalences. 
Consequently
$$|\calT_{\calC \cap \calA_p^e(G)} (G)| \xrightsimeq |\calT_{\calC}(G)|
 \mbox{\,\,\,\, and \,\,\,\, }
|\calF_{\calC \cap \calA_p^e(G)}(G)|  \xrightsimeq |\calF_{\calC}(G)|.
$$
\end{enumerate}
\end{thrm}

\begin{proof}
Point \eqref{reduce-to-rad}:
  To see that $|\calC \cap \calB_p^e(G)| \xrightsimeq |\calC|$ is a
$G$--homotopy equivalence, 
note that for an arbitrary $P \in \calC \setminus
\calB_p^e(G)$, the space  $|\calC_{>P}| \star |\calC_{<P}|$ is $N_G(P)$--contractible
by Lemma~\ref{overgroupslemma}\eqref{claim-radical} and
Lemma~\ref{connectivitylemma}, as $|\calS_p(N_G(P)/P)|$ is since $P$
is not $p$--radical. Hence the
pushout square in
Proposition~\ref{pruninglemma}(1a) shows that we can remove the subgroups
in $\calC \setminus \calB_p^e(G)$ one $G$--conjugacy class
at a time (in some arbitrary order) without changing the $G$--homotopy
type.
That  $|E\bO_{\calC\cap
  \calB_p^e(G)}(G)| \xrightsimeq |E\bO_\calC|$  is a $G$--homotopy
equivalence follows from the analogous argument, but now using
 Proposition~\ref{pruninglemma}(3a) instead.
The homotopy equivalences on $\calT$ and $\bO$ follow from this and
Lemma~\ref{propagatinglemma}\eqref{propa-norm}\eqref{propa-sub}, finishing the proof
of \eqref{reduce-to-rad}. (For the last bit, one could also use
Proposition~\ref{pruninglemma}(2b)(3b) directly.)

Point \eqref{reduce-to-eltab}: 
We want to prove this by comparing $\calC$ and $\calC \cap
\calA_p^e(G)$ to the smallest collection $\bar \calC$ containing
$\calC$ and closed under passage to
non-trivial $p$--subgroups. We do this by adding $G$--conjugacy classes
of subgroups to $\calC$ in order of increasing size, and observe that this does
not change the $G$--homotopy types of $|\calC|$ and $E\A_\calC$, and
correspondingly for  $\calC \cap
\calA_p^e(G)$:

Let $\tilde \calC = \calC
\cup [P]$, where $P \in \bar \calC \setminus \calC$ is minimal. Then
Proposition~\ref{pruninglemma}(1a)(4a) shows that $|\calC| \to |\tilde \calC|$ and $|E\A_\calC| \to
|E\A_{\tilde \calC}|$ are $G$--homotopy equivalences, once we see
that $|\calC_{<P}|$ is $N_G(P)$-equivariantly contractible (also using
Lemma~\ref{connectivitylemma}).
If $\calC$
contains the trivial subgroup then  $\calC_{<P} = \calS^e_p(G)_{<P}$
and the claim is obvious, so
assume that this is not the case, i.e., $\calC_{<P} = \calS_p(G)_{<P}$. This
is still $N_G(P)$--equivariantly contractible, as is seen using the
standard contraction of
Quillen \begin{equation}\label{std-contraction} Q
  \leq Q \Phi(P) \geq \Phi(P),
\end{equation}
  where $\Phi(P)$ is the Frattini
subgroup, generated by commutators and $p$th powers and $Q\Phi(P) < P$
since $P$ is not elementary abelian.

By continuing to add subgroups this way we
show that $|\calC| \to |\bar \calC|$ and $|E\A_\calC| \to |E\A_{\bar
  \calC}|$ are $G$-homotopy equivalences. The proof that  
 $|\calC \cap
\calA_p^e(G)| \to |\bar \calC|$ and $|E\A_{\calC \cap
\calA_p^e(G)}| \to |E\A_{\bar
  \calC}|$ are $G$-homotopy equivalences is identical.
The statements about $\calT$ and $\calF$ now
follow from Lemma~\ref{propagatinglemma}\eqref{propa-norm}\eqref{propa-cent}.
\end{proof}

\begin{rem}
Theorem~\ref{propagating}\eqref{reduce-to-rad} does
not hold for $\calF$, since $|\calF_{\calB_p(S)}(S)| \cong B
(S/Z(S))$ (compare also Proposition~\ref{fusion-boundslemma}).
Theorem~\ref{propagating}\eqref{reduce-to-eltab} is not true for
for $\bO$, since $|\bO_{\calA_2(C_4)}(C_4)| \cong B\Z/2$.
See also \cite {JM12} for results with $\bar \calF$.

\end{rem}

\subsection{Varying the collection without changing the low dimensional
  homotopy type}\label{pruningfundgrp-subsec}
In this subsection we continue to apply the  formulas of \S\ref{pruning-subsec} to remove subgroups, but now focusing
on the first homology group and the fundamental group.

\subsubsection{Models for
$H_1(\bO_\calC(G))$, $\pi_1(\bO_\calC(G))$ and  $\pi_1(\calT_\calC(G))$}

\begin{thrm}[Propagating fundamental groups, $p$--overgroup-closed
  version]\label{remove-non-essential}
  Let $\calG_p(G)$ denote the collection of $p$--subgroups $P$ that are either Sylow,
  $p$--essential, or satisfy that  $\psi_P: H_1(\Oep(N_G(P)/P)) \to
  H_1(N_G(P)/P)_{p'}$  is not an isomorphism.
  For any collection $\calC$ of $p$--subgroups, closed under
  passage to $p$--overgroups in $\calG_p(G)$,   $H_1(\bO_{\calC
    \cap \calG_p(G)}(G)) \xrightcong  H_1(\bO_\calC(G))$.

Likewise 
 $\pi_1(\bO_{\calC\cap\calG'_p(G)}(G)) \xrightcong \pi_1(\bO_\calC(G))$ and
 $\pi_1(\calT_{\calC \cap \calG''_p(G)}(G)) \xrightcong
 \pi_1(\calT_\calC(G))$, where the collections $\calG'_p(G)$ and
 $\calG''_p(G)$ are defined
analogously, replacing 
 $\psi_P$ by $\psi'_P\co\! \pi_1(\Oep(N_G(P)/P))\mkern-1mu \to\mkern-1mu
 (N_G(P)/P)_{p'}$ and $\psi''_P\co\! \pi_1(\Tep(N_G(P)/P))\mkern-1mu \to\mkern-1mu
  N_G(P)/P$, respectively. By construction $\calG_p(G) \subseteq \calG'_p(G) \subseteq
  \calG_p''(G)$.
\end{thrm} 
We remark that $\pi_1(\Tep(W)) \xrightcong W$ if and only if
$\calS_p(W)$ is simply connected by \eqref{fundseq}. The proof of Theorem~\ref{remove-non-essential} needs a
lemma, which can also be used to remove additional subgroups in it.
\begin{lemma} \label{lem-add-over} Suppose that $\calC$ is a collection of $p$--subgroups closed under
  passage to $p$--essential and Sylow $p$--overgroups, and let $\bar
  \calC = \{ P \in \calS_p^e(G) | \mbox{ there exists } Q \leq P
  \mbox{ with } Q \in\calC\}$. Then $\pi_1(\calT_{\calC}(G)) \xrightcong
  \pi_1(\calT_{\bar \calC}(G))$ and $\pi_1(\bO_{\calC}(G)) \xrightcong
  \pi_1(\bO_{\bar \calC}(G))$.
      \end{lemma}
      \begin{proof}

Suppose $P \in \bar \calC \setminus \calC$. Then $\calC_{>P}$ is
connected by Lemma~\ref{overgroupslemma}\eqref{claim-essential} and
$\calC_{<P}$ is non-empty. Thus $|\calC_{<P}| \star |\calC_{>P}|$ and
$|{(E\bO_\calC)}_{<P}|/P \star |\calC_{>P}|$ are simply
connected by Lemma~\ref{connectivitylemma}.
Proposition~\ref{pruninglemma}(2b)(3b), together with van Kampen's
theorem, show that we can add the conjugacy class of $P$ to $\calC$
without changing $\pi_1(\calT_{\calC}(G))$ or
$\pi_1(\bO_{\calC}(G))$. Repeating this argument, now with $\calC \cup
[P]$ etc., allows us to add all the subgroups of  $\bar \calC
\setminus \calC$ as wanted.
      \end{proof}

      \begin{proof}[Proof of Theorem~\ref{remove-non-essential}]
        By Lemma~\ref{lem-add-over} we can just as well compare groups
        relative to the two collections $\overline{\calC
    \cap \calG_p(G)}  \subseteq  \overline{\calC}$. However all the
  subgroups in  $\overline{\calC} \setminus \overline{\calC
    \cap \calG_p(G)} $ can now be removed, in order of increasing size
  using Proposition~\ref{remove-minimal}\eqref{po4}.

  The statement for $\calG'_p(G)$ follows by the same argument, but
  now appealing to Proposition~\ref{remove-minimal}\eqref{po3}. The
  statement for  $\calG''_p(G)$ also follow from the same line of
  argument: As remarked after the statement of
  Theorem~\ref{remove-non-essential},  $P \not \in
  \calG''_p(G)$ means that $|\calS_p(G)_{>P}|$ is simply connected.
  Hence Proposition~\ref{pruninglemma}(2b), together with van Kampen's
  theorem, still allows us to remove subgroups in $\overline{\calC} \setminus \overline{\calC
    \cap \calG_p(G)} $ without changing the fundamental group of $\calT$.
\end{proof}

\begin{example} \label{G25-counterex}
 For finite groups of Lie type in characteristic $p$, $\calG_p(G) =
 \calG'_p(G) =  \{\mbox{Sylow}\} \cup \{p\mbox{--ess.}\}$ and are
 exactly the unipotent radicals of parabolic
 subgroups of rank at most
 one, whereas $\calG''_p(G)$ identify with unipotent radicals of parabolic
 subgroups of rank at most 2 (see \S\ref{ftgrpsoflietype}).

For $G=G_2(5)$ at
$p=3$, discussed in Proposition~\ref{G_25p3}, the subgroup
$\langle3A\rangle$ is in $\calG_p(G)$ but not  $p$--essential. And for
$\calC$ the collection
of non-trivial $p$--subgroups except $\langle 3A\rangle$,
$\pi_1(\bO_\calC(G)) \cong C_2$ whereas $\pi_1(\Oep(G)) =1$, so
$\langle3A\rangle$ is indeed necessary to control endotrivial modules.
\end{example}

\begin{rem}
The collections from
Theorem~\ref{remove-non-essential} fit in a hierarchy 
involving higher homotopy groups. 
Note that  $\pi_1(\Tep(N_G(P)/P)) \xrightncong N_G(P)/P$ if and
only if
$|\calS_p(N_G(P)/P)|$ is not simply connected. Say that $P$ is
``$\pi_i$--essential'' if $|\calS_p(N_G(P)/P)|$ is not
$i$--connected. Then $\pi_{-1}$--essential is Sylow,  $\pi_0$--essential is Sylow
or $p$--essential, $\pi_1$--essential means in $\calG''_p(G)$, and we
have inclusions
\begin{multline}\label{inclusions} \{\mbox{Sylow}\} \subseteq
  \{\mbox{Sylow}\} \cup \{p\mbox{--ess.} \} 
 \subseteq \calG_p(G) \subseteq  \calG'_p(G) \subseteq \calG''_p(G) = \{\mbox{$\pi_{1}$--ess.}\} \\
\subseteq
\{\mbox{$\pi_{2}$--ess.}\}
 \cdots \subseteq \calB_p^e(G) 
\end{multline}
It follows from Remark~\ref{rem-higher-poset} that the $\pi_i$-essential subgroups are the ones needed to
describe the $i$--truncation of the homotopy type of $|\calS_p(G)|$, and
hence the $i$--truncation of $|\Tep(G)|$ and
$|\Oep(G)|$. As an 
$n$--connected space of dimension $n$ is contractible (see e.g., \cite[Exc.~4.12]{hatcher02}) the filtration in
\eqref{inclusions} is finite, and furthermore
Quillen's famous conjecture
\cite[Conj.~2.9]{quillen78} predicts that  $p$--radical implies
$\pi_n$--essential, for $n$, say, the dimension of $\calS_p(G)$ (see
e.g., \cite{AS93} for known cases).
As mentioned in
Sections~\ref{ftgrpsoflietype} and \ref{psolvable},
several open questions
about simply connectivity of $\calS_p(G)$ may be shadows of stronger
statements about Cohen--Macaulayness, also justifying an
interest in $\pi_i$--essential subgroups. \end{rem}

\subsubsection{Models for $\pi_1(\calF_\calC(G))$}\label{modelsF}
A non--Sylow $p$--subgroup $P$
is called {\em $\calF$--essential} if
$W_0PC_G(P)/P$ is a proper subgroup of $W =N_G(P)/P$ (with $W_0$ as
in \eqref{pembeddeddef}). It is obviously
a subcollection of the $p$--essential subgroups and were introduced
in \cite{puig76} (as ``C-essential''). 
The $\calF$--essential
subgroups can also be described as the $p$--centric subgroups
such that $N_G(P)/PC_G(P)$ contains a strongly $p$--embedded subgroup
(see Lemma~\ref{lem:Fessentialdef}). (Beware that some fusion litterature
such as \cite[Def.~I.3.2]{AKO11}, but not \cite[\S5]{puig06},
take ``fully $\calF$--nomalized'' in $S$ as part of the definition, giving a
smaller, but non-conjugacy invariant set, as it refers to a fixed
Sylow $p$--subgroup $S$.)

\begin{prop} \label{remove-pione-F}
 Suppose that $\calC$ is closed under
  passage to $\calF$--essential and Sylow $p$--overgroups, and that $\calC'$
 is a collection obtained from $\calC$ by removing conjugacy classes of
  $p$--subgroups, which are neither Sylow $p$--subgroups, 
$\calF$--essential, or minimal in $\calC$. Then  $\pi_1(\calF_{\calC'}(G))\xrightcong \pi_1(\calF_{\calC}(G))$.
\end{prop}

\begin{proof}
It is enough to show that for both $\calF_\calC(G)$ and
$\calF_{\calC'}(G)$ we get the same fundamental group as $\calF_{\bar
  \calC}(G)$, for $\bar
\calC$ the minimal collection containing $\calC$ and closed under all
$p$--overgroups. And for that, it is enough to show that the
fundamental group of $\calF_{\bar \calC}(G)$ does not change when removing a $p$--subgroup $P$
from $\bar \calC$ which is neither minimal nor $\calF$--essential nor a Sylow
$p$--subgroup (by removing subgroups in order of increasing size).
This will be a consequence of Proposition~\ref{pruninglemma}(4b), if we see that
$|(E\A_{\bar \calC})_{>P}|/C_G(P)$ is connected. For this note, as in
\eqref{EOCEA}, that $|(E\A_{\bar \calC})_{>P}| \to |\bar \calC_{>P}|$
is an $N_G(P)$--equivariant map, which is a bijection on components, and
hence $|(E\A_{\bar \calC})_{>P}|/C_G(P) \to |{\bar \calC}_{>P}|/C_G(P)$ is also a
bijection on components.  By assumption $|{\bar \calC}_{>P}| =
|\calS_p(G)_{>P}|$ which is $N_G(P)$--homotopy equivalent to $|\calS_p(N_G(P)/P)|$ by
Lemma~\ref{overgroupslemma}. By \eqref{redtoG_0},
$\pi_0(|\calS_p(N_G(P)/P)|) \cong W/W_0$, in the notation from above
the proposition, and $W = W_0C_G(P)P/P$, as $P$ is non-$\calF$--essential. Putting this together we see that $|(E\A_{\bar \calC})_{>P}|/C_G(P)$ is connected as wanted.
\end{proof}

\subsubsection{Fundamental groups for subgroup-closed collections}
We end with the dual case:

\begin{thrm}[Propagating fundamental groups, subgroup-closed version]\label{remove-big-subgroups} Let $\calC$ be a collection
  of $p$--subgroups closed under passage to non-trivial
  elementary abelian
  subgroups.
Then $\pi_1(\calT_{\calC'}(G))\xrightcong
  \pi_1(\calT_{\calC}(G))$,  $\pi_1(\bO_{\calC'}(G))_{p'}\xrightcong
  \pi_1(\bO_{\calC}(G))_{p'}$, and 
$\pi_1(\calF_{\calC'}(G))\xrightcong
  \pi_1(\calF_{\calC}(G))$ for $\calC'$ the subgroups in $\calC$
  which are either maximal, or elementary abelian of rank at most two.
The same statements hold taking $\calC'$ the elementary abelian subgroups of $\calC$
of rank at most $3$. 
\end{thrm}

\begin{proof} First, the
  claim for $\bO$ follows from $\calT$ by Proposition~\ref{pprime}.
Second, the claims are obvious if
the trivial subgroup is in $\calC$, so we can assume that this is not the case.
  Now to prove the
  claim for $\calT$ and $\calF$, it is, as usual, enough to compare $\calC$ and $\calC'$
  to a collection $\bar \calC$ obtained from $\calC$ by adding non-elementary
  abelian $p$--groups, to make it closed under passage to all non-trivial
  $p$--subgroups. By removing subgroups from $\bar \calC$ in order
  of decreasing size, we just have to see that the fundamental groups
  do not change by removing a non-trivial subgroup $P$ which is not elementary
  abelian of rank $1$ or $2$, and if elementary abelian of rank $3$
  not maximal in $\calC$.  However, this all follows from
  Proposition~\ref{pruninglemma}(2b)(4b) and van Kampen's theorem: If $P$ is not elementary
  abelian then $\bar \calC_{<P} =
  \calS_p(G)_{<P}$ is contractible via the standard contraction
  \eqref{std-contraction}. If $P$ is
  elementary abelian of rank at least $4$, then $\calS_p(G)_{<P}$ is
  simply connected as it is homotopy equivalent to a simply connected
  Tits building. And if $P$ is elementary 
  abelian of rank $3$ and not maximal in $\calC$ then $\bar
  \calC_{<P}$ is connected and is joined with a non-empty space giving
  something simply connected (cf.\ Lemma~\ref{connectivitylemma}).
\end{proof}

\subsection{Essential and strongly $p$--embedded subgroups: the connected components of $\calC$} \label{components-subsec}
We now describe the set of connected components of $\calC$ in more
detail, generalizing the description for  $\calS_p(G)$ due to Quillen \cite[\S5]{quillen78} \cite[Prop.~5.8]{grodal02},
explained in \eqref{redtoG_0}---we need this in
Section~\ref{fundgrp-sec} to get the results in their optimal form, and
it also has group theoretic significance, see Remark~\ref{cores}.

Fix a Sylow $p$--subgroup $S$ and set $G_{0,\calC} = \langle
  N_G(Q) | Q \leq S, Q \in \calC\rangle$ and $\calC_0 = \{ Q \in \calC  |Q \leq
  G_{0,\calC}\}$ as in
\eqref{pembeddedC}. 

\begin{prop}[Connected components of $\calC$]\label{conn-comp-C} Let $\calC$ be a collection of $p$--subgroups in
  $G$, closed under passage to Sylow and $p$--essential overgroups. Then 
$G \times_{G_{0,\calC}} |\calC_0|\xrightcong |\calC|$ is a
  $G$--equivariant isomorphism of simplicial sets, via
$(g,P_0\leq \cdots \leq P_n) \mapsto ({}^gP_0 \leq \cdots \leq {}^gP_n)$, and
$|\calC_0|$ is connected. On the set of components 
$$G/G_{0,\calC}\xrightcong \pi_0(|\calC|) \xleftcong
\pi_0(|\calC'|) \xleftcong G/G_{0,\calC'}$$
 with $\calC'$ the Sylow or $p$--essential subgroups of $\calC$.

Furthermore for a subgroup $H \leq G$ the following conditions are equivalent:
\begin{enumerate} 
\item\label{subconj}$G_{0,\calC}$ is subconjugate to $H$.
\item \label{conjugates-not-in-C} $p \nmid |G:H|$ and if, for any $g \in G$, $H
  \cap {}^g H$ contains an element of $\calC$, then $g \in H$.
\end{enumerate}
In particular $G_{0,\calC}$ is characterized as a minimal subgroup
satisfying \eqref{conjugates-not-in-C}, containing $S$.
\end{prop}
\begin{proof} 
Let us first show that the two conditions are equivalent. Suppose
that \eqref{subconj} is satisfied. We can without loss of generality
assume that $G_{0,\calC} \leq H$, since the condition on $H$
is conjugation invariant. Suppose now that
$Q \leq H \cap {}^g H$, and $Q \in \calC$. We want to show that $g
\in H$.  By Sylow's theorem in $H$, using that $p \nmid |G:H|$ we can
upon changing $g$ by an element in $H$ assume that $Q \leq S$. By
assumption ${}^{x^{-1}}Q \leq H$, and hence we can, again by Sylow's
theorem find $h \in H$ so that ${}^{hx^{-1}}Q \leq S$. Alperin's
fusion theorem, in the version of Goldschmidt--Miyamoto--Puig
\cite[Cor.~1]{miyamoto77} (see also \cite[\S10]{grodal02}), now says that we can find
$p$--essential subgroups $P_1, \ldots, P_r$, and
elements $g_i \in N_G(P_i)$, and $n \in N_G(S)$, with
$Q \leq P_1$ and  ${}^{g_{i}\cdots g_1}Q\leq P_{i+1}
\leq S$ for $i \leq r-1$, such that $hx^{-1}
= n g_r \cdots g_1$. However since the right-hand side is in
$G_{0,\calC}$ by assumption, this shows that $x \in H$ as wanted, so
\eqref{conjugates-not-in-C} holds.

Now suppose that $H$ is a subgroup satisfying
\eqref{conjugates-not-in-C}. By Sylow's theorem, we can change $H$ up
to conjugation so that $S \leq H$. We want to show that
$G_{0,\calC} \leq H$. In fact we will prove the stronger statement
that $G_S$, the stabilizer of $[S] \in
  \pi_0(\calC)$, is contained in $H$. As it is clear that $G_{0,\calC}
  \leq G_S$, this will show $\eqref{conjugates-not-in-C} \Rightarrow
  \eqref{subconj}$, and furthermore establish that $G_S = G_{0,\calC}$
  as claimed in the first part of the theorem. Hence suppose that $g \in G_S$, so
  that ${}^gS$ and $S$ lies in the same component. By definition there
  exists a sequence of Sylow $p$-subgroups $S_0, \ldots, S_r$, so that
  $S = S_0$, $S_r = {}^gS$ such that $S_{i} \cap S_{i+1}$ contains an
  element of $\calC$.  Choose $g_i \in G$ such that ${}^{g_i}S_{i} =
  S_{i+1}$, so that ${}^{g_{r-1}\cdots g_0}S ={}^gS$. If $S_i \leq H$,
  then $S_{i+1} \leq H$ and $g_i \in H$ by our assumption, so by
  induction we conclude that $g_{r-1}\cdots g_0 \in H$, and hence also
  $g \in H$ since the two elements differ by an element of $N_G(S)
  \leq H$.

That $G \times_{G_{0,\calC}} |\calC_0|\xrightcong
|\calC|$ now follows: The map is surjective since for $P_0 \leq \cdots
\leq P_n \in |\calC|_n$ we can, by Sylow's theorem, find $g \in G$ so ${}^{g}P_n
\leq S$, and hence $({}^gP_0 \leq \cdots \leq {}^gP_n) \in |\calC_0|_n$.
Furthermore if $ ({}^gP_0 \leq \cdots \leq {}^gP_n) =  ({}^{g'}P'_0
\leq \cdots \leq {}^{g'}P'_n)$ then ${}^{g'^{-1}g}P_0 = P'_0 \leq
G_{0,\calC}$, and likewise $P_0 \leq G_{0,\calC}$ by definition, i.e.,
$P_0$ and $P'_0$ lie in the same component so
${g'^{-1}g} \in G_S = G_{0,\calC}$, by the first part. In other words $gG_{0,\calC}
= g'G_{0,\calC}$ as wanted.

The last claim we need to justify is that $\pi_0(|\calC'|)
\xrightcong \pi_0(|\calC|)$ which follows from
Lemma~\ref{overgroupslemma} with $P = e$, except the degenerate case
where $e \in \calC$. But here it is also true: this is clear if also $e \in \calC'$
since both spaces are contractible, and if $e \not \in \calC'$ then
Proposition~\ref{pruninglemma}(1a) still says that we can add $e$ to
$\calC'$ without changing the number of components, which hence has to
be one.
\end{proof}

We now check that the two definitions of
$\calF$--essential from \S\ref{modelsF} agree (see also
\cite[Cor.~III.2]{puig76}).

\begin{lemma}\label{lem:Fessentialdef}
$W_0PC_G(P)/P$ is a proper subgroup of $W =N_G(P)/P$ if and only if
$P$ is $p$--centric and $\calS_p(N_G(P)/PC_G(P))$ is disconnected.
\end{lemma}

\begin{proof}
  Let $H$ denote the preimage of $W_0PC_G(P)/P$ in $N_G(P)$.
As $P \leq PC_G(P) \leq H \cap H^g$ for all $g \in N_G(P)$, Proposition~\ref{conn-comp-C}
  (applied to $\calC = \calS_p(N_G(P)/P)$ in $N_G(P)/P$) shows that if $H \neq
  N_G(P)$, then $|PC_G(P):P|$ is prime to $p$, as $ |P : H \cap H^g|$ is, so $P$ is $p$--centric.

  If $P$ is $p$--centric then $C_G(P) \cong Z(P) \times R$, for $R$ a
  $p'$--group. Note that
$|\calS_p(N_G(P)/P)|/R \cong |\calS_p(N_G(P)/PR)|$ (by
\cite[Prop.~5.7]{grodal02}), a space with set of components $W/W_0R$. Hence
$W_0R$ is a proper subgroup of $W$ if and only if
$|\calS_p(N_G(P)/PR)|$ is disconnected, showing the lemma.
 \end{proof}

\begin{rem}[Groups with a strongly $p$--embedded
  subgroup]\label{stronglyembeddedclassification} 
To use the theorems in this paper, it is useful to know
when  $G_0 =
\langle N_G(Q) | 1< Q \leq S\rangle$ is proper in $G$,
i.e., in group theoretic language, when $G$ contains a strongly $p$--embedded subgroup.
The answer to this question forms an important chapter in
the classification of finite simple groups. The following is a theorem
of Bender when $p=2$ \cite{bender71},
and only known as a consequence of the classification when $p$ is odd:
Either $\rk_p(G)=1$ and $G_0 = N_G(\Omega_p(Z(S)))$, with $G_0<G$ exactly when $O_p(G)
=1$, or $\rk_p(G)\geq2$,
$O_{p'}(G) \leq G_0$, $\bar G = G/O_{p'}(G)$ has a unique minimal
normal non-abelian simple subgroup $\bar K = F^*(\bar G)$, and $\bar
G/ \bar K
\leq \Out(\bar K)$. The group $\bar K$  is either a finite group of Lie type of
rank 1 (possibly twisted) in defining characteristic, $A_{2p}$ ($p\geq 3$), $(L_3(4),3)$,
$(M_{11,}3)$, $(Fi_{22},5)$, $(Mc,5)$, $(F_4(2)',5)$, or $(J_4,11)$.
See \cite[7.6.1,
7.6.2]{GLS98}  and also
\cite[\S5]{quillen78} \cite[(6.2)]{aschbacher93} for more details.
\end{rem}

\begin{rem} \label{cores} As noticed, when taking $\calC = \calS_p(G)$,
  Proposition~\ref{conn-comp-C} is a strong version of Quillen's
  \cite[Prop.~5.2]{quillen78}. When taking $\calC$ to be the collection of
  $p$--subgroups of $p$--rank at least $k$, $G_{0,\calC}
  =\Gamma_{k,S}(G)$, the $k$--generated $p$--core from finite group
  theory, and one also makes geometric and extends a standard characterization
  of it
  \cite[(46.4)]{aschbacher00} (see also
 \cite[Sec.~22]{GLS96} 
 \cite[Sec.~B.4]{ALSS11}). Groups with proper $2$--generated
 $2$--core were famously classified by Aschbacher \cite{aschbacher74}.
\end{rem}

\begin{rem} \label{endoessential-remark}
By examining the list in Remark~\ref{stronglyembeddedclassification} one sees that very often when $G_0 <
  G$, $H_1(G_0)_{p'} \twoheadrightarrow H_1(G)_{p'}$ is not injective,
  providing exotic Sylow-trivial modules via
  \eqref{classicalbounds} (take for instance $G =
  \SL_2(\F_{p^r})$ with $p^r \neq 2$). But it may also be injective:
Let $p=2$ and consider $K =
  \SL_2(\F_{2^r})$, $r>1$ odd, and let
  $C_r$ act on $K$ via field automorphisms. Set $G = K \semi C_r$. Then
  $K_0$ consists of upper triangular matrices of determinant one, and
  $G_0 = H_0 \semi C_r$. Hence $H_1(G_0)_{2'} \cong H_1(G)_{2'}
  \cong C_r$. Other examples may be constructed along these lines,
  though perhaps limited to small primes. (We are grateful to Ron
  Solomon for consultations on these points.) 
\end{rem}

\subsection{Proof of
  Proposition~\ref{pruninglemma}} \label{pruningproof-subsec}
We now prove Proposition~\ref{pruninglemma}, via general observations about links in preordered sets, an abstraction of observations in
\cite{dwyer98sharp,GS06}.
Let $\calX$ be a preordered set,  i.e., a small category with at most one
morphism between any two objects. Note that our spaces $E\bO_\calC$,
etc., are all examples of such. For $x \in \calX$, let $\bar x$ denote
the full subcategory of $\calX$ on
objects isomorphic to $x$, let $\calX_{<\bar x}$ denote the full
subcategory of $\calX$ on elements smaller than and not isomorphic to
$x$, and define $\calX_{>\bar x}$ similarly. For preordered sets
$\calX$ and $\calY$ the join $\calX \star \calY$  is the preordered
set obtained from the disjoint union of $\calX$ and $\calY$ by
adding a unique morphism from each
object in $\calX$ to each object in $\calY$. This has the property
that $|\calX \star \calY| \cong |\calX| \star |\calY|$.
The star $\sta_{\calX}(\bar x)$ is
the full subcategory of $\calX$ on objects which admit a morphism to or from $x$,
and the link $\link_{\calX}(\bar x)$ is the full subcategory on objects that admit a non-isomorphism
to or from $x$. Note that
$$\sta_\calX(\bar x) = \calX_{<\bar x} \star \bar x \star \calX_{>\bar
  x} \mbox{ and
} \link_\calX(\bar x) = \calX_{<\bar x} \star \calX_{>\bar x}$$
If $\calX$ has
a $G$--action, these are all $G_{\bar x}$--subcategories, where $G_{\bar
  x}$ is the stabilizer of $\bar x$ as a set, and furthermore
$\sta_{\calX}(\bar x)$ is $G_x$--contractible to $x$ (but generally
not $G_{\bar x}$--contractible).

\begin{prop} \label{linklemma} Suppose
 $\calX$ is a preordered set equipped with a $G$--action such that
 isomorphic objects are $G$--conjugate, and let $\calX'$ denote the subcategory of $\calX$
obtained by removing all $G$--conjugates of an element $x$.

\begin{enumerate}
\item \label{linksquare} There is a pushout square of $G$--spaces, which is
also a homotopy pushout square of $G$--spaces:
$$\xymatrix{ G \times_{G_{\bar x}} |\link_{\calX}(\bar x)| \ar[d] \ar[r] &
  |\calX'| \ar[d]\\
G \times_{G_{\bar x}} |\sta_{\calX}(\bar x)|  \ar[r] &
  |\calX|}$$
where  $\bar x$ denotes the subcategory of elements isomorphic to $x$ and $G_{\bar x}$ its
stabilizer as a set.

\item \label{Gsquare}
Assume in addition that the stabilizer $G_x$ of any point $x \in \calX$ is a normal
subgroup in the stabilizer of its isomorphism class $\bar x$. Then the square
$$\xymatrix{ G \times_{G_{\bar x}} (E(G_{\bar x}/G_x) \times
  |\link_{\calX}(\bar x)|) \ar[d] \ar[r] &
  |\calX'| \ar[d]\\
G \times_{G_{\bar x}} (E(G_{\bar x}/G_x) \times |\sta_{\calX}(\bar x)|)  \ar[r] &
  |\calX|}
$$
obtained by collapsing
$E(G_{\bar x}/G_x)$ and continuing as in \kern-0.5pt \eqref{linksquare} is again a pushout and homotopy pushout square of $G$--spaces,
and remains a homotopy pushout of $G$--spaces after collapsing $|\sta_{\calX}(\bar x)|$.
\end{enumerate}

In particular, under these assumptions:
\begin{enumerate} \addtocounter{enumi}{2}
\item \label{XmodG} On $G$--orbits there is a homotopy pushout square
$$\xymatrix{((|\calX_{<\bar x}| \star |\calX_{>\bar x}|)/G_x)_{hG_{\bar x}/G_x} \ar[d] \ar[r] &
  |\calX'|/G \ar[d]\\
B(G_{\bar x}/G_x)  \ar[r] &
  |\calX|/G}$$
\end{enumerate}
\end{prop}
\begin{proof}
For \eqref{linksquare}, notice that it is a pushout of $G$--spaces by the fact that $|\calX'|$ and
the image of $G \times_{G_{\bar x}}|\sta_\calX({\bar x})|$ cover $|\calX|$,
and the part of $G \times_{G_{\bar x}}|\sta_\calX({\bar x})|$ that maps to
$\calX'$ is the subspace $G \times_{G_{\bar x}}|\link_\calX({\bar x})|$, just by the
definitions and that isomorphic elements are $G$--conjugate.
It is homotopy pushout of $G$--spaces since
$G \times_{G_{\bar x}}|\link_\calX({\bar x})| \to G \times_{G_{\bar
    x}}|\sta_\calX({\bar x})|$ is an injective map of $G$--spaces. 

For \eqref{Gsquare} it is enough to see that the projection square
$$\xymatrix{ G \times_{G_{\bar x}} E(G_{\bar x}/G_x) \times |\link_{\calX}({\bar x})| \ar[d] \ar[r] &
   G \times_{G_{\bar x}} \times |\link_{\calX}({\bar x})|  \ar[d]\\
G \times_{G_{\bar x}} E(G_{\bar x}/G_x) \times |\sta_{\calX}({\bar x})|  \ar[r] &
 G \times_{G_{\bar x}} \times |\sta_{\calX}({\bar x})|}$$
 is a
pushout and homotopy pushout of $G$--spaces, by transitivity of
pushouts and homotopy
pushouts. 
To see this it is again enough to see that 
$$\xymatrix{ E(G_{\bar x}/G_x) \times |\link_{\calX}({\bar x})| \ar[d] \ar[r] &
   |\link_{\calX}({\bar x})|  \ar[d]\\
E(G_{\bar x}/G_x) \times |\sta_{\calX}({\bar x})|  \ar[r] &
 |\sta_{\calX}({\bar x})|}$$
is a pushout and homotopy pushout of $G_{\bar x}$--spaces, which we do by checking
on all fixed-points for $H \leq
G_{\bar x}$. For $H \leq G_x$ $E(G_{\bar x}/G_x)^H$ is contractible and
the claim is clear. (Note, we use that $G_{\bar x}/G_x$ is a group,
not just a coset, so the isotropy does not depend on the chosen point $x
\in \bar x$.) For $H \not \leq G_x$, the spaces on the left are empty,
and $\link_\calX({\bar x})^H = \sta_{\calX}({\bar x})^H$, so it is
also a pushout and  homotopy
pushout square in that case.
For the purposes of having a {\em homotopy} pushout square of $G$--spaces,
we can replace $\sta_\calX(\bar x)$ by a point, since
$\sta_\calX(\bar x)$ is $G_x$--contractible, and hence  $E(G_{\bar
  x}/G_x) \times |\sta_{\calX}({\bar x})| \to E(G_{\bar x}/G_x)$ a
$G_{\bar x}$--homotopy equivalence.

For \eqref{XmodG}, note that since  $G \times_{G_{\bar x}} E(G_{\bar x}/G_x) \times
|\link_{\calX}({\bar x})|  \to G \times_{G_{\bar x}} E(G_{\bar x}/G_x) \times
|\sta_{\calX}({\bar x})| $ is a cofibration of $G$--spaces, passing to
$G$--orbits in \eqref{Gsquare} produces  the homotopy pushout square
in \eqref{XmodG}.
\end{proof}

\begin{rem}\label{replacement-rem}
In Proposition~\ref{linklemma}\eqref{Gsquare}\eqref{XmodG} we can replace $|\link_\calX(\bar x)| = |\calX_{<\bar x}|
\star  |\calX_{>\bar x}|$ by an $G_x$--equivalent space through an $G_{\bar
  x}$-equivariant map, without changing
the conclusion, as any group element in $G_{\bar x} \setminus G_x$
acts freely on $E(G_{\bar x}/G_x)$.
  \end{rem}

\begin{proof}[Proof of Proposition~\ref{pruninglemma}]
This will be applications of Proposition~\ref{linklemma}: (1a) is Proposition~\ref{linklemma}\eqref{Gsquare}
with $\calX = \calC$, noting that in this case $\calX$ is a poset
and $G_{\bar x} = G_x$. (1b) is
Proposition~\ref{linklemma}\eqref{XmodG}.
For (2a) cross the diagram from (1a) with $EG$, noting that $EG$ as an
$N$--space is equivalent to $EN$. For (2b) take $G$--orbits and note that $|\calC|_{hG}$ identifies with $|\calT_\calC|$ by
Lemma~\ref{thomasonlemma}. (Alternatively,
more directly take $\calX = E\calT_\calC = e \downarrow \iota$, the
undercategory for $\iota\co
\calT_{\calC} \to \calT_{\calC \cup \{e\}}$.)
For (3a) and (3b) take $\calX = E\bO_\calC$, so for $x =
(G/Q,y)$, $G_{x} = G_{y}$, and $G_{\bar x} = N_G(G_y)$. The diagrams
now follow from
Proposition~\ref{linklemma}\eqref{Gsquare}--\eqref{XmodG} using that
we have an $N_G(P)$-equivariant map
$|(E\bO_\calC)_{>P}| \to |\calC_{>P}|$ which is a $P$--homotopy
equivalence by \eqref{fixed-points}, so we can replace
$(E\bO_\calC)_{>P}$ by $\calC_{>P}$ by Remark~\ref{replacement-rem}.
Finally for (4a-b) 
take $\calX = E\A_\calC$
and again use
Proposition~\ref{linklemma}\eqref{Gsquare}--\eqref{XmodG} (where
$G_x = C_G(i(Q))$ and $G_{\bar x} = N_G(i(Q))$, for
$x = {(i\co Q \to G)}$), 
together with the simplification that $|(E\A_\calC)_{<P}| \to |\calC_{<P}|$ is an
$C_G(P)$--homotopy equivalence (which holds as $|(E\A_\calC)_{<P})|^H \xrightsimeq
|\calC_{<P}|$  by \eqref{fixed-points} when $H \leq C_G(P)$, or
equivalently $P \leq C_G(H)$). 
\end{proof}

\bibliographystyle{alpha}
\bibliography{../bib/master}

\end{document}